\documentclass[11pt,reqno]{amsart}
\usepackage{amsmath}
\usepackage{amsfonts}

\numberwithin{equation}{section}
\usepackage{times}
\usepackage{amsmath,amsfonts,amstext,amssymb,amsbsy,amsopn,amsthm,eucal}
\usepackage{txfonts}
\usepackage{dsfont}
\usepackage{graphicx}   % for figures
\usepackage{hyperref}
\usepackage{accents}
\usepackage{enumerate}
\usepackage{xcolor}

\usepackage{verbatim}
\definecolor{mycustompurple}{RGB}{154, 36, 79} % 定义自己的颜色
\usepackage[utf8]{inputenc}
\usepackage{hyperref} % 引入超链接宏包
\hypersetup{
    colorlinks=true,            % 激活链接颜色，去掉链接边框
    linkcolor=red,              % 文档内部链接颜色（如图表等引用）
    citecolor=green,            % 文献引用链接颜色
    filecolor=mycustompurple,   % 文件链接颜色
    urlcolor=magenta            % 外部URL链接颜色
}

\usepackage{fancyhdr}

\fancypagestyle{mystyle}{%
    \fancyhf{} % 清空页眉和页脚的设置
    \fancyhead[LE,RO]{\thepage} % 在偶数页左侧和奇数页右侧显示页码
}
\newcommand{\Haus}{\mathcal{H}}

\newcommand{\Lip}{\text{Lip}}

\newcommand{\supp}{\operatorname{supp}}

\newcommand{\R}{{\rm R}}
\newcommand{\Rm}{{\rm Rm}}
\newcommand{\Ric}{{\rm Ric}}

\newcommand{\Vol}{{\rm Vol}}

\newcommand{\rv}{{\rm v}}

\newcommand{\vepsilon}{\varepsilon}

\newcommand{\N}{\mathcal{N}}
\newcommand{\C}{\mathcal{C}}

\newcommand{\cC}{\mathcal{C}}

\newcommand{\cE}{\mathcal{E}}

\newcommand{\cH}{\mathcal{H}}

\newcommand{\cN}{\mathcal{N}}

\newcommand{\cV}{\mathcal{V}}

\newtheorem{theorem}{Theorem}[section]

\newtheorem{proposition}[theorem]{Proposition}
\newtheorem{claim}[theorem]{Claim}
\newtheorem{lemma}[theorem]{Lemma}

\theoremstyle{definition}
\newtheorem{definition}[theorem]{Definition}
\theoremstyle{remark}
\newtheorem{remark}{Remark}[section]
\theoremstyle{remark}

\theoremstyle{remark}

\theoremstyle{remark}
\theoremstyle{remark}

\begin{document}
\pagestyle{mystyle}
  
\title{$L^2$ Curvature Bounds on Ricci flows with bounded scalar curvature up to dimension 7}
\date{}
\author{Xianbin Huang}
\address{X. Huang,~~School of mathematical Sciences,  Zhejiang University, Hangzhou, China}
\email{2592363260@foxmail.com}
\author{Wancheng Zhang}
\address{W. Zhang,~~School of mathematical Sciences,  Zhejiang University, Hangzhou, China}
\email{12335021@zju.edu.cn}
 
\begin{abstract}
In this paper, we study Ricci flows with uniformly bounded scalar curvature. On such Ricci
flows we establish uniform $L^2$ Riemannian
curvature bounds and $L^4$ Ricci
curvature bounds up to dimension 7. These estimates improves   $L^{2-\vepsilon}$ curvature bounds established by Bamler \cite{B18} and $L^2$ curvature bounds in dimension 4 established by Bamler and Zhang \cite{BZ15}.

More generally, we establish   uniform $L^2$ curvature bounds on $n$-manifolds satisfying an integral $L^p$ Ricci curvature bound for $p> n/2$, a uniform volume lower bound, and a strong $\vepsilon$-regularity assumption. This result extends the celebrated $L^2$ curvature estimates of Jiang and Naber \cite{JN}, which were established under pointwise two-sided Ricci curvature bounds. To overcome the lack of pointwise control, we develop an $L^p$ adaptation of the neck decomposition theorem and establish a  superconvexity estimate for the curvature on neck regions. Furthermore, we demonstrate that our structural assumptions are dynamically natural on Ricci flows with bounded scalar curvature. As an application, by combining our estimate with the a priori $L^{4-\vepsilon}$ curvature radius bound of Ricci flow established by Bamler, we obtain uniform $L^2$ curvature bounds on Ricci flows  with bounded scalar curvature of dimension $n \le 7$.

\end{abstract}
\maketitle
\tableofcontents
\section{Introduction}

Understanding the deep relationship between bounds on the Ricci curvature and the control of the full Riemannian curvature tensor is a central theme in geometric analysis. A cornerstone result in this area is the $L^2$ curvature estimate proved by Jiang and Naber \cite{JN}, which resolved the $L^2$ curvature conjecture. They demonstrated that for a Riemannian manifold $(M^n, g)$, a pointwise two-sided bound on the Ricci curvature $|\Ric| \le n-1$ coupled with a uniform volume non-collapsing condition $\Vol(B_1(x)) \ge \rv > 0$ enforces a  $L^2$ bound on the full Riemann curvature tensor: $\int_{B_1(x)} |\Rm|^2 \le C(n, \rv)$, where the upper bound  $C(n, \rv)$ depends only on the dimension $n$ and the non-collapsing constant $\rv$.

The analogous question for Ricci flows—whether uniform  $L^2$ curvature estimates hold under bounded scalar curvature and suitable non-collapsing conditions—has been investigated by Simon, Bamler and Zhang. Simon \cite{S15} and Bamler-Zhang \cite{BZ15} independently  established a uniform  $L^2$ curvature bound in dimension 4, and this result has been particularly useful in the study of Kähler Ricci flows on Fano manifolds. For general dimensions, Bamler developed a comprehensive compactness and partial regularity theory for Ricci flows with bounded scalar curvature and entropy lower bound \cite{B18}. In this framework, Bamler proved a uniform $L^{2-\vepsilon}$ curvature estimate in arbitrary dimensions.
This naturally raises the following question:
\textit{Does a uniform  $L^2$
  curvature estimate hold for Ricci flows with bounded scalar curvature in all dimensions?}

Bamler's $L^{2-\vepsilon}$ estimate represents the best possible result using the current methods, and new ideas appear necessary to bridge the gap between $2-\vepsilon$ and 2. One of the main results in this paper is to give an affirmative answer to above question when the dimension is at most 7:
\begin{theorem}
Let \( (M^n, (g_t)_{t \in [-2,0]}) \) be a compact Ricci flow  of dimension $n\le 7$. We further assume

\begin{enumerate} \item[$(1)$] The scalar curvature satisfies the uniform upper bound  
\[ |\R|_{g(t)}\le A \quad \text{on} \quad M \times [-2, 0]. \]

\item[$(2)$]  Perelman's entropy satisfies the uniform lower bound  
\[ \nu[g(-2), 4] = \inf_{0 < \tau \leq 4} \mu[g(-2), \tau]\ge -A. \]
\end{enumerate}
Then there exists $C(n,A)<\infty$ such that for any \( t \in [-1, 0] \) and \( x \in M \), we have 
\begin{enumerate} \item[$(a)$]  \textbf{$L^4$ Ricci curvature bound:}
\[\int_{B_{g(t)}(x,1)} |\Ric|_{g(t)}^{4} \, d\Vol_{(g(t)} \leq  C(n,A);\]
 \item[$(b)$] \textbf{$L^2$ Riemannian curvature bound:}
\[\int_{B_{g(t)}(x,1)} |\Rm|_{g(t)}^{2} \, d\Vol_{(g(t)} \leq  C(n,A).\]
\end{enumerate} 
\end{theorem}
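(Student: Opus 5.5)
The plan is to reduce the theorem to the general Riemannian statement announced in the abstract. That statement gives uniform $L^2$ curvature bounds on $n$-manifolds satisfying three conditions: an $L^p$ Ricci bound with $p>n/2$, a volume lower bound, and a strong $\vepsilon$-regularity property. We then verify that each time slice $(M,g(t))$, $t\in[-1,0]$, satisfies these hypotheses with constants depending only on $(n,A)$.

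\textbf{Volume non-collapsing.} The entropy bound $\nu[g(-2),4]\ge -A$ together with $|\R|\le A$ yields, by Perelman's $\kappa$-noncollapsing (in the form used by Bamler \cite{B18}), a lower bound $\Vol_{g(t)}(B_{g(t)}(x,r))\ge \kappa(n,A) r^n$ for $r\le 1$. Bounded scalar curvature also gives a volume upper bound via Bamler's results.

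\textbf{$L^p$ Ricci bound with $p>n/2$.} This is where $n\le 7$ enters. Bamler's a priori estimate on the curvature radius $r_{\Rm}$ states that
\[
\int_{B_{g(t)}(x,1)} r_{\Rm}^{-4+\vepsilon}\,d\Vol_{g(t)}\le C(n,A,\vepsilon).
\]
The key observation is that bounded scalar curvature gives a better pointwise estimate for Ricci than for $\Rm$. We aim to show that on a parabolic neighborhood of scale $r=r_{\Rm}(x,t)$, the evolution $\partial_t \R=\Delta \R+2|\Ric|^2$ combined with $|\R|\le A$ yields, by integrating in time over $[t-r^2,t]$ with a cutoff, an averaged bound $|\Ric|^2\lesssim r^{-2}$. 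Interior Shi-type estimates on the regular scale then upgrade this to the pointwise bound $|\Ric|(x,t)\le C r_{\Rm}^{-2+1}=C r_{\Rm}^{-1}$, as in Bamler--Zhang \cite{BZ15}. Hence $|\Ric|^{4-\vepsilon}\lesssim r_{\Rm}^{-4+\vepsilon}$ is integrable. Taking $p$ slightly less than $4$ gives $p>n/2$ exactly when $n\le 7$. The same argument, pushed to the endpoint, should give part (a). For this we would apply Bamler's estimate at exponent $4-\vepsilon$ and run a parabolic integration-by-parts argument on $\int |\Ric|^4$. We test the evolution equation of $\R$ against $|\Ric|^2\phi$ in spacetime and use $|\R|\le A$ to absorb terms, as in \cite{BZ15}. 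This is the first step we would try; if the endpoint fails, we would instead derive (a) a posteriori from (b) and the $\vepsilon$-regularity.

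\textbf{Strong $\vepsilon$-regularity.} The required statement is that if $B_{g(t)}(x,2r)$ is $\delta$-Gromov--Hausdorff close to a Euclidean ball (or to a cone $\mathbb{R}^{n-4}\times C(S^3/\Gamma)$ away from its singular set), then $r_{\Rm}\ge \epsilon r$. We would obtain this from Bamler's partial regularity theory for flows with bounded scalar curvature: tangent flows are static, their singular sets have codimension $4$, and regular points are smooth with quantitative curvature control. The gap-type result needed is that almost-Euclidean balls in time slices are regular, which follows from Bamler's backward pseudolocality combined with $|\R|\le A$.

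\textbf{Conclusion and main obstacle.} With all hypotheses verified, the general theorem applies. Its proof uses the $L^p$ neck decomposition and superconvexity on neck regions, and yields (b). We expect the main obstacle to be matching exactly the form of $\vepsilon$-regularity the general theorem requires, in particular the quantitative dependence on the $L^p$ Ricci norm at small scales. Scaling the $L^p$ bound with $p>n/2$ makes the scale-invariant quantity $r^{2p-n}\int_{B_r}|\Ric|^p$ tend to zero, which is what should allow Bamler's regularity to be invoked uniformly across scales.
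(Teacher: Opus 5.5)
For part (b) your reduction is the paper's own. Perelman's non-collapsing gives (A2). Bamler's $L^{4-\vepsilon}$ curvature-radius bound together with $|\Ric|\le C_0A^{1/2}r_{\Rm}^{-1}$ (Proposition~\ref{Improved Ricci curvature estimate}) gives (A1) with $p=4-\vepsilon>n/2$, exactly when $n\le 7$. Pseudolocality plus codimension-four regularity gives (A3); this includes its gap half, that closeness to $\mathbb{R}^{n-3}\times C(Z^2)$ forces closeness to $\mathbb{R}^n$, which your formulation omits but your appeal to codimension four supplies. Theorem~\ref{thm:main} then yields (b).

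\textbf{The genuine gap is part (a).} Testing $\partial_t\R=\Delta\R+2|\Ric|^2$ against $|\Ric|^2\phi$ in spacetime can at best bound $\int_{t-1}^{t}\int|\Ric|^4$. Statement (a) asserts a bound on the single slice $g(t)$ for every $t\in[-1,0]$, and nothing in your sketch converts spacetime control into slice-wise control.

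Your fallback, deducing (a) from (b) plus $\vepsilon$-regularity, also does not work. The available pointwise estimate is $|\Ric|\lesssim A^{1/2}r_{\Rm}^{-1}$, which is in terms of the curvature \emph{radius}; there is no inequality $|\Ric|^2\lesssim A|\Rm|$ at the same point. On a neck region, (A3) gives only $|\Ric|\le\vepsilon b^{-1}$ with $b\approx d(\cdot,\cC)$, and $\int_{\cN}b^{-4}=\infty$. This is the same logarithmic divergence the paper points out for $|\Rm|\le\vepsilon b^{-2}$. The trivial bound $|\Ric|\le C(n)|\Rm|$ only gives $|\Ric|^4\le C\vepsilon^2 b^{-2}|\Rm|^2$, and $\int|\Rm|^2<\infty$ does not control that.

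So the missing idea is not in matching (A3), which the paper settles with the same one-line appeal to pseudolocality and codimension four that you make. What is missing is a second superconvexity estimate, now for Ricci, run on necks just as for $|\Rm|$.

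The paper reruns the Dini/superconvexity argument with $f=\phi_{\cN}(|\Ric|+\sigma)^{2-2\eta}$ (Lemma~\ref{ricci superconvexity}, Theorem~\ref{ricci Dini estimates}). This gives $r^{4-n}\int_{\cN_{10^{-5}}\cap B_r(x)}|\Ric|^4\le C$ (Theorem~\ref{L4 ricci}). It then sums over the neck decomposition of Theorem~\ref{neck decomposition} on the slice $g(t)$ exactly as for $|\Rm|^2$, with the $b$-balls handled by a pointwise bound $|\Ric|\le Cr_b^{-1}$.

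If you follow that route, be aware of one step. The paper uses the Bochner-type formula $\frac12\Delta|\Ric|^2=|\nabla\Ric|^2+\langle\nabla^2\R,\Ric\rangle+\Rm*\Ric*\Ric$, which is not a general Riemannian identity. Linearizing at a flat metric along a transverse-traceless $h$ gives $\dot{\R}=0$ but $\langle\Delta\dot{\Ric},\dot{\Ric}\rangle=\tfrac14\langle\Delta^2h,\Delta h\rangle\neq0$. So the term $\langle\Delta\Ric,\Ric\rangle$ needs a separate justification.
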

It should be emphasized that the restriction to low dimensions in our theorem is solely of a technical nature, due to the loss  integrability $4>p>n/2$. The proof of $L^4$ Ricci curvature bound  follows almost from the same argument of $L^2$  Riemannian  curvature bound, so we will mainly focus on the $L^2$ estimate in this paper. Indeed, we prove an 
$L^2$ curvature estimate in general dimensions under an  $L^p$ Ricci curvature assumption with  
$p>n/2$. Combining this estimate with Bamler’s a priori $L^{4-\vepsilon}$ curvature radius bound for Ricci flow yields, as a corollary, a uniform  $L^2$
  curvature bound for Ricci flows with bounded scalar curvature in dimensions  $n\le 7$.

Therefore, the central concern of this paper is to establish  $L^2$ curvature estimates under $L^p$ Ricci curvature assumptions. Our main idea is directly inspired by the work of Jiang and Naber; however, their original framework assumes pointwise bounds on Ricci curvature, whereas our goal is to work under much weaker integral assumptions.  In many situations—particularly in the context of Ricci flow—uniform pointwise bounds on the Ricci curvature are too restrictive, and one typically can only obtain integral bounds via entropy or potential estimates. Consequently, the primary goal of this paper is to extend the Jiang–Naber  $L^2$ curvature estimates to the significantly more flexible framework of integral  $L^p$ Ricci curvature bounds.

  To make the overall framework more clear, we make the following assumptions on Riemannian manifold $(M^n, g)$ with a fixed base  point $q$:  
\begin{enumerate}
    \item[(A1)] \textbf{$L^p$ Ricci bound:} There exists  $p > \frac{n}{2}$ such that
\begin{equation}\label{Lp}
        \int_{B_4(q)} |\Ric|^p \le \Lambda.
\end{equation}

    \item[(A2)] \textbf{Uniform non-collapsing:} There exists $\rv>0$ such that for any  $0<r\le 1$,
\begin{equation}\label{noncollapsing}   \Vol(B_r(q)) \ge \rv r^n.
\end{equation}
 \item[(A3)] \textbf{Codimension four of singularity and strong $\vepsilon$-regularity:} For any $\vepsilon>0$, there exists $\delta>0$ such that for any $B_{2r}(x)\subset M$ with $d_{GH}(B_{2r}(x),B_{2r}(0^{n-3},z))< \delta r$, where $(0^{n-3},z)$ is the  tip of  some metric cone $\mathbb{R}^{n-3}\times C(Z^2)$, we have $d_{GH}(B_{r}(x),B_{r}(0^{n}))< \vepsilon r$. 
 
 Furthermore, in this case,  there exists $B>0$ such that 
\begin{equation}\label{Strong epsilon-regularity}
\begin{aligned}
        &r^2\sup_{B_r(x)}|\Rm|+r\sup_{B_r(x)}|\Ric|<   \vepsilon, \\
 r\sup_{B_{r/2}(x)}|\nabla \Rm|\le B\sup_{B_{r}(x)}|\Rm|, &\quad \sup_{B_{r/2}(x)}r|\nabla \Ric|+\sup_{B_{r/2}(x)}r^2|\nabla^2 \Ric|\le  B \sup_{B_{r}(x)}|\Ric|.
\end{aligned}
\end{equation}
  \end{enumerate}
We remark here that assumption (A3) holds automatically in some situations, in particular for Ricci flows with bounded scalar curvature. We can now state our main estimate as follows:
\begin{theorem} \label{thm:main}
Let $(M^n,g)$  satisfy assumptions $(A1), (A2), (A3)$ above, then there exists a constant \(C = C(n,p,\rv,\Lambda,B)<\infty\) such that  
\[
\int_{B_1(q)} |\Rm|^2 \le C .
\]
\end{theorem}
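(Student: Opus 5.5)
The plan follows the Jiang--Naber scheme, with pointwise Ricci control replaced by the $L^p$ hypothesis (A1), $p>n/2$. The key structural input is a \emph{neck decomposition} of $B_2(q)$ adapted to (A1)--(A3):
\[
B_2(q)\subset \bigcup_a \big(\mathcal N_a\cap B_{r_a}(x_a)\big)\ \cup\ \bigcup_b B_{r_b}(x_b)\ \cup\ \mathcal S,
\]
with the following properties.
\begin{enumerate}
\item[(i)] Each $\mathcal N_a$ is a $(k,\delta,\eta)$-neck region modeled on $\mathbb R^{n-4}\times C(S^3/\Gamma)$.
\item[(ii)] Each $B_{r_b}(x_b)$ is a ball on which the curvature scale satisfies $r_b^{2}\sup|\Rm|\le\vepsilon$.
\item[(iii)] $\mathcal S$ is a singular set of $\mathcal H^{n-4}$-measure zero.
\item[(iv)] The content bound $\sum_a r_a^{n-4}+\sum_b r_b^{n-4}\le C(n,p,\rv,\Lambda)$ holds.
\end{enumerate}

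To obtain this decomposition, I first need a cone-splitting and quantitative-stratification theory under (A1) and (A2). Since $p>n/2$, the Petersen--Wei relative volume comparison applies: after scaling, the quantity $r^{2p-n}\int_{B_r}|\Ric|^p$ tends to zero on small balls. This gives almost volume monotonicity, and with it almost-cone rigidity, except on a controlled set of "bad scales", whose number is bounded by summing the scale-invariant $L^p$ energy over dyadic scales. Condition (A3) rules out $\mathbb R^{n-3}\times C(Z)$ singularities. Hence the only non-regular tangent models at top stratum are $\mathbb R^{n-4}\times C(S^3/\Gamma)$, and the Cheeger--Jiang--Naber covering argument can then be run essentially verbatim. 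In that argument the bad scales are handled by replacing $\sup|\Ric|$ with the $L^p$ density, and they contribute only a summable error to the packing estimate.

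With the decomposition in hand, the $L^2$ bound reduces to two local estimates.
\begin{enumerate}
\item[(a)] On the regularity balls, the strong $\vepsilon$-regularity (A3) gives $\int_{B_{r_b}}|\Rm|^2\le C r_b^{n-4}$ immediately.
\item[(b)] On the neck regions I need the neck estimate $\int_{\mathcal N_a\cap B_{r_a}}|\Rm|^2\le C r_a^{n-4}$.
\end{enumerate}
Summing (a) and (b) against the content bound (iv) then proves Theorem~\ref{thm:main}.

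For (b), I follow Jiang--Naber's superconvexity argument. Fix a cutoff $\phi$ adapted to the neck, built from the approximate harmonic splitting and radius functions. Then consider
\[
F(s)=\int \phi\,|\Rm|^2\,\mathrm{d}\mu
\]
along the neck scale parameter. Using the Bochner-type identity
\[
\Delta|\Rm|^2\ge 2|\nabla\Rm|^2 - C|\Rm|^3 + \langle \Rm,\nabla^2\Ric\rangle\text{-terms},
\]
together with the smallness of $r^2|\Rm|$ on the neck annuli given by (A3), I aim to show the following. The quantity $r^{4-n}\int_{A_r}|\Rm|^2$ satisfies a discrete superconvexity inequality along dyadic annuli. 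The error terms are of the form $\sup|\nabla^2\Ric|\cdot r^2$, and these are bounded via the $B$-estimates in (A3) by $\sup|\Ric|$ on a slightly larger ball. Superconvexity plus boundary control at the two ends of the neck, from the regularity balls and the cone structure, yields the uniform bound.

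The main obstacle is exactly these Ricci error terms. In Jiang--Naber they are bounded pointwise. Here $\sup_{B_r}|\Ric|\le\vepsilon/r$ is only scale-invariantly small, which is not summable across scales. My plan is to combine it with (A1): on each annulus, control $r^2|\Ric|$ by its $L^p$ average via the derivative bounds in (A3), which act like elliptic estimates. Summing over dyadic scales then produces $\sum_j r_j^{2-n/p}\|\Ric\|_{L^p}$, which converges geometrically precisely because $p>n/2$. Making this interaction compatible with the neck structure, uniformly in the number of scales, is the step I expect to require the most care.
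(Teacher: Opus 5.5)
Your outer architecture (neck decomposition with a content bound, a trivial estimate on the regularity balls, then summation) matches the paper's. The gap is in the heart of the theorem, the neck estimate (b): it is not established, and the mechanism you propose cannot work as stated.

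\textbf{Superconvexity fails for $|\Rm|^2$.} With $F(r)=r^{-3}\int_{b=r}f|\nabla b|$ and Lemma~\ref{evolution}, the normalization $H=r^kF$ satisfies
\[
r^2H''+(3-2k)rH'+k(k-2)H=r^{k-1}\int_{b=r}\Delta f\,|\nabla b|^{-1}.
\]
The comparison argument behind the Dini estimate needs $k(k-2)<0$. On the other hand, bounding the cutoff errors at the inner boundaries $\partial B_{r_x}(x)$ by $\sum r_x^{n-4}$ forces $k$ to be at least the scaling degree of $f$. Together these force you to work with $|\Rm|^{s}$ for some $s<1$.
\begin{itemize}
\item For $|\Rm|^2$ ($k=4$) the coefficient is $+8$. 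Then $r^2-r^4/R^2$ is a positive homogeneous solution vanishing at both ends, so the Bochner inequality for $|\Rm|^2$ gives at best almost-monotonicity, never decay from both ends of the neck.
\item The paper takes $f=\phi_{\cN}(|\Rm|+\sigma)^{1-\eta}$, i.e. $k=2-2\eta$. The price is the term $-\eta(1-\eta)(|\Rm|+\sigma)^{-1-\eta}|\nabla|\Rm||^2$ in $\Delta f$, which is exactly what the improved Kato inequality (Proposition~\ref{Improved Kato inequality}) absorbs modulo $|\nabla\Ric|^2$.
\item The resulting Dini bound $\int b^{-2-2\eta}\phi_{\cN}(|\Rm|+\sigma)^{1-\eta}\le C$ is then converted to an $L^2$ bound using $|\Rm|\le Cb^{-2}$ from (A3).
\end{itemize}

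\textbf{Missing radial parameter and Ahlfors regularity.} Dyadic annuli around points are not a radial parameter for a neck whose center set is $(n-4)$-dimensional. The paper uses the Green's distance $b=G_{\bar\mu}^{-1/2}$ of the packing measure. Both $b\approx d(\cdot,\cC)$, $|\nabla b|\approx 1$ (Lemma~\ref{Green's distance function estimate}) and the final error summation require the Ahlfors regularity (A4) of $\mu_{\cN}$.

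Proving (A4) is a substantial step absent from your plan. It uses splitting maps on necks, the Hessian Dini estimate, the maximal function estimate and the Reifenberg map, bootstrapped with the neck $L^2$ bound proved under an assumed Ahlfors constant. Ahlfors regularity is also what gives the content bound in your decomposition step, since the $c$-ball content estimate uses it. Saying the covering argument runs ``verbatim'' hides this dependence.

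\textbf{The obstacle you single out comes from misreading (A3).} The bound there is $r\sup_{B_r}|\Ric|<\vepsilon$, which is one full order better than scale invariant (that would be $r^2\sup|\Ric|$). Combined with the derivative bounds in (A3), it gives $|\nabla\Ric|\lesssim b^{-2}$ and $|\nabla^2\Ric|\lesssim b^{-3}$ on the neck. So every Ricci error term in the superconvexity inequality is $O(\sigma^{-\eta}b^{-3-2\eta})$, and by Ahlfors regularity
\[
\int_{\operatorname{supp}\phi_{\cN}}d_{\cC}^{-3-2\eta}\le C\sum_i 2^{(3+2\eta)i}2^{-4i}<\infty .
\]
No $L^p$ averaging is needed. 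In any case, your series $\sum_j r_j^{2-n/p}\|\Ric\|_{L^p}$ runs over annuli around a single point, whereas neck errors must be summed over the roughly $r^{4-n}$ balls of each scale along $\cC$.
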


\subsubsection{Outline of this paper}
In section 2, we review some basic background and preliminary material of spaces with integral Ricci curvature bound  and Ricci flows with bounded scalar curvature. In section 3, we introduce the definition and concept of neck regions, they are open sets that look like the singular space $\mathbb{R}^{n-4}\times C(S^3/\Gamma)$ on many scales. Although the precise definition of a neck region is a bit technical, it is important for us to establish more refined estimates on the neck regions. In section 4, we prove the $L^2$
curvature bound on neck regions, under an additional assumption that we have already proved some Ahlfors regularity bound. The proof of this section follows from a powerful superconvexity estimate originated from \cite{JN}. In section 5, we study the better behavior of splitting functions on neck regions, and use these good estimates to prove the Ahlfors regularity bound. Thus finish the proof of  $L^2$
curvature bound on neck regions. In section 6, we inductively decompose our manifolds $M$
into two basic type of pieces, namely,  neck regions and $\vepsilon$-regularity regions with uniformly
bounded curvature. The proof will require multiple covering steps and lemmas, where we will first decompose $B_1(q)$ into a much larger collection of balls, and then proceed to recover those
balls that are not either neck regions or smooth regions. In the last section, we combine the $L^2$ estimates on neck region 
with the neck decomposition theorem  in order to finish the proof of $L^2$ estimates on $M$.
\subsubsection{Declaration on the Use of AI}
No artificial intelligence (AI) tools were used during the preparation of this manuscript. In fact, the main results of this work had already been completed by the end of the last year.
\section{Preliminaries}
Let us review some basic computations and regularity theory of spaces with integral Ricci curvature bound. For the most part, we work under assumptions (A1) and (A2) throughout this section. At the end, we review some recent progress on Ricci flows with bounded scalar curvature, due to the work of Bamler and Zhang \cite{BZ15},\cite{B18}. 
\subsection{Basic computations on manifolds}
\begin{proposition}\label{laplacian of Rm}
On a Riemannian manifold $(M,g)$, the Laplacian of the curvature tensor $\Rm$ satisfies 
\[\Delta\Rm=\nabla^2 \Ric+\Rm*\Rm+\Rm*\Ric.
\] More precisely,
\begin{align*} \Delta \Rm_{ijkl}&=\nabla_i\nabla_k \Ric_{jl}-\nabla_j\nabla_k\Ric_{il}+\nabla_j\nabla_l\Ric_{ik}-\nabla_i\nabla_l\Ric_{jk}\\
&\quad -2(B_{ijkl}-B_{ijlk}-B_{iljk}+B_{ikjl})+g^{pq}(\Rm_{qjkl}\Ric_{pi}+\Rm_{iqkl}\Ric_{pj}),
\end{align*}
where $B_{ijkl}= g^{pq}g^{rs}\Rm_{ipjr}\Rm_{kqls}$.
 In particular,  
\[\frac{1}{2}\Delta |\Rm|^2= |\nabla \Rm|^2+\langle\nabla^2 \Ric,\Rm\rangle+\langle\Rm*\Rm,\Rm\rangle+\langle\Rm*\Ric,\Rm\rangle.\]
\end{proposition}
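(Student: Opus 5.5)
We will derive the identity from the second Bianchi identity followed by commuting covariant derivatives. We work in normal coordinates at a point and use the paper's convention that $\Ric_{jl}=g^{ik}\Rm_{ijkl}$; the sign conventions must be fixed consistently before computing.

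\textbf{Step 1: reduce $\Delta\Rm$ to second derivatives of $\Rm$.} The second Bianchi identity gives
\[\nabla_p \Rm_{ijkl}=-\nabla_i\Rm_{jpkl}-\nabla_j\Rm_{pikl}.\]
Applying $\nabla_p$ and summing over $p$ yields
\[\Delta\Rm_{ijkl}=-\nabla_p\nabla_i\Rm_{jpkl}-\nabla_p\nabla_j\Rm_{pikl}.\]

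\textbf{Step 2: commute the derivatives.} We write
\[\nabla_p\nabla_i\Rm_{jpkl}=\nabla_i\nabla_p\Rm_{jpkl}+[\nabla_p,\nabla_i]\Rm_{jpkl}.\]
The commutator is given by the Ricci identity: a sum of four terms, one for each index of $\Rm_{jpkl}$, each of the form $\Rm*\Rm$. When the contracted index $p$ is hit, the contraction produces a Ricci tensor, which gives the terms $\Rm_{qjkl}\Ric_{pi}$ and $\Rm_{iqkl}\Ric_{pj}$. The remaining terms are quadratic in $\Rm$ and, after using the first Bianchi identity, can be grouped into the $B_{ijkl}$ expressions.

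\textbf{Step 3: eliminate the divergence of $\Rm$.} The contracted second Bianchi identity gives
\[\nabla_p\Rm_{jpkl}=\pm(\nabla_k\Ric_{jl}-\nabla_l\Ric_{jk}),\]
and similarly for $\nabla_p\Rm_{pikl}$. Substituting these, the leading part becomes $\nabla_i\nabla_k\Ric_{jl}-\nabla_i\nabla_l\Ric_{jk}$ together with its counterpart with $i,j$ swapped. This gives exactly the four $\nabla^2\Ric$ terms in the statement.

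\textbf{Step 4: the main obstacle, the quadratic terms.} Collecting the $\Rm*\Rm$ terms into the precise combination $-2(B_{ijkl}-B_{ijlk}-B_{iljk}+B_{ikjl})$ is the delicate part. We would do this as in Hamilton's computation of the evolution of $\Rm$ under Ricci flow: use the first Bianchi identity to rewrite $\Rm_{pq\cdot\cdot}$-type contractions as differences of $B$'s, and track the signs carefully. Since the paper only ever uses the schematic form $\nabla^2\Ric+\Rm*\Rm+\Rm*\Ric$, the structural statement already follows from Steps 1–3 regardless of the exact constants.

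\textbf{Step 5: the final formula.} We compute
\[\tfrac12\Delta|\Rm|^2=|\nabla\Rm|^2+\langle\Delta\Rm,\Rm\rangle,\]
which holds because $\nabla g=0$. Substituting the formula for $\Delta\Rm$ gives the last identity.
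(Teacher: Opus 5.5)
Your route (second Bianchi identity, Ricci identity for the commutators, contracted Bianchi identity for the divergence, first Bianchi identity for the quadratic terms) is the standard derivation of this formula, which is Hamilton's identity. The paper gives no proof of its own and simply quotes it, so there is no competing argument to compare with. Your Steps 1, 2, 3 and 5 are correct. What the proposal does not yet deliver is the ``more precisely'' part: the sign in Step 3 is left as $\pm$, and Step 4 is deferred. Both close with exactly the tools you name, so this is bookkeeping rather than a missing idea.

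To finish it, write $R_{ijkl}$ for $\Rm_{ijkl}$ in an orthonormal frame. Use the convention the formula forces, namely $R_{ijkl}=g_{ik}g_{jl}-g_{il}g_{jk}$ on the unit sphere, so that $\Ric_{jl}=g^{ik}R_{ijkl}$. With this convention:
\begin{itemize}
\item The Ricci identity reads $[\nabla_a,\nabla_b]T_c=R_{abcd}T_d$, applied to each index of a tensor.
\item The contracted Bianchi identity is $\nabla_pR_{pjkl}=\nabla_k\Ric_{jl}-\nabla_l\Ric_{jk}$.
\item Hence in Step 3 the sign is $\nabla_pR_{jpkl}=-(\nabla_k\Ric_{jl}-\nabla_l\Ric_{jk})$, which gives the four $\nabla^2\Ric$ terms with the stated signs.
\end{itemize}
The commutator terms $-[\nabla_p,\nabla_i]R_{jpkl}-[\nabla_p,\nabla_j]R_{pikl}$ equal $\Ric_{iq}R_{qjkl}+\Ric_{jq}R_{iqkl}$ plus
\[
-R_{pijq}R_{qpkl}-R_{pjiq}R_{pqkl}\;-\;R_{pikq}R_{jpql}-R_{pjlq}R_{pikq}\;-\;R_{pilq}R_{jpkq}-R_{pjkq}R_{piql}.
\]
These six terms group into three pairs:
\begin{itemize}
\item The first pair is where the commutator hits $j$ in $R_{jpkl}$, resp.\ $i$ in $R_{pikl}$. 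It sums to $-2R_{pijq}R_{qpkl}=-R_{pqij}R_{pqkl}=-2(B_{ijkl}-B_{ijlk})$. To see this, antisymmetrize in $p,q$ and apply the first Bianchi identity. Then apply it again to expand $R_{pqij}=R_{piqj}-R_{qipj}$.
\item The second pair equals $-2B_{ikjl}$, using only the antisymmetries of $\Rm$.
\item The third pair equals $+2B_{iljk}$, again using only the antisymmetries of $\Rm$.
\end{itemize}
Altogether this is $-2(B_{ijkl}-B_{ijlk}-B_{iljk}+B_{ikjl})$, as claimed.
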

The following improved Kato inequality will play an important role in the computation of superconvexity estimate Theorem \ref{superconvexity}. A detialed proof of this inequality can be found in section 9 of \cite{JN}.

\begin{proposition}[Improved Kato inequality]\label{Improved Kato inequality} Let $(M^n,g)$ be a Riemannian manifold, there exist $\varepsilon(n)>0$ and $C(n)>0$ such that
 \[ (1+\varepsilon(n))|\nabla |\Rm||^2\le |\nabla\Rm|^2+C(n)|\nabla \Ric|^2.
 \]
 \end{proposition}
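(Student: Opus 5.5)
The plan is to derive the improved Kato inequality for the curvature tensor from the algebraic structure of $\nabla\Rm$. The second Bianchi identity gives $\nabla_{[a}\Rm_{bc]kl}=0$. Tracing it expresses the divergence of $\Rm$ through derivatives of Ricci:
\[
\nabla^i\Rm_{ijkl}=\nabla_k\Ric_{jl}-\nabla_l\Ric_{jk}.
\]
So, up to an error controlled by $|\nabla\Ric|$, the tensor $T=\nabla\Rm$ lies in the kernel $\mathcal{K}$ of two linear maps: the Bianchi antisymmetrization and the trace. The key algebraic claim is a refined Kato inequality in the sense of Branson and Calderbank–Gauduchon–Herzlich. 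For any tensor $T$ in the subspace $\mathcal{K}\subset T^*M\otimes \mathcal{R}$, where $\mathcal{R}$ denotes the algebraic curvature tensors, and any unit $R\in\mathcal{R}$,
\[
|\langle T(\cdot),R\rangle|^2=\sup_{|v|=1}|\langle T(v),R\rangle|^2\le \frac{1}{1+\varepsilon(n)}|T|^2 .
\]
Here $T(v)=\iota_vT$. Since $|\nabla|\Rm||=|\langle\nabla\Rm,\Rm/|\Rm|\rangle|$ away from $\{\Rm=0\}$, this gives the inequality wherever $\Rm\ne0$. On the zero set one uses the standard argument with $\sqrt{|\Rm|^2+\delta}$ and lets $\delta\to0$, or simply notes that $|\Rm|$ is Lipschitz and the set where $\Rm=0$ but $\nabla|\Rm|\ne0$ has measure zero.

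For the algebraic claim I would argue by contradiction and compactness. The quantity $\sup\{|\langle T(\cdot),R\rangle|^2/|T|^2\}$ over the compact set of unit $T\in\mathcal{K}$ and unit $R\in\mathcal{R}$ is attained. It equals $1$ only if $T=\omega\otimes R$ for a unit covector $\omega$, i.e. $T$ is "rank one". So it suffices to show that no nonzero $\omega\otimes R$ lies in $\mathcal{K}$.

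Take $\omega=e_1$ and $T_{a\,bckl}=\delta_{a1}R_{bckl}$. The Bianchi condition gives $\delta_{a1}R_{bckl}+\delta_{b1}R_{cakl}+\delta_{c1}R_{abkl}=0$. Choosing $a=1$ and $b,c\ne1$ yields $R_{bckl}=0$ for all $b,c\neq1$. By pair symmetry the only possibly nonzero components are then of the form $R_{1j1l}$. Choosing $a=b=1$ gives no new information. The trace condition $\sum_a T_{a\,akl}$ yields $R_{1jkl}=0$, and in particular $R_{1j1l}=0$. Hence $R=0$, a contradiction, and $\varepsilon(n)>0$ depends only on $n$.

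To handle the Ricci error, decompose $\nabla\Rm=T_0+E$. Here $T_0$ is the orthogonal projection onto $\mathcal{K}$. Because the Bianchi part of $\nabla\Rm$ vanishes exactly and its trace is a linear expression in $\nabla\Ric$, we have $|E|\le C(n)|\nabla\Ric|$. Then
\[
|\nabla|\Rm||\le |\langle T_0,\hat R\rangle|+|E|\le (1+\varepsilon)^{-1/2}|T_0|+C|\nabla\Ric|,
\]
where $\hat R=\Rm/|\Rm|$. Squaring and using $(x+y)^2\le(1+\eta)x^2+(1+\eta^{-1})y^2$ with small $\eta$, together with $|T_0|\le|\nabla\Rm|$, gives
\[
(1+\varepsilon')|\nabla|\Rm||^2\le|\nabla\Rm|^2+C'(n)|\nabla\Ric|^2 .
\]
Renaming $\varepsilon'$ as $\varepsilon(n)$ and $C'(n)$ as $C(n)$ proves the statement.

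The main obstacle is the algebraic rank-one exclusion. One must make sure that the compactness step is legitimate: the supremum equals $1$ only for rank-one tensors, by the equality case of Cauchy–Schwarz in the $v$ variable. One must also check carefully that the Bianchi and trace conditions really force $R=0$ using all curvature symmetries. I expect this to be routine, but it is the step to verify first.
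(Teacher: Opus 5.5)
Your argument is correct and complete. The paper gives no proof of this proposition; it only refers to Section~9 of \cite{JN}. Your proposal is therefore a self-contained proof rather than a variant of an argument in the text.

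\textbf{Why your argument works.} It is the refined-Kato mechanism. The tensor $\nabla\Rm$ lies pointwise in $T^*M\otimes\mathcal{R}$, its Bianchi antisymmetrization vanishes, and its divergence is linear in $\nabla\Ric$. After orthogonal projection onto the kernel $\mathcal{K}$ of $L=(\text{Bianchi},\text{trace})$, the remainder satisfies $|E|\le C(n)|L(\nabla\Rm)|\le C(n)|\nabla\Ric|$. This holds because $\mathcal{K}^\perp=\operatorname{range}(L^*)$ and $L$ is injective there, with a lower bound depending only on $n$. On $\mathcal{K}$, Cauchy--Schwarz can be sharp only for tensors $\omega\otimes R$, and you exclude these correctly. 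Compactness of the unit spheres then gives the uniform gap $\varepsilon(n)$.

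\textbf{Two cosmetic points.} The trace should read $\sum_a T_{a\,ajkl}=R_{1jkl}$; an index is missing. At a zero of $\Rm$ where $|\Rm|$ is differentiable, $\nabla|\Rm|=0$ automatically because $|\Rm|$ attains its minimum there, so no regularization is needed.

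\textbf{An alternative route.} The natural alternative is a direct frame computation. Align $\nabla|\Rm|$ with $e_1$, so that $|\nabla|\Rm||\le|\nabla_1\Rm|$. Use the second Bianchi identity for the components $\nabla_1R_{bckl}$ with $b,c\neq 1$ or $k,l\neq 1$, and the contracted identity for $\nabla_1R_{1c1l}$. This gives $|\nabla_1\Rm|^2\le C(n)\bigl(\sum_{a\neq 1}|\nabla_a\Rm|^2+|\nabla\Ric|^2\bigr)$, which is exactly the quantitative form of your rank-one exclusion.

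\textbf{Comparison.} The frame computation yields an explicit $\varepsilon(n)$. Your compactness version is shorter and applies verbatim to any tensor satisfying an injectively elliptic first-order system up to a controlled error, but the constant is not explicit. That is harmless here, since the superconvexity lemma only needs some $\eta(n)>0$.
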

\subsection{Regularity  of spaces with integral Ricci curvature bound}
Since Riemannian metric satisfies  elliptic equation $\Delta {g_{ij}}+Q_{ij}(g,\partial g)=-2\Ric_{ij}$ in harmonic coordinates, harmonic radius plays an important role in the regularity theory. Let us briefly recall the definition of harmonic radius.
\begin{definition}[Harmonic radius]\label{harmonic radius}
Fix \( \alpha \in (0, 1) \). For \( x \in M \), we define the \( C^\alpha \) harmonic radius at \( x \), denoted by \( r^\alpha_h(x)\) (or simply \(r_h(x) \) when there is no confusion),  to be the maximal radius \( r>0 \) such that there exist harmonic coordinates \( x = (x^1, \dots, x^n) : B_r(x) \to \mathbb{R}^n \) satisfying
\[
\sup_{i,j} \bigl( \|g_{ij}-\delta_{ij}\|_{C^0} + r^\alpha \|g_{ij}\|_{C^\alpha} \bigr) \leq 10^{-n}.  
\]
Similarly, we define $W^{2,p}$ harmonic radius by requiring the scaling invariant $W^{2,p}$ bound
\[\sup_{i,j} \bigl( \|g_{ij}-\delta_{ij}\|_{C^0} +\sum_{k=1}^2 r^{k-\frac{n}{p}} \|\partial_k g_{ij} \|_{L^p} \bigr) \leq 10^{-n}.
\]
 \end{definition}  In harmonic coordinates, a priori \( L^{p>n/2} \)-bound of the Ricci curvature gives the uniform \( W^{2,p} \)-bound of the metric tensor \( g_{ij} \), which in turn implies the uniform \( C^{2-n/p} \) bound of the metric by Sobolev embedding theorem. Throughout the remainder of this paper, unless stated otherwise,  $r_h$  refers to the $W^{2,p}$ harmonic radius.
 
  Similar to the case of bounded Ricci curvature, we have the following $\vepsilon$-regularity theorem.
\begin{theorem}[\cite{TZ}, Theorem 2.35]
\label{harmonic radius}
Let $(M^n,g)$  satisfy $(A1)$. There exist \( \vepsilon(n,p,\Lambda) > 0 \) and \( r_0(n,p,\Lambda) > 0 \) such that if   $
\Vol(B_r(q)) \geq (1 - \vepsilon) \omega_nr^n$ $($or equivalently, $d_{GH}( B_r(q),B_r(0^n))<\vepsilon )$, then 
$r_h^{2-n/p}(q) \geq  r_0.$
\end{theorem}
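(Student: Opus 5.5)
This is the standard $\varepsilon$-regularity argument by contradiction and compactness, as in the bounded Ricci case (Anderson) and its integral versions (Petersen--Wei, Tian--Zhang). Fix the exponent $\alpha = 2-n/p \in (0,1)$ and argue by contradiction. If the theorem fails, there are pointed manifolds $(M_i,g_i,q_i)$ satisfying (A1) with constants $n,p,\Lambda$, radii $r_i$, and points $q_i$ with $\Vol(B_{r_i}(q_i))\ge (1-\vepsilon_i)\omega_n r_i^n$, $\vepsilon_i\to 0$, yet $r_h(q_i) < r_0$ for every proposed $r_0$. We may take $r_i\le 1$, since the scale-invariant quantity that matters is $r^{2p-n}\int_{B_r}|\Ric|^p$. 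This quantity tends to $0$ as $r\to 0$ precisely because $p>n/2$. For that reason the conclusion is a lower bound on the harmonic radius at a fixed small scale rather than a scale-invariant statement.

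First I will establish the volume/Gromov--Hausdorff equivalence stated in parentheses. Under (A1) with $p>n/2$, the Petersen--Wei relative volume comparison holds with an error controlled by $\bigl(r^{2p-n}\int|\Ric_-|^p\bigr)^{1/2p}$, and this error is small at small scales. Together with the almost volume cone implies almost metric cone theory of Cheeger--Colding, extended to integral bounds, this gives the equivalence up to changing $\vepsilon$.

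Next comes the point-picking step. Consider $f_i(x)=r_h(x)/d(x,\partial B_{r_i/2}(q_i))$ on $B_{r_i/2}(q_i)$. Assuming $r_h(q_i)/r_i\to 0$, choose $x_i$ minimizing $f_i$ and rescale so that $r_h(x_i)=1$. In the rescaled metrics $\tilde g_i$:
\begin{itemize}
\item the harmonic radius is $\ge 1/2$ on balls of radius $R_i\to\infty$ around $x_i$;
\item $\int_{B_R}|\widetilde{\Ric}|^p \le \Lambda\, (r_h)^{2p-n}\to 0$;
\item the volume ratios remain $\ge 1-\vepsilon_i$ at all scales up to $R_i$, by monotonicity.
\end{itemize}
Uniform $W^{2,p}$ bounds in harmonic coordinates give precompactness in $C^{1,\beta}\cap W^{2,p}$-weak for $\beta<\alpha$. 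The limit metric is harmonic-coordinate $W^{2,p}$ with $\Ric=0$ weakly, so it is smooth and Ricci-flat by elliptic regularity of $\Delta g_{ij}+Q(g,\partial g)=-2\Ric_{ij}$. It has Euclidean volume growth with asymptotic ratio $1$, so Bishop--Gromov rigidity makes it $\mathbb R^n$.

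The main obstacle is the final contradiction. Convergence in $W^{2,p}$ is only weak, so one cannot directly say the harmonic radius of the limit, which is $\infty$, is approximated from below. The remedy is to use elliptic $L^p$ estimates for the harmonic coordinate system. On $B_2(x_i)$ take harmonic coordinates that are $C^{1,\beta}$-close to Euclidean coordinates of the limit. Then $\|\partial^2 g\|_{L^p(B_1)}\lesssim \|\Ric\|_{L^p}+\|g-\delta\|_{C^{1,\beta}}\to 0$, using the quadratic structure of $Q$ and $W^{2,p}$ interior estimates. This produces coordinates of size slightly larger than $1$ satisfying the defining bounds with strict inequality, so $r_h(x_i)>1$, a contradiction.
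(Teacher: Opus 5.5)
The paper gives no argument for this statement (it only cites Tian--Zhang). Your Anderson-type blow-up is the natural route, but as written it has a genuine gap at the third bullet, ``the volume ratios remain $\ge 1-\vepsilon_i$ at all scales up to $R_i$, by monotonicity.'' Under (A1) there is no monotonicity. The Petersen--Wei comparison of volume ratios at radii $\le s$ has an error controlled by $s^{2p}\fint_{B_s}|\Ric|^p$, and at the top scales $s\sim r_i$ this is only bounded by a multiple of $\Lambda r_i^{2p-n}$. You only arranged $r_i\le 1$, so neither almost-monotonicity nor your first step (almost maximal volume implies GH-closeness to $B_r(0^n)$) is available at scale $r_i$.

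This is not cosmetic. For $n=4$ take $d\rho^2+f(\rho)^2 g_{\mathbb{RP}^3}$ with $f(\rho)=\rho$ for $\rho\le 1/20$ and $f(\rho)=\theta\rho$, $\theta^3=2.01$, for $\rho\ge 1/10$. Replace the vertex of the flat part $\mathbb{R}^4/\mathbb{Z}_2$ by an Eguchi--Hanson metric scaled by $\lambda$. Then $\int_{B_4(q)}|\Ric|^p$ is bounded independently of $\lambda$ and $\Vol(B_1(q))\ge\omega_4$, yet $r_h(q)\le C\lambda\to0$ for $q$ on the exceptional sphere. Note also that $B_1(q)$ is close to a cone, not to $B_1(0^4)$, so the claimed volume/GH equivalence fails at unit scale too.

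Hence the volume form of the statement holds only for $r\le\bar r(n,p,\Lambda)$, i.e.\ when the normalized $L^p$ norm of $\Ric$ at scale $r$ is small, which is exactly how the paper applies it. It also holds only with the scale-invariant conclusion $r_h(q)\ge r_0 r$: a scaled-down Eguchi--Hanson space already refutes $r_h(q)\ge r_0$ with $r_0$ independent of $r$. Correspondingly, your stated contradiction hypothesis $r_h(q_i)\to 0$ is not the one you actually use, $r_h(q_i)/r_i\to 0$. The contradiction sequence must have $\vepsilon_i\to0$, $r_h(q_i)/r_i\to0$ \emph{and} $r_i^{2p-n}\int_{B_{r_i}(q_i)}|\Ric|^p\to 0$. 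Only then do the comparison errors vanish and the limit have volume ratio exactly $1$.

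Two further repairs are needed. First, the volume hypothesis is at $q_i$, but you blow up at the point-picked $x_i$, which may lie at distance $\sim r_i/2$ from $q_i$. The inclusion $B_{r_i}(q_i)\subset B_{3r_i/2}(x_i)$ only gives volume ratio $\gtrsim(2/3)^n$ at $x_i$. You can either restrict the point-picking to $B_{\eta_i r_i}(q_i)$ with $\eta_i\to 0$ slowly enough that $r_h(q_i)/(\eta_i r_i)\to0$, or pass through the GH formulation (which is inherited by sub-balls) plus volume convergence, again under small normalized Ricci.

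Second, for $n/2<p<n$ the space $W^{2,p}$ does not embed in $C^{1,\beta}$. Uniform $W^{2,p}$ bounds in harmonic coordinates give precompactness only in $C^{0,\beta}$ with $\beta<2-n/p$, and strongly in $W^{1,q}$ with $q<np/(n-p)$. So your claims of $C^{1,\beta}$ convergence and $\|g-\delta\|_{C^{1,\beta}}\to0$ are unjustified. The final step nonetheless survives. For $p<n$, $p>n/2$ is equivalent to $2p<np/(n-p)$, so in the new harmonic coordinates $\partial g\to0$ in $L^{2p}$. Then $\|Q(g,\partial g)\|_{L^p}\lesssim\|\partial g\|_{L^{2p}}^2\to 0$, and the interior $W^{2,p}$ estimate for $\Delta g_{ij}+Q=-2\Ric_{ij}$ yields the strict bounds that contradict $r_h(x_i)=1$.
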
 

We also have the following volume comparison theorem  thanks to the work of \cite{PW}, which plays a crucial way to study local behavior of manifolds with   integral Ricci curvature bound.
    \begin{theorem}[Relative volume comparison theorem]\label{Volume comparison}
 Let $(M^n,g)$   satisfy  $\displaystyle R^2\fint_{B_{R}(x)}|\Ric|^{p}<\delta^p$ for some $p>n/2$. Then for $0<r\le R$,\[
 \left( \frac{\Vol(B_{r}(x))}{\Vol(B_{R}(x))} \right)  \geq \left( \frac{r}{R} \right)^n \left( 1 - C(n, p) \delta^{\frac{1}{2}}  \right)^{2p}.\]
 In particular, if $R^2\fint_{B_{R}(x)}|\Ric|^{p} \le \delta(n,p)$, then
 \[
 \left( \frac{\Vol(B_{r}(x))}{\Vol(B_{R}(x))} \right)  \geq \frac{1}{2}\left( \frac{r}{R} \right)^n  .\]
\end{theorem}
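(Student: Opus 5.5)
The statement is the relative volume comparison of Petersen--Wei type, so the plan is to follow their argument. Work in polar coordinates at $x$. Write the volume element along a minimal geodesic as $\mathcal{A}(t,\theta)\,dt\,d\theta$ (cut off beyond the cut locus), let $m=\partial_t\log\mathcal{A}$ be the mean curvature of geodesic spheres, and let $m_0=(n-1)/t$ be the Euclidean one. Set $\psi=(m-m_0)_+$ and
\[
k(x,R)=\Big(\fint_{B_R(x)}\rho^p\Big)^{1/p},\qquad \rho(y)=\max\{0,\ -\underline{\Ric}(y)\},
\]
where $\underline{\Ric}$ is the smallest eigenvalue of $\Ric$, so that $\rho\le|\Ric|$. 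The hypothesis then gives $R^2 k(x,R)\le\delta$.

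The first step is the Petersen--Wei $L^{2p}$ estimate for $\psi$. The Bochner/Riccati inequality along radial geodesics is $m'+m^2/(n-1)\le-\Ric(\partial_t,\partial_t)\le\rho$. It yields $\psi'+\psi^2/(n-1)+2\psi m_0/(n-1)\le\rho$. Multiply by $\psi^{2p-2}\mathcal{A}$, integrate in $t$, and use $p>n/2$ together with Young's inequality to absorb the $m_0$ term. This gives
\[
\Big(\int_{B_r(x)}\psi^{2p}\Big)^{1/2p}\le C(n,p)\Big(\int_{B_r(x)}\rho^p\Big)^{1/2p}\quad\text{for all } r .
\]

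The second step integrates the ODE
\[
\frac{d}{dt}\Big(\frac{\mathcal{A}(t,\theta)}{t^{n-1}}\Big)\le\psi\,\frac{\mathcal{A}}{t^{n-1}} .
\]
Integrate over $\theta$ and over $t$ from $r$ to $R$, and use Hölder to bound $\int\psi\,\mathcal{A}$ by $\|\psi\|_{L^{2p}}\Vol^{1-1/2p}$. One obtains
\[
\frac{d}{ds}\Big(\frac{\Vol(B_s)}{s^n}\Big)\le C\,\frac{s^{2p-1}}{s^n}\,\|\psi\|_{2p}^{2p}\ \cdots
\]
The concrete form I aim for is Petersen--Wei's
\[
\Big(\frac{\Vol B_R}{R^n}\Big)^{1/2p}-\Big(\frac{\Vol B_r}{r^n}\Big)^{1/2p}\le C(n,p)\,R^{1-n/2p}\,\|\rho\|_{L^p(B_R)}^{1/2}.
\]

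The third step normalizes. Since $\|\rho\|_{L^p(B_R)}^{1/2}=k^{1/2}\Vol(B_R)^{1/2p}$, dividing by $(\Vol B_R/R^n)^{1/2p}$ gives
\[
1-\Big(\frac{\Vol B_r}{\Vol B_R}\cdot\frac{R^n}{r^n}\Big)^{1/2p}\le C\,(R^2k)^{1/2}\le C\delta^{1/2}.
\]
Raising to the power $2p$ gives the claim, and the "in particular" follows by choosing $\delta$ so small that $(1-C\delta^{1/2})^{2p}\ge1/2$.

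The main obstacle is the first step, specifically the absorption argument. It is valid only for $p>n/2$: the exponent bookkeeping needs $2p-1-(n-1)>0$ so that the boundary term at $t=0$ vanishes. A further difficulty is handling the cut locus, where $\mathcal{A}$ is set to zero and $\psi$ must be treated as zero beyond it. I would follow Petersen--Wei's treatment exactly there.
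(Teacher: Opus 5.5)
This is the same route as the paper, which gives no argument of its own for this statement and simply cites Petersen--Wei. Your Step 3 is correct: with $\rho\le|\Ric|$ and $\|\rho\|_{L^p(B_R)}^{1/2}=k^{1/2}\Vol(B_R)^{1/2p}$, dividing the Petersen--Wei inequality by $(\Vol B_R/\omega_nR^n)^{1/2p}$ leaves exactly $C(R^2k)^{1/2}$ on the right. Your reading $R^2k(x,R)\le\delta$ is the intended scale-invariant hypothesis; it is the one the paper actually uses in the summability lemma, via $\delta(R)=R^2(\fint_{B_R}|\Ric|^p)^{1/p}$. The printed condition $R^2\fint_{B_R}|\Ric|^p<\delta^p$ implies it only for $R\le 1$.

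\textbf{Step 2 has a gap.} As written, it does not reach the inequality you then quote. Integrating $\frac{d}{dt}(\mathcal{A}/t^{n-1})\le\psi\,\mathcal{A}/t^{n-1}$ in $t$ from $r$ to $R$ only compares sphere areas, and it leaves the weight $t^{1-n}$ uncontrolled as $r\to0$. Moreover, H\"older gives a bound linear in $\|\psi\|_{2p}$ times $\Vol^{1-1/2p}$, not your display with $\|\psi\|_{2p}^{2p}$. The missing step is to differentiate the volume ratio directly. With $V(t)=\Vol B_t(x)$ and $v(t)=\omega_nt^n$,
\[
\frac{d}{dr}\frac{V}{v}=\frac{|S^{n-1}|}{v(r)^2}\int_0^r\!\!\int_{S^{n-1}}\big(\mathcal{A}(r,\theta)\,t^{n-1}-\mathcal{A}(t,\theta)\,r^{n-1}\big)\,d\theta\,dt .
\]
Now integrate the ODE from $t$ to $r$ and use $t^{n-1}\le s^{n-1}$ for $t\le s$. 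This gives $\frac{d}{dr}\frac{V}{v}\le\frac{n}{v(r)}\int_{B_r}\psi\le\frac{n}{v(r)}\|\psi\|_{L^{2p}(B_r)}V(r)^{1-1/2p}$, that is,
\[
\frac{d}{dr}\Big(\frac{V}{v}\Big)^{1/2p}\le C(n,p)\,r^{-n/2p}\,\|\psi\|_{L^{2p}(B_r)} .
\]
Integrate from $r$ to $R$. This uses $p>n/2$ a second time, for the integrability of $t^{-n/2p}$ at $0$. Combined with Step 1 on $B_R$, it gives the Petersen--Wei inequality with constant $C(n,p)R^{1-n/2p}$. This $1/2p$-power linearization is why the final bound has the form $(1-C\delta^{1/2})^{2p}$.

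\textbf{Step 1 misattributes the role of $p>n/2$.} You have the right condition $2p-1>n-1$ but the wrong reason for it. The boundary term at $t=0$ vanishes for every $p\ge1$, since $\psi=O(t)$ and $\mathcal{A}=O(t^{n-1})$ there. No Young absorption of the $m_0$ term is needed either. Integrate $\int_0^r(\psi^{2p-1})'\mathcal{A}$ by parts and use $\mathcal{A}'=m\mathcal{A}\le(\psi+m_0)\mathcal{A}$; one gets
\[
\Big(\tfrac{1}{n-1}-\tfrac{1}{2p-1}\Big)\int_0^r\psi^{2p}\mathcal{A}+\Big(\tfrac{2}{n-1}-\tfrac{1}{2p-1}\Big)\int_0^r m_0\,\psi^{2p-1}\mathcal{A}\le\int_0^r\rho\,\psi^{2p-2}\mathcal{A}.
\]
The condition $p>n/2$ is exactly what makes the first coefficient positive. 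The $m_0$ term then has a good sign and is discarded, and H\"older on the right-hand side closes the estimate. With these two repairs your argument is complete.
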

An immediate consequence of volume comparison theorem is the following precompactness theorem for manifolds with $L^p$ Ricci curvature bound.

\begin{theorem}[Precompactness under integral curvature bound]\label{precompactness}
Let $(M_i^n,g_i,q_i)$ be a sequence of complete manifolds satisfying $(A1)$, then there exists a subsequence   $(M_j^n,g_j,q_j)$ converges to a limit metric space $(X,d,q)$ in pointed Gromov-Hausdorff topology.
\end{theorem}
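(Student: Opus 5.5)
The plan is to follow the Gromov argument: prove uniform local volume doubling and ball-packing bounds, then apply Gromov's precompactness criterion. Gromov's criterion says that a sequence of pointed complete length spaces is precompact in the pointed Gromov--Hausdorff topology as soon as, for every $R>0$ and $\epsilon>0$, the maximal number of disjoint $\epsilon$-balls with centers in $B_R(q_i)$ is bounded independently of $i$. So everything reduces to a uniform packing bound, and the natural source is Theorem \ref{Volume comparison}.

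First I would localize the volume comparison. Fix $R$ and $\epsilon$, and cover the relevant region by balls of a small radius $s$ on which $(A1)$ makes the scale-invariant quantity $s^2\fint_{B_s(x)}|\Ric|^p$ small. The difficulty is that $(A1)$ controls $\int|\Ric|^p$ but not the average, and the average involves $\Vol(B_s(x))^{-1}$. This is exactly why I would use the Petersen--Wei form with the smallness parameter $\delta$ in its integral version rather than its averaged version. Petersen--Wei in fact control $\|(\mathrm{Ric}_-)\|_{L^p}$ in a way that yields relative volume comparison for all radii with an error term proportional to $r^{2p-n}\int|\Ric|^p$, up to normalizations. Since $2p>n$, this error tends to zero as $r\to0$ uniformly in the sequence.

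Second I would derive the packing bound in the usual way. Given disjoint balls $B_{\epsilon}(y_k)$ with $y_k\in B_R(q_i)$, each satisfies $B_{\epsilon}(y_k)\subset B_{2R}(y_k)$ and $B_{R+\epsilon}(q_i)\subset B_{2R+\epsilon}(y_k)$. Volume comparison then gives
\[\Vol(B_\epsilon(y_k))\ge c(n,p,\Lambda,R,\epsilon)\,\Vol(B_{3R}(y_k))\ge c\,\Vol(B_{R}(q_i)).\]
Summing over $k$ and using disjointness inside $B_{R+\epsilon}(q_i)$ bounds the number of balls by
\[\Vol(B_{R+\epsilon}(q_i))/(c\,\Vol(B_R(q_i))).\]
A second application of volume comparison, between radii $R$ and $R+\epsilon$ around $q_i$, bounds this ratio. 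Hence the count is uniform in $i$.

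The main obstacle is the first step: getting a relative volume comparison that does not require prior smallness of the normalized quantity. For large $R$ the hypothesis $(A1)$ only controls the integral over $B_4(q)$, so the statement really concerns balls inside that region. I would handle this by applying the Petersen--Wei estimate at scales $r\le r_0(n,p,\Lambda)$, where the unnormalized bound $r^{2p-n}\Lambda$ is small. I would then chain balls along geodesics to compare volumes at larger scales, paying a multiplicative constant per step, with the number of steps bounded by $R/r_0$. Completeness gives the length-space structure needed for Gromov's theorem. A diagonal argument over $R,\epsilon\to$ rational values then produces the convergent subsequence $(M_j,g_j,q_j)\to(X,d,q)$.
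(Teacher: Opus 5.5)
There is a genuine gap, and it sits exactly at the step you flag as the main obstacle. The unnormalized Petersen--Wei inequality is additive: for $r\le R$ it reads
\[
\Bigl(\frac{\Vol(B_R(x))}{R^n}\Bigr)^{\frac{1}{2p}}-\Bigl(\frac{\Vol(B_r(x))}{r^n}\Bigr)^{\frac{1}{2p}}\le C(n,p)\,R^{1-\frac{n}{2p}}\Bigl(\int_{B_R(x)}|\Ric|^p\Bigr)^{\frac{1}{2p}} .
\]
Your packing step needs the multiplicative bound $\Vol(B_\epsilon(y))\ge c\,\Vol(B_{3R}(y))$. That bound follows from the display only when $\Vol(B_R(x))/R^n$ dominates the right-hand side, i.e.\ when the \emph{normalized} quantity $R^{2p}\fint_{B_R(x)}|\Ric|^p$ is small. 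Smallness of $r^{2p-n}\Lambda$ gives nothing if $\Vol(B_r(x))\ll r^n$. Chaining along geodesics does not help either, because every step already uses the multiplicative estimate.

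This is not a technicality: with $(A1)$ alone the statement is false. For $n\ge3$, let $\Sigma_N$ be a closed surface Gromov--Hausdorff close to a star graph with $N$ edges of length $1$ (e.g.\ the boundary of a thin tubular neighbourhood of it in $\mathbb{R}^3$). Put $M_N=\Sigma_N\times T^{n-2}_{\epsilon_N}$, where $T^{n-2}_{\epsilon}$ is a flat torus of diameter about $\epsilon$. Then $|\Ric|=\sqrt2\,|K_{\Sigma_N}|$, so $\int_{M_N}|\Ric|^p=c(n,p)\,\epsilon_N^{\,n-2}\int_{\Sigma_N}|K|^p\le1$ once $\epsilon_N$ is small. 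Yet $B_1(q_N)$ contains $N$ points with pairwise distances larger than $1$. So the packing condition in Gromov's criterion fails, and no subsequence converges.

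The missing ingredient is the non-collapsing assumption $(A2)$. The paper treats it as standing in Section~2 and invokes it every time it passes to a limit. Used at all centers, as in the proof of Lemma~\ref{summability of volume drop}, it gives $r^{2p}\fint_{B_r(x)}|\Ric|^p\le\rv^{-1}\Lambda\,r^{2p-n}$. The normalized comparison of Theorem~\ref{Volume comparison} then yields doubling at scales $r\le r_0(n,p,\rv,\Lambda)$, followed by Gromov's criterion; this is what the paper means by ``an immediate consequence of volume comparison''. There is an even more direct route: disjoint balls $B_\epsilon(y_k)$ each have volume at least $\rv\epsilon^n$, while the display with $r\to0$ bounds $\Vol(B_{R+\epsilon}(q))$ from above.

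A second problem is locality. $(A1)$ and $(A2)$ only see $B_4(q_i)$ and scales $\le1$, so no argument controls $B_R(q_i)$ for large $R$; your closing diagonal argument over all $R$ contradicts your own remark to that effect. What is true is precompactness of the balls $\overline{B}_1(q_i)$; with $R\le1$ one has $B_{3R}(y)\subset B_4(q_i)$ in your packing step. Pointed convergence at all scales requires the hypotheses on balls of unbounded radius. This happens after a blow-up with $r_j\to0$, where $(A1)$ becomes $\int_{B_{4/r_j}}|\Ric|^p\le\Lambda r_j^{2p-n}$.
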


 To study the structure of limit space, let us briefly review the notion of symmetry of a metric space.
\begin{definition} Given a metric space \(Y\), we define the following:
\begin{enumerate}
    \item We say \(Y\) is \(k\)-symmetric at \(y\in Y\) if there exists a pointed isometry 
    \(\iota : \mathbb{R}^k \times C(Z) \to Y\) with \(\iota(0^k, z_c) = y\), where \(Z\) is a compact metric space and \(z_c\) is a cone point.
    \item We say \(Y\) is \(k\)-symmetric with respect to \(L^k \subseteq Y\) if \(L^k = \iota(\mathbb{R}^k \times \{z_c\})\).
\end{enumerate}
\end{definition}
 
This notion of symmetry leads to a natural quantitative generalization, first introduced in \cite{CN13} for spaces with lower Ricci curvature bound and, generalize to spaces with integral Ricci curvature bound later by \cite{Q}.  It is the notion of quantitative symmetry which will play the most important role for us in this paper, in particular in the discussion of neck regions. Let us begin with a discussion of quantitative symmetry:

\begin{definition} Given a metric space \(Y\) with \(y\in Y\), \(r>0\) and \(\epsilon>0\), we say
\begin{enumerate}
    \item \(B_r(y)\) is \((k,\epsilon)\)-symmetric if there exists a pointed \(\epsilon r\)-GH map 
    \(\iota : B_r(0^k, z_c) \subseteq \mathbb{R}^k \times C(Z) \to B_r(y) \subseteq Y\) with \(\iota(0^k, z_c) = y\);
    \item \(B_r(y)\) is \((k,\epsilon)\)-symmetric with respect to \(\mathcal{L}_\epsilon^k \subseteq B_r(y)\) if 
    \(\mathcal{L}_\epsilon^k \equiv \iota(\mathbb{R}^k \times \{z_c\}) \cap B_r(y)\).
\end{enumerate}
\end{definition}
 
To state the definition in words, we say that $B_r(y) \subset Y$ is $(k, \epsilon)$-symmetric if the ball $B_r(y)$ looks very close to having $k$-symmetries. The quantitative stratification is then defined as follows: 
\begin{definition} For each \(\epsilon > 0\), \(0 < r < 1\) and \(k \in \mathbb{N}\), define the closed quantitative \(k\)-stratum  \(S^k_{\epsilon,r}(X)\), by  
\[
S^k_{\epsilon,r}(X) \equiv \bigl\{ x \in X : \text{for no } r \leq s \leq 1 \text{ is } B_s(x) \text{ a } (k+1,\epsilon)\text{-symmetric ball} \bigr\}.\] 
\end{definition}
  In order to study the almost rigidity structure under Ricci curvature (lower, two-sided, integral) bound,  let us recall the  definition of splitting map introduced by \cite{C01}.
\begin{definition}[$\vepsilon $-splitting map] A $(k,\vepsilon)$-splitting map on $B_r(x)$ is a harmonic map $u=(u_1,u_2,\cdots,u_k):B_{r}(x)\to \mathbb{R}^k$ such that 
\begin{enumerate}
\item $\displaystyle\sup_{B_r(x)}|\nabla u_j|\le C(n)$,\text{\quad for all $1\le j\le k$};
\item $\displaystyle\sum_{j,l=1}^k\fint_{B_{r}(x)} \left|\langle\nabla u_j,\nabla u_l\rangle -\delta_{jl}\right| <\vepsilon$;
\item $\displaystyle\sum_{j=1}^k r^2\fint_{B_{r}(x)} |\nabla^2 u_j|^2<\vepsilon$.
\end{enumerate}\end{definition}
The following almost rigidity theorems  for manifolds with integral bounded Ricci curvature are well-known. We refer to \cite{PW},\cite{TZ},\cite{C21} for a detailed proof, see also \cite{Q} for the study of singular set of spaces with integral Ricci curvature bound.

\begin{theorem}[Almost splitting theorem]\label{almost splitting} Let $(M^n,g)$   satisfy  $\displaystyle\int_{B_{2}(x)}|\Ric|^{p}<\delta$ for some $p>n/2$. For any  $\vepsilon>0$, if $\delta<\delta_0(n,\vepsilon)$ then 
\begin{enumerate}
\item If there exists a $(k,\delta)$-splitting map on $B_{2}(x)$, then $d_{GH}(B_1(x),B_1(0^k,z))<\vepsilon$ for some length space $(0^k,z)\in\mathbb{R}^k\times Z$.
\item If  $d_{GH}(B_2(x),B_2(0^k,z))<\delta$ for some length space $(0^k,z)\in\mathbb{R}^k\times Z$, then there exists a $(k,\vepsilon)$-splitting map on $B_{1}(x)$.
\end{enumerate}
 \end{theorem}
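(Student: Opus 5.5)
The almost splitting theorem (Theorem \ref{almost splitting}) will be proved by contradiction and compactness, following the scheme of Cheeger--Colding adapted to integral Ricci bounds as in \cite{PW,TZ}. The key analytic input is a Bochner-type estimate for harmonic functions whose error is controlled by $\int|\Ric|^p$ instead of a pointwise lower bound. Everything runs at scale $1$ after rescaling. By Theorem \ref{Volume comparison}, the smallness $\int_{B_2(x)}|\Ric|^p<\delta$ gives almost Euclidean relative volume comparison. It therefore gives a uniform doubling constant and, via the segment inequality available under integral bounds, a uniform Poincar\'e inequality on $B_2(x)$.

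\textbf{Part (1).} Let $u$ be a $(k,\delta)$-splitting map on $B_2(x)$. I would first show that $u$ is almost a submetry onto its image on $B_1(x)$. Since $|\nabla u_j|\le C(n)$, $u$ is Lipschitz. The Hessian bound $\fint|\nabla^2u_j|^2<\delta$, combined with the segment inequality, shows that along most geodesics $\gamma$ the functions $u_j\circ\gamma$ are almost affine. Together with $\langle\nabla u_j,\nabla u_l\rangle\approx\delta_{jl}$ in $L^1$, this gives, for most pairs $y,z$,
\[
d(y,z)^2\approx|u(y)-u(z)|^2+d\bigl(y,u^{-1}(u(z))\bigr)^2 .
\]
This is the quantitative Pythagorean relation. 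Now take $Z=u^{-1}(0)\cap B_1(x)$ with the induced length structure and define the map $y\mapsto(u(y),\pi(y))$, where $\pi$ is a nearest point projection to $Z$. This map is an $\vepsilon$-GH approximation. The cleanest way to make this rigorous is by contradiction. Take a sequence with $\delta_i\to0$ and pass to a GH limit via Theorem \ref{precompactness}. The splitting maps converge to Lipschitz maps on the limit $X$ with $\nabla^2u=0$ and orthonormal gradients in the weak sense. The gradient flows of the $u_j$ are then measure preserving isometric translations, so $X$ splits off $\mathbb R^k$. This contradicts the assumed distance $\ge\vepsilon$.

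\textbf{Part (2).} Suppose $d_{GH}(B_2(x),B_2(0^k,z))<\delta$. Let $p_j$ be the pullbacks of the $\mathbb R^k$ coordinate functions; they are only $\delta$-Lipschitz-approximate functions. I would regularize them through distance functions to far points. Then let $u_j$ solve $\Delta u_j=0$ on $B_{3/2}(x)$ with $u_j=\tilde p_j$ on the boundary, where $\tilde p_j=d(\cdot,q_j^+)-d(q_j^+,x)$ is built from a point $q_j^+$ far along the $j$-th axis. The excess estimate (Abresch--Gromoll type, valid under integral bounds by \cite{PW}) gives that $\tilde p_j$ is almost harmonic in the sense $\int|\Delta\tilde p_j|\phi$ is small. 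By the Laplacian comparison with $L^p$ error, this yields $\fint|\nabla(u_j-\tilde p_j)|^2$ small and $\sup|u_j-\tilde p_j|$ small. Next I would get the gradient bound $\sup_{B_1}|\nabla u_j|\le C(n)$ from Cheng--Yau/Moser iteration. This works under integral Ricci bounds with $p>n/2$, since the Sobolev constant is uniform and the potential $|\Ric|$ lies in $L^p$ with $p>n/2$. Finally I would apply the Bochner formula $\tfrac12\Delta|\nabla u|^2=|\nabla^2u|^2+\Ric(\nabla u,\nabla u)$ against a good cutoff $\phi$ with $|\Delta\phi|\le C$:
\[
\int\phi|\nabla^2u_j|^2\le \tfrac12\int\Delta\phi\,(|\nabla u_j|^2-1)+C\int\phi|\Ric| .
\]
The right side is small: the first term because $|\nabla u_j|^2\approx|\nabla\tilde p_j|^2=1$ in $L^1$, and the second by H\"older and $\delta$. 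The orthogonality $\fint|\langle\nabla u_j,\nabla u_l\rangle-\delta_{jl}|$ follows similarly by polarization, using $|\tilde p_j\pm\tilde p_l|$ compared with distance functions to diagonal far points.

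\textbf{Main obstacle.} I expect the main difficulty to be the cutoff function with bounded Laplacian and the sharp gradient estimate under only $L^p$ Ricci control. Under a lower Ricci bound both are standard. Here the Laplacian comparison has an error term, and the maximum principle arguments for $|\nabla u|^2$ require the Moser iteration to absorb $|\Ric|\,|\nabla u|^2$. That absorption is precisely where $p>n/2$ and the smallness of $\delta$ are used. I would handle this by invoking the Petersen--Wei estimate on $\psi=(\Delta r-\tfrac{n-1}{r})_+$, namely $\|\psi\|_{L^{2p}}\le C\|\Ric_-\|_{L^p}^{1/2}$, to build the cutoff as in \cite{TZ}.
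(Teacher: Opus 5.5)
The paper does not prove this statement. It quotes it from \cite{PW}, \cite{TZ}, \cite{C21}, where it is obtained by transplanting the Cheeger--Colding argument to integral Ricci bounds, and that is the scheme you follow. There is, however, a concrete gap in your Part (2), and Part (1) as written leans on the wrong route.

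\textbf{Part (2).} Your construction of $\tilde p_j$ needs points $q_j^{\pm}$ at distance $L\gg1$ from $x$ along the $j$-th axis. The hypothesis only controls $B_2(x)$, so every point you can produce has $d(x,q_j^{\pm})<2$, and at that distance nothing is small. Already in the model $\mathbb{R}^k\times Z$, for $y=(v,z')\in B_1$ one has $d(y,q_j^+)=\bigl(|v-Le_j|^2+d_Z(z',z)^2\bigr)^{1/2}$. Its gradient has a $Z$-component of size about $d_Z(z',z)/L$, and its Hessian has size about $1/L$. Likewise the excess $b^++b^-$ on $B_1$ is of order $d_Z(z',z)^2/L$, and Laplacian comparison gives only $\Delta b^{\pm}\le (n-1)/d(\cdot,q_j^{\pm})+\psi$. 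All of these are $O(1)$ when $L<2$. So $\tilde p_j$ is neither almost harmonic nor almost linear, and the harmonic replacement $u_j$ has neither small Hessian nor almost orthonormal gradients.

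What your argument actually proves is the usual formulation, with hypothesis $d_{GH}(B_{\delta^{-1}}(x),B_{\delta^{-1}}(0^k,z))<\delta$ and $L\to\infty$ as $\delta\to0$. The version with $B_2$ needs a different mechanism: a contradiction--compactness argument in the non-collapsed setting. Take harmonic replacements of the pulled-back coordinates and show they converge in $H^{1,2}$ to the coordinate functions of the limit product ball. Those coordinates are harmonic there because the $\mathbb{R}^k$-translations are local isometries and hence preserve $\mathcal{H}^n$. Your Bochner-plus-cutoff step then applies.

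Separately, Theorem \ref{Volume comparison}, the Petersen--Wei bound on $\psi$, and your Moser iteration all require scale-invariant averaged smallness of $|\Ric|^p$ on sub-balls. The unnormalized hypothesis $\int_{B_2(x)}|\Ric|^p<\delta$ gives this only together with non-collapsing (A2). So your first step implicitly uses (A2), and $\delta_0$ must also depend on $p$ and $\rv$.

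\textbf{Part (1).} Keep the direct argument you sketch first: the segment inequality under integral Ricci bounds, almost affine behaviour of $u$ along most geodesics, and the quantitative Pythagorean relation. That is the standard route. Do not replace it with the limit argument. The claims that the limits of the splitting maps have vanishing Hessian, and that their gradient flows are measure-preserving isometric translations of the limit, presuppose a second-order calculus on the limit space. That means lower semicontinuity of Hessian energy along the sequence and a local splitting theorem for functions with zero Hessian, neither of which is available off the shelf for sequences with only integral Ricci bounds. That route therefore hides the whole difficulty rather than resolving it.

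Also take $Z=u^{-1}(0)$ with the metric restricted from $M$, not its intrinsic length metric, which can be far from the restricted one. The length-space structure of $Z$ only appears in the limit.
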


Recall the volume ratio $\displaystyle \cV_r(x):=\frac{\Vol(B_r(x))}{\Vol(B_r(0^n))}$,  where $\Vol(B_r(0^n))=\omega_nr^n$ is the volume of $r$-ball in $\mathbb{R}^n$.
\begin{theorem}[Almost volume cone implies almost metric cone]\label{volume cone} Let $(M^n,g)$  satisfy  $\displaystyle  \int_{B_{4}(x)}|\Ric|^{p}<\delta $ for some $p>n/2$ and $|\cV_{r}(x)-\cV_{2r}(x)|\le \delta$  for some $r\le 2$. For any $\vepsilon>0$, there exists $\delta(n,\vepsilon)>0$ such that if $\delta\le \delta(n,\vepsilon)$ then \[d_{GH}(B_{r}(x),B_{r}(z))<\vepsilon r,
\] for some metric cone $z\in C(Z)$.
\end{theorem}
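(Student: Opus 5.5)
The plan is a contradiction and compactness argument, the standard route from ``almost volume cone'' to ``almost metric cone.'' Fix $\vepsilon>0$ and suppose the statement fails. Then there are manifolds $(M_i^n,g_i,x_i)$ and radii $r_i\le 2$ with
\[
\int_{B_4(x_i)}|\Ric_i|^p<\delta_i\to 0,\qquad |\cV_{r_i}(x_i)-\cV_{2r_i}(x_i)|\le\delta_i,
\]
but with $d_{GH}(B_{r_i}(x_i),B_{r_i}(z))\ge\vepsilon r_i$ for every metric cone $C(Z)$ with vertex $z$.

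First rescale by $r_i^{-1}$ so that $r_i=1$. Since $p>n/2$, the scale-invariant quantity $\rho^{2}\bigl(\fint_{B_\rho}|\Ric|^p\bigr)^{1/p}$ does not increase under shrinking, so the smallness survives. Concretely, the rescaled manifolds satisfy $\int_{B_{4/r_i}}|\Ric|^p\le r_i^{2p-n}\delta_i\le 2^{2p-n}\delta_i\to0$, which controls the balls of radius $4$ in the new metric. The volume ratio is scale invariant, so $|\cV_1(x_i)-\cV_2(x_i)|\le\delta_i$. By Theorem \ref{precompactness} (Petersen--Wei volume comparison), a subsequence converges in the pointed GH sense to a limit $(X,d,x)$. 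Volume convergence is also needed. I would take it from Theorem \ref{Volume comparison} together with the Colding-type volume continuity available under small $L^p$ Ricci bounds, as in \cite{PW,TZ}. Alternatively, one can pass to a renormalized limit measure $\mu$ that satisfies volume comparison with $\mathrm{Ric}\ge 0$; this is the form I would actually use.

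The key point is that the relative volume comparison of \cite{PW} gives more than Theorem \ref{Volume comparison} as stated. It gives an almost-monotonicity of $\cV_s(x_i)$ in $s$ with an error $C\delta_i^{1/2}\to 0$, and the same with $x_i$ replaced by nearby points. In the limit, the ratio $\mu(B_s(y))/s^n$ is therefore non-increasing, so $(X,d,\mu)$ behaves like a Ricci-limit with nonnegative Ricci curvature in the sense of Bishop--Gromov. Moreover $\mu(B_1(x))/\omega_n=\mu(B_2(x))/(\omega_n 2^n)$.

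The main obstacle is the next step: from equality in the volume ratio, deduce that $B_1(x)$ is a metric cone. I would not argue this on the limit space abstractly. Instead I would follow Cheeger--Colding on the smooth approximants, which is how \cite{PW,TZ} carry it out. On $A_{i}=B_2\setminus B_{1/2}$, solve $\Delta h_i=n$ with boundary values equal to those of $d_{x_i}^2/2$. Using the segment inequality and Laplacian comparison in the integral form (the error in $\Delta d_{x_i}$ is controlled by $\|(\Delta d-\tfrac{n-1}{d})_+\|_{L^{2p}}\le C\|\Ric\|_{L^p}^{1/2}$ from \cite{PW}), the pinched volume ratio yields
\[
\fint|\nabla h_i-\nabla d_{x_i}^2/2|^2\to0,\qquad \fint|\nabla^2h_i-g_i|^2\to0 .
\]
A standard Cheeger--Colding argument through the segment inequality then turns these into the metric identity for cones, the almost-Pythagorean law of cosines $d(\gamma(s),\sigma(t))^2\approx s^2+t^2-2st\cos\theta$. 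Passing to the limit shows that $B_1(x)$ is isometric to a ball around the vertex of a metric cone $C(Z)$. This contradicts the assumption that $B_1(x_i)$ stays $\vepsilon$ away from every cone ball, and finishes the proof. The Hessian estimate is the delicate part, because the error terms must be absorbed using only $L^p$ rather than pointwise control; the integrability $p>n/2$ is exactly what makes the mean-curvature excess estimate close.
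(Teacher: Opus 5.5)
The paper does not prove this statement; it only cites \cite{PW,TZ,C21}. Your outline, a contradiction argument plus Cheeger--Colding estimates on the approximants, follows the standard route of those references. As written, however, it silently uses a non-collapsing hypothesis that appears neither in your proposal nor in the statement, and without it your first step fails.

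After rescaling, all that survives is the \emph{unnormalized} bound $\int_{B_{4/r_i}}|\Ric_i|^p\le r_i^{2p-n}\delta_i$. The scale-invariant quantity $\rho^2(\fint_{B_\rho}|\Ric|^p)^{1/p}$ is not monotone in $\rho$. It is bounded by $\rv^{-1/p}\rho^{2-n/p}\|\Ric\|_{L^p}$ only when $\Vol(B_\rho)\ge\rv\rho^n$. Yet Theorem \ref{Volume comparison}, precompactness and volume convergence all need this averaged smallness.

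Likewise, $|\cV_1(x_i)-\cV_2(x_i)|\le\delta_i$ gives $\mu(B_2(x))=2^n\mu(B_1(x))$ for a renormalized limit measure only if $\liminf_i\cV_1(x_i)>0$. In a collapsed ball, the unnormalized Petersen--Wei inequality $\cV_\rho^{1/(2p)}\ge\cV_R^{1/(2p)}-C(n,p)R^{1-\frac{n}{2p}}\|\Ric\|_{L^p(B_R)}^{1/2}$ has an error of the same size as $\cV^{1/(2p)}$ itself. So your limit monotonicity is not available either.

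This is not a removable technicality. For $n\ge3$ let $M_\epsilon=S^2\times T^{n-2}_\epsilon$, with $S^2$ the unit round sphere and $T^{n-2}_\epsilon$ a flat torus of diameter $\sim\epsilon$, and take $r=1$. Then $|\Ric|\equiv\sqrt2$ and $\Vol(M_\epsilon)\sim\epsilon^{n-2}$, so $\int_{B_4}|\Ric|^p\to0$ and $\cV_1,\cV_2\to0$. Yet $B_1(x)$ converges to a unit geodesic disc in $S^2$. That disc is not a cone ball, since its distances are strictly shorter than the cone law of cosines predicts, while GH limits of cone balls are cone balls.

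So you must assume a lower volume bound such as $\cV_r(x)\ge\rv$; in the paper this is supplied by (A2), under which Section 2 is meant to work. Then $\delta=\delta(n,p,\rv,\vepsilon)$ rather than $\delta(n,\vepsilon)$. With this bound, the unnormalized Petersen--Wei comparison propagates non-collapsing to nearby points and smaller scales, so the averaged smallness you need holds. The limit is then non-collapsed, you may take $\mu=\mathcal{H}^n$ by volume convergence, and your outline goes through.

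One smaller point remains. Running Cheeger--Colding only on $B_2\setminus B_{1/2}$ yields conical structure on an annulus, not on all of $B_1(x)$. Also, monotonicity of $\mu(B_s)/s^n$ together with equality at $s=1,2$ only gives constancy on $[1,2]$. Use the annular form of Bishop--Gromov in the limit, which the Petersen--Wei mean curvature estimate also provides, to propagate $\mu(B_2)=2^n\mu(B_1)$ to $\mu(B_s)=s^n\mu(B_1)$ for every $s\le1$. Then run the argument on $B_2\setminus B_\epsilon$ for each $\epsilon>0$.
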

 
 \begin{theorem}[Almost maximal volume implies almost Euclidean] Let $(M^n,g)$  satisfy  $\displaystyle  \int_{B_{4}(x)}|\Ric|^{p}<\delta$ for some $p>n/2$ and $\cV_{2r}(x)\ge 1-\delta$ for some $r\le 2$. For any $\vepsilon>0$, there exists $\delta(n,\vepsilon)>0$ such that if $\delta\le \delta(n,\vepsilon)$ then \[d_{GH}(B_{r}(x),B_{r}(0^n))<\vepsilon r.
\] 
\end{theorem}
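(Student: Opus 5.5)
We argue by contradiction and compactness, with the volume comparison Theorem \ref{Volume comparison}, the precompactness Theorem \ref{precompactness} and the almost splitting Theorem \ref{almost splitting} doing the work. First rescale so that the ball in question is $B_{2r}(x)$ with $r=1$. The hypothesis $\int_{B_4(x)}|\Ric|^p<\delta$ with $p>n/2$ is scale-favorable: under rescaling by $\lambda\ge 1$ the quantity $\lambda^{2p-n}\int|\Ric|^p$ only improves. Hence the smallness persists on every sub-ball, and in particular the normalized quantity $R^2\fint_{B_R(y)}|\Ric|^p$ is small for $y\in B_2(x)$ and $R\le 2$, once we also have a lower volume bound $\Vol(B_R(y))\gtrsim R^n$. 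Since $\cV_{2r}(x)\ge 1-\delta$, Theorem \ref{Volume comparison} gives such a lower bound for balls centered near $x$. It also gives an almost-monotone volume ratio: $\cV_s(y)\ge (1-C\delta^{1/2})^{2p}\cV_{R}(y)$ for $s\le R$.

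\textbf{Step 1 (maximal volume propagates to points and scales).} Using $\cV_{2}(x)\ge 1-\delta$, the inclusion $B_{2-d(x,y)}(y)\subset B_2(x)$ and the almost monotonicity, we obtain $\cV_s(y)\ge 1-\Psi(\delta)$ for all $y\in B_{1}(x)$ and all $s\le 1/2$. Here $\Psi(\delta)\to0$ as $\delta\to0$. We also need the reverse inequality $\cV_s(y)\le 1+\Psi(\delta)$. Bishop--Gromov type upper bounds also hold under small $L^p$ Ricci by Petersen--Wei, so we take this as part of the cited comparison theory.

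\textbf{Step 2 (contradiction and limit).} Suppose the statement fails for some $\vepsilon$. Then there are manifolds $(M_i,g_i,x_i)$ with $\int_{B_4}|\Ric|^p\to0$, $\cV_2(x_i)\to1$, and $d_{GH}(B_1(x_i),B_1(0^n))\ge\vepsilon$. By Theorem \ref{precompactness}, a subsequence converges to a limit $(X,d,x_\infty)$. By Step 1 and volume convergence (available under small $L^p$ Ricci), every ball $B_s(y)\subset B_1(x_\infty)$ has $\Haus^n$-volume ratio equal to $1$. By Theorem \ref{volume cone} applied at all scales, every tangent cone of $X$ at every point is a metric cone with maximal volume ratio. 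This should force each tangent cone to be $\mathbb{R}^n$. Concretely, $B_1(x_\infty)$ is itself a volume cone with equal ratios at all scales, so it is a metric cone $C(Z)$ with $\Haus^{n-1}(Z)=\Haus^{n-1}(S^{n-1})$. The same holds at every point $y\neq$ vertex, so the cone splits a line at $y$. Iterating the splitting $n$ times, using part (2) of Theorem \ref{almost splitting} in the approximating sequence, yields $(n,\Psi)$-splitting maps on $B_1(x_i)$. Part (1) then gives $d_{GH}(B_1(x_i),B_1(0^k,z))\to 0$ with $k=n$, where the factor $Z$ is a point because the dimension is full. This contradicts the choice of the sequence.

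\textbf{Main obstacle.} The delicate step is the iteration of splitting, that is, upgrading ``metric cone with full volume'' to ``$\mathbb{R}^n$''. Without pointwise Ricci bounds this requires the cone-splitting principle in the $L^p$ setting. I would proceed as follows: a cone at $x_\infty$ that is also a cone at a second point $y$ splits off the line through $x_\infty$ and $y$. Induct on $k$, producing $k$ independent lines. At each stage, pass the splitting back to the sequence via Theorem \ref{almost splitting}, and use the volume $\Haus^n(B_1)=\omega_n$ to rule out a nontrivial cross-section $C(Z)$ with $\dim Z\ge 0$ once $k=n$. If the upper volume bound in Step 1 turns out to be unavailable, I would instead bypass it: apply Theorem \ref{volume cone} directly at the scale pair $(r,2r)$ to get a cone, then use the lower bound $\cV\ge1-\Psi$ at the vertex, together with the monotone structure of cones, to conclude that the cross-section has full measure.
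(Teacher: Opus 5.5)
The paper does not prove this statement. It quotes it as a known almost-rigidity result for small $L^p$ Ricci curvature, due to Petersen--Wei and Tian--Zhang. Your compactness route can be made to work, but Step~1 does not follow from what you cite, and all of Step~2 rests on it.

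The inclusion $B_{2-d}(y)\subset B_2(x)$, with $d=d(x,y)$, gives an \emph{upper} bound on $\Vol(B_{2-d}(y))$. That says nothing about $\cV_s(y)$ from below. The inclusion pointing the right way is $B_2(x)\subset B_{2+d}(y)$. Combined with almost monotonicity, it gives only $\cV_s(y)\ge(1-\Psi(\delta))\bigl(\tfrac{2}{2+d}\bigr)^n$. This is close to $1$ only when $d\to0$; for $d$ near $1$ it is roughly $(2/3)^n$. Almost maximal volume at points of $B_1(x)$, at scales $s\ll d(x,y)$, is essentially the conclusion of the theorem combined with volume convergence, so it cannot serve as an input.

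Consequently three claims in Step~2 are unsupported: that every $B_s(y)\subset B_1(x_\infty)$ has volume ratio $1$, that every point is a vertex, and that the splitting can be iterated $n$ times. Your fallback does not close the gap either. The equality $\Haus^{n-1}(Z)=\Haus^{n-1}(S^{n-1})$ does not force $Z\cong S^{n-1}$ without a rigidity theorem for the cross-section. That would be Colding's induction on dimension, or Bishop--Gromov rigidity for $Z$ under a synthetic bound $\Ric\ge n-2$, and neither tool is available here.

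The missing idea is to propagate maximal volume through the cone structure rather than by comparison on the manifolds.
\begin{itemize}
\item At the center, both bounds do hold at every scale $s\le 2$. The bound $\cV_s(x)\ge 1-\Psi(\delta)$ follows from almost monotonicity starting at scale $2$. The bound $\cV_s(x)\le 1+\Psi(\delta)$ follows by letting $r\to0$ in Theorem~\ref{Volume comparison}.
\item So in the limit $\Haus^n(B_s(x_\infty))=\omega_n s^n$ for all $s\le2$, and $B_1(x_\infty)$ is a ball about the vertex $o$ of a metric cone $C(Z)$.
\item A diagonal blow-up $(M_i,\lambda_i^2g_i,x_i)$ with $\lambda_i\to\infty$ slowly converges to the whole cone. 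This only improves the hypotheses: $\int|\Ric|^p$ scales by $\lambda^{n-2p}$, and the normalized $L^p$ quantity at any fixed scale tends to $0$.
\item Now use your inclusion with $d$ fixed and $R\to\infty$. For any $y$, the inclusion $B_{R-d(o,y)}(o)\subset B_R(y)$ gives volume ratio tending to $1$ at large scales around $y$.
\item The limiting monotonicity, together with the upper bound $1$, then forces the ratio at $y$ to be identically $1$. By the limit form of Theorem~\ref{volume cone}, every $y$ is a vertex.
\item Iterating Theorem~\ref{cone-splitting} gives $C(Z)=\mathbb{R}^k\times C(Z')$ with every point a vertex. Hence $C(Z')$ is a point, and $k=n$ by volume.
\end{itemize}
Thus $B_1(x_\infty)\cong B_1(0^n)$, which contradicts the choice of sequence directly.

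Your detour through splitting maps is unnecessary. Part (2) of Theorem~\ref{almost splitting} converts an existing splitting into maps but produces no new lines. The splitting itself has to come from cone splitting. The repaired argument is the Cheeger--Colding proof that density one gives a Euclidean tangent cone. Like your proposal, it uses volume convergence and the limit form of Theorem~\ref{volume cone} as black boxes.
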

Take a decreasing sequence of small scales $R_0\ge R_1\ge \cdots \ge R_k\to 0$. In the case of  $\Ric\ge 0$,   the standard Bishop-Gromov volume comparison theorem implies the summability of volume drop \[\sum_{k=0}^{\infty} |\mathcal{V}_{R_{k+1}}(x) - \mathcal{V}_{R_k}(x)|=\sum_{k=0}^{\infty} \left(\mathcal{V}_{R_{k+1}}(x) - \mathcal{V}_{R_k}(x)\right)=1-\mathcal{V}_{R_0}(x).
\] This summability playes a key role in the study of infinitesimal structure of manifold with lower Ricci curvature bound and its limit space. If we have $L^p$ Ricci curvature bound instead of lower Ricci curvature bound, we still have the summability of volume drop:
\begin{lemma}[Summability of volume drop]\label{summability of volume drop}
Let \((M^n, g)\) satisfy $(A1)$ and $(A2)$. Then there exists $R_0=R_0(n,p,\rv)\Lambda^{-1/(2p)}>0$ and $R_k=2^{-k}R_0$ such that 
\[\sum_{k=0}^{\infty} |\mathcal{V}_{R_{k+1}}(x) - \mathcal{V}_{R_k}(x)|\le C(n, p, \rv, \Lambda)<\infty.
\]
In particular, for any $\delta>0$, there are at most $N(\delta,n,p,\rv,\Lambda)$ many scales such that $|\mathcal{V}_{R_{k+1}}(x) - \mathcal{V}_{R_k}(x)|>\delta.$
\end{lemma}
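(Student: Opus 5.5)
We will prove the lemma by comparing each volume ratio $\mathcal{V}_{R_k}(x)$ with a monotone quantity, since (A1) does not make $\mathcal{V}_r(x)$ itself monotone. We use the Petersen--Wei relative volume comparison (Theorem \ref{Volume comparison}) in its sharp form. Let $\psi=\max(0,H-\Delta r)$ be the excess of the mean curvature of distance spheres from $x$ over the Euclidean value $H=(n-1)/r$. The Petersen--Wei estimate gives, for $0<r<R$,
\[
\frac{d}{dr}\left(\frac{\Vol(B_r(x))}{r^n}\right)\le C(n,p)\, r^{-n}\, R^{\,1-\frac{n}{2p}}\Bigl(\int_{B_R(x)}\psi^{2p}\Bigr)^{\frac{1}{2p}}\cdots,
\]
together with the bound $\|\psi\|_{L^{2p}(B_R)}\le C(n,p)\,\|\Ric_-\|_{L^p(B_R)}^{1/2}$. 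Integrating from $r$ to $R$ yields the standard one-sided estimate
\[
\mathcal{V}_r(x)-\mathcal{V}_R(x)\ \ge\ -\,C(n,p)\,\mathcal{V}_R(x)^{1-\frac{1}{2p}}\Bigl(R^{2p-n}\int_{B_R(x)}|\Ric|^p\Bigr)^{\frac{1}{2p}}\ =: -\eta(R).
\]
This is the scale-invariant error. The exact form of the exponent on $\mathcal{V}_R$ is unimportant, because $\mathcal{V}_R\le C$ at small scales.

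\textbf{Key step: the defects are summable over dyadic scales.} We have $\int_{B_{R_k}}|\Ric|^p\le\Lambda$ and $2p-n>0$, so
\[
\eta(R_k)\le C\Lambda^{1/(2p)}R_k^{\,1-\frac{n}{2p}}=C\Lambda^{1/(2p)}R_0^{\,1-\frac{n}{2p}}\,2^{-k(1-\frac{n}{2p})},
\]
and $\sum_k\eta(R_k)\le C(n,p)\Lambda^{1/(2p)}R_0^{1-n/(2p)}<\infty$. The choice $R_0=R_0(n,p,\rv)\Lambda^{-1/(2p)}$ ensures that $R_0\le 1$ (so that $B_{R_0}(x)\subset B_4(q)$ where (A1) applies, after assuming $x\in B_1(q)$ as in later use). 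It also makes the rescaled $L^p$ smallness condition of Theorem \ref{Volume comparison} hold at every scale $\le R_0$. In particular this gives $\mathcal{V}_{R_k}(x)\le 2\mathcal{V}_{R_0}(x)\le C(n,p,\rv,\Lambda)$, an upper bound on all the ratios.

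\textbf{Telescoping.} Write $a_k=\mathcal{V}_{R_{k+1}}(x)-\mathcal{V}_{R_k}(x)$, so that $a_k\ge-\eta_k$ with $\eta_k:=\eta(R_k)$. Then $|a_k|\le a_k+2\eta_k$, and
\[
\sum_{k=0}^{N}|a_k|\le \bigl(\mathcal{V}_{R_{N+1}}(x)-\mathcal{V}_{R_0}(x)\bigr)+2\sum_k\eta_k .
\]
The first term is bounded by the upper bound on the ratios (alternatively, $\mathcal{V}_r\to 1$ as $r\to0$ on a smooth manifold, and the upper bound makes this uniform). Letting $N\to\infty$ gives the claimed bound $C(n,p,\rv,\Lambda)$. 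The non-collapsing assumption (A2) is only needed to keep $\mathcal{V}_{R_0}$ bounded below, so that the multiplicative form of Theorem \ref{Volume comparison} can be converted into the additive error above. The counting statement follows by Chebyshev: at most $C/\delta$ indices satisfy $|a_k|>\delta$.

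\textbf{Main obstacle.} The hard part is the one-sided additive estimate with an error that decays geometrically in the scale. Theorem \ref{Volume comparison} as quoted gives only a multiplicative bound with error $C\delta^{1/2}$, where $\delta^p=R^2\fint_{B_R}|\Ric|^p$. I would first try applying it directly between $R_{k+1}$ and $R_k$ with $R=R_k$. The lower bound $\Vol(B_{R_k})\ge c\,\rv R_k^n$ from (A2) gives
\[
\delta_k^p\le R_k^{2}\,\frac{\Lambda}{c\,\rv R_k^n}=C\Lambda R_k^{2p-n}\cdot R_k^{2-2p},
\]
so the normalization must be checked carefully. The correct scale-invariant quantity is $R^{2p}\fint_{B_R}|\Ric|^p$, which is $\lesssim \Lambda R^{2p-n}/\rv$ and so decays geometrically. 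Then $(1-C\delta_k^{1/2})^{2p}\ge 1-C'\delta_k^{1/2}$ gives $\eta_k\lesssim(\Lambda R_k^{2p-n}/\rv)^{1/(2p)}$, which is summable. If the quoted normalization differs, I would fall back on the Petersen--Wei differential inequality above.
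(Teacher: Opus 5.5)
Your proposal is correct and follows essentially the same route as the paper: you apply the Petersen--Wei comparison at each dyadic scale, use (A2) to turn the global $L^p$ bound into the scale-invariant error $\lesssim(\Lambda R_k^{2p-n}/\rv)^{1/(2p)}$, and then telescope with $|y|=y+2\max(0,-y)$. One slip needs fixing: the comparison gives $\mathcal{V}_{R_k}\ge\frac12\mathcal{V}_{R_0}$, not $\mathcal{V}_{R_k}\le 2\mathcal{V}_{R_0}$, so the upper bound on the ratios (which your error term needs) should instead come, as in the paper, from applying $\mathcal{V}_r\ge\frac12\mathcal{V}_{R_k}$ and letting $r\to0$ with $\mathcal{V}_r\to1$, which gives $\mathcal{V}_{R_k}\le 2$.
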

\begin{proof}
By the volume comparison theorem (Theorem \ref{Volume comparison}), for $0 < r \le R \le 1$:
\begin{equation}\label{eq:volume drop}
    \mathcal{V}_r(x) \ge \mathcal{V}_R(x) \left(1 - C(n,p)\delta(R)^{1/2}\right)^{2p},
\end{equation}
where $\delta(R) = R^2 (\fint_{B_R(x)} |\Ric|^p)^{1/p}$. Using the uniform non-collapsing assumption $\Vol(B_R(x)) \ge \rv R^n$, we can bound $\delta(R)$:
\begin{equation} \label{eq:delta_decay}
    \delta(R)^p = R^{2p} \frac{1}{\Vol(B_R(x))} \int_{B_R(x)} |\Ric|^p \le \rv^{-1} R^{2p-n} \int_{B_1(x)} |\Ric|^p \le \rv^{-1} \Lambda R^{2p-n}.
\end{equation}
Because $p > n/2$, the exponent $2p-n > 0$. Consequently, $\delta(R) \le C(p,\rv)\Lambda^{1/p} R^{2 - n/p}$ decays polynomially as $R \to 0$.

By (\ref{eq:volume drop}) there exists $R_0=R_0(n,p,\rv)\Lambda^{-1/(2p)}>0$ such that  for $0 < r \le R\le R_0$: 
\[ \mathcal{V}_r(x) \ge \frac{1}{2}\mathcal{V}_R(x).
\]  Since $\mathcal{V}_r(x) \to 1$ as $r\to 0$, this in particular implies $c(n,\rv)\le\mathcal{V}_R(x)\le 2$.

By (\ref{eq:delta_decay}), the volume ratio volume drop satisfies:
\[\cV_R(x) - \cV_r(x) \le \cV_r(x) \big[ (1 - C(n,p)\delta(R)^{1/2})^{-2p} - 1 \big] \le \cE(R),\]
where $\cE(R) =C(n, p,\rv)\Lambda^{1/(2p)}R^{1 - n/(2p)}.$  
To bound the absolute value, note that $|y| = y + 2\max(0, -y)$. Applying this to the volume difference $y = \cV_r(x) - \cV_R(x)$, we get:
\begin{equation}
    |\cV_r(x) - \cV_R(x)| = (\cV_r(x) - \cV_R(x)) + 2\max(0, -(\cV_r(x) - \cV_R(x))\le (\cV_r(x) - \cV_R(x))+2\cE(R).
\end{equation}
Thus we conclude
\[\begin{aligned}
    \sum_{k=0}^{\infty} |\mathcal{V}_{R_{k+1}}(x) - \mathcal{V}_{R_k}(x)| &\le  1- \mathcal{V}_{R_0}(x) + 2  \sum_{k=0}^{\infty} \cE(R_k)\\
    & \le 1-c(n,\rv)+C(n,p,\rv)\Lambda^{1/(2p)} \sum_{k=0}^{\infty} (2^{-k})^{1 - n/(2p)}\\ 
    &\le 1-c(n,\rv)+C(n,p,\rv)\Lambda^{1/(2p)}.
\end{aligned}\]
\end{proof} 
Combing precompactness theorem \ref{precompactness} and classical cone splitting principle, we have the following quantitative cone splitting:
\begin{theorem}[quantitative cone splitting]\label{cone-splitting}
For every $\epsilon, \tau > 0$, there exists $\delta(n, \epsilon, \tau) > 0$ such that if
\begin{enumerate}
    \item[$(1)$] $\displaystyle  \int_{B_{4}(x)}|\Ric|^{p}<\delta$ for some $p>n/2$,
    \item[$(2)$] $B_2(x)$ is $(k, \delta)$-symmetric with respect to $L_\delta^k \subseteq B_4(x)$,
    \item[$(3)$] there exists $z \in B_1(x) \setminus B_\tau L_\delta^k$ such that $B_2(z)$ is $(0, \delta)$-symmetric,
\end{enumerate}
then $B_1(x)$ is $(k+1, \epsilon)$-symmetric.
\end{theorem}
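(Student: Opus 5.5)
We argue by contradiction combined with compactness, following the classical proof for lower Ricci bounds (Cheeger--Colding, Cheeger--Naber). Suppose the statement fails for some fixed $\epsilon,\tau>0$. Then there is a sequence $(M_i,g_i,x_i)$ with $\int_{B_4(x_i)}|\Ric_i|^p<\delta_i\to 0$, with $B_2(x_i)$ $(k,\delta_i)$-symmetric with respect to $L^k_{\delta_i,i}$, with points $z_i\in B_1(x_i)\setminus B_\tau L^k_{\delta_i,i}$ such that $B_2(z_i)$ is $(0,\delta_i)$-symmetric, but with $B_1(x_i)$ not $(k+1,\epsilon)$-symmetric.

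By the precompactness theorem (Theorem \ref{precompactness}), after passing to a subsequence we have $(M_i,g_i,x_i)\to(X,d,x)$ in the pointed Gromov--Hausdorff sense. Since $\delta_i\to0$, the limit satisfies the following:
\begin{enumerate}
\item $B_2(x)$ is isometric to a ball in $\mathbb{R}^k\times C(Z)$ centered at a cone point, and the sets $L^k_{\delta_i,i}$ converge to $L=\iota(\mathbb{R}^k\times\{z_c\})$.
\item $z_i\to z\in \overline{B_1(x)}$ with $d(z,L)\ge\tau$, and $B_2(z)$ is a metric cone with vertex $z$.
\end{enumerate}
A point to check is that the cone structures pass to the limit. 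The GH maps converge after taking a subsequence, and the limits of the cross-sections $Z_i$ are handled by Gromov compactness of the cross-sections. Alternatively, we can work only with the limiting ball, which is a pointed GH limit of balls $\delta_i$-close to cone balls. Either way, the limit ball is exactly conical.

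The key step is the \emph{classical cone splitting principle} on the limit space. This is where the main obstacle lies. For spaces with $\Ric\ge0$, the statement that a space which is a cone with respect to two distinct vertices splits a line uses either the Cheeger--Colding splitting theorem or the characterization of cones through $\mathrm{Hess}\,\tfrac{d^2}{2}=g$. Here the approximating manifolds only have small $L^p$ Ricci curvature. The cleanest route therefore avoids an abstract ``RCD''-type structure on $X$ and argues along the sequence.

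First, the metric cone structure at $x$ and at $z$ gives, by the conical geometry, that the line through $x$ and $z$ extends in $B_2$. More precisely, the rays from $x$ through $z$ and from $z$ away from $x$ concatenate into a segment of length comparable to $1$ with an almost-linear distance function. Second, we transfer this back to $M_i$. There we obtain almost-linear functions with small excess along a new direction orthogonal to $L$, and we combine them with the $k$ existing directions. The almost splitting theorem (Theorem \ref{almost splitting}), which applies because $\int|\Ric_i|^p\to0$, upgrades this to a $(k+1,\epsilon')$-splitting map, and hence gives GH-closeness to $\mathbb{R}^{k+1}\times Z'$. The condition $d(z,L)\ge\tau$ is what guarantees that the new direction is nondegenerate.

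Finally, we check that $B_1(x)$ is a cone over its vertex in $\mathbb{R}^{k+1}\times C(Z')$, that is, that the vertex structure is preserved. This holds because a product $\mathbb{R}\times Y$ which is a cone at $x$ forces $Y$ to be a cone at the projection of $x$. Hence $B_1(x)$ is $(k+1,\epsilon/2)$-symmetric for large $i$, contradicting the choice of the sequence.
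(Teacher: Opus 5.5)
\paragraph{Review.} Your skeleton matches the paper's: argue by contradiction, pass to a limit using precompactness (Theorem \ref{precompactness}), and apply the classical cone-splitting principle there. The paper offers nothing beyond that one-line justification, so the only real content is the cone-splitting step. That is where your argument has a gap.

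\paragraph{The gap.} You build a segment from the two cone structures: the radial segment of $B_2(z)$ from $z$ through $x$, ending at some $q^-$, concatenated with the radial segment of $B_2(x)$ from $x$ through $z$. This segment extends less than distance $2$ on either side of $x$. You then feed it into an almost splitting theorem at scale $1$ on $M_i$, and this step fails.

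Excess-type almost splitting (Cheeger--Colding, or the $L^p$ version of Petersen--Wei) controls $B_R(p)$ only up to an error $\Psi(\delta, L^{-1}\mid n,R)$. Here $L$ is how far the almost-line extends on each side of $p$. With $L$ comparable to $R=1$, this error stays bounded away from $0$ even as $\delta_i\to 0$. So no $(k+1,\epsilon')$-splitting of $B_1(x_i)$ with small $\epsilon'$ follows.

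Separately, Theorem \ref{almost splitting} as stated in the paper only converts GH-closeness to a product into splitting maps and back. It takes no excess or almost-linear function as input, so it cannot perform the upgrade you describe. You need the excess version.

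\paragraph{How to repair it.} Use the exact conical structure of the limit at $x$ before returning to the sequence.
\begin{enumerate}
\item In the limit, $x$ is an interior point of the segment from $q^-$ to $z$ and beyond. So the directions of $z$ and $q^-$ at $x$ are at angle $\pi$ in the cross-section of $\mathbb{R}^k\times C(Z)$.
\item Dilation about the vertex then turns the segment into a complete line in the model cone. Its $C(Z)$-component is nonconstant because $d(z,L)\ge\tau$.
\item Work at a small scale $r$, or equivalently on the tangent cone at $x$, which is the model cone. Since $2p>n$, blowing up does not increase the scale-invariant $L^p$ norm of $\Ric$ on sub-balls. So the rescaled $M_i$ still satisfy the hypotheses with $\delta_i\to 0$, while the almost-line now has length comparable to $r^{-1}$ on each side of $x_i$.
\item The $L^p$ almost splitting theorem now applies. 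Combined with the existing $k$ directions, it shows the model cone is isometric to $\mathbb{R}^{k+1}\times C(Z')$. Gram--Schmidt handles the fact that the new line is not orthogonal to $L$, with nondegeneracy quantified by $\tau$.
\item Self-similarity at $x$ transfers this to $B_1(x)$. GH-convergence then gives $(k+1,\epsilon)$-symmetry of $B_1(x_i)$ for large $i$, which is the desired contradiction.
\end{enumerate}
With this rescaling step added, your proposal becomes a correct and more detailed version of the paper's argument.
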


Combing assumption (A3) and  a quantitative stratification Theorem 3.9 in \cite{Q} under $L^p$ Ricci curvature bound, we can prove the $L^\alpha$ estimate of curvature for any $\alpha<2$.
\begin{theorem}[\cite{CN15}, Theorem 1.8]\label{Lq estimate}
Let $(M^n,g)$  satisfy assumptions $(A1),(A2),(A3)$, then for any $\alpha<2$,  there exists $C=C(n,p,\rv,\Lambda,\alpha)>0$ such that
\[ \int_{B_1(q)}|\Rm|^\alpha \le C.
\]
\end{theorem}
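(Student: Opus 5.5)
The plan is to adapt the Cheeger--Naber argument for the $L^{\alpha}$, $\alpha<2$, bound. The idea is to control the volume of the sublevel sets of the curvature scale. We define the regularity scale
\[
r_{\Rm}(x)=\sup\Bigl\{r\le 1: \sup_{B_r(x)}|\Rm|\le r^{-2}\Bigr\}.
\]
It then suffices to prove the volume estimate $\Vol\bigl(\{r_{\Rm}<r\}\cap B_1(q)\bigr)\le C(n,p,\rv,\Lambda,\eta)\,r^{4-\eta}$ for every $\eta>0$. Granting this, we have $|\Rm|\le r_{\Rm}^{-2}$ pointwise, so a dyadic decomposition in $r$ gives
\[
\int_{B_1(q)}|\Rm|^\alpha\le C+\sum_j 2^{2\alpha j}\,\Vol\bigl(\{r_{\Rm}<2^{-j+1}\}\bigr)\le C+C\sum_j 2^{(2\alpha-4+\eta)j}.
\]
This sum is finite once $\eta<4-2\alpha$.

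To prove the volume estimate, I would first apply the quantitative stratification theorem of \cite{Q} (Theorem 3.9 there) under (A1) and (A2). With the summability of volume drop (Lemma \ref{summability of volume drop}) and the cone-splitting theorem \ref{cone-splitting} as inputs, it gives, for every $\epsilon>0$ and $\eta>0$,
\[
\Vol\bigl(B_r(S^{k}_{\epsilon,r})\cap B_1(q)\bigr)\le C(n,p,\rv,\Lambda,\epsilon,\eta)\,r^{n-k-\eta}.
\]
Here $R_0$ rescales only by a constant depending on $\Lambda$. We take $k=n-4$.

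The key step is to show $\{r_{\Rm}<\gamma r\}\subset S^{n-4}_{\epsilon,r}$ for suitable $\epsilon$ and $\gamma>0$. Suppose $x\notin S^{n-4}_{\epsilon,r}$. Then some $B_s(x)$ with $r\le s\le 1$ is $(n-3,\epsilon)$-symmetric, i.e.\ $\epsilon s$-close to a ball centered at the tip of $\mathbb{R}^{n-3}\times C(Z)$. A cone over a one-dimensional $Z$ is $C(Z^2)$ in the paper's notation, since $Z$ is a circle or interval. Choosing $\epsilon\le\delta$ from (A3), assumption (A3) gives $r_{\Rm}(x)\ge s/2$, since $s^2\sup_{B_{s/2}}|\Rm|<\vepsilon$. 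Hence $r_{\Rm}(x)\ge r/2$. This containment is where the codimension-four hypothesis enters, and it is the reason the exponent is $4$ rather than $2$.

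The main obstacle is making sure the stratification volume estimate of \cite{Q} holds with constants independent of the pointwise geometry. This requires a uniform count of bad scales, which Lemma \ref{summability of volume drop} supplies in place of Bishop--Gromov monotonicity. The estimate also has to apply at all points of $B_1(q)$. I would apply (A2) at each point via the volume comparison theorem \ref{Volume comparison} at scales below $R_0$. Larger scales are handled by covering, with a constant depending on $\Lambda$.
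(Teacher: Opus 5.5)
Your proposal is correct and follows essentially the same route as the paper. You define the curvature scale, use (A3) to show $\{r_{\Rm}<r/2\}\subset S^{n-4}_{\epsilon,r}$, invoke Theorem 3.9 of \cite{Q} for the $r^{4-\eta}$ volume bound, and sum over dyadic scales; your layer-cake sum using $|\Rm|\le r_{\Rm}^{-2}$ is a slightly cleaner substitute for the paper's Vitali covering by the balls $B_{r_y}(y)$. One harmless slip: in the $n$-dimensional model $\mathbb{R}^{n-3}\times C(Z^2)$ the cross-section $Z$ is two-dimensional, so $C(Z)$ is a three-dimensional cone rather than a cone over a circle or interval; this does not affect the argument.
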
 
\begin{proof}The proof follows the same line of  \cite{CN15}. To begin with, let us define $r_y=\sup\{0<r\le 1:\sup_{B_r(y)}|\Rm|\le r^{-2}\}$. Let \( \alpha < 2 \) be fixed, set \( \epsilon = 4 - 2\alpha \). By assumption (A3), there exists $\vepsilon=\vepsilon(n)>0$ such that if $y$ is $(n-3,\vepsilon,2r)$-symmetric, then $r_y> r.$ The
contrapositive gives the inclusion
\begin{equation}\label{inclusion}\begin{aligned}\{y\in B_1(q):r_y\le r\}\subset S_{\varepsilon, 2r}^{n-4} \cap B_1(q).\end{aligned}
\end{equation}  And by Theorem 3.9 in \cite{Q},   there exists  
\( C(n,p,\rv,\Lambda,\alpha)>0\) such that  
\[
\Vol\bigl( T_r(\{y \in S_{\varepsilon, 2r}^{n-4} \cap B_1(q)\}) \bigr) < C r^{4- \epsilon}.
\]
 By inclusion $(\ref{inclusion})$
\[
\Vol\bigl( T_r(\{y \in B_1(q) : r_y \leq r\}) \bigr) < C r^{4-\epsilon}= C r^{2\alpha}.
\]
 Put \( \eta = 2 - \alpha \). Then \( \alpha + \frac{\eta}{2} < 2 \). Then we have already shown that
\begin{equation}\label{volume estimate}\begin{aligned}
\Vol(T_r(\{y \in B_1(q) : r_y\leq r\})) < C r^{2\alpha+\eta},
 \end{aligned}
\end{equation} 
for $C(n,p,\rv,\Lambda,\alpha)>0$. Consider the Vitali covering \(\{B_{\frac{1}{5}r_y}(y)\}\) of \(B_1(q)\) such that
\begin{enumerate}
\item \( B_1(q) \subseteq \bigcup B_{r_j}(y_j) \) with \( r_j =  r_{y_j} \);
\item \(\{B_{r_j/5}(y_j)\}\) are disjoint.
\end{enumerate}
By using (\ref{volume estimate}), we see that for each \( \beta \in \mathbb{N} \), we have
\[
\sum_{2^{- \beta -1} < r_j \leq 2^{- \beta }} r_j^{-2q}\Vol(B_{r_j}(y_j)) \leq C  r_j^{\eta} = C   2^{-\eta \beta }.
\]
By summing over \(  \beta  \), this gives
\[
\sum_{j=1}^\infty r_j^{-2q}  \Vol(B_{r_j}(y_j)) \leq C \sum_{ \beta =1}^\infty\sum_{2^{- \beta -1} < r_j \leq 2^{- \beta }} r_j^{-2q}\Vol(B_{r_j}(y_j)) \leq C.
\]
Finally, by definition of $r_j$ we get
\[
\int_{B_1(q)} |\mathrm{Rm}|^\alpha \leq   \sum_{j=1}^\infty \int_{B_{r_j}(y_j)} |\mathrm{Rm}|^\alpha
\leq C \sum_{j=1}^\infty r_j^{-2q}\Vol(B_{r_j}(y_j)) \leq C.
\]
\end{proof}
Arguing the same as Theorem 7.20 of \cite{CN15}, together with quantitative stratification Theorem 3.9 in \cite{Q}, we can prove one can obtain $L^\alpha$ bounds on the Hessians of such harmonic functions for all $\alpha < 4.$
\begin{theorem}[\cite{CN15}, Theorem 7.20]\label{Lq hessian} Let $(M^n,g)$  satisfy assumptions $(A1),(A2),(A3)$. Let \( u : B_2(q) \to \mathbb{R} \) satisfy  
\[
|u| \leq 1, \quad |\Delta u| \leq 1,
\]
then for every \( \alpha < 4 \),  there exists $C=C(n,p,\rv,\Lambda,\alpha)>0$  such that
\[
\int_{B_1(q)} |\nabla^2 u|^\alpha \leq C.
\]
 \end{theorem}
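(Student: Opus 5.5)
The plan is to follow the Cheeger--Naber argument for Theorem 7.20 of \cite{CN15}, with the quantitative stratification of \cite{Q} replacing the lower Ricci bound version. The idea is to bound $|\nabla^2 u|$ pointwise by a scale-invariant quantity on balls of size comparable to a regularity scale, and then to sum over a Vitali cover using the volume estimate for the tubular neighbourhood of the quantitative singular set.

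\textbf{Regularity scale.} For $y\in B_1(q)$ I will use
\[
r_y=\sup\{0<r\le 1:\ \sup_{B_r(y)}r^2|\Rm|\le 1 \text{ and the harmonic radius at } y \text{ is at least } r\}.
\]
Exactly as in the proof of Theorem \ref{Lq estimate}, assumption (A3) gives the inclusion
\[
\{y\in B_1(q):\ r_y\le r\}\subset S^{n-4}_{\varepsilon,2r}\cap B_1(q)
\]
for some $\varepsilon(n)$. Indeed, (A3) makes $B_r(y)$ almost Euclidean with $r^2|\Rm|$ small, and Theorem \ref{harmonic radius} then bounds the harmonic radius from below. Theorem 3.9 of \cite{Q} then yields, for every $\eta>0$,
\[
\Vol\bigl(T_r(\{r_y\le r\}\cap B_1(q))\bigr)\le C(n,p,\rv,\Lambda,\eta)\, r^{4-\eta}.
\]

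\textbf{Pointwise Hessian bound.} On $B_{r_y}(y)$, rescale to unit size. In harmonic coordinates, (A3) gives $r^2|\Rm|$ and $r|\Ric|$ small and $r|\nabla\Ric|$ controlled, so the metric has uniform $C^{1,\beta}$ bounds; I need $C^{1,\beta}$ here, not just $W^{2,p}$. Schauder theory applied to $\Delta u=f$ then gives
\[
\sup_{B_{r_y/2}(y)}|\nabla^2 u|\le C\bigl(r_y^{-1}\sup_{B_{r_y}(y)}|\nabla u| + \sup|\Delta u| + \dots\bigr)\le C r_y^{-1}.
\]
This uses a uniform gradient bound $\sup_{B_{3/2}(q)}|\nabla u|\le C$.

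\textbf{Gradient bound (main obstacle).} Without a lower Ricci bound, Cheng--Yau is not available, so the gradient bound needs a substitute. I would first try Moser iteration on Bochner's formula
\[
\tfrac12\Delta|\nabla u|^2\ge \langle\nabla u,\nabla\Delta u\rangle-|\Ric||\nabla u|^2,
\]
with $|\Ric|\in L^p$ for $p>n/2$. This works using the local Sobolev inequality available under (A1)--(A2) and the smallness of scale-invariant $L^p$ Ricci norms on small balls. The term $\langle\nabla u,\nabla\Delta u\rangle$ has to be integrated by parts, so I will handle it via $|\Delta u|\le 1$ in the weak form of the inequality. This step, or equivalently the Petersen--Wei gradient estimate, is where I expect the technical work to lie.

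\textbf{Summation.} Take a Vitali cover $\{B_{r_j/2}(y_j)\}$ of $B_1(q)$ with the balls $B_{r_j/10}(y_j)$ disjoint, where $r_j=r_{y_j}$. Then
\[
\int_{B_1(q)}|\nabla^2u|^\alpha\le C\sum_j r_j^{-\alpha}\Vol(B_{r_j}(y_j)).
\]
Split the sum dyadically according to $2^{-\beta-1}<r_j\le 2^{-\beta}$. By disjointness and the tube estimate, the balls in each dyadic class have total volume at most $C2^{-\beta(4-\eta)}$, so each class contributes at most $C2^{-\beta(4-\eta-\alpha)}$. Since $\alpha<4$, choose $\eta=(4-\alpha)/2$; the geometric series then converges, giving $\int_{B_1(q)}|\nabla^2u|^\alpha\le C(n,p,\rv,\Lambda,\alpha)$.
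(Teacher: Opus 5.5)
Your route is the one the paper intends. The paper gives no argument beyond saying one argues as in Theorem 7.20 of \cite{CN15}, using the quantitative stratification of \cite{Q}. Your regularity scale, the inclusion of $\{r_y\le r\}$ into $S^{n-4}_{\varepsilon,2r}$ via (A3), the $r^{4-\eta}$ tube estimate, the gradient bound (which the paper takes from \cite{DWZ} rather than proving by Moser iteration), and the dyadic Vitali summation are exactly that argument.

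One step fails as written: the pointwise bound $\sup_{B_{r_y/2}(y)}|\nabla^2 u|\le C r_y^{-1}$. Schauder theory for $g^{ij}\partial_i\partial_j u=\Delta u$ in harmonic coordinates needs a $C^\beta$ bound on $\Delta u$, and you only have $|\Delta u|\le 1$. With a merely bounded Laplacian the Hessian need not be locally bounded, even in $\mathbb{R}^n$, so no such sup estimate holds.

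The fix is to use $L^q$ (Calder\'on--Zygmund) estimates instead.
\begin{itemize}
\item Rescale $B_{r_y}(y)$ to unit size and set $\tilde u=(u-u(y))/r_y$. By your gradient estimate $|\tilde u|\le C$, and $|\tilde\Delta\tilde u|=r_y|\Delta u|\le 1$.
\item In harmonic coordinates the metric is $C^{1,\beta}$, because of the curvature bound built into $r_y$. So the $W^{2,q}$ estimate for the nondivergence equation, with any $q<\infty$, applies.
\item Bounded Christoffel symbols, together with the gradient bound, let you pass from $\partial^2\tilde u$ to $\tilde\nabla^2\tilde u$.
\item Undoing the scaling gives
\[
\fint_{B_{r_y/2}(y)}|\nabla^2 u|^\alpha\le C r_y^{-\alpha}.
\]
\end{itemize}
This integral bound is all your summation step uses, since it yields
\[
\int_{B_1(q)}|\nabla^2u|^\alpha\le C\sum_j r_j^{-\alpha}\Vol(B_{r_j}(y_j)),
\]
and the rest of your argument goes through unchanged. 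If $u$ were harmonic, as in the applications in Section 5, your Schauder step would be correct as stated.
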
 
 \subsection{Regularity of Ricci flows with scalar bounded curvature}
 In this section, we review some basic results of Ricci flows with scalar bounded curvature developed by Bamler and others. Let \( (M^n, (g_t)_{t \in [-2,0]}) \) be a Ricci flow on a compact  manifold that satisfies
\begin{enumerate}
\item[(i)] \( \nu[g_{-2}, 4] \geq -A \);
\item[(ii)] \( |\R| \leq A \) on \( M \times [-2,0] \).
\end{enumerate}
By Perelman's no local collapsing theorem, it is known that there exists $\rv=\rv(A)$ such that  for any $(x, t)\in M \times [-1, 0]$ and $r > 0$, we have $\Vol(B_{g(t)}(x,r))\ge \rv r^n.$ Recall the following pseudolocality theorem, see \cite{BZ15},\cite{CY}.

\begin{theorem}[Pseudolocality]
  Let \( (M^n, (g_t)_{t \in [-2,0]}) \) be a Ricci flow satisfying $(i)$ and $(ii)$ above and \( (x, t) \in M \times [-1,0] \), \( 0 < r < 1 \). Then for any $\vepsilon>0$ there exists $\delta(\vepsilon,n,A)>0$ such that if \[d_{GH}(B_{g(t)}(x,r),B_r(0^n))<\delta r,\] then 
  \[ |\Rm|_{g(t)}\le \left(\frac{\vepsilon} {r}\right)^2 \quad\text{on}\quad B_{g(t)}(x, r) \times [t-(r/\vepsilon )^2, t + ( r/\vepsilon )^2].
  \]

 \end{theorem}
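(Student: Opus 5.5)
The plan is a contradiction and compactness argument built on Perelman's pseudolocality, with the Gromov–Hausdorff control at time $t$ upgraded to a curvature bound on a parabolic neighbourhood. After parabolic rescaling by $r^{-2}$, we may assume $r=1$; this uses that hypotheses (i) and (ii) only improve under rescaling when $r<1$. Concretely, the rescaled scalar curvature bound is $Ar^2\le A$, and the entropy bound is $\nu\ge -A$ on a longer time interval. It therefore suffices to show the following: for every $\vepsilon>0$ there is $\delta>0$ such that $d_{GH}(B_{g(t)}(x,1),B_1(0^n))<\delta$ implies $|\Rm|\le \vepsilon^{-2}$ on $B_{g(t)}(x,1)\times[t-\vepsilon^{-2},t+\vepsilon^{-2}]$. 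Here the time interval is intersected with the domain of the flow, and the curvature scale in the statement is read as $(\vepsilon/r)^2$ or $(r/\vepsilon)^{-2}$ after normalisation.

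First we pass from Gromov–Hausdorff closeness to almost-Euclidean volume and an isoperimetric bound. Since $|\R|\le A$, Perelman's no local collapsing and Bamler's theory give the following consequence of small GH-distance to $B_1(0^n)$: the volume ratio $|B_{g(t)}(x,s)|/\omega_n s^n$ is at least $1-\eta$ for all $s\le 1/2$. This uses volume continuity under GH convergence for flows with bounded scalar curvature and entropy lower bound. It follows from Bamler's compactness theory [B18], in which the limits of time slices are metric spaces with $\mathcal H^n$ equal to the limit of Riemannian volume, with no volume collapse. Almost-maximal volume on all small balls then gives an almost-Euclidean isoperimetric inequality on $B(x,1/2)$, and this is the hypothesis of Perelman's pseudolocality theorem in its standard form.

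Second, we apply Perelman's pseudolocality. This yields $|\Rm|\le \alpha s^{-2}+ (\epsilon_0)^{-2}$ forward in time on $B(x,\epsilon_0)\times(t,t+\epsilon_0^2]$. To obtain two-sided control in time we use the bounded scalar curvature, following Bamler–Zhang [BZ15]. Since $|\partial_t d\Vol|=|\R|\,d\Vol\le A\, d\Vol$ and distances distort in a controlled way ([BZ15, Theorem 1.1]: $d_{t'}$ and $d_t$ are comparable at small scales), the almost-Euclidean volume condition persists on the backward interval $[t-\tau,t]$ with $\tau$ small. Applying forward pseudolocality from a slightly earlier time $t-\tau$ then gives curvature bounds across $t$, that is, on a two-sided interval. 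Finally we cover $B_{g(t)}(x,1)$ by smaller balls, each of which is still almost Euclidean because GH-closeness is hereditary to subballs up to a scale factor, and rescale back.

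The main obstacle is the statement's literal time window, $[t-(r/\vepsilon)^2,t+(r/\vepsilon)^2]$, which is long when $\vepsilon$ is small. I would argue it by contradiction. Take flows with $\delta_i\to 0$ and points where the curvature bound fails. Bamler's compactness [B18] gives a limit flow that is smooth away from a codimension-4 set and whose time slice at $t$ is $\mathbb R^n$. The limit is then static flat Euclidean space on the whole interval, by rigidity of Ricci flows with bounded scalar curvature and Euclidean time slice (equality in the entropy). Smooth convergence on the regular part, which here is everything, contradicts the assumed curvature blow-up. Establishing that the limit has no singular times, via backward uniqueness and Bamler's $\mathbb F$-convergence, is the step I expect to be hardest.
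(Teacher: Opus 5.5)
The paper gives no proof of this statement; it quotes it from \cite{BZ15}, \cite{CY}. Your proposal has a genuine gap at its core, and its last step aims at a claim that is false as written.

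\textbf{Your second step is circular.} Gromov--Hausdorff closeness of $B_{g(t)}(x,1)$ to $B_1(0^n)$ with error $\delta$ says nothing about balls of radius $\lesssim\delta$. Volume continuity in Bamler's compactness theory only gives $|B_{g(t)}(y,s)|\ge(1-\eta)\omega_n s^n$ at scales $s$ bounded below independently of the sequence.

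With only $|\R|\le A$ there is no Bishop--Gromov monotonicity to push this down to all $s\le 1/2$. There is also no Colding-type argument that turns almost-maximal volume into the almost-Euclidean isoperimetric inequality on every $\Omega\subset B(x,1/2)$, which is what Perelman's pseudolocality requires.

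The implication you need is: almost Euclidean at one scale at time $t$ implies a curvature radius $\ge\epsilon r$ at time $t$. That is exactly the $\epsilon$-regularity theorem for flows with bounded scalar curvature (\cite{BZ15}, and Bamler's structure theory \cite{B18}). It is the real content of the statement, and your Step 2 assumes it.

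Your backward step has the same defect. Distance-distortion estimates compare $d_{t-\tau}$ and $d_t$ only up to errors that are not small relative to $s$ once $s\lesssim\sqrt{\tau}$. So the small-scale hypothesis for forward pseudolocality at time $t-\tau$ is again unavailable. Backward control is what the backward pseudolocality theorem of \cite{BZ15} provides: it starts from a curvature bound at time $t$ and is proved with heat-kernel estimates, not by rerunning Perelman from an earlier time.

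\textbf{Your last step cannot succeed, because the long-window statement is false}, even after intersecting $[t-(r/\vepsilon)^2,t+(r/\vepsilon)^2]$ with $[-2,0]$. Here is a counterexample.
\begin{itemize}
\item On a flat torus, let $g(-2)$ be an $\eta$-small $C^0$ perturbation of the flat metric oscillating at wavelength $\sim\sqrt{\eta}$, with $|\Rm|_{g(-2)}\approx 10$ somewhere in every ball of radius $\gg\sqrt{\eta}$.
\item The flow stays $C^0$-close to flat, and $|\R|$ and $\nu[g(-2),4]$ stay uniformly bounded.
\item By the smoothing estimates for $C^0$-small perturbations of flat metrics (Simon, Koch--Lamm), $|\Rm|_{g(-1)}\lesssim\eta$.
\item Take $t=-1$ and $r=\vepsilon$. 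For $\eta\ll\delta$ the hypothesis holds, yet the conclusion would force $|\Rm|\le 1$ at time $-2$ on $B_{g(-1)}(x,\vepsilon)$.
\end{itemize}

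In your contradiction argument the limit flow is indeed flat. What fails is smooth convergence over the whole window. Bamler's compactness is a statement about time slices, with smooth convergence (via backward pseudolocality) only on parabolic neighbourhoods of size comparable to the curvature radius. It says nothing near the initial time.

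The statement should be read in the standard form of the cited sources: $|\Rm|\le(\vepsilon r)^{-2}$ on a two-sided parabolic neighbourhood of size $\sim\vepsilon r$ around $(x,t)$. That version follows by combining three results:
\begin{itemize}
\item the $\epsilon$-regularity at time $t$;
\item the backward pseudolocality of \cite{BZ15};
\item Perelman's forward pseudolocality in its curvature-bounded form, which applies once you have a curvature bound and non-collapsing at time $t$.
\end{itemize}
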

 Let us now recall compactness and partial regularity result for sequences of Ricci flows with bounded scalar curvature due to Bamler.
 \begin{theorem}[Compactness of Ricci flows with bounded scalar curvature]
  Let \((M^n_i, (g_i^t)_{t \in [-2,0]} ,q_i)\) be a sequence of pointed Ricci flows that satisfying $(i)$ and $(ii)$ above.  Then after passing to a subsequence,
there is an \( n \)-dimensional  pointed singular space \( (\mathcal{X}, q_\infty) \),  such that the sequence of pointed Riemannian manifolds \( (M_i, g_0^i, q_i) \)
converges to \( (\mathcal{X}, q_\infty) \). Moreover, the singular space \( \mathcal{X} \) has singularities of codimension 4.
 \end{theorem}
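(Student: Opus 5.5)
This is Bamler's compactness theorem; the plan is to assemble it from results already stated rather than reprove it from scratch. The statement has three parts: subconvergence of the time-zero slices, the structure of the limit as a singular space, and the codimension-four property of the singular set.

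\textbf{Step 1: Gromov--Hausdorff subconvergence.} By Perelman's no local collapsing, the slices $(M_i,g^0_i,q_i)$ are uniformly non-collapsed, $\Vol(B_{g_i(0)}(x,r))\ge \rv(A) r^n$. For the upper volume bound and the doubling property I would use the bounded scalar curvature together with Bamler's heat-kernel/distance distortion estimates, which give $\Vol(B(x,r))\le C(A)r^n$ for $r\le 1$. Bounded-scalar-curvature flows satisfy a uniform $L^{p}$ Ricci bound locally for some $p>n/2$ (Bamler's a priori $L^{4-\vepsilon}$ curvature radius bound suffices when $n\le7$). In that case Theorem \ref{precompactness} gives the pointed GH-subconvergence directly. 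Otherwise, uniform doubling plus Gromov's theorem gives it.

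\textbf{Step 2: the regular set and its smooth structure.} Define the regular part $\mathcal{R}\subset\mathcal{X}$ as the set of points having a neighborhood that is $\delta r$-GH close to a Euclidean ball. By the pseudolocality theorem, curvature bounds $|\Rm|\le (\vepsilon/r)^2$ hold on a parabolic neighbourhood. Shi's estimates then give all higher derivative bounds. Cheeger--Gromov convergence therefore holds on these regions, so $\mathcal{R}$ is an open smooth Riemannian manifold whose length metric is compatible with $d$. This establishes the singular-space structure of $(\mathcal{X},q_\infty)$.

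\textbf{Step 3: codimension four of the singular set $\mathcal{S}=\mathcal{X}\setminus\mathcal{R}$.} This is the main obstacle. The plan is the following.
\begin{enumerate}
\item[(a)] Take tangent cones. Volume-cone rigidity (Theorem \ref{volume cone}) holds in the integral-curvature setting, or alternatively via Bamler's monotone pointed Nash entropy. Either way, tangent cones are metric cones, and the Cheeger--Colding/Naber stratification applies. This gives $\dim \mathcal{S}^k\le k$.
\item[(b)] Show there are no codimension-two singularities. If a tangent cone were $\mathbb{R}^{n-2}\times C(S^1_\beta)$, I would use the harmonic-map/Nash-entropy argument to show the flow near such points is close to a static cone flow. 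A static cone flow of this form must be flat, giving a contradiction.
\item[(c)] Show there are no codimension-three singularities. The link would be $\mathbb{R}^{n-3}\times C(Z^2)$ with $Z$ a smooth surface of curvature $\ge 1$ away from singular points. Codimension-two regularity and the Ricci-flat nature of the cone force $Z=S^2$ round, hence Euclidean. This is exactly the $\vepsilon$-regularity recorded as (A3).
\end{enumerate}
The hard part is proving that blow-up limits are Ricci flat, or static, using the bounded scalar curvature. I would attempt this via the evolution $\partial_t \R=\Delta \R+2|\Ric|^2$: integrating against the heat kernel shows that $\int|\Ric|^2$ over parabolic neighbourhoods scales to zero under blow-up. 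Combined with the local smooth convergence from Step 2, this forces the limits to be Ricci flat on their regular part. Anderson/Cheeger--Naber-type codimension-four arguments then finish the proof.
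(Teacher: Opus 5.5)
The paper does not prove this statement; it quotes it from Bamler \cite{B18}, so your outline has to stand on its own. Step 1 works through your fallback (two-sided volume bounds plus Gromov's theorem). Pseudolocality and Shi for the regular part, and Ricci-flatness of blow-ups (which also follows at once from Proposition \ref{Improved Ricci curvature estimate}), are the right ingredients.

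\textbf{Step 3(a) is not available.} Theorem \ref{volume cone} and the Cheeger--Colding/Naber machinery need $\int|\Ric|^p$ with $p>n/2$ on the time-slices. Your only source for that is Bamler's $L^{4-\vepsilon}$ curvature-radius bound, which comes out of the same partial-regularity theory in \cite{B18} that you are trying to prove. Even granting it, it gives $p>n/2$ only for $n\le 7$. Without a Ricci lower bound the slices have no volume monotonicity, so ``tangent cones are metric cones'' and the stratification do not come for free. What has to replace them is comparison geometry on the Ricci-flat blow-up limits. That requires the limits to be genuine singular spaces: $\mathcal{R}$ dense, and almost every pair of regular points joined by a minimizing geodesic inside $\mathcal{R}$. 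Otherwise Bishop--Gromov fails, as it does for a flat cone of angle $>2\pi$. This is exactly what Step 2 asserts without argument when it says the length metric of $\mathcal{R}$ is compatible with $d$. The Nash-entropy alternative relies on much later work and produces parabolic tangent flows, not tangent cones of the time-zero slice, and you give no way to pass from one to the other.

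\textbf{The more serious gap is excluding codimension-two and codimension-three singularities,} which is the real content of the theorem.

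In (b), $\mathbb{R}^{n-2}\times C(S^1_\beta)$ with $\beta<2\pi$ is flat on its regular part and is a perfectly good static Ricci-flat cone. Knowing that the flow is close to a static cone therefore gives no contradiction. What must be shown is that limits of smooth flows with bounded scalar curvature force $\beta=2\pi$. In the Einstein case this is done on two-dimensional slices of a splitting map, by a Gauss--Bonnet type argument going back to Cheeger--Colding--Tian. That argument uses the two-sided Ricci bound on the approximating manifolds, so it does not transfer.

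In (c), Ricci-flatness of $C(Z^2)$ with $Z$ smooth forces $Z$ to have curvature exactly $1$, so $Z=S^2$ or $\mathbb{RP}^2$. You never exclude $\mathbb{R}^{n-3}\times C(\mathbb{RP}^2)$. Cheeger--Naber do this with the slicing theorem plus the fact that $\mathbb{RP}^2$ bounds no compact $3$-manifold. The slicing theorem again rests on Hessian estimates for splitting maps that come from Ricci bounds on the approximating sequence.

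Nor can (A3) serve as an input here: in this paper, (A3) for Ricci flows is deduced from the very codimension-four statement you are proving. The closing appeal to ``Anderson/Cheeger--Naber-type arguments'' therefore hides precisely the steps that must be redone for bounded scalar curvature. As it stands, the proposal is a roadmap rather than a proof.
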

Now we introduce Bamler's $L^{4-\epsilon}$ estimate for curvature radius. More precisely, define the curvature radius  \[r_{\Rm}(x,t):=\sup\{r>0:|\Rm|_{g(t)}\le r^{-2}\text{ on $B_{g(t)}(x,r)$}\}.\] 
\begin{theorem}
 Let \( (M^n, (g_t)_{t \in [-2,0]}) \) be a Ricci flow satisfying $(i)$ and $(ii)$ above and \( (x, t) \in M \times [-1,0] \). For any $\epsilon>0$, there exists $C(n,A,\epsilon)>0$ such that \[
\int_{B_{g(t)}(x,1)}\left( r_{\Rm}(\cdot,t)\right)^{-(4-\epsilon)}d\Vol_{g(t)}\le C(n,A,\epsilon).
\]
\end{theorem}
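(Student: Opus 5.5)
The plan is to follow the Cheeger–Naber quantitative stratification scheme, in the same form as the proof of Theorem \ref{Lq estimate}. The required monotone quantity and the $\vepsilon$-regularity both come from the Ricci flow structure: Perelman's entropy together with the scalar curvature bound, and the pseudolocality theorem.

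\textbf{Step 1: inclusion into a quantitative stratum.} I will show that the small-curvature-radius set lies in a quantitative stratum,
\[
\{y\in B_{g(t)}(x,1): r_{\Rm}(y,t)\le r\}\subset S^{n-4}_{\eta,r},
\]
where $\eta=\eta(n,A)>0$ is fixed. The argument has two parts.
\begin{enumerate}
\item[(a)] Suppose $B_{g(t)}(y,s)$ is $(n-3,\eta)$-symmetric with $s\ge r$. Then it is $\delta s$-close to $\mathbb{R}^{n-3}\times C(Z)$. The compactness theorem says that limits of such flows are singular spaces with singularities of codimension $4$, and any tangent cone splitting $\mathbb{R}^{n-3}$ must then be $\mathbb{R}^n$. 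A contradiction argument therefore makes $B_{g(t)}(y,s/2)$ $\delta s$-GH close to a Euclidean ball.
\item[(b)] Pseudolocality then gives $|\Rm|\le (\vepsilon/s)^2$ on that ball, so $r_{\Rm}(y,t)> s/4$.
\end{enumerate}
After adjusting constants, the contrapositive gives the inclusion above.

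\textbf{Step 2: a monotone quantity for conical scales.} For the covering argument I need a quantity whose drop is summable across dyadic scales and whose smallness forces conical structure. I would use the pointed Nash entropy $\mathcal{N}_{y,t}(\tau)$. It is nonincreasing in $\tau$, and it is bounded below by $-C(A)$ for $\tau\le 1$ by the entropy assumption and the scalar curvature bound. It is also bounded above by $0$. Hence at each point, $|\mathcal N_{y,t}(r_{k+1}^2)-\mathcal N_{y,t}(r_k^2)|>\delta$ holds for at most $N(n,A,\delta)$ of the dyadic scales $r_k$.

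The key input is a contradiction/compactness argument based on Bamler's compactness theory. An almost constant Nash entropy on $[r^2/4,4r^2]$ should force the flow near $(y,t)$ to be almost a gradient shrinking soliton at scale $r$. Because $|\R|\le A$ and $r^2 A\to0$, the soliton potential has vanishing Hessian trace in the limit. The limiting soliton is then a static metric cone, so $B_{g(t)}(y,r)$ is $(0,\eta')$-symmetric.

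\textbf{Step 3: cone splitting and the volume estimate.} Combine Step 2 with the quantitative cone splitting principle, which holds on the limit singular spaces and so transfers by contradiction. The Cheeger–Naber covering argument, run on decreasing collections of balls on which the entropy pinches at all but boundedly many scales, then gives
\[
\Vol_{g(t)}\bigl(T_r(S^{n-4}_{\eta,r})\cap B_{g(t)}(x,1)\bigr)\le C(n,A,\epsilon)\,r^{4-\epsilon/2}.
\]
The non-collapsing from Perelman's theorem controls the ball counts.

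\textbf{Step 4: dyadic summation.} Decompose by dyadic values of $r_{\Rm}$ and apply Steps 1 and 3:
\[
\int_{B_{g(t)}(x,1)} r_{\Rm}^{-(4-\epsilon)}\le C+\sum_{\beta\ge0}2^{(\beta+1)(4-\epsilon)}\Vol\{r_{\Rm}\le 2^{-\beta}\}\le C+C\sum_\beta 2^{-\beta\epsilon/2}\le C(n,A,\epsilon).
\]
The points with $r_{\Rm}\ge1$ are handled by the volume upper bound on unit balls. That bound follows from the scalar curvature bound and the $L^p$-type control, or from Bamler's volume bounds.

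\textbf{Main obstacle.} The hardest step is Step 2: showing that pinching of the Nash entropy at a single time slice forces metric-cone structure at that scale. This requires Bamler's identification of limits as singular spaces with cone tangent structure, and uses bounded scalar curvature in an essential way to kill the time-dependence of the soliton. I would first try a direct contradiction argument through the compactness theorem, and fall back on Bamler's heat kernel estimates if needed.
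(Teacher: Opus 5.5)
The paper gives no proof of this statement. It is imported from Bamler \cite{B18}, so your sketch has to stand on its own.

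Steps 1 and 4 are fine. Step 1 is exactly the pseudolocality-plus-codimension-four mechanism by which the paper verifies (A3). Step 4 is the dyadic summation used in the proof of Theorem \ref{Lq estimate}.

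The genuine gap is Step 2, and with it the cone splitting in Step 3. You assert, but do not prove, that pinching of $\mathcal{N}_{y,t}$ on $[r^2/4,4r^2]$ makes $B_{g(t)}(y,r)$ close to a metric cone. That claim is essentially the whole content of the theorem.

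The route you propose, a direct contradiction argument through the compactness theorem, does not work as stated. That theorem only gives Gromov--Hausdorff convergence of a single time slice. It says nothing about the conjugate heat kernels based at $(y_i,t_i)$, so $\mathcal{N}_{y_i,t_i}(\tau)$ need not converge, and you cannot conclude that limits of entropy-pinched points are vertices of anything. Three things are missing:
\begin{enumerate}
\item[(i)] Convergence of conjugate heat kernels and Nash entropies along the sequence, with no mass lost near the singular set. This needs Gaussian heat kernel bounds plus a compactness theory for the whole flow, as in Bamler's later $\mathbb{F}$-convergence theory, not slice-wise convergence.
\item[(ii)] The rigidity statement ``constant Nash entropy implies gradient shrinking soliton'' on a possibly singular limit. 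This requires integrating by parts across the codimension-four singular set.
\item[(iii)] A reason why the cone structure, which the soliton gives on the backward interval $[t-4r^2,t-r^2/4]$, holds on the time-$t$ slice where you need it.
\end{enumerate}
For (iii) the limit must be static near $t$. That comes from the scale-invariant smallness $r^{2-n}\iint_{P_r}|\Ric|^2\le C(n,A)\,r^2$ on parabolic balls $P_r$ of size $r$. You get this by integrating $\partial_t\R=\Delta\R+2|\Ric|^2$ against suitable space-time cutoffs, and you still need backward pseudolocality in addition. The same package, applied at two base points at once, is what a Nash-entropy cone-splitting lemma in Step 3 requires.

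There are also two smaller corrections.

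First, the soliton heuristic is wrong as written. Tracing $\Ric+\nabla^2 f=\frac{1}{2\tau}g$ with $\R\equiv 0$ gives $\Delta f=\frac{n}{2\tau}$, not $0$. The correct mechanism is the shrinker identity $\Delta\R-\langle\nabla f,\nabla\R\rangle=\frac{\R}{\tau}-2|\Ric|^2$. With $\R\equiv0$ it forces $\Ric\equiv0$, hence $\nabla^2 f=\frac{1}{2\tau}g$. That is what makes $f$, up to a constant, equal to $d(o,\cdot)^2/(4\tau)$ for a vertex $o$, and so produces the cone.

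Second, the volume upper bound needed to count balls cannot come from ``$L^p$-type control''. A flow with bounded scalar curvature has no a priori $L^p$ Ricci bound with $p>n/2$. The $L^{4-\epsilon}$ Ricci bound is a consequence of the very estimate you are proving, and it is too weak once $n\ge 8$. Use the volume upper bound available for such flows instead (e.g.\ in \cite{BZ15}).

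This is also why the paper's volume-ratio machinery (Lemma \ref{summability of volume drop}) cannot be run on the time slices. Replacing the volume ratio by the Nash entropy is the right instinct, but the step that makes the replacement work is deferred rather than supplied.
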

Thanks to Proposition \ref{Improved Ricci curvature estimate}, we have $|\Rm(x,t)|\le r^{-2}_{\Rm}(x,t)$ and $|\Ric(x,t)|^2\le C_0A\,r^{-2}_{\Rm}(x,t)$, it is then immediate that \[\int_{B_{g(t)}(x,1)} |\Rm(\cdot,t)| ^{2-\epsilon}d\Vol_{g(t)}\le C(n,A,\epsilon),
\]and 
\[\int_{B_{g(t)}(x,1)} |\Ric(\cdot,t)| ^{4-\epsilon}d\Vol_{g(t)}\le C(n,A,\epsilon).
\]
 Finally, we recall the following improved estimate for Ricci curvature on Ricci flows with bounded scalar curvature, which roughly gives the curvature inequality $|\Ric|\le \sqrt{|\R||\Rm|}$, see \cite{W11},\cite{BZ15},\cite{B18}. Combined with the above pseudolocality theorem and codimension four of singularity result, we observe that assumption (A3) on Ricci flows with bounded scalar curvature is  satisfied automatically. 
 
 \begin{proposition}[Improved Ricci curvature estimate]\label{Improved Ricci curvature estimate}
There are constants \( C_0, C_1, \ldots < \infty \) that only depend on the dimension \( n \) such that the following holds: Let \( (M^n, (g_t)_{t \in [-2,0]}) \) be a Ricci flow on compact manifold $M$ and \( (x, t) \in M \times [-1,0] \), \( 0 < r < 1 \). Assume that \( |\Rm| \le r^{-2} \) and \( |\R| \leq A\) on the parabolic neighborhood \( P(x, t, r, -r^2)=B_{g(t)}(x,r)\times[t-r^2,t] \). Then for all \( m \geq 0 \),
\[
   \sup_{B_{g(t)}(x,r/2)}|\nabla^m \Ric|\le C_m A^{1/2} r^{-m-1}.
\]   
\end{proposition}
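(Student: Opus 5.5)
We prove the estimate by localizing to the parabolic neighborhood where the curvature is controlled, passing to unit scale, and bounding the derivatives of $\Ric$ through the evolution equation it satisfies. Since the scalar curvature is scale-dependent, we first rescale. Set $\tilde g(s)=r^{-2}g(t+r^2 s)$ for $s\in[-1,0]$. Then $|\Rm_{\tilde g}|\le 1$ on $B_{\tilde g(0)}(x,1)\times[-1,0]$ and $|\R_{\tilde g}|\le Ar^2=:a$. Because $\nabla^m\Ric$ scales like $r^{-m-2}$, the claim becomes
\[\sup_{B_{\tilde g(0)}(x,1/2)}|\nabla^m\Ric_{\tilde g}|\le C_m (Ar^2)^{1/2}\, r^{-m-1}\cdot r^{m+2}=C_m a^{1/2},\]
after also using $r<1$, so $r\le 1$ trivially helps. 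The problem is thus reduced to the unit-scale statement: if $|\Rm|\le1$ and $|\R|\le a$ on a unit parabolic ball, then $|\nabla^m\Ric|\le C_m a^{1/2}$ on the half ball at the final time. (We may assume $a\le 1$, since otherwise $|\nabla^m\Ric|\le C_m\le C_m a^{1/2}$ already follows from Shi's estimates.)

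The key input is the evolution equation of the scalar curvature,
\[\partial_s \R=\Delta\R+2|\Ric|^2,\]
which turns integral control of $|\Ric|^2$ into control by $\R$. Fix a cutoff $\phi$ supported in the unit parabolic ball, equal to $1$ on the half ball, with bounded derivatives. Shi's local derivative estimates apply because $|\Rm|\le1$, and give uniform bounds on all $\nabla^k\Rm$, hence on $\partial_s\phi$ and $\Delta\phi$, on slightly smaller sets. Multiplying the evolution equation by $\phi$ and integrating over spacetime, then integrating by parts, we get
\[2\int\!\!\int\phi|\Ric|^2\le \int\phi\R\Big|_{s=0}-\int\phi\R\Big|_{s=-1}+\int\!\!\int \R\,(|\Delta\phi|+|\partial_s\phi|)+C\int\!\!\int|\R|\phi\le C a.\]
Here the volume of the unit ball is bounded above by the curvature bound. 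This gives the spacetime bound $\|\Ric\|_{L^2}\le Ca^{1/2}$ on a smaller parabolic region.

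We then upgrade this integral bound to pointwise bounds on all derivatives. The Ricci tensor satisfies
\[\partial_s\Ric=\Delta_L\Ric=\Delta\Ric+\Rm*\Ric.\]
This is a linear parabolic system whose coefficients are smooth with all derivatives bounded by Shi, and the metrics in harmonic or exponential coordinates are uniformly equivalent and $C^k$-controlled. Standard local parabolic estimates for linear systems then apply: local $L^\infty$ bounds from Moser iteration or Schauder theory, followed by interior derivative estimates. These give
\[\sup_{B(x,1/2)\times\{0\}}|\nabla^m\Ric|\le C_m\|\Ric\|_{L^2(P')}\le C_m a^{1/2}.\]
Scaling back proves the proposition.

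The main obstacle is the first step. The pointwise bound on $|\R|$ alone does not control $|\Ric|$ linearly; what we actually gain is the square root, and it comes only through the integrated identity for $|\Ric|^2$, so it must be handled carefully with the cutoff and shrinking domains. Some care is also needed with a time-dependent cutoff, since the balls are taken with respect to $g(t)$. For this we use the fact that, under $|\Rm|\le1$, the metrics $g(s)$ are uniformly equivalent on the time interval, so a fixed spatial cutoff built from $\tilde g(0)$ works.
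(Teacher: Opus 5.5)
Your proposal is correct and is essentially the standard argument behind this proposition, which the paper does not prove itself but quotes from the literature (Bamler--Zhang, Bamler). The steps match: rescale to unit scale, integrate $\partial_s\R=\Delta\R+2|\Ric|^2$ against a cutoff to get $\iint\phi|\Ric|^2\le Ca$, then upgrade to $|\nabla^m\Ric|\le C_m a^{1/2}$ by linear parabolic estimates for $\partial_s\Ric=\Delta_L\Ric$.

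Two small repairs are needed. First, the rescaling gives exactly $A^{1/2}r^{-m-1}\cdot r^{m+2}=(Ar^2)^{1/2}=a^{1/2}$ with no appeal to $r<1$; your display carries an extra factor of $r$. Second, no non-collapsing is assumed, so keep the spacetime integrals normalized by volume (your right-hand side is really $\le Ca\Vol(B)$), or run both steps on the lift of the flow to $T_xM$ via $\exp_x$; this makes the $L^\infty$--$L^2$ constant depend only on $n$.
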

 
\section{Neck regions}
The central geometric objects in the proof of $L^2$ estimate are \emph{neck regions}.  They are open sets that, on many scales, close to cone spaces $\mathbb{R}^{n-4}\times C(Z^3)$. Let us briefly recall the definition of neck region here, for a detialed explanation of neck region, see \cite{JN} and \cite{CJN}. 
\begin{definition}[$(\delta,\tau)$-neck region] \label{def:neck-region}
Let \((M^n,g)\) be a Riemannian manifold.
A subset \(\N = B_2(q)\setminus \bigcup_{x\in\C}\overline{B}_{r_x}(x)\) is called a $(\delta,\tau)$-neck region if there exist a closed set \(\C = \C_0\cup\C_+\) (with \(q\in\C\)), a radius function \(r:\C_+\to (0,\infty)\) (and \(r_x\equiv 0\) on \(\C_0\)), and a constant \(\tau>0\) such that:
\begin{enumerate}
    \item[$(n1)$] The balls \(\{B_{\tau r_x}(x)\}_{x\in\C_+}\) are pairwise disjoint.
    \item [$(n2)$] For every \(x\in\C\) and every \(r_x\le s\le 1\) there exists a pointed $\delta s$-Gromov--Hausdorff approximation
        \[
        \iota_{x,s}: B_{\delta^{-1}s}(0^{n-4},y_c)\subset \mathbb{R}^{n-4}\times C(S^3/\Gamma) \longrightarrow B_{\delta^{-1}s}(x),
        \]
        where \(\Gamma\subset O(4)\) is nontrivial and \(\iota_{x,s}(0^{n-4},y_c)=x\).
    \item [$(n3)$] For each \(x\in\C\) and \(r_x\le s\) with \(B_{2s}(x)\subset B_2(q)\), we have $
        \mathcal{L}_{x,s}:=\iota_{x,s}\bigl(B_s(0^{n-4})\times\{y_c\}\bigr) \subset B_{\tau s}(\mathcal{C})$ and $
        \C\cap B_s(x) \subset B_{\delta s}(\mathcal{L}_{x,s}).$
    \item [$(n4)$] The radius function is a Lipschitz function with $
        |\Lip r_x| := \sup_{y,z\in\C}\frac{|r_y-r_z|}{d(y,z)} \le \delta.$
\end{enumerate}
For \(\tau\le s\) we denote ``enlarge'' neck region \(\N_s = B_2(q)\setminus \bigcup_{x\in\C}\overline{B}_{s\cdot r_x}(x)\). 
We also say $\N \subset B_{2R}(q)$ is a neck region for some $R > 0$ if
on the rescaled ball $B_2(q)=R^{-1}B_{2R}(q)$ with $r_x$ becomes $r_x/R$, we have that
$\N$ satisfies the above. To each neck region we associate a \emph{packing measure}:
\[
\mu_\N := \sum_{x\in\C_+} r_x^{n-4}\,\delta_x \;+\; \Haus^{n-4}\llcorner\C_0 .
\]
\end{definition}
 \begin{remark}\label{remark_neck}
 Since we will only focus on smooth manifolds, the set $\C_0$ is empty. 
 
 As pointed by Lemma 3.11 in \cite{JN}, the harmonic radius at a point in the neck region is effectively equivalent to the distance of that point to the effective singular set $\cC$, i.e. $r_h(y)\approx d(y,\cC)$ for $y\in \cN_{10^{-6}}$ up to a constant.  
\end{remark}

Next we can build a good cutoff function associated with a neck region.
\begin{lemma}[Good cutoff functions]\label{Good cutoff fuctions}
Let \( (M^n, g) \) satisfy assumptions $(A1),(A2)$, and \( \cN \subseteq B_2(q) \) a \( (\delta, \tau) \)-neck region.  
Then for \( \tau < \tau(n) \) and \( \delta < \delta(n, \rv, \tau) \) there exists a cutoff function \( \phi_\cN  = \phi_1 \cdot \phi_2: B_2(q) \to [0, 1] \) such that  

\begin{enumerate}
    \item[$(1)$] \( \phi_1(y) \equiv  1 \) if \( y \in B_{18/10}  \) with \( \supp(\phi_1) \subseteq B_{19/10} \).  
    \item[$(2)$] \( \phi_2(y) \equiv 1 \) if \( y \in \cN_{10^{-3}} \) with \( \supp(\phi_2)  \subseteq \cN_{10^{-4}} \).  
    \item[$(3)$] \( |\nabla \phi_1|, |\Delta \phi_1| \leq C(n) \).  
    \item[$(4)$] \( \supp(|\nabla \phi_2|) \cap B_{19/10} \subseteq A_{10^{-4}r_x, 10^{-3}r_x}(\mathcal{C}) \) with \( r_x |\nabla \phi_2|, r_x^2 |\nabla^2 \phi_2| < C(n, \tau, \rv) \) in each \( B_{r_x}(x) \).
\end{enumerate}
\end{lemma}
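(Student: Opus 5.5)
We build $\phi_1$ and $\phi_2$ separately and then take $\phi_\cN=\phi_1\phi_2$. Properties (1)–(4) concern only one factor each, so the product inherits them.

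\emph{The factor $\phi_1$.} We want a cutoff between $B_{18/10}(q)$ and $B_{19/10}(q)$ with $|\nabla\phi_1|,|\Delta\phi_1|\le C(n)$. A Lipschitz function of $d(q,\cdot)$ gives the gradient bound but no Laplacian bound. So we take $\phi_1=f\circ h$, where $h$ is a regularized distance function with $|h-d(q,\cdot)|$ small, $|\nabla h|\le C$ and $|\Delta h|\le C$ on the annulus $A_{17/10,2}(q)$.

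Because $\cN$ is a neck region, $B_2(q)$ is $\delta$-GH close to $B_2(0^{n-4},y_c)\subset\mathbb{R}^{n-4}\times C(S^3/\Gamma)$ by (n2) at $x=q$, $s=1$. The model has a conical distance function $\rho$ with $\Delta\rho^2=2n$. On $M$ we solve $\Delta b^2=2n$ on the annulus with boundary data $d(q,\cdot)^2$. Here (A1) is used with $p>n/2$, after rescaling so that $\int|\Ric|^p$ is small, together with the almost volume cone theorem (Theorem \ref{volume cone}). The almost-rigidity estimates for integral Ricci bounds (\cite{PW},\cite{TZ}) then give $b^2\approx d^2$ in $C^0$ and $|\nabla b|\le C(n)$; this is the Cheeger–Colding construction transplanted to the $L^p$ setting. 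We then set $\phi_1=f(b^2)$ with $f$ a fixed smooth one-variable cutoff. Its Laplacian is $f''|\nabla b^2|^2+2nf'$, which is bounded by $C(n)$.

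\emph{The factor $\phi_2$.} For each $x\in\C_+$ we want a local cutoff $\psi_x$ that vanishes on $B_{10^{-4}r_x}(x)$ and equals $1$ outside $B_{10^{-3}r_x}(x)$, with $r_x|\nabla\psi_x|$ and $r_x^2|\nabla^2\psi_x|$ bounded. Since $|\nabla^2\psi_x|$ is required (not only $\Delta\psi_x$), the conical construction above is not enough. Instead we use the harmonic radius.

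By Remark \ref{remark_neck}, $r_h(y)\gtrsim d(y,\C)$ on $\cN_{10^{-6}}$. On the annulus $A_{10^{-5}r_x,10^{-2}r_x}(x)$, the points are at distance $\gtrsim r_x$ from $\C$. This uses (n1), (n4), and $\tau<\tau(n)$ small relative to $10^{-4}$, which force other centers $x'$ with $d(x,x')$ of order $r_x$ to have $r_{x'}\approx r_x$ and to be $\tau r_x$-separated. Hence the annulus is covered by $C(n,\tau,\rv)$ balls of radius $c\,r_x$ on which harmonic coordinates exist. In those coordinates $g$ is $W^{2,p}$-controlled, hence $C^{1,\beta}$ for $p>n$ and $C^{0,\beta}$ in general.

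We then glue Euclidean bump functions in harmonic coordinates with a partition of unity and set $\psi_x$ to be the resulting smoothing of $\chi(d(x,\cdot)/r_x)$. For the Hessian we write $\nabla^2\psi=\partial^2\psi-\Gamma\,\partial\psi$. The Christoffel symbols $\Gamma$ are only in $W^{1,p}\subset L^{q}$, so a pointwise bound $r_x^2|\nabla^2\psi|\le C$ is not immediate. Where needed, we upgrade it using the $\vepsilon$-regularity in (A3) at scale $r_x$, or else we accept an $L^p$-type bound, which suffices for the later integration by parts.

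Finally we set $\phi_2=\prod_x\psi_x$. This product is locally finite with multiplicity $\le C(n,\tau,\rv)$, by (n1) and volume non-collapsing. Its gradient and Hessian bounds on each $B_{r_x}(x)$ follow from the local bounds on each $\psi_x$ and comparability of the radii. The support property (4) holds because $\nabla\phi_2$ is supported in $\bigcup_x A_{10^{-4}r_x,10^{-3}r_x}(x)$, and the disjointness from $\C$ is ensured by (n3) together with $\delta$ small.

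The main obstacle is the Hessian bound on $\phi_2$ under only $L^p$ Ricci control. Proving that $r_h\approx d(\cdot,\C)$ holds quantitatively on the transition annuli, with constants independent of $\delta$, is the key step.
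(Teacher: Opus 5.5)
\paragraph{Gap: the Hessian bound in (4).} Your architecture matches the paper's. The paper's proof only points to Lemma 3.15 of \cite{JN} combined with the integral-Ricci cutoff functions of \cite{DWZ}. In both, $\phi_1$ comes from a Cheeger--Colding-type constant-Laplacian problem plus a gradient estimate under $L^p$ Ricci bounds; you do not need the conical structure at $q$ for this, since the standard annulus construction suffices. Likewise $\phi_2$ is a product of scale-$r_x$ local cutoffs whose overlap is controlled by (n1), (n4) and non-collapsing.

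However, you do not prove the pointwise bound $r_x^2|\nabla^2\phi_2|<C(n,\tau,\rv)$ in (4). Under (A1), (A2) the $W^{2,p}$ harmonic radius only gives $\Gamma\in W^{1,p}$, which is bounded only when $p>n$. Neither fallback closes this. Invoking (A3) uses a hypothesis the lemma does not have. An $L^p$-type Hessian bound is a strictly weaker conclusion than (4). As written, your argument gives (4) only for $p>n$, where $W^{2,p}\subset C^{1,\alpha}$.

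The step you call key, $r_h\approx d(\cdot,\C)$ on the transition annuli, is not where the difficulty lies. It follows from the almost-Euclidean $\vepsilon$-regularity of \cite{TZ} at scale $\approx 10^{-4}r_x$, but it controls a $W^{2,p}$ harmonic radius and so cannot give a pointwise Hessian bound. There is also a smaller inaccuracy. Points of $A_{10^{-5}r_x,10^{-2}r_x}(x)$ are not at distance $\gtrsim r_x$ from $\C$, because by (n3) the centers are $\tau r_x$-dense along $\mathcal{L}_{x,r_x}$. The bound $d(\cdot,\C)\gtrsim 10^{-4}r_x$ holds only on $\cN_{10^{-4}}$. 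That is fortunately the only place the local cutoffs must be controlled, since $\phi_2$ vanishes on the removed balls.

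\paragraph{How to repair it.} The paper's one-line citation does not really address this point either. In \cite{JN} a pointwise Hessian bound at scale $r_x$ is available because a two-sided Ricci bound gives $C^{1,\alpha}$ harmonic coordinates there. The Cheeger--Colding-type cutoffs available under integral Ricci bounds, as in \cite{DWZ}, come with gradient and Laplacian bounds, not Hessian bounds. There are two consistent repairs.
\begin{enumerate}
\item Add (A3) to the hypotheses. This is harmless, since every application (Lemma \ref{superconvexity}, Theorem \ref{Dini estimates}, Lemma \ref{hessian superconvexity}) assumes it. At $y\in\cN_{10^{-4}}$ the ball $B_{c\,d(y,\C)}(y)$ is GH-close to $\mathbb{R}^n=\mathbb{R}^{n-3}\times C(S^2)$. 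So (A3) gives $|\Rm|\le\vepsilon\, d(y,\C)^{-2}$ with derivative bounds, hence $C^{1,\alpha}$ harmonic coordinates at scale $\approx r_x$ and a genuine pointwise Hessian bound.
\item Weaken (4) to $r_x|\nabla\phi_2|+r_x^2|\Delta\phi_2|<C$. This is all that is used later: the error terms involve only $|\nabla\phi_\cN|$ and $|\Delta\phi_\cN|$, including after the integration by parts in the term $e_3$ of Theorem \ref{hessian Dini}. For each $x\in\C$, take an integral-Ricci Cheeger--Colding cutoff between $B_{10^{-4}r_x}(x)$ and $B_{10^{-3}r_x}(x)$, adjusting the radii slightly so that $\phi_2$ vanishes near $\bar B_{10^{-4}r_x}(x)$, and multiply these cutoffs. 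This needs no harmonic-radius input, and it is exactly what the $L^p$ cutoff machinery of \cite{DWZ} delivers.
\end{enumerate}
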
 
\begin{proof}
This follows from the same argument of Lemma 3.15 in \cite{JN} together with good cutoff functions under $L^p$ Ricci curvature bound constructed in Lemma 5.4 in \cite{DWZ}.\end{proof}
The following theorem constructs a Reifenberg type map for the effective singular set $\cC$ of neck regions, and plays a key rule to prove the lower Ahlfors regularity bound. As remarked in  \cite{JN}, one can check that the constructions of this theorem work for a general Reifenberg type set.
\begin{theorem}[\cite{JN}, Theorem 3.24]\label{Reifenberg map}
   let $\cN \equiv B_2(q) \setminus \bar{B}_{r_x}(\mathcal{C})$ be a $(\delta, \tau)$-neck region with $\max_{x \in \mathcal{C}} r_x \leq 10^{-3}$. For each $\epsilon > 0$, if $\tau < \tau(n)$ and $\delta \leq \delta(n, \epsilon)$, then there exists a map $\Phi : \mathcal{C} \cap B_{19/10}(q) \to \mathbb{R}^{n-4}$ with $\Phi(q) = 0^{n-4}$ such that

\begin{enumerate}
    \item[$(1)$] $(1 - \epsilon)d(x, y)^\epsilon \leq \dfrac{|\Phi(x) - \Phi(y)|}{d(x, y)} \leq d(x, y)^{-\epsilon};$
    
    \item[$(2)$] for each $x \in \mathcal{C} \cap B_{9/5}(q)$ and $r_x \leq r \leq 10^{-3}$, there exists $T_{x,r} = T \in \mathbb{R}^{(n-4) \times (n-4)}$ such that the map $T \circ \Phi : B_{30r}(x) \cap \mathcal{C} \to \mathbb{R}^{n-4}$ is an $\epsilon r$-isometry and $T_{x,10^{-3}} = \operatorname{Id};$
    
    \item[$(3)$] $|T_{x,r} \circ (T_{x,s})^{-1}| + |T_{x,s} \circ (T_{x,r})^{-1}| \leq C(n, \epsilon) \left(\dfrac{r}{s}\right)^\epsilon$ for all $r_x \leq s \leq r \leq 10^{-3}$ and $x \in \mathcal{C} \cap B_{9/5}(q);$
    \item[$(4)$] for any $x, y \in \mathcal{C} \cap B_{9/5}(q)$ and $10^{-3} \geq r \geq r_x$ with $d(x, y) \leq 10r$, there exists $O_{xy,r} \in O(n-4)$ such that $|T_{x,r} \circ (O_{xy,r}T_{y,r})^{-1} - \operatorname{Id}| \leq \epsilon;$
    
    \item[$(5)$] if $T_x := T_{x,r_x}$, then we have the covering
    \[
    B_{7/4}(0^{n-4}) \subseteq \bigcup_{x \in \mathcal{C} \cap B_{9/5}(q)} T_x^{-1} \overline{B _{r_x}}(\Phi(x)),
    \]
    where $T_x^{-1} \overline{B _{r_x}}(\Phi(x)) := \{T_x^{-1}v + \Phi(x) : v \in \overline{B _{r_x}}(0^{n-4})\} \subset \mathbb{R}^{n-4}$.
\end{enumerate}
\end{theorem}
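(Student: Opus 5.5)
The plan is to follow Jiang--Naber's construction of Theorem 3.24, which uses only the metric Reifenberg-type properties of $\cC$ in $(n2)$--$(n4)$ and never the curvature assumptions. So the only work is to check that those properties hold in our setting and then run the same inductive construction.

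\emph{Setup.} Fix the dyadic scales $r_j=10^{-3}2^{-j}$. For each $j$, choose a maximal $r_j$-separated subset $\{x_{j,a}\}$ of $\cC\cap B_{19/10}(q)$. By $(n2)$ and $(n3)$, at each such point and scale the set $\cC\cap B_{40 r_j}(x_{j,a})$ lies within $\delta r_j$ of $\mathcal{L}_{x_{j,a},r_j}$. Conversely, $\mathcal{L}_{x_{j,a},r_j}$ lies within $\tau r_j$ of $\cC$ as long as $r_j\ge r_{x}$. Composing $\iota_{x,s}^{-1}$ with the projection to $\mathbb{R}^{n-4}$ therefore gives $\delta' r_j$-GH maps $f_{j,a}:\cC\cap B_{40r_j}(x_{j,a})\to \mathbb{R}^{n-4}$, with $\delta'\to0$ as $\delta,\tau\to0$.

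\emph{Gluing.} On overlaps of the balls $B_{40r_j}(x_{j,a})$, two such maps differ by an almost-rigid motion, by a standard GH-approximation argument in $\mathbb{R}^{n-4}$. Using a partition of unity subordinate to the cover by these balls, I glue the $f_{j,a}$ into maps $\Phi_j$, inductively, so that
\[
\Phi_{j+1}=\Phi_j+O(\delta' r_j)
\]
and $\Phi_{j+1}$ agrees with $A_{j,a}\circ\Phi_j$ on $B_{30r_{j+1}}(x_{j,a})$, up to $\delta' r_j$, for some linear $A_{j,a}$ with $|A_{j,a}-\mathrm{Id}|\le C\delta'$.

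\emph{The maps $T$ and $\Phi$.} Set $T_{x,r_j}$ to be the inverse of the product of the $A$'s along the chain of scales from $10^{-3}$ down to $r_j$. Then $|T_{x,r}T_{x,s}^{-1}|\le (1+C\delta')^{\log_2(s/r)}\le C(r/s)^{-\epsilon}$ once $C\delta'\le \epsilon\ln 2$, which gives property (3). Define $\Phi=\lim_j\Phi_j$, stopping at scale $r_x$ near each $x$. The series converges by the $O(\delta' r_j)$ increments. Property (2) then follows from the construction, and property (4) follows by comparing the chains for $x$ and $y$ at the common scale.

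\emph{Bi-H\"older bound (1).} For $x,y\in\cC$ with $d(x,y)\approx r$, property (2) gives
\[
|T_{x,r}(\Phi(x)-\Phi(y))|=d(x,y)(1\pm\epsilon).
\]
Combining this with $|T_{x,r}^{\pm1}|\le C r^{-\epsilon}$ from (3) yields (1), after shrinking $\epsilon$.

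\emph{Covering (5).} This is the step I expect to be the main obstacle. I would argue by degree or topology at each scale. Since $T_{x,r}\circ\Phi$ is an $\epsilon r$-isometry onto an almost-dense subset of $B_{30r}$, every point of $B_{7/4}(0^{n-4})$ lies within $\epsilon r_j$ of $\Phi_j(\cC)$ at every scale. Starting from a point $v$, I track a nested chain of centers whose images approach $v$. The chain either terminates at a point $x$ with $r_j\approx r_x$, giving $v\in T_x^{-1}\overline{B_{r_x}}(\Phi(x))$, or converges to a point of $\cC_0$, which is empty. The delicate part is keeping the accumulated errors below $r_x$ when passing through many scales; I would handle this using the relative bounds in (3) together with the Lipschitz condition $(n4)$ on $r_x$.
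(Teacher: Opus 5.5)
Your top-level plan matches the paper's. The paper gives no argument of its own: it invokes Jiang--Naber's Theorem 3.24 and remarks that the construction only uses the Reifenberg-type structure of $\cC$. The reconstruction you sketch, however, has a genuine gap in (5), and it starts from a false claim.

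It is not true that every point of $B_{7/4}(0^{n-4})$ lies within $\epsilon r_j$ of $\Phi_j(\cC)$ at every scale. First, (n3) only gives $\mathcal{L}_{x,s}\subset B_{\tau s}(\cC)$, and $\tau<\tau(n)$ is fixed and may not shrink with $\epsilon$. Second, (n1) gives $d(y,z)\ge\tau(r_y+r_z)$, so $\cC\cap B_{\tau r_y}(y)=\{y\}$. Hence at scales $r_j\ll r_y$ the set $\Phi(\cC)$ misses a region of size $\approx\tau r_y\gg\epsilon r_j$ around $\Phi(y)$.

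Two consequences follow. The $\epsilon r$-isometry in (2) can only mean distance distortion at most $\epsilon r$, not $\epsilon r$-density. And your $\delta'$ must depend on $\delta$ alone: the distortion comes from $\cC\cap B_s(x)\subset B_{\delta s}(\mathcal{L}_{x,s})$ and from $\iota_{x,s}$. The condition $\tau<\tau(n)$ only serves to make $\cC$ spread out in all $n-4$ directions. That is what forces two almost-isometries on an overlap to differ by an almost-rigid motion with error $C(n)\delta r$. Writing ``$\delta'\to0$ as $\delta,\tau\to0$'' would not yield the statement with $\tau$ fixed.

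Your proposed repair also does not work. Property (3) bounds frame changes only by $C(n,\epsilon)(r/s)^{\epsilon}$, which is unbounded across many scales, so it cannot keep accumulated errors below $r_x$.

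The missing idea is that nothing accumulates if the inductive hypothesis is reset at every scale. Fix $v\in B_{7/4}(0^{n-4})$. Suppose that at scale $r_j$ you have $y_j\in\cC$ with $r_{y_j}\le r_j$ and $|T_{y_j,r_j}(v-\Phi(y_j))|\le r_j/4$; start at the top scale, where the frames are $\mathrm{Id}$. The argument then runs as follows.

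\begin{enumerate}
\item By (n2), (n3) at $(y_j,r_j)$ and (2), the set $\{T_{y_j,r_j}(\Phi(z)-\Phi(y_j)) : z\in\cC\cap B_{r_j}(y_j)\}$ is $(\tau+C\epsilon)r_j$-dense in $B_{r_j/2}(0^{n-4})$. So there is $z$ with $|T_{y_j,r_j}(v-\Phi(z))|\le(\tau+C\epsilon)r_j$.
\item By (n4), $r_z\le r_{y_j}+\delta r_j\le(1+\delta)r_j$. Switching to the frame at $z$ uses (4) at scale $\approx r_j$ plus at most one dyadic step at $z$. If you use the per-scale bound $|A_{j,a}-\mathrm{Id}|\le C\delta'$ from your own construction (not (3)), this costs only a factor $1+C\epsilon$.
\item If $r_z\le r_{j+1}=r_j/2$, then $y_{j+1}=z$ satisfies the hypothesis at scale $r_{j+1}$, provided $\tau+C\epsilon\le 1/10$.
\item If $r_z>r_j/2$, then $|T_z(v-\Phi(z))|\le 2(1+C\epsilon)(\tau+C\epsilon)r_z<r_z$, i.e.\ $v\in T_z^{-1}\overline{B_{r_z}}(\Phi(z))$.
\item Since $\cC=\cC_+$ is compact and $r$ is continuous and positive, $\min_{\cC} r>0$. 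The first alternative requires $r_{y_j}\le r_j$, so it cannot persist, and the second alternative occurs after finitely many steps.
\end{enumerate}

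This gives (5) using only one-step frame comparisons, with no accumulated error and no limit point in $\cC_0$.
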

\section{$L^2$ estimate on neck regions}
 
Let \(\cN \subseteq B_2(q)\) be a \((\delta, \tau)\)-neck region with packing measure \(\mu_\cN\) on $(M^n,g)$. The main goal of this section is to establish $L^2$ curvature bound on neck regions (Theorem \ref{strong L2 neck}) under an addtional Ahlfors regularity assumption:
 \begin{enumerate}
    \item[(A4)]\textbf{Ahlfors regularity:} 
There exists $B>0$, such that for each \(x \in \mathcal{C}\) and \(r_x < r\) with \(B_{2r}(x) \subseteq B_2(q)\), we have \begin{equation}
\begin{aligned}\label{Ahlfors regularity}
B^{-1}r^{n-4} < \mu(B_r(x)) < Br^{n-4}.
\end{aligned}
\end{equation}\end{enumerate}  In  next sections, we will show that (A4) is a consequence of  (A1), (A2), and (A3). 

More precisely, we want to prove:
\begin{theorem}\label{weak L2 on neck}
Let $(M^n,g)$  satisfy assumptions $(A1), (A2), (A3)$ and $(A4)$.   If $\delta\le\delta_0(n,p,\rv,\Lambda,B)$, then there exists $C(n,p,\rv,\Lambda,B)>0$ such that
\[
r^{4-n} \int_{\mathcal{N}_{10^{-5}} \cap B_r(x)} |\Rm|^2(z) \, dz \leq C.
\] 
\end{theorem}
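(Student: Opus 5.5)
We follow the Jiang--Naber superconvexity scheme and adapt it to the integral Ricci bound. By rescaling it suffices to prove the bound for $x\in\cC$ with $r=1$ and $\cN\subset B_2(q)$; the general case follows by covering $B_r(x)$ with sub-neck regions and using the Ahlfors bound (A4). The basic quantity is the weighted curvature mass
\[
 F(s):=\int_{\cN_{10^{-5}}\cap B_s(x)}|\Rm|^2 ,
\]
or, more precisely, the heat-kernel/cutoff-weighted version $\int \phi_\cN\,\rho_s\,|\Rm|^2$. Here $\phi_\cN$ is the good cutoff of Lemma \ref{Good cutoff fuctions}, and $\rho_s$ is a Green/heat kernel-type weight centered along the effective singular set, built from the packing measure $\mu_\cN$.

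The first step is a pointwise scale bound on the neck region. By Remark \ref{remark_neck}, $r_h(y)\approx d(y,\cC)$ on $\cN_{10^{-6}}$. By (n2) each such $y$ is $(n-3,\delta)$-symmetric at scale comparable to $d(y,\cC)$, so (A3) gives
\[
|\Rm|(y)\le \vepsilon\, d(y,\cC)^{-2},\qquad |\Ric|(y)\le\vepsilon\, d(y,\cC)^{-1},
\]
together with the derivative bounds for $\nabla\Rm$, $\nabla\Ric$ and $\nabla^2\Ric$. These pointwise bounds replace the two-sided Ricci bound used by Jiang--Naber.

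The second step is the superconvexity estimate. Put $f=|\Rm|$ and compute $\Delta f^{1+\epsilon'}$ using Proposition \ref{laplacian of Rm} and the improved Kato inequality (Proposition \ref{Improved Kato inequality}). This gives
\[
\Delta |\Rm|^{1+\epsilon'}\ge -C|\Rm|^{\epsilon'}\bigl(|\nabla^2\Ric|+|\Rm|^2+|\Rm||\Ric|\bigr)-C|\Rm|^{\epsilon'-1}|\nabla\Ric|^2 .
\]
On the neck region we bound $|\Rm|^2\le \vepsilon d^{-2}|\Rm|$. The Ricci terms are controlled by the strong $\vepsilon$-regularity (A3), which gives $d^2|\nabla^2\Ric|+d|\nabla\Ric|\le B\sup|\Ric|\le B\vepsilon d^{-1}$. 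The result is a differential inequality in which the error is small compared with $d^{-2}|\Rm|^{1+\epsilon'}$, plus an error involving $|\Ric|$ alone.

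We integrate this inequality against $\phi_\cN$ and the weights $\rho_s=d(\cdot,\cC)^{-\beta}$, or against the Green function of $\mu_\cN$. The boundary terms are the following.
\begin{enumerate}
\item[(i)] The terms where $\nabla\phi_2$ is supported live on annuli $A_{10^{-4}r_y,10^{-3}r_y}(y)$, where $|\Rm|\le C r_y^{-2}$. Summed, they are bounded by $C\sum_y r_y^{n-4}=C\mu_\cN(B_2)\le CB$ by (A4).
\item[(ii)] The terms from $\phi_1$ are bounded by the $L^{\alpha}$ estimate of Theorem \ref{Lq estimate}.
\end{enumerate}
The Ricci error is handled by Hölder's inequality with (A1) and $p>n/2$: it contributes
\[
\int |\Ric|\, d^{-3}\le \|\Ric\|_{L^p}\,\|d^{-3}\|_{L^{p'}(\cN)} ,
\]
and the second factor is finite because Ahlfors regularity makes $\cC$ $(n-4)$-dimensional. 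This is the point where $p>n/2$ and (A4) enter. It yields a reverse inequality of the form $\int\phi|\Rm|^{1+\epsilon'}d^{-2+\gamma}\le C$, and iterating over dyadic annuli around $\cC$ upgrades this to $\int |\Rm|^2\le C$.

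We expect the main obstacle to be the absorption of the Ricci-derivative terms, specifically $\langle\nabla^2\Ric,\Rm\rangle$. In Jiang--Naber this term is controlled by the pointwise Ricci bound after integrating by parts; here we have only (A1) and the scale-invariant bounds of (A3). Our first attempt will be to integrate $\nabla^2\Ric$ by parts twice onto the weight and the cutoff, producing terms $|\Ric|\,|\Rm|\,|\nabla^2(\phi\rho)|$. We then bound $|\Rm|\le \vepsilon d^{-2}$ and close with Hölder and Ahlfors regularity of $\mu_\cN$, summing dyadically in $d(\cdot,\cC)$.
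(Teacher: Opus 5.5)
Your first step, the pointwise bounds from (A3) on the neck region, matches the paper. The gap is the next step, where all the content lies: passing from $|\Rm|\le\vepsilon\,d(\cdot,\cC)^{-2}$ to a genuine $L^2$ bound. That pointwise bound is exactly critical. By (A4) each shell $\{d(\cdot,\cC)\sim 2^{-k}\}\cap\supp\phi_\cN$ has volume $\lesssim 2^{-4k}$, so every shell contributes $O(\vepsilon^2)$, and the number of shells is unbounded. You need a gain that is summable over scales, and nothing in your argument produces one.

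Your ``reverse inequality'' $\int\phi|\Rm|^{1+\epsilon'}d^{-2+\gamma}\le C$ does not supply it. For $\gamma>2\epsilon'$ it is trivial from the pointwise bound. For $\gamma=2\epsilon'$ it is equivalent to the theorem, since $|\Rm|^2\le C|\Rm|^{1+\epsilon'}d^{-2+2\epsilon'}$. No dyadic iteration bridges the two cases.

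The real obstruction is your exponent. Suppose you integrate $\Delta u$, with $u=|\Rm|^\alpha$, against $\phi_\cN w$, where $w$ is homogeneous of degree $-\beta$ in the distance to $\cC$.
\begin{itemize}
\item The inner cutoff terms must be $\lesssim r_x^{n-4}$ so that (A4) can sum them, and the resulting weighted bound must control $\int|\Rm|^2$. Together these force $\beta=2-2\alpha$.
\item To absorb the critical term $|\Rm|^{\alpha-2}\langle\Rm*\Rm,\Rm\rangle\gtrsim-\vepsilon b^{-2}u$ you need $-\Delta w\ge c\,b^{-2}w$ with $c>0$. On the model cone $c=\beta(2-\beta)$, so $\beta\in(0,2)$ and hence $\alpha<1$.
\end{itemize}
For $\alpha=1+\epsilon'$ the sign is wrong. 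The Green function itself is the case $\beta=2$, $c=0$, so it gives no coercivity either. Equivalently, in the ODE that Lemma~\ref{evolution} gives for $H=r^{2\alpha-3}\int_{b=r}\phi_\cN|\Rm|^\alpha|\nabla b|$, the zeroth-order coefficient $-4\alpha(1-\alpha)$ becomes positive, and no maximum principle is available. A symptom is that your computation only needs the ordinary Kato inequality.

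Your H\"older step also fails. $d^{-3}\in L^{p'}$ near an $(n-4)$-dimensional set requires $3p'<4$, i.e.\ $p>4$. That is unavailable in the range $n/2<p<4$ that matters for $n\le 7$.

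The missing idea is Jiang--Naber superconvexity with an exponent \emph{below} one.
\begin{itemize}
\item Take $u=(|\Rm|+\sigma)^{1-\eta}$ with $\eta$ at most the improved Kato constant. The improved Kato inequality is exactly what gives $\Delta u\ge -C(|\Rm|+\sigma)^{2-\eta}-(\text{Ricci terms})$.
\item Use the Green distance $b=G_{\bar\mu}^{-1/2}$ instead of $d(\cdot,\cC)$. Since $G_{\bar\mu}$ is harmonic on $\supp\phi_\cN$, there $\Delta b^{-2\eta}=-4\eta(1-\eta)b^{-2-2\eta}|\nabla b|^2$. The lower bound $|\nabla b|\ge C^{-1}$ from the Green's distance estimate, which uses (A4), makes this coercive.
\item The paper packages this as $r^2H''+(4\eta-1)rH'-3\eta(1-\eta)H\ge -e(r)$ for $H(r)=r^{-1-2\eta}\int_{b=r}\phi_\cN u|\nabla b|$. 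The term $C|\Rm|^{2-\eta}$ is absorbed using $b^2|\Rm|\ll\eta(1-\eta)$ from (A3).
\item Comparison with an explicit solution gives the Dini bound $\int_0^\infty H/r\,dr=\int b^{-2-2\eta}\phi_\cN u|\nabla b|^2\le C$. This controls $\int_{\cN_{10^{-5}}}|\Rm|^2$.
\item The errors reduce to the subcritical quantities $\int b^{-3-2\eta}\phi_\cN$, $\int b^{-3}|\nabla\phi_\cN|$ and $\int b^{-2}|\Delta\phi_\cN|$, all bounded via (A4).
\end{itemize}

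The term you flagged, $\langle\nabla^2\Ric,\Rm\rangle$, needs no integration by parts. (A3) gives $|\nabla^2\Ric|\le Cb^{-2}|\Ric|\le Cb^{-3}$ pointwise, which is subcritical after the $\sigma$-shift. The Ricci terms are controlled pointwise by (A3), not integrally by (A1).
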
 
\begin{remark}\label{weak L4 Ricci on neck} 
Under the assumption of Theorem \ref{weak L2 on neck}, we can also prove \[
r^{4-n} \int_{\mathcal{N}_{10^{-5}} \cap B_r(x)} |\Ric|^4(z) \, dz \leq C.
\] Of course, this estimate is nontrivial only when 
$4>p>n/2$.
\end{remark}
The proof of Theorem \ref{weak L2 on neck} follows from a powerful superconvexity estimate originated from \cite{JN}. Let's begin with a discussion of Green’s functions on neck region with respect to the packing measure $\mu$ .

\begin{definition}\label{def_Green}
Given \((M^n, g)\) with \(\cN \subseteq B_2(q)\) a \((\delta, \tau)\)-neck region with packing measure \(\mu_\cN\), let us consider \(\bar{\mu} = \mu_\cN|_{B_{39/20}(q)}\). We define the following:

\begin{enumerate}
    \item We denote by \(G_x(y)\) a Green's function at \(x\). That is, \(-\Delta_y G_x(y) = \delta_x\), where \(\delta_x\) is the Dirac delta at \(x\).

    \item We denote by \(G_{\bar{\mu}}(y)\) the function \(G_{\bar{\mu}}(y) = \int_{B_{39/20}(q)} G_x(y) \, d\mu(x)\) a Green's function that solves \(-\Delta G_{\bar{\mu}} = \bar{\mu}\).

    \item We denote by \(b(y) \equiv G_{\bar{\mu}}^{-1/2}\) the \(\bar{\mu}\)-Green's distance function to the effective singular set \(\mathcal{C}\).
\end{enumerate}
\end{definition}
\begin{remark}
 In fact, thanks to the strong $\vepsilon$-regularity assumption (A3), if $\delta\le \delta(n,\vepsilon)$ is small enough,   we can show $|\Rm|\le \vepsilon b^{-2} \text{ on $\cN_{10^{-5}}$}.$ In particular 
 \[ \int_{\cN_{10^{-5}}\cap B_1(x)}|\Rm|^2\le  \vepsilon  \int_{\cN_{10^{-5}}\cap B_1(x)} b^{-4}.
 \]
However, it can be shown (see (\ref{integral})) that \[ \int_{\cN_{10^{-5}}\cap B_1(x)} b^{-4+\epsilon}\le C(n,\rv,B,\tau,\epsilon)<\infty,  
 \] for any $\epsilon>0$, while $\int_{\cN_{10^{-5}}\cap B_1(x)} b^{-4}=\infty.$ 
 Thus  strong $\vepsilon$-regularity only gives weak $L^2$ curvature bound. This is why we need   \textit{superconvexity estimate}   to prove the full $L^2$ curvature bound.  
\end{remark}
The next lemma plays a key role in the proof of superconvexity estimate.

\begin{lemma}[Green's distance function estimate]\label{Green's distance function estimate}
Let \((M^n, g)\) satisfy assumptions  $(A1),(A2)$ and $(A4)$. Then for any \(B > 0\), there exists \(\delta(n,p, \rv,\Lambda,B), \tau(n), C(n,p, \rv,\Lambda,B) > 0\) and a Green function \(G_{\bar{\mu}}\) such that  for \(x \in \cN_{10^{-6}} \cap B_{39/20}(q)\), we have
\begin{enumerate} 
   \item[$(1)$] \(C^{-1} d(x, \mathcal{C}) \leq b(x) \leq C d(x, \mathcal{C})\);
    \item[$(2)$]  \(C^{-1} \leq |\nabla b| \leq C\).
\end{enumerate} 
\end{lemma}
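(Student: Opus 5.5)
We follow the strategy of Jiang--Naber (Lemma 4.13 of \cite{JN}), first proving two-sided pointwise bounds for Green's functions with pole on $\cC$, then integrating against $\bar\mu$ with (A4), and finally obtaining the gradient bound from the splitting structure of the neck.

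\textbf{Step 1: pointwise Green's function bounds.} Under (A1), (A2) we have volume doubling at small scales (Theorem \ref{Volume comparison}) and a local Poincar\'e/Sobolev inequality (as in \cite{DWZ}). Hence Li--Yau type heat kernel bounds hold with constants depending on $(n,p,\rv,\Lambda)$. Here $G_x$ is the Green's function of a slightly larger ball with Dirichlet conditions, and we may take $B_{2}(q)$ after rescaling so that $\Lambda$ is small. Integrating the heat kernel bounds in time gives
\[
C^{-1}d(x,y)^{2-n}\le G_x(y)\le C d(x,y)^{2-n}\quad\text{for } d(x,y)\le 1/10 .
\]
For larger distances there is a bounded additive correction, which is harmless.

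\textbf{Step 2: the distance estimate $(1)$.} Fix $y\in\cN_{10^{-6}}\cap B_{39/20}(q)$ and set $d=d(y,\cC)$. Since $y\in \cN_{10^{-6}}$ and the $r_x$ are $\delta$-Lipschitz, any $x\in\cC$ with $d(x,y)\le 2d$ satisfies $r_x\lesssim 10^{6}d$. Hence (A4) applies at scales $r\gtrsim d$ around a nearest point $x_0$.
\begin{itemize}
\item For the lower bound on $G_{\bar\mu}$, note that $\mu(B_{2d}(x_0))\ge B^{-1}d^{n-4}$ and $G_x(y)\gtrsim d^{2-n}$ on this ball, so $G_{\bar\mu}(y)\gtrsim d^{-2}$.
\item For the upper bound, decompose into dyadic annuli $A_k=\{x: 2^kd\le d(x,y)<2^{k+1}d\}$. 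These are covered by $B_{2^{k+2}d}(x_0)$, so (A4) gives $\mu(A_k)\le B(2^kd)^{n-4}$. Therefore
\[
G_{\bar\mu}(y)\le C\sum_k (2^kd)^{n-4}(2^kd)^{2-n}\le Cd^{-2}.
\]
\end{itemize}
Together these give $b=G_{\bar\mu}^{-1/2}\approx d(y,\cC)$.

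\textbf{Step 3: the gradient bound $(2)$.} For the upper bound, apply Cheng--Yau on $B_{d/2}(y)$, where the harmonic radius is $\gtrsim d$ by Remark \ref{remark_neck} and Theorem \ref{harmonic radius}. This gives $|\nabla G_{\bar\mu}|\le Cd^{-1}G_{\bar\mu}$, hence $|\nabla b|\le C$.

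The lower bound is the main obstacle. We argue by contradiction and compactness. Rescale at $y$ by $d$ and suppose $|\nabla b|(y)\to 0$ along a sequence with $\delta\to0$. By the $W^{2,p}$ harmonic radius bound, the functions $d^{-1}b$ converge in $C^{1,\alpha}$ near $y$. The limit space is $\mathbb{R}^{n-4}\times C(S^3/\Gamma)$, with $\cC$ converging to the axis. The measures $d^{4-n}\mu$, suitably rescaled, converge to a measure on $\mathbb{R}^{n-4}$ that is Ahlfors regular with constant $B$.

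The limiting Green potential $G_\infty$ then depends on the $\mathbb{R}^{n-4}$ variables and on the radial coordinate $s$ of the cone. In the radial direction it is strictly decreasing: $\partial_s G_\infty=\int \partial_s G_x\,d\mu_\infty(x)$, and each term is negative and comparable to $-s\,|x-\cdot|^{-n}$. Summing against the Ahlfors regular measure gives $|\partial_s G_\infty|\gtrsim s^{-3}$. This yields $|\nabla b_\infty|\ge c>0$, a contradiction.

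The delicate points are:
\begin{itemize}
\item justifying uniform convergence of the Green's functions under only $L^p$ Ricci bounds;
\item controlling the contribution of far-away mass, which is handled by the tail sum in Step 2.
\end{itemize}
If the compactness route proves troublesome, one can instead make this radial monotonicity quantitative directly on $M$. This would use the almost-cone structure from (n2) together with the gradient estimate on $\nabla G_x$ along the $r$-direction, in the spirit of \cite{JN}.
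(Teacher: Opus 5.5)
Your proposal is correct, and its architecture matches the paper's proof. Both use two-sided bounds $G_x(y)\approx d(x,y)^{2-n}$: the paper takes them from the Sobolev inequality of \cite{DWZ} and Proposition 3.16 of \cite{LSZ}, while you derive them from heat kernel bounds. Both then use the same dyadic summation against the Ahlfors regular measure for $(1)$, and a blow-up contradiction for $|\nabla b|\ge C^{-1}$.

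Two sub-steps differ.

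For $|\nabla b|\le C$, the paper sums the kernel bound $|\nabla_y G_x(y)|\le C d(x,y)^{1-n}$ over the annuli. You instead apply an interior gradient estimate to the positive harmonic function $G_{\bar\mu}$ on $B_{d/2}(y)$. This needs no gradient bound on the kernel, only $r_h(y)\gtrsim d$. The same device also gives the gradient tail control you need in the blow-up: combine the tail bound on values from Step 2 with interior estimates at unit scale.

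For the lower bound, the paper asserts that the rescaled packing measures converge to a constant multiple of Lebesgue measure on the axis, so that $G_\infty=c\rho^{-2}$. That is not implied by (A4), which only gives two-sided comparability. Moreover, when $r_{x_0}$ is comparable to $d$, the limit is a sum of equal point masses on a $\tau$-separated subset of the axis; the rescaled radii become locally constant because $\Lip r\le\delta_j\to 0$. Your argument only uses two facts: each model kernel $c\,(|x-z|^2+s^2)^{(2-n)/2}$ has $\partial_s<0$, and $\mu_\infty\neq 0$. It therefore handles every Ahlfors regular limit and is the more robust version of this step.

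You should state explicitly that $s(y_\infty)>0$. By (n3), the axis near $x_0$ lies within $C\tau\cdot 10^{6}d$ of $\mathcal{C}$. So with $\tau\ll 10^{-6}$, the point $y_\infty$ cannot sit on the axis in a gap of $\mathcal{C}_\infty$.

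Two small points need tightening; the first is shared with the paper.

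First, the global cone Green's function appears in the limit only when $d(y_j,\mathcal{C}_j)\to 0$. If $d$ stays bounded below, the limit kernel is the Green's function of whatever bounded domain defines $G_x$, and you need $\partial_s G_x<0$ for that kernel too. For the Dirichlet Green's function of a ball centered on the axis, this follows from a reflection (moving-plane) argument.

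Second, in Step 2, (A4) is only available at scales $r>r_{x_0}$. When $2d\le r_{x_0}\le 2\cdot 10^{6}d$, the lower bound on $\mu(B_{2d}(x_0))$ comes instead from the point mass $r_{x_0}^{n-4}$ itself.
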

 
\begin{proof} 
Under the assumptions of (A1) and (A2), it is known by \cite{DWZ} that such manifold satisfies  a uniform Sobolev inequality. Consequently, by Proposition 3.16 of \cite{LSZ}, there exists $C(n, p,\rv,\Lambda)>0$ such that  Green's function $G_x(y)$ and its gradient satisfies the following standard estimates:
\begin{equation}\label{Green's estimate}
    C ^{-1} d(x,y)^{2-n} \le G_x(y) \le C  d(x,y)^{2-n}, \quad |\nabla_y G_x(y)| \le C  d(x,y)^{1-n}.
\end{equation}

Fix $y \in \mathcal{N}$ and let $r = d(y, \C)$. Let $x_0 \in \C$ be a point such that $d(y, x_0) = r$. We decompose  $B_{39/20}(q)$  into annuli: $A_0 =  B_{39/20}(q) \cap B_{2r}(x_0)$ and $A_k = B_{39/20}(q)\cap (B_{2^{k+1}r}(x_0) \setminus B_{2^k r}(x_0))$ for $k \ge 1$.  

For $x \in A_0$, we have $r \le d(x,y) \le 3r$. Thus, $C^{-1} r^{2-n}\le G_x(y)\le Cr^{2-n}$. Since $\mu(A_0) \le B(2r)^{n-4} \le  Cr^{n-4}$, we have 
\begin{equation}
    \int_{A_0} G_x(y) d\mu(x) \le C r^{2-n} \cdot r^{n-4} = C r^{-2}.
\end{equation}
For $x \in A_k$, we have $d(x,y) \ge d(x, x_0) - d(x_0, y) \ge 2^k r - r \ge 2^{k-1}r$ for $k \ge 1$. The measure is $\mu(A_k) \le B(2^{k+1}r)^{n-4}$. Thus:
\begin{equation}
    \int_{A_k} G_x(y) d\mu(x) \le C (2^k r)^{2-n} \cdot (2^k r)^{n-4} = C 2^{-2k} r^{-2}.
\end{equation}
Summing over $k \ge 0$, we obtain \[G_{\bar{\mu}}(y) = \sum_{k=0}^{\infty} \int_{A_k} G_x(y) d\mu(x) \le C r^{-2} \sum_{k=0}^{\infty} 2^{-2k} = C' r^{-2}.\] The lower bound follows similarly by considering the integral over $B_{r/2}(x_0) \cap \C$. Since $b(y) = G_{\bar{\mu}}^{-1/2}$, we conclude $C^{-1}r\le  b(y)\le  C r$ with $r = d(y, \C)$.

To derive an upper bound for $|\nabla b(y)|$, 
We differentiate $b(y)$ to get $\nabla b = -\frac{1}{2} G_{\bar{\mu}}^{-3/2} \nabla G_{\bar{\mu}}$. From above computation, $ C^{-1} r^3 \le G_{\bar{\mu}}^{-3/2} \le C r^3$.   By use the same dyadic decomposition:
\begin{equation}
    |\nabla G_{\bar{\mu}}(y)| \le \int_{B_{39/20}(p) } |\nabla_y G_x(y)| d\mu(x) \le \sum_{k=0}^{\infty} \int_{A_k} C d(x,y)^{1-n} d\mu(x).
\end{equation}
We estimate each term:
\begin{equation}
    \int_{A_k} d(x,y)^{1-n} d\mu(x) \le C (2^k r)^{1-n} (2^k r)^{n-4} = C 2^{-3k} r^{-3}.
\end{equation}
Summing gives $|\nabla G_{\bar{\mu}}(y)| \le C r^{-3}$, which implies $|\nabla b(y)| \le C r^3 \cdot r^{-3} = C$.

Now we establish the lower bound $|\nabla b(y)| \ge C^{-1}$ through a contradiction argument. Suppose the conclusion fails. Then there exists a sequence of  Riemannian manifolds $(M^n_j, g_j,q_j)$ satisfying uniform  (A1),(A2)  and  (A4) with $\delta_j \to 0$, and a sequence of points $y_j \in \cN^{(j)}_{10^{-6}}$ such that
\begin{equation}
    |\nabla b_j(y_j)|_{g_j} \le \frac{1}{j} \longrightarrow 0, \quad \text{as } j \to \infty.
\end{equation}
Let $r_j = d_{g_j}(y_j, \cC_j)$ be the distance to the center set $\cC_j$, and pick $x_j\in \cC _j$ so that $r_j = d_{g_j}(y_j, x_j)$. Rescale the metrics by setting $\tilde{g}_j = r_j^{-2} g_j$. Under this sequence of rescaled metrics:
\begin{enumerate}
    \item The distances scale as $\tilde{d}_j(x, y) = r_j^{-1} d_{g_j}(x, y)$. In particular, $\tilde{d}_j(y_j,  x_j) = 1$.
    \item The $(n-4)$-dimensional Ahlfors regular packing measure $\mu_j$ on $\C_j$ scales as $\tilde{\mu}_j = r_j^{4-n} \mu_j$.
    \item The fundamental solution to the Laplacian scales as $\tilde{G}_{x}(y) = r_j^{n-2} G_{x}(y)$.
\end{enumerate}
Consequently,  $G_{\bar{\mu}_j}(y) = \int_{B_{39/20}(q_j)} G_{x}(y) d\mu_j(x)$ transforms under rescaling as:
\begin{equation}
    \tilde{G}_{\tilde{\mu}_j}(y)= \int_{B_{39/20}(q_j)} \tilde{G}_{x}(y) d\tilde{\mu}_j(x) = \int_{B_{39/20}(q_j)} (r_j^{n-2} G_{x}(y)) (r_j^{4-n} d\mu_j(x)) = r_j^2G_{\bar{\mu}_j}(y).
\end{equation}
Therefore, the rescaled Green's distance function is $\tilde{b}_j(y)  = r_j^{-1} b_j(y)$. The pointwise norm of the gradient is exactly scale-invariant:
\begin{equation}
    |\tilde{\nabla} \tilde{b}_j|_{\tilde{g}_j} = r_j \cdot |\nabla (r_j^{-1} b_j)|_{g_j} = |\nabla b_j|_{g_j}.
\end{equation}
Thus, our contradiction hypothesis implies $|\tilde{\nabla} \tilde{b}_j(y_j)|_{\tilde{g}_j} \to 0$.
  Thanks to the uniform $L^p$ Ricci bound, the non-collapsing condition and Ahlfors regularity, the sequence of pointed metric measure spaces $(M_j, \tilde{g}_j, x_j, \tilde{\mu}_j)$ is precompact in the pointed measured Gromov-Hausdorff topology. Up to passing to a subsequence, they converge to a limit space $(M_\infty, d_\infty, x_\infty, \mu_\infty)$. 

Since the points $x_j\in \cC_j$ with $\delta_j \to 0$, by Cheeger-Colding's almost splitting theorem (Theorem \ref{almost splitting}), the limit space $M_\infty$ is isometric to the  $\mathbb{R}^{n-4} \times C(S^3/\Gamma)$ for some $\Gamma\le O(3)$. The singular sets $\C_j$ converge to the singular $\C_\infty = \mathbb{R}^{n-4} \times \{0\}$. The rescaled measures $\tilde{\mu}_j$ converge weakly to $\mu_\infty$, which is a constant multiple of the $(n-4)$-dimensional Lebesgue measure on $\mathbb{R}^{n-4} \times \{0\}$.
  
On the sequence of manifolds $M_j$,  $\tilde{G}_{\tilde{\mu}_j}$ solve the elliptic equation $-\Delta_{\tilde{g}_j} \tilde{G}_{\tilde{\mu}_j}= \tilde{\mu}_j$. By $(\ref{Green's estimate})$, we have 
uniform bound of $\tilde{G}_{\tilde{\mu}_j}$ and $\tilde{\nabla}\tilde{G}_{\tilde{\mu}_j}$ away from
singular $\C_\infty$. To obtain equicontinuity, we rely on higher-order interior regularity. Under the $L^p$ Ricci bound, we have harmonic coordinates with uniform $W^{2,p}$ (and hence $C^{\alpha}$ by Sobolev embedding) bounds on the metric tensor away from $\cC_j$. Consequently, the Schauder theory for elliptic equation $-\Delta_{\tilde{g}_j} \tilde{G}_{\tilde{\mu}_j}= \tilde{\mu}_j$ provides uniform local  $C^{1,\alpha}$ estimates for the solutions $\tilde{G}_{\tilde{\mu}_j}$. 

 By the Arzel\'a-Ascoli theorem, the functions $\tilde{G}_{\tilde{\mu}_j}$ and their gradients $\tilde{\nabla} \tilde{G}_{\tilde{\mu}_j}$ converge uniformly on compact subsets of $M_\infty \setminus \C_\infty$ to a limit harmonic function $G_\infty$ and its gradient $\nabla G_\infty$. This  convergence guarantees that:
\begin{equation}
    \lim_{j \to \infty} |\tilde{\nabla} \tilde{b}_j(y_j)|_{\tilde{g}_j} = |\nabla b_\infty(y_\infty)|_{g_\infty},
\end{equation}
where $y_j\to y_\infty$ and $b_\infty = G_\infty^{-1/2}$. 

Finally, we can explicitly compute $|\nabla b_\infty(y_\infty)|_{g_\infty}$ in the cone space $\mathbb{R}^{n-4} \times C(S^3/\Gamma)$. Since $\mu_\infty$ is the Lebesgue measure on  $\mathbb{R}^{n-4} \times \{0\}$, the solution $G_\infty$ to $-\Delta G_\infty = \mu_\infty$ depends only on the radial distance $\rho = d_\infty(\cdot, \C_\infty)$ within the cone factor $C(S^3/\Gamma)$. 

In a 4-dimensional cone, the fundamental solution depending only on $\rho$ satisfies $G_\infty(\rho) = c \cdot \rho^{-2}$ for some universal constant $c > 0$. Thus, the limit distance function is exactly:
\begin{equation}
    b_\infty(\rho) = G_\infty(\rho)^{-1/2} = c^{-1/2} \rho.
\end{equation}
Since $\rho$ is the distance function in a metric cone, its gradient norm is identically 1 almost everywhere away from the vertex. Hence:
\begin{equation}
    |\nabla b_\infty(y_\infty)|_{g_\infty} = |c^{-1/2} \nabla \rho| = c^{-1/2} > 0.
\end{equation}
This contradicts our assumption that $\lim_{j \to \infty} |\tilde{\nabla} \tilde{b}_j(y_j)|_{\tilde{g}_j} = 0$, and thus finishes the proof of the lower bound. 
\end{proof} 
\begin{lemma}[\cite{JN}, Lemma 5.13]\label{evolution}
Given \((M^n, g, q)\) with \(\cN \subseteq B_2(q)\) a \((\delta, \tau)\)-neck and \(b(y) = b_\cN(y)\) the associated Green's distance function, then the following hold:
\begin{enumerate}
\item[$(1)$] for every compactly supported \(f : B_4(q) \to \mathbb{R}\), we have
\[
\frac{d}{dr} \left( r^{-3} \int_{b=r} f |\nabla b|(z) \, dz \right) = r^{-3} \int_{b \leq r} \Delta f(z) \, dz;
\]
\item[$(2)$] we have
\[
r \frac{d^2}{dr^2} \left( r^{-3} \int_{b=r} f |\nabla b|(z) \, dz \right) + 3 \frac{d}{dr} \left( r^{-3} \int_{b=r} f |\nabla b|(z) \, dz \right)
= r^{-2} \int_{b=r} \Delta f |\nabla b|^{-1}(z) \, dz.\]
\end{enumerate}
\end{lemma}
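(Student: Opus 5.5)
The plan is to prove both identities by the coarea formula combined with the divergence theorem, using that $b^{-2}=G_{\bar\mu}$ satisfies $-\Delta b^{-2}=\bar\mu$. The measure $\bar\mu$ is supported on $\cC$, where $b=0$, so $b^{-2}$ is harmonic on $\{b>0\}$. Fix a compactly supported $f$. For a.e.\ $r$ the level set $\{b=r\}$ is a smooth hypersurface with unit normal $\nabla b/|\nabla b|$ (by Sard, and near $\cN$ by Lemma~\ref{Green's distance function estimate}, $|\nabla b|\ge C^{-1}$). Define
\[
I(r):=r^{-3}\int_{b=r} f|\nabla b|.
\]
Since $\nabla(b^{-2})=-2b^{-3}\nabla b$, on $\{b=r\}$ we can write
\[
r^{-3}|\nabla b|=-\tfrac12\,\langle \nabla(b^{-2}),\nu\rangle ,
\]
where $\nu=\nabla b/|\nabla b|$ is the outward normal of $\{b\le r\}$. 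Hence
\[
I(r)=-\tfrac12\int_{b=r} f\,\partial_\nu(b^{-2}).
\]

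For (1), I apply Green's second identity on $\{b\le r\}$ to the pair $f$ and $b^{-2}-r^{-2}$. The second function vanishes on $\{b=r\}$ and satisfies $\Delta(b^{-2}-r^{-2})=-\bar\mu$ in the distributional sense. This gives
\[
\int_{b\le r}\bigl(f\Delta(b^{-2}-r^{-2})-(b^{-2}-r^{-2})\Delta f\bigr)=\int_{b=r} f\,\partial_\nu(b^{-2}) = -2I(r).
\]
The left-hand side equals $-\int f\,d\bar\mu-\int_{b\le r}(b^{-2}-r^{-2})\Delta f$. Rearranging,
\[
2I(r)=\int f\,d\bar\mu+\int_{b\le r} b^{-2}\Delta f - r^{-2}\int_{b\le r}\Delta f .
\]
By the coarea formula, the $r$-derivative of $\int_{b\le r}b^{-2}\Delta f$ is $r^{-2}\int_{b=r}\Delta f|\nabla b|^{-1}$. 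This cancels against the derivative of the $r^{-2}$ factor acting on the same integrand, so
\[
\frac{d}{dr}\bigl(2I(r)\bigr)=2r^{-3}\int_{b\le r}\Delta f,
\]
which is (1). The singular support is handled by integrating over $\{\epsilon\le b\le r\}$ and letting $\epsilon\to0$. Alternatively, one can observe that $\frac{d}{dr}I$ can be computed via the flux of $\nabla f$ in the combination $\int_{b=r}(b^{-2}\partial_\nu f - f\partial_\nu b^{-2})$, which avoids needing $f$ near $\cC$ beyond compact support.

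For (2), I differentiate (1):
\[
\frac{d}{dr}\Bigl(r^{3} I'(r)\Bigr)=\frac{d}{dr}\int_{b\le r}\Delta f=\int_{b=r}\Delta f\,|\nabla b|^{-1},
\]
again by coarea. Expanding the left side gives $r^3I''+3r^2I'$, and dividing by $r^2$ yields (2).

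The main obstacle is justification rather than computation. First, the level sets are regular only for a.e.\ $r$, so the derivatives must be understood in the distributional sense, or via the absolute continuity supplied by the coarea formula. Second, the distributional identity $-\Delta b^{-2}=\bar\mu$ near $\cC$ requires the $\epsilon\to0$ limit: the boundary term at $\{b=\epsilon\}$ converges to $\int f\,d\bar\mu$ by the Green's function asymptotics \eqref{Green's estimate}, which are valid under (A1) and (A2). Once these points are in place, the two identities are direct consequences of Green's identity, exactly as in \cite{JN}.
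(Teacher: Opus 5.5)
Your argument is correct and is essentially the proof in \cite{JN} that the paper cites without reproducing. It rewrites $r^{-3}\int_{b=r}f|\nabla b|$ as $-\tfrac12\int_{b=r}f\,\partial_\nu(b^{-2})$, applies the divergence theorem with $-\Delta b^{-2}=\bar\mu$ (supported in $\{b=0\}$) and the coarea formula to get (1), and differentiates $r^{3}I'(r)=\int_{b\le r}\Delta f$ to get (2). Your use of Green's second identity with $b^{-2}-r^{-2}$, which gives the closed form $2I(r)=\int f\,d\bar\mu+\int_{b\le r}(b^{-2}-r^{-2})\Delta f$, is only a minor repackaging of the first-identity computation in \cite{JN}, and your remarks on regular values and the $\epsilon\to0$ limit near $\mathcal{C}$ supply the needed justifications.
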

 
 Now we can state our superconvexity estimate of Riemannian curvature.  To begin with,
we fix small parameters $\sigma>0,$ $0<\eta\le\eta(n),$ where $\eta(n)$ is the constant shown in Proposition \ref{Improved Kato inequality}, set
\[
f = \phi_\cN\,\left(|\Rm|+\sigma\right)^{1-\eta},
\]
where $\phi_\cN$ is the cutoff function constructed in Lemma \ref{Good cutoff fuctions}. Define $F(r)=r^{-3}\int_{b=r} f\,|\nabla b|,$ and 
$H(r)=r^{2(1-\eta)}F(r)$. 

\begin{lemma}[$H(r)$ has Compact support]\label{compact support}
Let $H(r)=r^{-1-2\eta}\int_{b=r} \phi_\cN\,\left(|\Rm| +\sigma\right)^{1-\eta}|\nabla b|$ be defined as above,  there exists  $R=R(n,p, \rv,\Lambda,B)>0$ such that $H(r)$ has compact support  in $(0,R).$ 
\end{lemma}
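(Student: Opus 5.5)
The plan is to show that the integrand of $H$ vanishes on the level set $\{b=r\}$ whenever $r$ is either too small or too large. Both cutoff factors in $\phi_\cN=\phi_1\phi_2$ are used, together with the two-sided comparison $C^{-1}d(\cdot,\cC)\le b\le C\,d(\cdot,\cC)$ from Lemma \ref{Green's distance function estimate}. The weight $(|\Rm|+\sigma)^{1-\eta}|\nabla b|$ is bounded on $\supp\phi_\cN$, so $H$ is finite, and it is continuous in $r$ wherever $|\nabla b|\neq0$. Hence it suffices to find $0<r_-<R$ such that $\{b=r\}\cap\supp\phi_\cN=\emptyset$ for $r\le r_-$ and for $r\ge R$.

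\textbf{Upper bound.} Let $y\in\supp\phi_\cN$. Lemma \ref{Good cutoff fuctions}$(1)$ gives $y\in B_{19/10}(q)$, and $q\in\cC$, so $d(y,\cC)\le d(y,q)<19/10$. The estimate of Lemma \ref{Green's distance function estimate}$(1)$ holds on $\cN_{10^{-6}}\cap B_{39/20}(q)$, which contains $\supp\phi_\cN$ because $\supp\phi_2\subseteq\cN_{10^{-4}}\subset\cN_{10^{-6}}$. Therefore $b(y)\le C\,d(y,\cC)\le 2C$ with $C=C(n,p,\rv,\Lambda,B)$. Setting $R=2C+1$, the level set $\{b=r\}$ misses $\supp\phi_\cN$ for every $r\ge R$, so $H(r)=0$ there.

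\textbf{Lower bound.} Take $y\in\supp\phi_2\subseteq\cN_{10^{-4}}$. By definition of the enlarged neck region, $d(y,x)>10^{-4}r_x$ for every $x\in\cC$. The main obstacle is to turn this into a positive lower bound on $d(y,\cC)$, since a priori $\inf_x r_x$ might be small. This is where compactness and smoothness enter. Remark \ref{remark_neck} says $\cC_0=\emptyset$, and $\cC$ is closed inside the bounded region $\overline{B_2(q)}$, so $\cC$ is compact. By $(n1)$ the balls $B_{\tau r_x}(x)$ are pairwise disjoint, and the volume lower bound (A2) (or simply local finiteness in a compact smooth manifold together with the Lipschitz bound $(n4)$) implies that $\cC$ is finite. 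Hence $r_{\min}:=\min_{x\in\cC}r_x>0$, and $d(y,\cC)\ge 10^{-4}r_{\min}$. Then $b(y)\ge C^{-1}10^{-4}r_{\min}=:r_->0$, and $H(r)=0$ for $r<r_-$.

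The lower cutoff $r_-$ depends on the particular neck region, which is harmless because only the support of $H$ being compact in $(0,\infty)$ matters. Only the upper endpoint $R$ needs to be uniform, and it depends only on $(n,p,\rv,\Lambda,B)$. This gives $\supp H\subset[r_-,R]\subset(0,R']$ for any $R'>R$, as claimed.
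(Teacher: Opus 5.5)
Your proof is correct and is essentially the paper's argument: $\supp\phi_\cN\subset\cN_{10^{-4}}\cap B_{19/10}(q)$ stays at a positive distance from $\cC$ and within bounded distance of $q\in\cC$, and the comparison $b\approx d(\cdot,\cC)$ from Lemma \ref{Green's distance function estimate} turns this into two-sided bounds on $b$. The only difference is how you get $\min_{\cC} r_x>0$. The paper uses directly that $r_x$ is positive and continuous (by $(n4)$) on the compact set $\cC$. Your detour through finiteness of $\cC$ also works, but only when justified by $(n1)$: each point of $\cC$ is isolated since $d(x,y)\ge\tau r_x$. The volume bound (A2) alone would not rule out infinitely many tiny balls, so that justification should be dropped.
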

\begin{proof} It suffices to show  $\phi_\cN \equiv 0 $ on level set $\{b=r\}$ when $r$ is small or large enough. In fact, by construction of cutoff function (Lemma \ref{Good cutoff fuctions}), $\supp(\phi_{\cN})\subset \cN_{10^{-4}}\cap B_{19/10}$, where $\cN_{10^{-4}}=B_2(q)\setminus\bigcup_{x\in \cC}\bar{B}_{10^{-4}r_x}(x)$. Since $\cC=\cC_+$, we have $r_x>0$ on $\cC$. Note that $\cC$ is compact and $r_x$ is continous, $\cN_{10^{-4}}$ has a uniform positive distance to $\cC$.  The lemma then follows directly from Lemma \ref{Green's distance function estimate}.  
\end{proof} 
 
\begin{lemma}\label{superconvexity}
Let \((M^n, g)\) satisfy assumptions  $(A1),(A2),(A3)$ and $(A4)$.  
If $\delta\le\delta_0(n,p, \rv,\Lambda,B),$ then there exists $\sigma=\sigma(n)>0$ and $C=C(n,p, \rv,\Lambda,B)>0$ such that for $H(r)=r^{-1-2\eta}\int_{b=r} \phi_\cN\,\left(|\Rm| +\sigma\right)^{1-\eta}|\nabla b|$, we have the following evolution equation:
\[
r^2 H''+(4\eta-1)rH'-3\eta(1-\eta)H
= r^{1-2\eta}\int_{b=r}\Delta f\,|\nabla b|^{-1}\ge -e(r),
\]
where \begin{equation}\label{error}
\begin{aligned}
e(r)&=r^{1-2\eta}\int_{b=r} \left(\left(|\Rm| +\sigma\right)^{1-\eta}|\Delta\phi_N| +2|\nabla\phi_N||\nabla\left(|\Rm|+\sigma\right)^{1-\eta}|\right)|\nabla b|^{-1} \\
&\quad +  C r^{1-2\eta}\int_{b=r}\phi_N\left(\left(|\Rm|+\sigma\right)^{-1-\eta}|\nabla \Ric|^2+ \left(|\Rm|+\sigma\right)^{-\eta}|\nabla^2\Ric|\right) |\nabla b|^{-1}.
\end{aligned}
\end{equation}
\end{lemma}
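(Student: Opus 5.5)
The argument has two parts: an exact ODE identity for $H$ that follows from Lemma \ref{evolution}, and a pointwise lower bound for $\Delta f$ coming from Proposition \ref{laplacian of Rm}, the improved Kato inequality (Proposition \ref{Improved Kato inequality}) and assumption (A3).

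\textbf{The identity.} Write $F(r)=r^{-3}\int_{b=r}f|\nabla b|$ and $H=r^{2(1-\eta)}F$, and set $a=2(1-\eta)$. Then
\[
F=r^{-a}H,\qquad F'=r^{-a}H'-a r^{-a-1}H,\qquad F''=r^{-a}H''-2a r^{-a-1}H'+a(a+1)r^{-a-2}H .
\]
Substituting into $rF''+3F'=r^{-2}\int_{b=r}\Delta f|\nabla b|^{-1}$ from Lemma \ref{evolution}(2) and multiplying by $r^{a+1}$ gives
\[
r^2H''+(3-2a)rH'+(a^2-2a)H=r^{a-1}\int_{b=r}\Delta f\,|\nabla b|^{-1}.
\]
With $a=2-2\eta$ we have $3-2a=4\eta-1$, $a^2-2a=a(a-2)=-4\eta(1-\eta)$, and $r^{a-1}=r^{1-2\eta}$. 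So the left side comes out with the constant $-4\eta(1-\eta)$ rather than $-3\eta(1-\eta)$. I would carry the exact constant through and check which one the statement intends; for the inequality, the exact value of the zeroth-order coefficient does not matter.

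\textbf{The lower bound for $\Delta f$.} Write $u=|\Rm|+\sigma$ and expand
\[
\Delta f=u^{1-\eta}\Delta\phi_\cN+2\langle\nabla\phi_\cN,\nabla u^{1-\eta}\rangle+\phi_\cN\,\Delta u^{1-\eta}.
\]
The first two terms are bounded below by minus their absolute values; these give the first line of $e(r)$. For the third term, compute
\[
\Delta u^{1-\eta}=(1-\eta)u^{-\eta}\Big(\Delta|\Rm|-\eta\frac{|\nabla|\Rm||^2}{u}\Big).
\]
Away from $\{\Rm=0\}$, Proposition \ref{laplacian of Rm} gives
\[
|\Rm|\,\Delta|\Rm|=|\nabla\Rm|^2-|\nabla|\Rm||^2+\langle\nabla^2\Ric,\Rm\rangle+\langle\Rm*\Rm+\Rm*\Ric,\Rm\rangle .
\]
By the improved Kato inequality, $|\nabla\Rm|^2-|\nabla|\Rm||^2\ge\varepsilon(n)|\nabla|\Rm||^2-C|\nabla\Ric|^2$. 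Therefore
\[
\Delta|\Rm|\ge\varepsilon(n)\frac{|\nabla|\Rm||^2}{|\Rm|}-C\frac{|\nabla\Ric|^2}{|\Rm|}-|\nabla^2\Ric|-C(|\Rm|+|\Ric|)|\Rm| .
\]
Since $|\Rm|\le u$, the gradient term beats $\eta|\nabla|\Rm||^2/u$ once $\eta\le\eta(n)\le\varepsilon(n)$. Replacing $|\Rm|$ by $u$ in the $|\nabla\Ric|^2$ denominator requires a little care where $|\Rm|\ll\sigma$. There I would instead work directly with the smooth quantity $\Delta\sqrt{|\Rm|^2+\sigma^2}$ in place of $\Delta|\Rm|$; this is the standard regularization, and it produces exactly the terms $u^{-1-\eta}|\nabla\Ric|^2$ and $u^{-\eta}|\nabla^2\Ric|$ in the second line of $e(r)$.

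\textbf{Main obstacle: the zeroth-order term.} The term $-Cu^{-\eta}(|\Rm|+|\Ric|)|\Rm|$ has no counterpart in $e(r)$, so it must be absorbed into the left-hand side. On $\supp\phi_\cN\subset\cN_{10^{-4}}$, assumption (A3) together with Lemma \ref{Green's distance function estimate} gives $|\Rm|+r|\Ric|\le\varepsilon b^{-2}=\varepsilon r^{-2}$ on $\{b=r\}$, with $\varepsilon$ small once $\delta\le\delta_0$. Hence this term is at most $C\varepsilon r^{-2}u^{1-\eta}$. After multiplying by $r^{1-2\eta}|\nabla b|^{-1}$ and integrating over $\{b=r\}$, it is bounded by $C\varepsilon\, r^{-1-2\eta}\int_{b=r}\phi_\cN u^{1-\eta}|\nabla b|=C\varepsilon H$, using $|\nabla b|^{-1}\le C^2|\nabla b|$ from Lemma \ref{Green's distance function estimate}(2). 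Because the left side carries $-4\eta(1-\eta)H$ and the claim only asks for $-3\eta(1-\eta)H$, choosing $\varepsilon$ so that $C\varepsilon\le\eta(1-\eta)$ lets this error be absorbed into the gap between the two coefficients. I expect this absorption to be the real content of the statement and the step most sensitive to the choice of constants. The only role of $\sigma$ is to keep $u$ positive and away from zero, so it can be any fixed $\sigma(n)$.
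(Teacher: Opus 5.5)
Your proposal is essentially the paper's proof. The paper also derives the exact identity with $-4\eta(1-\eta)H$ from Lemma \ref{evolution}, bounds $\Delta f$ using Proposition \ref{laplacian of Rm} and the improved Kato inequality, and then uses (A3) to absorb the zeroth-order term into the $\eta(1-\eta)H$ gap. So the equality sign in the statement is indeed a slip: the identity holds with $-4\eta(1-\eta)$, and only the inequality holds with $-3\eta(1-\eta)$.

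There are two local differences worth noting.

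\textbf{The $|\nabla\Ric|^2$ denominator.} This needs no regularization. Replacing $|\Rm|+\sigma$ by $\sqrt{|\Rm|^2+\sigma^2}$ would change the function $H$ in the statement, so that route does not actually prove the stated lemma. The paper avoids the issue by the order in which it applies the Kato inequalities:
\begin{enumerate}
\item[(i)] First it uses the ordinary Kato inequality $|\nabla\Rm|\ge|\nabla|\Rm||$. This makes the numerator of $\bigl(|\nabla\Rm|^2-|\nabla|\Rm||^2\bigr)/|\Rm|$ nonnegative, so $|\Rm|^{-1}$ can be replaced by $u^{-1}$.
\item[(ii)] It then combines this with the $-\eta u^{-1}|\nabla|\Rm||^2$ term.
\item[(iii)] Only then does it apply the improved Kato inequality, in the form $|\nabla\Rm|^2-(1+\eta)|\nabla|\Rm||^2\ge -C|\nabla\Ric|^2$.
\end{enumerate}

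\textbf{The zeroth-order term.} Here your bookkeeping is slightly sharper than the paper's.
\begin{enumerate}
\item[(i)] The paper bounds this term by $Cu^{2-\eta}$. That forces the choice $\sigma=C(n)^{-1}\eta(1-\eta)R^{-2}$, with $R$ from Lemma \ref{compact support}, so $\sigma$ actually depends on $(n,p,\rv,\Lambda,B)$ and not on $n$ alone.
\item[(ii)] You keep $u^{-\eta}|\Rm|^2\le u^{1-\eta}|\Rm|$, and use $|\Ric|\le C(n)|\Rm|$ and $|\Rm|\le C\varepsilon b^{-2}$. This absorbs the term with no constraint on $\sigma$, which is consistent with the claimed $\sigma=\sigma(n)$.
\end{enumerate}
Your displayed form of (A3) is garbled; it should read $r^2|\Rm|+r|\Ric|<\varepsilon$. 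Only the $|\Rm|$ part is needed for this step.
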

\begin{proof}
Applying Lemma \ref{evolution} to $F(r)=r^{-3}\int_{b=r} f\,|\nabla b|$ with $f = \phi_\cN\,\left(|\Rm| +\sigma\right)^{1-\eta}$, and by definition of $H(r)$, we have
\[
r^2 H''+(4\eta-1)rH'-4\eta(1-\eta)H
= r^{1-2\eta}\int_{b=r}\Delta f\,|\nabla b|^{-1}.
\]
Write $G=\left(|\Rm| +\sigma\right)^{1-\eta}$, we now compute \(\Delta f\) as follows:
\begin{equation}\label{delta f}
\Delta f = G\Delta\phi_\cN + 2\langle\nabla\phi_\cN,\nabla G\rangle + \phi_\cN\,\Delta G.
\end{equation}
where
\[\nabla G=(1-\eta)\nabla\left(|\Rm| +\sigma\right)^{1-\eta} ,
\] and
\[
\Delta G = (1-\eta)\left(|\Rm| +\sigma\right)^{-\eta}\Delta|\Rm| - \eta(1-\eta) \left(|\Rm| +\sigma\right)^{-\eta-1}|\nabla|\Rm||^2.
\]

By  Proposition \ref{laplacian of Rm} we have
\[
\frac12\Delta|\Rm|^2 = |\nabla \Rm|^2 + \langle \Rm,\nabla^2\mathrm{\Ric}\rangle + \langle \Rm,\mathrm{\Ric}*\Rm\rangle + \langle \Rm,\Rm*\Rm\rangle.
\]
Since $\displaystyle\Delta|\Rm| = \frac{\Delta|\Rm|^2}{2|\Rm|} - \frac{|\nabla|\Rm||^2}{|\Rm|},$ we can estimate
\begin{align*}
\Delta|\Rm| &\ge \frac{|\nabla \Rm|^2-|\nabla |\Rm||^2}{|\Rm|} + \frac{\langle \Rm,\nabla^2\mathrm{\Ric}\rangle}{|\Rm|}
+ \frac{\langle \Rm,\mathrm{\Ric}*\Rm\rangle}{|\Rm|}
+ \frac{\langle \Rm,\Rm*\Rm\rangle}{|\Rm|}\\
 &\ge \frac{|\nabla \Rm|^2-|\nabla |\Rm||^2}{|\Rm|}-|\nabla^2 \Ric|-C(n)\left(|\Ric|+|\Rm|\right)|\Rm|\\
 &\ge \frac{|\nabla \Rm|^2-|\nabla |\Rm||^2}{|\Rm|}-|\nabla^2 \Ric|-C(n)|\Rm|^2,
 \end{align*} where we have used the fact that 
$|\Ric|\le C(n)|\Rm|$ and $|\Ric*\Rm|+|\Rm*\Rm|\le C(n)|\Rm|^2.$ It then follows that 
\begin{equation}
\begin{aligned}\label{Delta}
 \Delta G&\ge(1-\eta)\left(|\Rm|+\sigma\right)^{-\eta}\left(\left(|\Rm|+\sigma\right)^{-1}\left(|\nabla \Rm|^2-(1+\eta)|\nabla |\Rm||^2  \right) - |\nabla^2 \Ric|-C(n)\left(|\Rm|+\sigma\right)^{2} \right)\\
 &\ge -C(n)\left(\left(|\Rm|+\sigma\right)^{-1-\eta}|\nabla \Ric|^2+\left(|\Rm|+\sigma\right)^{-\eta} |\nabla^2 \Ric|+\left(|\Rm|+\sigma\right)^{2-\eta}\right),
 \end{aligned}
 \end{equation}
where we have used the improved Kato inequality (Proposition \ref{Improved Kato inequality}) in the last line.
  Plug (\ref{Delta}) into (\ref{delta f}), we have 
\begin{align*}
\Delta f &\ge G\Delta\phi_\cN  + 2\langle\nabla\phi_\cN,\nabla G\rangle-C(n)\phi_\cN\left(\left(|\Rm| +\sigma\right)^{-1-\eta}|\nabla \Ric|^2+\left(|\Rm| +\sigma\right)^{-\eta} |\nabla^2 \Ric|+\left(|\Rm| +\sigma\right)^{2-\eta} \right)\\
&\ge- \left(|\Rm| +\sigma\right)^{1-\eta}|\Delta\phi_\cN| -2|\nabla\left(|\Rm| +\sigma\right)^{1-\eta}||\nabla\phi_\cN| \\
&\quad-C(n)\phi_\cN\left(\left(|\Rm| +\sigma\right)^{-1-\eta}|\nabla \Ric|^2+\left(|\Rm| +\sigma\right)^{-\eta} |\nabla^2 \Ric|+ \left(|\Rm| +\sigma\right)^{2-\eta} \right) .
\end{align*} 

 Finally, we conclude
\begin{align*}
r^{1-2\eta}\int_{b=r}\Delta f\,|\nabla b|^{-1} 
&\ge -r^{1-2\eta}\int_{b=r} \left(\left(|\Rm| +\sigma\right)^{1-\eta}|\Delta\phi_\cN| +2|\nabla\phi_\cN||\nabla\left(|\Rm|+\sigma\right)^{1-\eta}|\right)|\nabla b|^{-1} \\
&\quad - Cr^{1-2\eta}\int_{b=r}\phi_\cN\left(\left(|\Rm|+\sigma\right)^{-1-\eta}|\nabla \Ric|^2+ \left(|\Rm|+\sigma\right)^{-\eta}|\nabla^2\Ric|\right) |\nabla b|^{-1}\\
&\quad- Cr^{1-2\eta}\int_{b=r}\phi_\cN\left(|\Rm|+\sigma\right)^{2-\eta}|\nabla b|,
\end{align*} where we used $C^{-1}\le|\nabla b|\le C$ in the last line by Lemma \ref{Green's distance function estimate}.

Since $\phi_\cN$ is supported in $(\delta,\tau)$-neck region, which is $\Psi(\delta)$ GH-close to $(n-3)$-symmetric cone, where $\Psi(\delta)\to 0$ as $\delta\to 0$. By Strong epsilon regularity assumption (A3), if $\delta<\delta_0$ is sufficently small relative to $\eta$, we have curvature estimate 
 \[   r^2 \phi_\cN|\Rm| \le C(n)^{-1}\eta(1-\eta) \phi_\cN \text{ on $\{b=r\}$}.
 \] Now we choose $\sigma=C(n)^{-1}\eta(1-\eta) R^{-2}$, where $R$ is given by Lemma \ref{compact support}, we conclude \[C(n)r^{1-2\eta}\phi_\cN\left(|\Rm|+\sigma\right)^{2-\eta} \le \eta(1-\eta) r^{-1-2\eta}  \phi_\cN \left(|\Rm|+\sigma\right)^{1-\eta} \text{ on $\{b=r\}$}.
 \] 
Thus we obtain
\[
r^2 H''+(4\eta-1)rH'-3\eta(1-\eta)H
= r^{1-2\eta}\int_{b=r}\Delta f\,|\nabla b|^{-1}\ge -e(r),
\]where 
\begin{align*}
e(r)&=r^{1-2\eta}\int_{b=r} \left(\left(|\Rm| +\sigma\right)^{1-\eta}|\Delta\phi_\cN| +2|\nabla\phi_\cN||\nabla\left(|\Rm|+\sigma\right)^{1-\eta}|\right)|\nabla b|^{-1} \\
&\quad + Cr^{1-2\eta}\int_{b=r}\phi_\cN\left(\left(|\Rm|+\sigma\right)^{-1-\eta}|\nabla \Ric|^2+ \left(|\Rm|+\sigma\right)^{-\eta}|\nabla^2\Ric|\right) |\nabla b|^{-1}.
\end{align*}
\end{proof}
 Now that we have obtained the evolution equation satisfied by  $H(r)$, we proceed to compute a special solution of this ODE, and then apply the comparison principle to derive an upper bound estimate for  
$H(r)$.
 
\begin{lemma}\label{special solution}
Let $R > 0$ and $\eta \in (0, 1)$. Let $g(r)$ be a bounded function on the interval $(0, R)$. Consider the differential equation: \[
h''(r)+\frac{4\eta-1}{r}h'(r)-\frac{3\eta(1-\eta)}{r^2}h(r)=-g(r).
\]
Then there exists a solution $h(r)$ with $h(0)=h(R)=0$ such that
\begin{equation} 
\int_0^R\frac{h(r)}{r}\,dr = \frac{1}{ 3\eta\,(1-\eta)}\left(\int_0^R s\,g(s)\,ds - R^{\alpha}\int_0^R s^{1-\alpha}g(s)\,ds\right),
\end{equation}where $\alpha = 1 - 2\eta - \sqrt{1-\eta(1-\eta) }$.
\end{lemma}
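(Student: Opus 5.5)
The equation is of Euler type, so I would solve it explicitly by variation of parameters and then obtain $\int_0^R h/r$ from an integrated form of the equation, without integrating $h$ directly.

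\textbf{Indicial roots.} Substituting $h=r^\lambda$ gives
\[
\lambda^2+(4\eta-2)\lambda-3\eta(1-\eta)=0 .
\]
Hence
\[
\lambda_\pm=1-2\eta\pm\sqrt{(1-2\eta)^2+3\eta(1-\eta)}=1-2\eta\pm\sqrt{1-\eta(1-\eta)} .
\]
Thus $\alpha=\lambda_-<0<\lambda_+=:\beta$, with $\alpha\beta=-3\eta(1-\eta)$ and $\alpha+\beta=2-4\eta$.

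\textbf{Green's function.} Rewrite the operator in divergence form, $r^{1-4\eta}(r^{4\eta-1}h')'-3\eta(1-\eta)r^{-2}h=-g$. Take the homogeneous solution vanishing at $0$ to be $u_0=r^\beta$, and the one vanishing at $R$ to be $u_R=r^\beta-R^{\beta-\alpha}r^\alpha$. The Wronskian satisfies $r^{4\eta-1}W(u_0,u_R)=\mathrm{const}$. Computing it (the constant is a multiple of $(\beta-\alpha)R^{\beta-\alpha}$) yields the Green's function
\[
K(r,s)=\frac{s^{4\eta-1}\,u_0(\min(r,s))\,u_R(\max(r,s))}{\mathrm{const}} ,
\]
and I set $h(r)=-\int_0^R K(r,s)g(s)\,ds$. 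Because $g$ is bounded, we have $h(r)=O(r^{\beta}\log)$ or better near $0$ (I would check the cases $\beta\lessgtr 2$ by direct estimation of the two pieces). Hence $h(0)=0$ and $h(R)=0$, and moreover $rh'(r)\to0$ as $r\to0$.

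\textbf{Integral identity.} Multiply the ODE by $r$ and integrate over $(\epsilon,R)$. Using $\int rh''=[rh']-\int h'$ together with $h(0)=h(R)=0$, I get
\[
3\eta(1-\eta)\int_0^R\frac{h}{r}\,dr=\int_0^R s\,g(s)\,ds+Rh'(R)-\lim_{r\to0}rh'(r) .
\]
The last term vanishes by the previous step. It remains to compute $Rh'(R)$ from the Green's function representation. For $r>s$ only the $u_R$ branch is active, and $u_R'(R)=(\beta-\alpha)R^{\beta-1}$. This gives $Rh'(R)$ as a constant multiple of $\int_0^R s^{4\eta-1}s^\beta g(s)\,ds$ times a power of $R$. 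Using $4\eta-1+\beta=1-\alpha$ (equivalently $\alpha+\beta=2-4\eta$), this becomes exactly $-R^{\alpha}\int_0^R s^{1-\alpha}g(s)\,ds$, and the stated formula follows.

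\textbf{Main obstacle.} The delicate point is bookkeeping rather than ideas. First, I must fix the Wronskian normalization and the signs so that the coefficient of $R^\alpha\int s^{1-\alpha}g$ is exactly $-1$. Second, I must justify the boundary behaviour at $r=0$ ($h\to0$ and $rh'\to0$) when $g$ is merely bounded. I would handle the latter by splitting $h$ into $r^{\alpha}\int_0^r s^{4\eta-1+\beta}g$ and $r^{\beta}\int_r^R s^{4\eta-1+\alpha}g$ and estimating each piece by $\sup|g|$. The first piece is $O(r^{2})$. The second is $O(r^2)$, or $O(r^\beta)$, or $O(r^2\log r)$, depending on the sign of $4\eta+\alpha$. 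All of these tend to zero, and so do their derivatives multiplied by $r$.
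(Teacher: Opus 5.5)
Your proposal is correct and follows essentially the same route as the paper. Both arguments build the explicit variation-of-parameters solution from $r^{\beta}$ and $r^{\beta}-R^{\beta-\alpha}r^{\alpha}$, with normalizing constant $(\beta-\alpha)R^{\beta-\alpha}$. Both then multiply the equation by $r$, integrate, and read off $\lim_{r\to0}rh'(r)=0$ and $Rh'(R)=-R^{\alpha}\int_0^R s^{1-\alpha}g$.

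Your treatment of the boundary behaviour at $0$ is somewhat more careful than the paper's. One simplification is available: since $0<\beta<2$ for every $\eta\in(0,1)$, you have $4\eta+\alpha=2-\beta>0$. So only the $O(r^{\beta})$ case arises, and no logarithm appears.
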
 
 
\begin{proof}
By multiplying the given differential equation by $r$, we can rewrite it  as:
\begin{equation} \label{eq:rearranged}
r h''(r) + (4\eta-1)h'(r) - \frac{3\eta(1-\eta)}{r}h(r) = -r g(r).
\end{equation}
Rearranging to isolate the term $\frac{h(r)}{r}$, we have:
\begin{equation}
\frac{h(r)}{r} = \frac{1}{3\eta(1-\eta)} \Big( r h''(r) + (4\eta-1)h'(r) + r g(r) \Big).
\end{equation}
We integrate both sides from $0$ to $R$:
\begin{equation} \label{eq:integral_eq}
\int_0^R \frac{h(r)}{r} \, dr = \frac{1}{3\eta(1-\eta)} \left( \int_0^R r h''(r) \, dr + (4\eta-1)\int_0^R h'(r) \, dr + \int_0^R r g(r) \, dr \right).
\end{equation}
Let us evaluate the first two integrals on the right-hand side.
Using integration by parts on the first integral, we obtain:
\begin{equation*}
\int_0^R r h''(r) \, dr = \Big[ r h'(r) \Big]_0^R - \int_0^R h'(r) \, dr = R h'(R) - \lim_{r \to 0^+} r h'(r) - \Big( h(R) - h(0) \Big).
\end{equation*}
For the second integral, we simply have:
\begin{equation*}
(4\eta-1)\int_0^R h'(r) \, dr = (4\eta-1) \Big( h(R) - h(0) \Big).
\end{equation*}
Since $h(0) = h(R) = 0$, both $h(R) - h(0) = 0$ terms vanish.

To evaluate $\lim_{r \to 0^+} r h'(r)$ and $h'(R)$, we recall the explicit representation of the particular solution satisfying the boundary conditions. The homogeneous equation \[r^2 h'' + (4\eta-1)r h' - 3\eta(1-\eta)h = 0\] is a Cauchy-Euler equation with characteristic roots $\gamma_1, \gamma_2= 1 - 2\eta \pm \sqrt{1-\eta(1-\eta)}$, where $\gamma_1 > 0$ and $\gamma_2 < 0$. Denote $\gamma_2$ by $\alpha$, note that $\gamma_1 + \alpha = 2 - 4\eta$ and $\gamma_1 - \alpha = 2\sqrt{1-\eta(1-\eta)}$. Using the Green's function for this boundary value problem, the solution $h(r)$ is given by:
\begin{equation} \label{eq:explicit_h}
h(r) = \frac{R^{\gamma_1-\alpha} r^\alpha - r^{\gamma_1}}{C} \int_0^r s^{1-\alpha} g(s) \, ds + \frac{r^{\gamma_1}}{C} \int_r^R \left( R^{\gamma_1-\alpha} s^{1-\gamma_1} - s^{1-\alpha} \right) g(s) \, ds,
\end{equation}
where $C = (\gamma_1 - \alpha) R^{\gamma_1-\alpha}$. 

Differentiating $h(r)$ with respect to $r$ and applying the Fundamental Theorem of Calculus, the terms involving $g(r)$ perfectly cancel out, leaving:
\begin{equation} \label{eq:h_prime}
h'(r) = \frac{\alpha R^{\gamma_1-\alpha} r^{\alpha-1} - \gamma_1 r^{\gamma_1-1}}{C} \int_0^r s^{1-\alpha} g(s) \, ds + \frac{\gamma_1 r^{\gamma_1-1}}{C} \int_r^R \left( R^{\gamma_1-\alpha} s^{1-\gamma_1} - s^{1-\alpha} \right) g(s) \, ds.
\end{equation}
Multiplying by $r$, we have:
\begin{equation*}
r h'(r) = \frac{\alpha R^{\gamma_1-\alpha} r^\alpha - \gamma_1 r^{\gamma_1}}{C} \int_0^r s^{1-\alpha} g(s) \, ds + \frac{\gamma_1 r^{\gamma_1}}{C} \int_r^R \left( R^{\gamma_1-\alpha} s^{1-\gamma_1} - s^{1-\alpha} \right) g(s) \, ds.
\end{equation*}

Since $g$ is bounded by some $M > 0$, the first integral $\int_0^r s^{1-\alpha} g(s) \, ds$ is bounded by $\frac{M}{2-\alpha} r^{2-\alpha}$. Therefore, the first term in the expression is bounded by $\mathcal{O}(r^2)$, which approaches $0$ as $r \to 0^+$. Because $\gamma_1 > 0$, the second term also vanishes since $r^{\gamma_1} \to 0$.
Thus, $\lim_{r \to 0^+} r h'(r) = 0$.

Next, we evaluate $h'(R)$ using \eqref{eq:h_prime}. The second integral evaluates from $R$ to $R$, thereby vanishing, and yielding:
\begin{align*}
h'(R) &= \frac{\alpha R^{\gamma_1-\alpha} R^{\alpha-1} - \gamma_1 R^{\gamma_1-1}}{C} \int_0^R s^{1-\alpha} g(s) \, ds + 0 \\
&= \frac{(\alpha - \gamma_1) R^{\gamma_1-1}}{(\gamma_1 - \alpha) R^{\gamma_1-\alpha}} \int_0^R s^{1-\alpha} g(s) \, ds \\
&= -R^{\alpha-1} \int_0^R s^{1-\alpha} g(s) \, ds.
\end{align*}
Therefore, $R h'(R) = -R^\alpha \int_0^R s^{1-\alpha} g(s) \, ds$.

Substituting these evaluations back into \eqref{eq:integral_eq}, we get:
\begin{equation*}
\int_0^R \frac{h(r)}{r} \, dr = \frac{1}{3\eta(1-\eta)} \left( R h'(R) + \int_0^R r g(r) \, dr \right)
\end{equation*}
\begin{equation*}
\int_0^R \frac{h(r)}{r} \, dr = \frac{1}{3\eta(1-\eta)} \left( -R^\alpha \int_0^R s^{1-\alpha} g(s) \, ds + \int_0^R s g(s) \, ds \right).
\end{equation*}
Rearranging the terms inside the parentheses to match the exact statement of the Lemma yields:
\[
\int_0^R\frac{h(r)}{r}\,dr = \frac{1}{3\eta\,(1-\eta)}\left(\int_0^R s\,g(s)\,ds - R^{\alpha}\int_0^R s^{1-\alpha}g(s)\,ds\right),
\]
which concludes the proof.
\end{proof}
\begin{theorem}[Dini estimates]\label{Dini estimates}
 Let \((M^n, g)\) satisfy assumptions  $(A1),(A2),(A3)$ and $(A4)$.  If $\delta<\delta_0(n,p, \rv,\Lambda,B)$, there exists $C(n,p, \rv,\Lambda,B)>0$ such that for \[H(r)=r^{-1-2\eta}\int_{b=r} \phi_\cN\,\left(|\Rm|+\sigma\right)^{1-\eta}|\nabla b|,\] we have the Dini estimate:
\[
\int_0^\infty \frac{H(r)}{r} \, dr \le C.
\]
\end{theorem}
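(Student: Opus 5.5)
We compare $H$ with the special solution from Lemma \ref{special solution}, using the differential inequality of Lemma \ref{superconvexity}, and then bound the error integrals.

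\textbf{Step 1: reduction to the special solution.} By Lemma \ref{compact support}, $H$ is supported in $(0,R)$ with $R=R(n,p,\rv,\Lambda,B)$, so $H(0^+)=H(R)=0$ and $H\ge 0$. Let $L h:=h''+\frac{4\eta-1}{r}h'-\frac{3\eta(1-\eta)}{r^2}h$. Lemma \ref{superconvexity} gives $LH\ge -g$ with $g(r):=r^{-2}e(r)$. Let $h$ be the solution of $Lh=-g$ with $h(0)=h(R)=0$ given by Lemma \ref{special solution}. After truncating $g$ near $0$ (it vanishes near $r=0$ since $\phi_\cN$ does) it is bounded. Then $w=H-h$ satisfies $Lw\ge 0$ and $w=0$ at both endpoints. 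The zeroth-order coefficient $-3\eta(1-\eta)/r^2$ is negative, so $L$ obeys the maximum principle: at a positive interior maximum we would have $w''\le 0$, $w'=0$ and hence $Lw<0$. Therefore $H\le h$. Lemma \ref{special solution} then gives
\[
\int_0^R\frac{H}{r}\,dr\le \frac{1}{3\eta(1-\eta)}\Big(\int_0^R s\,g(s)\,ds+R^{\alpha}\int_0^R s^{1-\alpha}|g(s)|\,ds\Big).
\]
Since $\alpha<0$ and $s\le R$, we have $R^\alpha s^{1-\alpha}\le s$. So it suffices to bound $\int_0^R s^{-1}e(s)\,ds$.

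\textbf{Step 2: coarea.} We have $s^{-1}e(s)=s^{-2\eta}\int_{b=s}(\cdots)|\nabla b|^{-1}$. By the coarea formula and $b\approx d(\cdot,\cC)$, $|\nabla b|\approx 1$ (Lemma \ref{Green's distance function estimate}), it becomes
\[
\int s^{-1}e\,ds\le C\int_{\cN_{10^{-4}}\cap B_{19/10}} b^{-2\eta}\,(\text{integrand of }e).
\]

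\textbf{Step 3: the cutoff terms.} The $\nabla\phi_\cN$, $\Delta\phi_\cN$ terms live either on the annuli $A_{10^{-4}r_x,10^{-3}r_x}(x)$ or on the outer region $B_{19/10}\setminus B_{18/10}$. On the annuli, (A3) gives $|\Rm|\le \epsilon r_x^{-2}$, $|\nabla\Rm|\le C r_x^{-3}$, and Lemma \ref{Good cutoff fuctions} gives $|\nabla\phi|\le Cr_x^{-1}$, $|\nabla^2\phi|\le Cr_x^{-2}$. Each annulus then contributes $\le C r_x^{-2\eta}r_x^{-2(1-\eta)}r_x^{-2}r_x^n=Cr_x^{n-4}$. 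Summing over $x\in\cC$ gives $C\mu(B_2)\le C$ by (A4). On the outer region $b\gtrsim$ const and curvature is controlled, so that part is bounded as well.

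\textbf{Step 4: the Ricci terms (main obstacle).} We need to bound
\[
\int_{\cN_{10^{-4}}} b^{-2\eta}\Big((|\Rm|+\sigma)^{-1-\eta}|\nabla\Ric|^2+(|\Rm|+\sigma)^{-\eta}|\nabla^2\Ric|\Big).
\]
Decompose $\cN$ into Whitney-type balls $B_{b(y)/10}(y)$ on which (A3) applies at scale $r\approx b(y)$. The derivative bounds in (A3) give $r|\nabla\Ric|+r^2|\nabla^2\Ric|\le B\sup_{B_r}|\Ric|$. Using $(|\Rm|+\sigma)^{-1-\eta}\le\sigma^{-1-\eta}$ and $(|\Rm|+\sigma)^{-\eta}\le\sigma^{-\eta}$, the integrand on such a ball is $\le C r^{-2\eta}\big(r^{-2}\sup|\Ric|^2+r^{-2}\sup|\Ric|\big)$. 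The suprema of $|\Ric|$ must be converted into integrals. To do this, shrink the balls and use elliptic regularity of $\Ric$ in harmonic coordinates (the bound of $\nabla^2\Ric$ by $\Ric$ makes $\sup_{B_{r/2}}|\Ric|\lesssim \fint_{B_r}|\Ric|$ plausible), so that $\sup|\Ric|\le C\big(\fint_{B_r}|\Ric|^p\big)^{1/p}$. Hölder's inequality with $p>n/2$ and the weight $r^{-2-2\eta}$ then yields, summed over a dyadic layer $\{b\approx 2^{-k}\}$, a bound of the form
\[
C\,2^{k(2+2\eta)}\,\mu\text{-volume}^{1-1/p}\,\Lambda^{1/p}\cdot 2^{-k\cdot(\dots)}.
\]
The key is that the layer has volume $\approx 2^{-4k}$ by (A4), and $p>n/2\ge 2$ leaves a geometric factor $2^{-k(4(1-1/p)-2-2\eta)}$. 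This is summable once $\eta$ is small and $p>2$; dimension $4$ requires a separate treatment using $|\Ric|\le\epsilon r^{-1}$. The main danger is precisely this exponent count. If it fails, I would instead absorb the $|\nabla\Ric|^2$ term via the $|\Ric|\le\epsilon r^{-1}$ bound from (A3), and the $|\nabla^2\Ric|$ term by integrating by parts against $b^{-2\eta}\phi_\cN$, using $|\nabla b|\le C$. Combining Steps 1–4 gives the Dini estimate with $C=C(n,p,\rv,\Lambda,B)$.
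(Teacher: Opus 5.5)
Your Steps 1--2 are exactly the paper's argument: comparison with the special solution via the maximum principle, discarding the $R^{\alpha}$ term, then the coarea formula. The proof fails in Step 4, at the one term that matters, $(|\Rm|+\sigma)^{-1-\eta}|\nabla\Ric|^2$.

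Once you replace $(|\Rm|+\sigma)^{-1-\eta}$ by $\sigma^{-1-\eta}$, you have thrown away a full power of curvature. On a Whitney ball with $r\approx b\approx 2^{-k}$ the integrand is then of size $r^{-2-2\eta}\sup|\Ric|^2$, and none of your three options closes:
\begin{itemize}
\item Using $|\Ric|\le\vepsilon r^{-1}$ from (A3) turns the integrand into $b^{-4-2\eta}$. The integral $\int b^{-4-2\eta}\phi_\cN$ has no bound independent of $\min_x r_x$; already $\int b^{-4}\phi_\cN$ has none, which is the whole reason the superconvexity argument is needed.
\item Using the $L^p$ bound and H\"older, even granting your sup-to-average step, the layer $\{b\approx 2^{-k}\}$ contributes $2^{k(2+2\eta)}\Lambda^{2/p}(2^{-4k})^{1-2/p}$. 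This is summable only if $p>4/(1-\eta)$. The exponent $4(1-1/p)-2-2\eta$ you wrote is the one for the \emph{linear} term $|\nabla^2\Ric|$, not for $|\nabla\Ric|^2$.
\item Mixing the two bounds requires $p>4/(1-2\eta)$, so it also fails.
\end{itemize}
So in the range $n/2<p<4$, which is exactly the range the paper needs for Ricci flows in dimension $\le 7$, your route fails. The fallback does not rescue it: the error term carries absolute values, so there is nothing to integrate by parts, and ``absorbing via $|\Ric|\le\vepsilon r^{-1}$'' is again the $b^{-4-2\eta}$ bound. Separately, the step $\sup_{B_{r/2}}|\Ric|\lesssim(\fint_{B_r}|\Ric|^p)^{1/p}$ is unjustified. $\Ric$ satisfies no elliptic equation by itself, and (A3) only bounds derivatives of $\Ric$ by its supremum.

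The missing idea is to cancel the factor $(|\Rm|+\sigma)^{-1}$ rather than discard it, using the algebraic inequality $|\Ric|\le C(n)|\Rm|$. The paper uses (A3) in the pointwise form $|\nabla\Ric|^2\le Cb^{-2}|\Ric|^2\le Cb^{-2}|\Ric||\Rm|$ and $|\nabla^2\Ric|\le Cb^{-2}|\Ric|$. Then both Ricci errors are at most $Cb^{-2}|\Ric|(|\Rm|+\sigma)^{-\eta}$. The improved decay $|\Ric|\le b^{-1}$ from (A3), one order better than $|\Rm|\lesssim b^{-2}$, then bounds the weighted integrand by $C\sigma^{-\eta}b^{-3-2\eta}\phi_\cN$. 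No integral of $|\Ric|^p$ is used at all.

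The three resulting integrals $\int b^{-3-2\eta}\phi_\cN$, $\int b^{-3}|\nabla\phi_\cN|$ and $\int b^{-2}|\Delta\phi_\cN|$ are bounded using $3+2\eta<4$ and the codimension-four estimate
$\Vol(\supp\phi_\cN\cap\{2^{-i}\le d(\cdot,\cC)\le 2^{-i+1}\})\le C2^{-4i}$.
That estimate comes from a Vitali cover by balls centred on $\cC$ together with the two-sided Ahlfors bound (A4).

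You need the same estimate in Step 3 for the region where $\nabla\phi_1\neq 0$. Contrary to your claim, $b$ is not bounded below there: $\cN_{10^{-4}}\cap(B_{19/10}\setminus B_{18/10})$ comes within distance $10^{-4}r_x$ of $\cC$.
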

\begin{proof}
By Lemma \ref{compact support}, there exists $R>0$ such that $\phi_\cN \equiv 0 $ on $ \{b \geq R\} $. Therefore, we have
\[
\int_0^\infty \frac{1}{r} H(r) \, dr = \int_0^R \frac{1}{r} H(r) \, dr.
\]
Now let us begin by using Lemma~\ref{special solution} to choose a special solution \( h(r) \) of the following ODE:
\[  
h''(r)+\frac{4\eta-1}{r}h'(r)-\frac{3\eta(1-\eta)}{r^2}h(r)=-\frac{e(r)}{r^2}, \quad h(0)=h(R)=0,
\] where $e(r)$ is given by $(\ref{error})$.
By Lemma~\ref{superconvexity}, we have 
\[
r^2 H''+(4\eta-1)rH'-3\eta(1-\eta)H \ge -e(r),
\]
and by Lemma \ref{compact support},   $H(0)=H(R)=0.$ Then we we conclude 
\begin{equation}r^2 (H-h)''(r)+(4\eta-1)r (H-h)'(r)-3\eta(1-\eta)(H-h)(r) \ge0, \label{ODI}
\end{equation}with $(H-h)(0)=(H-h)(R)=0.$

 We now claim $H(r)\le h(r)$ in $(0,R)$. In fact, if there exists some  $r_0\in(0,R)$ such that $(H-h)(r)$ attains its maximum at $r_0$ with $(H-h)(r_0)>0$, then 
$ (H-h)'(r_0)=0$  and $(H-h)''(r_0)\le 0$.
  It follows that 
\[ r_0^2 (H-h)''(r_0)+(4\eta-1)r_0 (H-h)'(r_0)-3\eta(1-\eta)(H-h)(r_0)\le -3\eta(1-\eta)(H-h)(r_0)<0,
\]  which contradicts (\ref{ODI}).

Recall from Lemma \ref{special solution}, we have
\[\int_0^R\frac{h(r)}{r}\,dr = \frac{1}{3\eta\,(1-\eta)}\left(\int_0^R \frac{e(s)}{s}\,ds - R^{\alpha}\int_0^R \frac{e(s)}{s^{1-\alpha}}\,ds\right)\le \frac{1}{3\eta(1-\eta)}\int_0^R\frac{e(s)}{s}ds.\]
Now we can estimate 
\begin{align*}
 \int_0^R \frac{H(r)}{r} dr&\le \int_0^R \frac{h(r)}{r} dr\le  \frac{1}{3\eta(1-\eta)}\int_0^R\frac{e(r)}{r}dr.
\end{align*}

From  expression $(\ref{error})$ and coarea formula,  
we can write it as \[\int_0^R \frac{e(r)}{r}dr= e_1+e_2+e_3,\] where 
\[\begin{aligned}
e_1&=\int_0^R r^{-2\eta} \int_{b=r} \left(|\Rm| +\sigma\right)^{1-\eta}|\Delta \phi_\cN| |\nabla b|^{-1}
     = \int_{b \leq R} b^{-2\eta} \left(|\Rm| +\sigma\right)^{1-\eta}|\Delta \phi_\cN|,\\
e_2&=2\int_0^R r^{-2\eta} \int_{b=r} |\nabla\phi_\cN||\nabla\left(|\Rm| +\sigma\right)^{1-\eta}||\nabla b|^{-1}
     = 2\int_{b \leq R} b^{-2\eta} |\nabla\phi_\cN||\nabla\left(|\Rm| +\sigma\right)^{1-\eta}|,\\
     e_3 &= C(n)\int_0^R r^{-2\eta}\int_{b=r}\phi_\cN\left(\left(|\Rm| +\sigma\right)^{-1-\eta}|\nabla \Ric|^2+ \left(|\Rm| +\sigma\right)^{-\eta}|\nabla^2\Ric|\right) |\nabla b|^{-1}\\
     &= C(n)\int_{b \leq R} b^{-2\eta}\phi_\cN\left(\left(|\Rm| +\sigma\right)^{-1-\eta}|\nabla \Ric|^2+ \left(|\Rm| +\sigma\right)^{-\eta}|\nabla^2\Ric| \right). 
\end{aligned}\]

We now estimate each term as follows.
To begin with, since $\phi_\cN$ is supported in $\cN_{10^{-4}}=B_2(q)\backslash \bigcup_{x\in\C}\overline{B}_{10^{-4}\cdot r_x}(x)$, by Remark \ref{remark_neck} and Lemma \ref{Green's distance function estimate}, we have $r_h(y)\approx d(y,\C)\approx b(y)$. If follows from assumption  (A3)  that if $\delta<\delta_0$,  we have curvature derivative estimates: 
 \[\begin{aligned}|\nabla \left(|\Rm| +\sigma\right)^{1-\eta}|&\le(1-\eta)|\Rm|^{-\eta}|\nabla \Rm|\le Cb^{-1}|\Rm|^{1-\eta},\\
 |\nabla \Ric|^2&\le Cb^{-2}|\Ric|^2\le Cb^{-2}|\Ric||\Rm| ,\\
 |\nabla^2 \Ric|&\le Cb^{-2}|\Ric|\\
 |\Ric|&\le b^{-1}  .\end{aligned}\] Combing above estimates, we have
 \[\begin{aligned}
 e_1&\leq \int_{b \leq R} b^{-2\eta} (Cb^{-2})^{1-\eta}|\Delta \phi_\cN|\le C\int_{b \leq R} b^{-2} |\Delta \phi_\cN| ,\\
e_2&\leq C\int_{b \leq R} b^{-2\eta} |\nabla\phi_\cN|\left(b^{-1}|\Rm|^{1-\eta}\right)\leq  C\int_{b \leq R} b^{-2\eta} |\nabla\phi_N|\left(b^{-1}b^{2\eta-2}\right)=C\int_{b \leq R} b^{-3} |\nabla\phi_\cN| ,\\ 
e_3&\le C\int_{b \leq R} b^{-2\eta}\phi_\cN\left( \left(|\Rm| +\sigma\right)^{-\eta}  b^{-2}|\Ric|   \right)\le C\sigma^{-\eta}\int_{b \leq R} b^{-3-2\eta}\phi_\cN .
\end{aligned}\]  
Combining all these we arrive at
\begin{equation}\label{integral}
\int_0^\infty \frac{H(r)}{r} \, dr \leq C \left(  \int_{b \leq R} b^{-3-2\eta}\phi_\cN+ \int_{b \leq R}b^{-3}|\nabla\phi_\cN| + \int_{b \leq R} b^{-2}|\Delta\phi_\cN| \right).  
\end{equation}

We can now follow  the same approach of \cite{JN} to bound each term of $(\ref{integral})$. By  Lemma \ref{Good cutoff fuctions}, Lemma \ref{Green's distance function estimate} and assumption (A4)   we have estimate
\[
\begin{aligned}
&\int_{b \leq R} b^{-2}|\Delta \phi_\cN|(z) \, dz +\int_{b \leq R} b^{-3}|\nabla \phi_\cN|(z) \, dz +\int_{b \leq R} b^{-3-2\eta}(z)\phi_\cN(z) dz \\
&\leq C\, \left( \int_{d_\cC \leq 3} d_\cC^{-2}|\Delta \phi_\cN|(z) \, dz + \int_{d_\cC \leq 3} d_\cC^{-3}|\nabla \phi_\cN|(z) \, dz + \int_{d_\cC \leq 3} d_\cC^{-3-2\eta}\phi_\cN(z)dz \right) \\
&\leq C\,  \Biggl( \sum_{x \in \cC \cap B_{19/10}(p)} \int_{B_{r_x}(x)} \left( d_\cC^{-2}|\Delta \phi_\cN| + d_\cC^{-3}|\nabla \phi_\cN|\right)(z) \, dz + \int_{\{d_\cC \leq 3\} \cap \operatorname{supp} \phi_\cN}   d_\cC^{-3-2\eta}(z) \, dz \Biggr) \\
&\leq C\,   \Biggl( \sum_{x \in \cC \cap B_{19/10}(p)} r_x^{n-4} + \int_{\{d_\cC \leq 3\} \cap \operatorname{supp} \phi_\cN} d_\cC^{ -3-2\eta}(z) \, dz \Biggr) \\
&\leq C\,   \mu_\cN(B_{19/10}(q)) + C(n, \rv, B,\tau) \int_{\{d_\cC \leq 3\} \cap \operatorname{supp} \phi_\cN} d_\cC^{-3-2\eta}(z) \, dz \\
&\leq C  + C\,  \int_{\{d_\cC \leq 3\} \cap \operatorname{supp} \phi_\cN} d_\cC^{-3-2\eta}(z) \, dz,
\end{aligned}  
\]

We can compute that
\begin{equation}\label{dC}
\begin{aligned}
\int_{\{d_\cC \leq 3\} \cap \operatorname{supp} \phi_\cN} d_\cC^{ -3-2\eta}(z) \, dz
&\leq \sum_{i=0}^\infty \int_{\operatorname{supp} \phi_\cN \cap \{2^{-i} \leq d(x, \mathcal{C}) \leq 2^{-i+1}\}} d_\cC^{ -3-2\eta}(z) \, dz \\
&\leq \sum_{i=0}^\infty 2^{(3+2\eta)i} \operatorname{Vol}\bigl( \operatorname{supp} \phi_\cN \cap \{2^{-i} \leq d(x, \mathcal{C}) \leq 2^{-i+1}\} \bigr).
\end{aligned} 
\end{equation}

Choose a Vitali covering \(\{B_{2^{-i+3}}(x_\alpha),\; x_\alpha \in\cC,\; \alpha = 1, \dots, N\}\) of \(\operatorname{supp} \phi_\cN \cap \{2^{-i} \leq d(x, \cC) \leq 2^{-i+1}\}\) such that \(\{B_{2^{-i}}(x_\alpha),\; x_\alpha \in \cC,\; \alpha = 1, \dots, N\}\) are pairwise disjoint. We get
\[
C  \geq \mu(B_{19/10}(q)) \geq \mu(\operatorname{supp} \phi_\cN) \geq \sum_{\alpha=1}^N \mu(B_{2^{-i}}(x_\alpha)) \geq  C\,  N 2^{-(n-4)i} 6^{-(n-4)}. 
\]
Therefore, we have \(N \leq C\, 2^{(n-4)i}\). Thus
\begin{equation}\label{vol}\begin{aligned}
\operatorname{Vol}\bigl( \operatorname{supp} \phi_\cN \cap \{2^{-i} \leq d(x, \cC) \leq 2^{-i+1}\} \bigr)
\leq \sum_{\alpha=1}^N \operatorname{Vol}(B_{2^{-i+3}}(x_\alpha)) \leq C\, 2^{-4i}.  
\end{aligned}
\end{equation}

Combining $(\ref{dC})$ and $(\ref{vol})$ we get
\[
\int_{\operatorname{supp} \phi_{\cN}} d_\cC^{-3}(z) \, dz\leq C\sum_{i=0}^\infty 2^{(3+2\eta)i -4i}= C\sum_{i=0}^\infty \frac{1}{2^{ (1-2\eta)i}} \leq  C.  
\]

Hence we arrive at
\[
\int_{b \leq R} b^{-2}|\Delta\phi_\cN|(z) \, dz + \int_{b \leq R} b^{-3}|\nabla\phi_\cN|(z) \, dz  +\int_{b \leq R} b^{-3-2\eta}(z)\phi_\cN(z) dz \leq C.  
\]
Thus we have proved the Lemma.\end{proof}

\begin{proof}[Proof of Theorem \ref{weak L2 on neck}:] 
 Since the estimates are scaling invariant, without loss
of generality we can assume $r = 1$. 
By Lemma \ref{Dini estimates}, we have estimate
\begin{equation}\begin{aligned} \label{proof_dini} \int_{0}^\infty r^{-2-2\eta}\int_{b=r}\phi_\cN \left(|\Rm|+\sigma\right)^{1-\eta}|\nabla b|=\int_0^\infty\frac{H(r)}{r}dr\le C.\end{aligned}
\end{equation} 
 Observe that 
\[\int_{\cN_{10^{-5}}\cap B_1(x)}|\Rm|^2\le     C\int_M  b^{-2-2\eta}\phi_\cN \left(|\Rm|+\sigma\right)^{1-\eta}.
\]  
By coarea formula, Lemma (\ref{Green's distance function estimate}) and $(\ref{proof_dini})$ we have 
\begin{equation}
\begin{aligned}
 \int_M b^{-2-2\eta}\phi_\cN \left(|\Rm|+\sigma\right)^{1-\eta}&=
     \int_0^\infty r^{-2-2\eta}\int_{b=r}\phi_\cN  \left(|\Rm|+\sigma\right)^{1-\eta} |\nabla b|^{-1}\\
     &\le C\int_{0}^\infty r^{-2-2\eta}\int_{b=r}\phi_\cN \left(|\Rm|+\sigma\right)^{1-\eta}|\nabla b|\\ & \le C.
\end{aligned}
\end{equation}
This proves \[\int_{\cN_{10^{-5}}\cap B_1(x)}|\Rm|^2\le C.
\] 
\end{proof} 
\section{Ahlfors regularity}
\subsection{Splitting functions on neck regions}
This subsection is intended to study the behavior of splitting functions on neck regions, and will be the key technical ingredient in the proof of Ahlfors regularity (A4) used in previous sections. The proof of the main Theorem \ref{Splitting functions on neck regions} in this section follows the same   argument in \cite{JN}. In fact, we will provide a simplified proof of Dini estimate (Theorem \ref{hessian Dini}),   due to stronger curvature assumption (A3).
\begin{theorem}[\cite{JN}, Theorem 5.2]\label{Splitting functions on neck regions}
Let \((M^n, g)\) satisfy assumptions  $(A1), (A2), (A3)$ and $(A4)$. Let $u:B_3(q)\to \mathbb{R}^{n-4}$ be a $\delta'$-splitting function,  for every $\epsilon, B>$  0 and $\tau < \tau (n)$, if $\delta,\delta'\le\delta(n, p,\rv,\Lambda, \tau, B)$,  then there exists a subset \(\mathcal{C}_\epsilon \subseteq \mathcal{C} \cap B_1\) such that

\begin{enumerate}
\item[$(1)$] \(\mu((\mathcal{C} \cap B_1) \setminus \mathcal{C}_\epsilon) < \epsilon\);
\item[$(2)$] for each \(x, y \in \mathcal{C}_\epsilon\), we have that \(1 - \epsilon < \frac{|u(x) - u(y)|}{d(x,y)} < 1 + \epsilon\);
\item[$(3)$] for each \(x \in \mathcal{C}_\epsilon\) and \(r_x \leq r \leq 1\), we have that \(u : B_r(x) \to \mathbb{R}^{n-4}\) is an \(\epsilon\)-splitting.
\end{enumerate}
\end{theorem}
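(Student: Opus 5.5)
The plan follows the Jiang--Naber scheme. The key is a Dini-type estimate for the Hessian of $u$ integrated against the packing measure, from which a maximal-function argument produces the good set $\mathcal{C}_\epsilon$.

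\textbf{Step 1: the Hessian Dini estimate.} The target is
\[
\int_{\mathcal{C}\cap B_{3/2}} \sum_{r_x\le r_\alpha=2^{-\alpha}\le 1} r_\alpha^{2-n}\int_{B_{r_\alpha}(x)}|\nabla^2 u|^2 \, d\mu(x)\le \Psi(\delta,\delta'\,|\,n,p,\rv,\Lambda,B),
\]
which is essentially the announced Theorem~\ref{hessian Dini}. I would prove it in the spirit of Lemma~\ref{superconvexity}. Apply Lemma~\ref{evolution} to $f=\phi_\cN\,|\nabla^2 u|^2$, or better to $\phi_\cN(|\nabla u|^2-1)$ and $\phi_\cN\,\langle\nabla u_i,\nabla u_j\rangle$, using the Bochner formula
\[
\tfrac12\Delta|\nabla u|^2=|\nabla^2u|^2+\Ric(\nabla u,\nabla u).
\]
This gives
\[
r^{-3}\int_{b\le r}|\nabla^2 u|^2 \le \frac{d}{dr}\Big(r^{-3}\int_{b=r}\phi_\cN(|\nabla u|^2-1)|\nabla b|\Big)+\text{errors}.
\]
Integrating in $r$ controls $\int_0^R r^{-3}\int_{b\le r}\phi_\cN|\nabla^2u|^2\,dr$ by boundary terms, which are small because $u$ is a $\delta'$-splitting, plus error terms. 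There are two error sources.
\begin{itemize}
\item The cutoff terms $|\nabla\phi_\cN|,|\Delta\phi_\cN|$ are supported near $\partial B_{r_x}(x)$. I would bound them as in the proof of Theorem~\ref{Dini estimates} via (A4), using that $|\nabla u|^2-1$ is small on those scales, by induction or continuity over scales.
\item The Ricci term $\int b^{-1}\phi_\cN|\Ric||\nabla u|^2$. Here (A3) gives $|\Ric|\le \vepsilon\, b^{-1}$ on the neck, and the dyadic volume count (\ref{vol}) gives $\int_{\operatorname{supp}\phi_\cN} b^{-2}\le C$. So this term is at most $C\vepsilon$, which is small. This is the simplification over \cite{JN}: (A3) replaces the pointwise $|\Ric|\le\delta$ assumption.
\end{itemize}
Finally, convert the integral over $\{b\le r\}$ to a $\mu$-average over centers. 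Since $b\approx d(\cdot,\cC)$ (Lemma~\ref{Green's distance function estimate}) and $\mu$ is Ahlfors regular by (A4), we have $r^{-3}\int_{b\le r}(\cdot)\approx\int r^{2-n}\int_{B_r(x)}(\cdot)\,d\mu(x)$ up to constants.

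\textbf{Step 2: the good set.} Let $\mathcal{C}_\epsilon$ be the set of $x\in\mathcal{C}\cap B_1$ whose scale-summed quantity
\[
\sum_\alpha r_\alpha^{2-n}\int_{B_{r_\alpha}(x)}|\nabla^2u|^2
\]
is at most $\epsilon'$. By Chebyshev, $\mu\big((\mathcal{C}\cap B_1)\setminus\mathcal{C}_\epsilon\big)\le \Psi/\epsilon'<\epsilon$ once $\delta,\delta'$ are small, which gives (1).

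\textbf{Step 3: splitting on all scales, giving (3).} For $x\in\mathcal{C}_\epsilon$, each scale has small scale-invariant Hessian energy. The summability over scales controls the drift of $\fint_{B_r(x)}\nabla u$ across scales, via Poincar\'e; the Sobolev inequality holds by \cite{DWZ}. Since the drift is bounded by the square-root Dini sum, which is small, $\langle\nabla u_i,\nabla u_j\rangle$ stays $\epsilon$-close to $\delta_{ij}$ in average on every $B_r(x)$ with $r_x\le r\le1$. The gradient bound $|\nabla u|\le C$ holds at all these scales because the neck balls have harmonic radius comparable to scale away from $\cC$. The $L^p$ gradient estimate on the near-singular part can be handled with Theorem~\ref{Lq hessian} if needed. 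Together these show $u$ is an $\epsilon$-splitting on $B_r(x)$.

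\textbf{Step 4: the bi-Lipschitz property, giving (2).} Take $x,y\in\mathcal{C}_\epsilon$ and set $r=d(x,y)\ge\max(r_x,r_y)$, which holds since the balls $B_{\tau r_x}$ are disjoint and $r$ is Lipschitz. Then $u$ is an $\epsilon$-splitting on $B_{2r}(x)$. By Theorem~\ref{almost splitting} and the neck structure (n3), $y$ lies $\delta r$-close to the $\mathbb{R}^{n-4}$ factor, and $u$ restricted to that factor is an $\epsilon r$-GH isometry. Hence $\big||u(x)-u(y)|-r\big|\le\epsilon r$.

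\textbf{Main obstacle.} Step 1 is the hard part. The cutoff boundary terms in the Dini argument require already knowing that $|\nabla u|^2-1$ is small near the scales $r_x$. In \cite{JN} this is handled by a careful inductive or contradiction argument over scales, and I would follow that argument, checking at each stage that (A1) and (A3) supply the needed Ricci control.
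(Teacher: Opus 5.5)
There is a genuine gap, and it sits at the point you flag as the main obstacle. The smallness in Step 1 cannot be obtained the way you propose, and the paper (following \cite{JN}) gets the needed smallness from a different estimate, not from an induction over scales. Since $r^{-3}\int_{b=r}\phi_\cN(|\nabla u|^2-1)|\nabla b|$ vanishes for small and for large $r$, integrating your identity in $r$ only trades $\int b^{-2}\phi_\cN|\nabla^2u|^2$ for cutoff terms of size $\sum_{x\in\cC}r_x^{n-4}\fint_{A_{10^{-4}r_x,10^{-3}r_x}(x)}\bigl||\nabla u|^2-1\bigr|$, plus a Ricci term. That sum is a $\mu$-average of the failure of $u$ to split at the bottom scale of each center, which is essentially what the theorem asserts. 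It is not a priori small: the centers discarded in (1) are exactly where it may be large. A priori this route gives only boundedness, which is Lemma \ref{hessian bound 1}.

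Step 3 as written also fails, even if you grant a small $L^2$ Dini sum. Put $a_\alpha=r_\alpha^2\fint_{B_{r_\alpha}(x)}|\nabla^2u|^2$. Poincar\'e telescoping bounds the change of $\fint_{B_{r_\alpha}(x)}\langle\nabla u_a,\nabla u_b\rangle$ between consecutive scales by $Cr_\alpha\fint_{B_{r_\alpha}(x)}|\nabla^2u|\le Ca_\alpha^{1/2}$. There are $\log_2(1/r_x)$ scales, with no uniform bound, and $\sum_\alpha a_\alpha\le\epsilon'$ does not control $\sum_\alpha a_\alpha^{1/2}$; take $a_\alpha=\epsilon'\alpha^{-2}$.

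The paper avoids both problems by working with a first-power quantity.
\begin{enumerate}
\item[(i)] A blow-up argument (Theorem \ref{hessian bound 2}) gives $r^2\fint_{B_r(x)}|\nabla^2u|^2<\epsilon$ for every $x\in\cC$ and every $r_x<r<1$. This settles the Hessian part of (3), including on the holes $B_{r_y}(y)$, where $\phi_\cN=0$. Your conversion of $\int_{b\le r}\phi_\cN(\cdot)$ into $\mu$-averages over full balls does not see those holes. By elliptic estimates it also gives $|\nabla^2u|\le\epsilon b^{-1}$ on the neck.
\item[(ii)] Lemma \ref{hessian superconvexity} gives a superconvexity inequality $r^2H''+rH'-H\ge e(r)$ for $H(r)=r^{-2}\int_{b=r}\sqrt{|\nabla^2u|^2+\epsilon^2}\,\phi_\cN|\nabla b|$. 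It comes from the commutation formula for $\Delta\nabla^2u$, not from Bochner for $|\nabla u|^2$. Its cutoff errors carry the factor $\sqrt{|\nabla^2u|^2+\epsilon^2}\le C\epsilon b^{-1}$ instead of $\bigl||\nabla u|^2-1\bigr|$. ODE comparison then gives $\int b^{-3}|\nabla^2u|\phi_\cN<C\epsilon$ (Theorem \ref{L1 hessian}), which is the $\mu$-averaged version of $\sum_\alpha r_\alpha\fint|\nabla^2u|$.
\item[(iii)] Telescoping along the wedge region $W_x$ bounds $\fint_{B_r(x)}|\langle\nabla u_a,\nabla u_b\rangle-\delta_{ab}|$, for all $r_x<r<2/3$ at once, by $\epsilon+C\int_{W_x}r_h^{1-n}|\nabla^2u|$. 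Integrating over $\mu$ with (A4) gives Proposition \ref{maximal function estimate}.
\end{enumerate}
Chebyshev applied to the maximal function $m$, rather than to an $L^2$ Dini sum, then gives $\cC_\epsilon$, and your Step 4 goes through. In Step 4, note that (n1) only gives $d(x,y)\ge\tau\max\{r_x,r_y\}$, so work at a scale comparable to $\tau^{-1}d(x,y)$.
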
 
   The following Theorem is a direct generalization from \cite{JN}, note that the condition $\int_{B_4}|\Ric|^p\le \Lambda$ with $p>n/2$  implies   $G_{\bar{\mu}}\in L^q$ for any $q<n/(n-2)$, which is sufficient for us to apply H\"older inequality to argue the same as \cite{JN}.
   
\begin{lemma}[\cite{JN}, Lemma 5.36]\label{hessian bound 1}
Under the condition of Theorem \ref{Splitting functions on neck regions}. If \( \delta, \delta' < \delta(n,p,\rv,\Lambda,B) \) and \( 0 < \tau \leq \tau(n) \), then
\[
\int_{B_{19/10}(q)} b^{-2} |\nabla^2 u|^2(z) \, dz \leq C(n, B, \rv,p).  
\]
\end{lemma}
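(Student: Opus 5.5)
The plan is to follow the Jiang–Naber argument and use the identity $\Delta G_{\bar\mu}=-\bar\mu$ together with the Bochner formula for harmonic functions. Since $b^{-2}=G_{\bar\mu}$, the quantity to control is $\int \phi_1\, G_{\bar\mu}|\nabla^2 u|^2$, where $\phi_1$ is the cutoff of Lemma \ref{Good cutoff fuctions} (equal to $1$ on $B_{18/10}$; a slightly larger cutoff covers $B_{19/10}$ in the same way). For each component $u_j$ we have
\[
\tfrac12\Delta\bigl(|\nabla u_j|^2-1\bigr)=|\nabla^2u_j|^2+\Ric(\nabla u_j,\nabla u_j).
\]
Multiplying by $\phi_1 G_{\bar\mu}$ and integrating by parts twice gives
\[
\int\phi_1G_{\bar\mu}|\nabla^2u_j|^2=\tfrac12\int\bigl(|\nabla u_j|^2-1\bigr)\Delta(\phi_1G_{\bar\mu})-\int\phi_1G_{\bar\mu}\Ric(\nabla u_j,\nabla u_j).
\]
The manifold is smooth and $\cC=\cC_+$ consists of finitely many atoms, so we can do the integration by parts against the Green's function $G_x$ directly, using the standard bounds (\ref{Green's estimate}) near the poles.

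Next, expand $\Delta(\phi_1G_{\bar\mu})=-\phi_1\bar\mu+2\langle\nabla\phi_1,\nabla G_{\bar\mu}\rangle+G_{\bar\mu}\Delta\phi_1$. The terms containing derivatives of $\phi_1$ live on the annulus $B_{19/10}\setminus B_{18/10}$. There, (\ref{Green's estimate}) and the Ahlfors bound (A4) show that $G_{\bar\mu}$ and $|\nabla G_{\bar\mu}|$ are integrable with respect to volume, with bounds $C(n,p,\rv,\Lambda,B)$. The estimate is the same dyadic-annulus computation as in Lemma \ref{Green's distance function estimate}, but integrated rather than evaluated pointwise. Together with $|\nabla u|\le C(n)$, these terms are bounded by $C$. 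The measure term is $\frac12\int\phi_1(1-|\nabla u_j|^2)\,d\bar\mu$, which is at most $C(n)\,\mu(B_2)\le C$ by (A4). Note that this step only needs an upper bound, so the sign of $1-|\nabla u_j|^2$ does not matter.

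The Ricci term is $\bigl|\int\phi_1G_{\bar\mu}\Ric(\nabla u_j,\nabla u_j)\bigr|\le C(n)\int_{B_2}G_{\bar\mu}|\Ric|$. By H\"older with exponents $p$ and $p'=p/(p-1)$, this is at most $C\Lambda^{1/p}\|G_{\bar\mu}\|_{L^{p'}(B_2)}$. Since $p>n/2$, we have $p'<n/(n-2)$, so it suffices to show $G_{\bar\mu}\in L^{q}(B_2)$ for every $q<n/(n-2)$. For that, use Minkowski's integral inequality:
\[
\|G_{\bar\mu}\|_{L^q}\le\int\|G_x\|_{L^q}\,d\bar\mu(x)\le C\,\mu(B_2),
\]
because $\int_{B_2}d(x,\cdot)^{(2-n)q}$ is finite for $q<n/(n-2)$ by the volume upper bound from Theorem \ref{Volume comparison}. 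Actually (A4) gives the sharper fact $G_{\bar\mu}\sim d_\cC^{-2}\in L^{q}$ for $q<2$, but the crude bound is enough.

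The main obstacle is justifying the integration by parts near the atoms of $\bar\mu$, where $G_{\bar\mu}$ has an $(n-2)$-type singularity, and checking that the boundary terms vanish. I would handle this by replacing $G_{\bar\mu}$ with $\min(G_{\bar\mu},K)$, or by excising small balls $B_\rho(x)$, $x\in\cC$. Since $u$ is smooth, the boundary contributions are $O(\rho^{n-1}\rho^{1-n}|\nabla|\nabla u|^2|)+O(\rho^{n-1}\rho^{2-n})\to0$, and this recovers exactly the $\bar\mu$ term. Summing over $j\le n-4$ then gives the stated bound.
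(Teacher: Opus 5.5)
Your proposal is correct and takes essentially the paper's route. The paper defers to the Jiang--Naber argument (the Bochner formula integrated against $\phi\, G_{\bar\mu}$) and notes that the only change is to control the Ricci term by H\"older's inequality using $G_{\bar\mu}\in L^{q}$ for $q<n/(n-2)$, which is exactly what you do.

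One small caveat: your parenthetical claim $G_{\bar\mu}\sim d_{\cC}^{-2}$ holds only on $\cN_{10^{-6}}$, not inside the balls $B_{r_x}(x)$, where $G_{\bar\mu}\gtrsim r_x^{n-4}d(x,\cdot)^{2-n}$. This does not affect the proof, since you only use the crude bound $G_{\bar\mu}\in L^{q}$ for $q<n/(n-2)$.
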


\begin{theorem}[\cite{JN}, Theorem 5.47] Let \((M^n, g)\) satisfy conditions  $(A1), (A2), (A3)$. Assume for some \(R, C > 0\) that \(u_j : B_R(q_j) \to \mathbb{R}\) is  
a sequence of harmonic functions with \(|u_j| \leq C\). Then there exists harmonic  
\(u : B_R(q) \to \mathbb{R}\) such that after possibly passing to a subsequence,

\begin{enumerate}
\item \(u_j \to u\) uniformly on compact subsets of \(B_R(q)\);
\item for each \(x_j \to x \in X\) with \(B_{2r}(x_j) \subseteq B_R(q_j)\), we have  
\[
|\nabla u|(x) \leq \liminf_j \|\nabla u_j\|_{L^\infty(B_r(x_j))};
\]
\item for each \(0 < \alpha < 4\), we have that \(|\nabla^2 u_j|^\alpha \to |\nabla^2 u|^\alpha\) in measure ---  
in fact, for each \(x_j \to x \in X\) with \(B_{2r}(x_j) \subseteq B_R(q_j)\), we have that  
\[
\int_{B_r(x_j)} |\nabla^2 u_j|^\alpha(z) \, dz \to \int_{B_r(x)} |\nabla^2 u|^\alpha(z) \, dz.
\]
\end{enumerate}
\end{theorem}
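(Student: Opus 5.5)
We plan to prove the three assertions by a contradiction and compactness argument, using the local regularity theory available under (A1) and (A2) together with the integral Hessian bounds of Theorem \ref{Lq hessian}.

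\emph{Step 1: uniform convergence.} Write $B_r(x_j)$ for balls in $M_j$ and $B_r(x)$ for balls in the limit $X$. By Theorem \ref{precompactness}, the spaces $(M_j,g_j,q_j)$ subconverge in the pointed Gromov--Hausdorff sense to some $(X,d,q)$. By (A1) and (A2), the uniform Sobolev inequality of \cite{DWZ} holds, so Moser iteration gives a uniform gradient bound. Concretely, by Cheng--Yau type estimates adapted to $L^p$ Ricci (as in \cite{TZ}, \cite{LSZ}), we expect
\[
\sup_{B_{r}(y)}|\nabla u_j|\le C r^{-1}\sup_{B_{2r}(y)}|u_j| .
\]
Thus the $u_j$ are locally uniformly Lipschitz. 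An Arzel\`a--Ascoli argument through Gromov--Hausdorff approximations then yields a subsequence converging uniformly on compact sets to a Lipschitz function $u$.

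\emph{Step 2: $u$ is harmonic.} Harmonicity passes to the limit in the Cheeger energy sense, by the standard Dirichlet-energy lower semicontinuity and Mosco convergence under volume convergence (volumes converge by Theorem \ref{Volume comparison} and (A2)). On the regular part, which is open and of full measure thanks to (A3) and codimension four, we have smooth (or $W^{2,p}$) convergence of the metrics in harmonic coordinates, since the $W^{2,p}$ harmonic radius is bounded below by Theorem \ref{harmonic radius}. Elliptic estimates then give $C^{1,\alpha}$ convergence of $u_j\to u$ there, so $u$ is classically harmonic on the regular set. Part (2) follows from this local convergence: since $|\nabla u_j|\to|\nabla u|$ pointwise on the regular set, the lower-semicontinuity inequality holds there, and on all of $X$ it holds for the Lipschitz constant by the uniform convergence.

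\emph{Step 3: convergence of Hessian integrals.} On the regular set, the $C^{1,\alpha}$ convergence upgrades to $W^{2,p'}$ convergence, and by (A3) the curvature is in fact bounded there, so the convergence is even $C^{2,\alpha}$ away from $S$. Hence $|\nabla^2u_j|\to|\nabla^2 u|$ locally uniformly away from the singular set. Near the singular set we use equi-integrability. Fix $\alpha<\alpha'<4$. Theorem \ref{Lq hessian} (after rescaling and normalizing $u_j$) gives $\int_{B_r(x_j)}|\nabla^2u_j|^{\alpha'}\le C$. By H\"older's inequality, for the $s$-tubular neighborhood $T_s$ of the effective singular set $S^{n-4}_{\varepsilon,s}$,
\[
\int_{T_s}|\nabla^2u_j|^{\alpha}\le C\,\Vol(T_s)^{1-\alpha/\alpha'} .
\]
The quantitative stratification estimate $\Vol(T_s)\le Cs^{4-\epsilon}$ (Theorem 3.9 in \cite{Q}) makes this uniformly small as $s\to0$. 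Outside $T_s$ the harmonic radius is at least $cs$, so there we have local smooth convergence. Splitting each integral into these two pieces gives the convergence of $\int_{B_r(x_j)}|\nabla^2 u_j|^\alpha$. Convergence in measure follows in the same way.

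\emph{Main obstacle.} The hard part is Step 3 near the singular set. Using the $L^{\alpha'}$ bound with $\alpha'<4$ uniformly requires first fixing the parameters in Theorem \ref{Lq hessian}: the bound $|\Delta u|\le1$ holds trivially for harmonic $u_j$, and $|u_j|\le C$ after normalization. One also has to check that the tubes $T_s$ in $M_j$ correspond to the tubes in $X$ under the Gromov--Hausdorff approximations, which relies on the fact that $(k,\varepsilon)$-symmetry is stable under convergence.
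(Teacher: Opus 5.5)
Your proposal is correct and follows essentially the paper's route: the paper proves (1)--(2) from the gradient estimate for harmonic functions under integral Ricci bounds (\cite{DWZ}) plus the Jiang--Naber argument, and (3) from the $L^{\alpha'}$ Hessian bound of Theorem \ref{Lq hessian} (with $\alpha<\alpha'<4$) plus the same argument. Your Arzel\`a--Ascoli step, the smooth convergence on the regular part via (A3), and H\"older's inequality combined with the small volume of tubular neighborhoods of the singular stratum are exactly what that argument consists of; the appeal to Mosco convergence is unnecessary, because classical harmonicity on the full-measure regular set, together with the Lipschitz bound and the codimension-four singular set, already suffices.
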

\begin{proof} Since gradient estimate for harmonic functions are valid for manifolds with integral Ricci curvature bound (see, for instance, \cite{DWZ}), (1) and (2) follows from the same argument in \cite{JN}.   Finally, (3) follows from $\ref{Lq hessian}$ and  the same argument in \cite{JN}.  
\end{proof} Once we establish  tools
of the previous theorem, we can argue by contradiction and apply blow up analysis to prove the following theorem.
\begin{theorem} [\cite{JN}, Theorem 5.44]\label{hessian bound 2}
Under the condition of Theorem \ref{Splitting functions on neck regions}, for every \(\epsilon > 0\) and \(\tau < \tau(n)\), if \(\delta, \delta' \leq \delta(n,p,\rv,\Lambda,B)\) , then we have for every \(x \in \mathcal{C}\) and \(r_x < r < 1\) that
\[
r^2 \fint_{B_r(x)} |\nabla^2 u|^2(z) \, dz < \epsilon.  
\]
\end{theorem}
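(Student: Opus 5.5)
We argue by contradiction and blow up, in the spirit of \cite{JN}. If the statement fails, there are $\epsilon_0>0$, manifolds $(M_j,g_j,q_j)$ satisfying (A1)--(A4) with uniform constants, $(\delta_j,\tau)$-neck regions $\cN_j$ with $\delta_j\to 0$, $\delta_j'$-splitting maps $u_j$ with $\delta_j'\to0$, and points $x_j\in\cC_j$ with scales $r_{x_j}<r_j<1$ such that
\[
r_j^2\fint_{B_{r_j}(x_j)}|\nabla^2 u_j|^2\ge \epsilon_0 .
\]
We rescale $\tilde g_j=r_j^{-2}g_j$ and normalize $\tilde u_j=r_j^{-1}(u_j-u_j(x_j))$, composed with a lower triangular matrix $A_j$ chosen so that $\tilde u_j$ is $L^2$-orthonormal on $B_1(x_j)$, in the sense of $\fint_{B_1}\langle\nabla \tilde u^a,\nabla\tilde u^b\rangle=\delta^{ab}$. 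The quantity $r^2\fint|\nabla^2u|^2$ is scale invariant. After rescaling, (A1) only improves, since $\int|\Ric|^p$ scales by $r^{2p-n}\to0$ when $r_j\to0$. The neck condition still gives $\delta_j$-closeness to $\mathbb{R}^{n-4}\times C(S^3/\Gamma)$ on all scales between $r_{x_j}/r_j$ and $r_j^{-1}$.

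\textbf{Step 1: control of the normalizing matrices.} We first show $|A_j|,|A_j^{-1}|$ are bounded, or at least grow no faster than a small power of $r_j^{-1}$. The plan is to use Lemma \ref{hessian bound 1}: the bound $\int b^{-2}|\nabla^2u|^2\le C$, combined with $b\approx d_{\cC}$ and Ahlfors regularity (A4), controls $\sum_k s_k^2\fint_{B_{s_k}(x)}|\nabla^2u|^2$ along dyadic scales for $\mu$-most $x$. On each dyadic scale the Gram matrix of $\nabla u$ then changes by at most $C(s_k^2\fint|\nabla^2u|^2)^{1/2}$, and this increment is summable up to a set of small measure. I would handle the general case as in \cite{JN} by a ``first bad scale'' choice. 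Namely, take $r_j$ maximal, so that $u_j$ is still $\epsilon_0$-splitting at all larger scales; this gives the bounds on $A_j$ at scale $r_j$ from the previous scale.

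\textbf{Step 2: passing to the limit.} The rescaled spaces converge to $X=\mathbb{R}^{n-4}\times C(S^3/\Gamma)$. By the Theorem above (\cite{JN}, Theorem 5.47), which itself relies on Theorem \ref{Lq hessian}, the maps $\tilde u_j$ converge to a harmonic $u_\infty$ with linear growth on $X$, and the convergence holds in the following sense. For $\alpha<4$ we have $\int_{B_1}|\nabla^2\tilde u_j|^\alpha\to\int_{B_1}|\nabla^2u_\infty|^\alpha$. Since $\alpha=2<4$, and uniform integrability follows from the $L^{\alpha'}$ bound with $2<\alpha'<4$, the Hessian lower bound survives in the limit:
\[
\fint_{B_1}|\nabla^2u_\infty|^2\ge\epsilon_0 .
\]
The linear growth bound comes from Step 1 together with the gradient estimate.

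\textbf{Step 3: rigidity on the cone.} A harmonic function with linear growth on $\mathbb{R}^{n-4}\times C(S^3/\Gamma)$ with $\Gamma$ nontrivial is an affine function of the $\mathbb{R}^{n-4}$ factor. To see this, separate variables: on the cone factor the only growth rates at most $1$ come from eigenvalues of $S^3/\Gamma$. Nonconstant eigenfunctions give rate $\ge1$, and rate exactly $1$ would correspond to linear functions on $\mathbb{R}^4$ invariant under $\Gamma$. These exist only for trivial $\Gamma$, since $\Gamma\subset O(4)$ acts freely on $S^3$. Hence $\nabla^2u_\infty=0$, which contradicts Step 2.

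\textbf{Main obstacle.} The hard part is Step 1, the uniform linear growth of the rescaled maps down to scale $r_x$, which is needed to apply the compactness theorem. I expect to argue this by induction downward in scale, using Lemma \ref{hessian bound 1} and the Ahlfors regularity (A4). As a fallback, I would restrict to $x$ in a large-measure good set and then extend to all $x\in\cC$ via the neck structure $(n3)$.
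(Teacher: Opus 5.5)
There is a genuine gap, and it is in Step 1. The rest of your architecture is the blow-up argument the paper invokes from \cite{JN}: contradiction, rescaling at $r_j$, the compactness theorem with $L^\alpha$ Hessian convergence for $\alpha<4$, and Liouville-type rigidity on $\mathbb{R}^{n-4}\times C(S^3/\Gamma)$. Your Step 3 is fine.

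The problem is that bounding $|A_j|$ from above requires $\nabla u_j$ to stay uniformly nondegenerate at every scale $r_x<r<1$ around every $x\in\cC$. The paper never proves this, and it is not expected to hold pointwise. Theorem \ref{Splitting functions on neck regions} gives it only on the large-measure set $\cC_\epsilon$. Moreover, that theorem is derived from the very estimate you are proving, via Proposition \ref{maximal function estimate}. Your two repairs do not close the gap. The dyadic summation from Lemma \ref{hessian bound 1} and (A4) only works for $\mu$-most $x$, as you note. Choosing $r_j$ maximal among scales where $u_j$ stops being $\epsilon_0$-splitting does not give the Hessian lower bound at that scale. Extending from a good set to all of $\cC$ via $(n3)$ is not justified either.

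The fix is to drop the normalization entirely, because the statement is unnormalized. By the paper's definition of a splitting map (and the gradient estimate under (A1) from \cite{DWZ}), you have $\sup_{B_3(q)}|\nabla u|\le C(n)$, and this bound is scale-invariant. Set $\tilde u_j=r_j^{-1}(u_j-u_j(x_j))$ with respect to $\tilde g_j=r_j^{-2}g_j$. Then $|\tilde\nabla\tilde u_j|_{\tilde g_j}\le C(n)$, $|\tilde u_j|\le C(n)R$ on $B_R(x_j)$, and $\fint_{B_1(x_j)}|\nabla^2\tilde u_j|^2=r_j^2\fint_{B_{r_j}(x_j)}|\nabla^2u_j|^2\ge\epsilon_0$. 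These are exactly the hypotheses of the compactness theorem.

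You may assume $r_j\to0$. Otherwise noncollapsing gives $r_j^2\fint_{B_{r_j}(x_j)}|\nabla^2u_j|^2\le C\,r_j^{2-n}\delta_j'\to0$ directly. With $r_j\to0$, the rescaled domains exhaust the cone, and the limit $u_\infty$ is a globally defined harmonic function with $|\nabla u_\infty|\le C(n)$. Step 3 then gives $\nabla^2u_\infty\equiv0$, and Hessian convergence yields the contradiction. This uses no nondegeneracy of $u_\infty$: a degenerate or even constant limit is harmless. The nondegeneracy you were trying to secure in Step 1 is what the paper proves afterwards, on a large-measure set, using this estimate.
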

\begin{remark}
This theorem has a direct corollary:  If $y\in\cN_{10^{-5}}$ with $2r = d(y, C)\approx r_h\approx b$, then for $\delta, \delta_0 \le \delta(n, \epsilon, \tau, \rv)$, we can use elliptic estimates to obtain
$r|\nabla^2u|< \epsilon$ on $B_r(y)$. We can rephrase this as the pointwise Hessian estimate
$|\nabla^2u|(y)<\epsilon r_h^{-1}(y)$ on $\cN_{10^{-5}}$.
\end{remark}

\begin{theorem}[$L^1$ summable Hessian estimate]\label{L1 hessian}
Under the condition of Theorem \ref{Splitting functions on neck regions}, for every \(\epsilon, B > 0\) and \(\tau < \tau(n)\) if \(\delta, \delta' < \delta(n,p,\rv,\Lambda,B)\), then we have the estimate   
\[
\int_{N_{10^{-3}} \cap B_{18/10}(q)} b^{-3}|\nabla^2 u|(z) \, dz < \epsilon.
\]  
\end{theorem}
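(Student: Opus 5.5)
The plan is to combine the weighted $L^2$ Hessian bound of Lemma \ref{hessian bound 1} with the smallness of the scale-invariant Hessian from Theorem \ref{hessian bound 2}, via Cauchy--Schwarz on dyadic shells around $\cC$. This is the analogue of the Jiang--Naber argument.

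\textbf{Step 1: reduction to the neck region.} On $\cN_{10^{-3}}\cap B_{18/10}(q)$ we have $b\approx d(\cdot,\cC)\approx r_h$ by Lemma \ref{Green's distance function estimate} and Remark \ref{remark_neck}. So it suffices to bound $\int d_\cC^{-3}|\nabla^2 u|$ over this set.

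\textbf{Step 2: dyadic decomposition.} Split the set into the shells $A_i=\{2^{-i}\le d_\cC\le 2^{-i+1}\}\cap \cN_{10^{-3}}\cap B_{18/10}(q)$. By the Vitali argument used in the proof of Theorem \ref{Dini estimates}, each $A_i$ is covered by $N_i\le C2^{(n-4)i}$ balls $B_{2^{-i+3}}(x_\alpha)$ with $x_\alpha\in\cC$, and $\Vol(A_i)\le C2^{-4i}$. The constant depends on the Ahlfors bound (A4).

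\textbf{Step 3: Cauchy--Schwarz on each shell.} On each shell,
\[
\int_{A_i} b^{-3}|\nabla^2u|\le C\,2^{2i}\Big(\int_{A_i}b^{-2}|\nabla^2u|^2\Big)^{1/2}\Vol(A_i)^{1/2}\le C\Big(\int_{A_i}b^{-2}|\nabla^2 u|^2\Big)^{1/2}.
\]
Summing these square roots over $i$ does not directly give a bound from Lemma \ref{hessian bound 1}. I would therefore restrict the sum, splitting it into scales $2^{-i}\ge s$ and $2^{-i}<s$.

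\textbf{Step 4: large scales.} For $2^{-i}\ge s$ there are only $\log(1/s)$ shells. On each one I use the pointwise corollary of Theorem \ref{hessian bound 2}, namely $|\nabla^2u|<\epsilon' b^{-1}$ on $\cN_{10^{-5}}$, which gives
\[
\int_{A_i}b^{-3}|\nabla^2 u|\le \epsilon' 2^{4i}\Vol(A_i)\le C\epsilon'.
\]
Hence the large scales contribute at most $C\epsilon'\log(1/s)$.

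\textbf{Step 5: small scales, the main obstacle.} This is where an additional gain is needed. A purely per-shell bound of $O(\epsilon')$ is not summable over infinitely many scales. The saving must come from a weight. The approach I would try first is to prove the strengthened weighted estimate
\[
\int b^{-2+\gamma}|\nabla^2u|^2\le C
\]
for a small $\gamma>0$. I expect the proof of Lemma \ref{hessian bound 1} to give this with a small loss, since the Green function $G_{\bar\mu}$ lies in $L^q$ with room to spare. With that weight, Cauchy--Schwarz on shell $i$ gains a factor $2^{-\gamma i/2}$, so the small-scale sum is $\le C s^{\gamma/2}$.

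\textbf{Step 6: choice of parameters.} Choose $s$ so that $Cs^{\gamma/2}<\epsilon/2$, and then choose $\delta,\delta'$ small enough that $C\epsilon'\log(1/s)<\epsilon/2$.

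\textbf{Alternative route.} If the $\gamma$-gain in Step 5 fails, I would instead argue by contradiction and compactness. Rescaling around a point of $\cC$ and using the convergence theorem for harmonic functions (Theorem 5.47 analogue), the limit $u$ on $\mathbb{R}^{n-4}\times C(S^3/\Gamma)$ is linear. Then the integral near $\cC$, which is controlled by Lemma \ref{hessian bound 1} together with uniform integrability from Theorem \ref{Lq hessian} ($\alpha<4$), must tend to $0$.
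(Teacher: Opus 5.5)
\textbf{There is a genuine gap, in Step 5.} Your Steps 1--4 are fine, and you correctly locate the obstruction. Shell-by-shell Cauchy--Schwarz against Lemma \ref{hessian bound 1} only gives $\sum_i a_i^{1/2}$ with $\sum_i a_i\le C$, i.e.\ square-summability across scales. The statement needs $\ell^1$ (Dini) summability across scales, and Step 5 does not supply it.

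\emph{The weight has the wrong sign.} Since $b$ is bounded above on the region, $\int b^{-2+\gamma}|\nabla^2u|^2\le C$ is \emph{weaker} than Lemma \ref{hessian bound 1}. Cauchy--Schwarz with it gives $\int_{A_i}b^{-3}|\nabla^2u|\le(\int_{A_i}b^{-2+\gamma}|\nabla^2u|^2)^{1/2}(\int_{A_i}b^{-4-\gamma})^{1/2}\lesssim 2^{\gamma i/2}(\cdots)^{1/2}$, which is a loss, not a gain.

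\emph{The corrected estimate is not available.} What you would actually need is $\int b^{-2-\gamma}|\nabla^2u|^2\le C$. This is a polynomial decay, in absolute scale, of the non-linearity of $u$ towards $\cC$, which is stronger than the Dini bound you are proving. With only the known bound $|\nabla^2u|<\epsilon b^{-1}$, the regions at distance comparable to $r_x$ from $\cC$ contribute up to $\epsilon^2\sum_x r_x^{n-4-\gamma}$. This is unbounded as the $r_x$ shrink, even though (A4) gives $\sum_x r_x^{n-4}\le C$.

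\emph{The proof of Lemma \ref{hessian bound 1} does not extend.} That proof uses that $b^{-2}=G_{\bar\mu}$ is harmonic off $\cC$. Off $\cC$, however, $\Delta G_{\bar\mu}^{1+\gamma/2}=\frac{\gamma}{2}\big(1+\frac{\gamma}{2}\big)G_{\bar\mu}^{\gamma/2-1}|\nabla G_{\bar\mu}|^2$, which is comparable to $\gamma\, b^{-4-\gamma}$. This produces the supercritical term $\gamma\int\big||\nabla u|^2-1\big|\,b^{-4-\gamma}\phi_\cN$. The $L^q$-integrability of Green's functions has no bearing on this.

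\emph{The compactness alternative fails at the same point.} Convergence to a linear function on $\mathbb{R}^{n-4}\times C(S^3/\Gamma)$ controls compact sets away from $\cC_\infty$. But the mass of $b^{-3}|\nabla^2u|$ can escape to scales tending to zero. Neither available bound gives uniform integrability near $\cC$:
\begin{itemize}
\item Lemma \ref{hessian bound 1} with Cauchy--Schwarz leaves the logarithmically divergent $\int b^{-4}\phi_\cN$;
\item Theorem \ref{Lq hessian} with H\"older and $\alpha<4$ needs $\int b^{-3\alpha/(\alpha-1)}$, whose exponent exceeds $4$.
\end{itemize}

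\textbf{The missing idea is the superconvexity argument}, which the paper uses instead of a stronger weighted $L^2$ bound.
\begin{itemize}
\item For $H(r)=r^{-2}\int_{b=r}\sqrt{|\nabla^2u|^2+\epsilon^2}\,\phi_\cN|\nabla b|$, the Bochner formula for $\nabla^2u$ gives $r^2H''+rH'-H\ge e(r)$ (Lemma \ref{hessian superconvexity}).
\item The homogeneous solutions are $r^{\pm1}$, and $H$ vanishes near $0$ and near $R$. Comparison with the explicit solution of Lemma \ref{Special solution2} then gives $\int_0^R H/r\,dr\le\int_0^R|e|/r\,dr$ (Theorem \ref{hessian Dini}).
\item The differential inequality itself prevents $H$ from staying at a fixed level over many scales. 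This is exactly the $\ell^1$ summability your dyadic scheme lacks.
\end{itemize}
The error terms are then summable for reasons you can check directly:
\begin{itemize}
\item By (A3), $|\nabla\Ric|\le\epsilon b^{-2}$ and $|\Ric|\le\epsilon b^{-1}$. So the Ricci terms are bounded by $C\epsilon\int b^{-3}\phi_\cN\le C\epsilon$. This is one power of $b$ better than the critical $b^{-4}$, so it converges by your own shell count $\Vol(A_i)\le C2^{-4i}$.
\item The cutoff terms are bounded by $C\epsilon\int(b^{-2}|\Delta\phi_\cN|+b^{-3}|\nabla\phi_\cN|)$.
\item The $\Rm*\nabla^2u$ term is handled by Cauchy--Schwarz against $\int|\Rm|^2\phi_\cN$ (Theorem \ref{weak L2 on neck}) and Lemma \ref{hessian bound 1}.
\end{itemize}
Finally, the coarea formula together with $|\nabla b|\ge C^{-1}$ converts the Dini bound into the stated estimate.
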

The proof of the above estimate also follows from the superconvexity estimate.  We will present a simplier proof due to stronger curvature assumption (A3).
\begin{lemma}[\cite{JN}, Proposition 5.65]\label{hessian superconvexity}
Under the condition of Theorem \ref{Splitting functions on neck regions}, if we define \[H(r) = r^{-2} \int_{b=r} \sqrt{|\nabla^2 u|^2 + \epsilon^2} \, \phi_\cN \, |\nabla b|(z) \, dz ,\]  then we have the following evolution equation:
\begin{equation}
\begin{aligned}
r^2 H''(r) + r H'(r) - H(r) &\geq e(r)\\&
= \int_{b=r} \left(-\sqrt{|\nabla^2 u|^2 + \epsilon^2} \, |\Delta \phi_\cN|-c(n) |\Rm| \cdot |\nabla^2 u| \phi_\cN + U_1 + U_2 \right) |\nabla b|^{-1}(z) \, dz, 
\end{aligned}
\end{equation}
where
\[
U_1 = 2 \langle \nabla \phi_\cN, \nabla \sqrt{|\nabla^2 u|^2 + \epsilon^2} \rangle,
\]
and
\[\begin{aligned}
U_2 &= \left( \nabla_j (\Ric_{ia} \nabla^a u) - \nabla_a (\Ric_{ij} \nabla^a u) + \nabla_i (\Ric_{ja} \nabla^a u) \right) * \nabla^i \nabla^j u \frac{\phi_\cN}{\sqrt{|\nabla^2 u|^2 + \epsilon^2}}\\
&= \left(\nabla \Ric*\nabla u +\Ric*\nabla^2 u\right)*\nabla^2 u \frac{\phi_\cN}{\sqrt{|\nabla^2 u|^2 + \epsilon^2}}.
\end{aligned}\]
\end{lemma}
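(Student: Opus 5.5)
The plan is to derive the inequality from the co-area identity of Lemma \ref{evolution}, which should give an exact formula, followed by a pointwise Bochner-type lower bound for $\Delta$ of the regularized Hessian norm. Set $\psi=\sqrt{|\nabla^2u|^2+\epsilon^2}$ and $f=\psi\,\phi_\cN$, which is smooth and compactly supported in $\cN_{10^{-4}}\cap B_{19/10}$. With $F(r)=r^{-3}\int_{b=r}f|\nabla b|$ we have $H=rF$, so $H'=F+rF'$ and $H''=2F'+rF''$. A direct computation gives
\[
r^2H''+rH'-H=r^3F''+3r^2F'=r^2\bigl(rF''+3F'\bigr).
\]
By Lemma \ref{evolution}(2) the right-hand side equals $\int_{b=r}\Delta f\,|\nabla b|^{-1}$. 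This is an identity, so the whole statement reduces to a pointwise lower bound for $\Delta f$.

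Next I expand $\Delta f=\phi_\cN\Delta\psi+2\langle\nabla\phi_\cN,\nabla\psi\rangle+\psi\Delta\phi_\cN$. The middle term is exactly $U_1$. The last term is bounded below by $-\psi|\Delta\phi_\cN|$. For $\Delta\psi$ I use
\[
\Delta\psi=\psi^{-1}\bigl(\langle\Delta\nabla^2u,\nabla^2u\rangle+|\nabla^3u|^2\bigr)-\psi^{-3}|\langle\nabla^3u,\nabla^2u\rangle|^2 .
\]
Since $|\langle\nabla^3u,\nabla^2u\rangle|\le|\nabla^3u|\,|\nabla^2u|\le|\nabla^3u|\,\psi$, this gives $\Delta\psi\ge\psi^{-1}\langle\Delta\nabla^2u,\nabla^2u\rangle$. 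No improved Kato inequality is needed here; the plain Kato inequality absorbs the gradient term.

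The remaining step is the commutator formula for $\Delta\nabla_i\nabla_ju$ with $\Delta u=0$. Commuting covariant derivatives twice gives
\[
\Delta\nabla_i\nabla_ju=\nabla_i\nabla_j\Delta u+\Rm*\nabla^2u+\bigl(\nabla_j(\Ric_{ia}\nabla^au)-\nabla_a(\Ric_{ij}\nabla^au)+\nabla_i(\Ric_{ja}\nabla^au)\bigr)+\Ric*\nabla^2u .
\]
I will verify this carefully. First use the Ricci identity $\nabla_j\Delta u=\Delta\nabla_ju-\Ric_{ja}\nabla^au$, then differentiate once more, using the contracted second Bianchi identity to rewrite $\nabla\Rm*\nabla u$ as the three $\nabla\Ric$ terms. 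Keeping track of exactly which terms are pure $\Rm*\nabla^2u$ and which are of the form $\nabla(\Ric*\nabla u)$ is the main bookkeeping obstacle, since the statement isolates the Ricci terms precisely. Pairing with $\nabla^2u$ and multiplying by $\phi_\cN/\psi$, the Ricci part is exactly $U_2$. The $\Rm$ part is bounded below by $-c(n)|\Rm|\,|\nabla^2u|^2\phi_\cN/\psi\ge -c(n)|\Rm|\,|\nabla^2u|\phi_\cN$, because $|\nabla^2u|\le\psi$.

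Finally I integrate the resulting pointwise inequality for $\Delta f$ against $|\nabla b|^{-1}$ over $\{b=r\}$ and combine it with the identity from the first paragraph. This yields $r^2H''+rH'-H\ge e(r)$ with $e(r)$ as stated. Here $|\nabla b|^{-1}$ is finite on the support by Lemma \ref{Green's distance function estimate}, and all quantities are smooth there since $\cC_0=\emptyset$.
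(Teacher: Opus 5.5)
Your proposal is correct and follows essentially the standard argument behind this statement (the paper gives no proof of its own and defers to Jiang--Naber, Proposition 5.65): writing $H=rF$, Lemma \ref{evolution}(2) gives the exact identity $r^2H''+rH'-H=\int_{b=r}\Delta f\,|\nabla b|^{-1}$, and the inequality then follows pointwise from the Bochner formula for $\nabla^2u$ together with the ordinary Kato inequality. A minor simplification of your bookkeeping: when $\Delta u=0$ the $\Ric*\nabla^2u$ contributions cancel exactly, so $\Delta\nabla_i\nabla_ju$ is a pure $\Rm*\nabla^2u$ term plus the three divergence terms $\nabla_j(\Ric_{ia}\nabla^au)-\nabla_a(\Ric_{ij}\nabla^au)+\nabla_i(\Ric_{ja}\nabla^au)$; that is, the first form of $U_2$ is exact.
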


\begin{lemma}[Special solution]\label{Special solution2}
 Let $g(r)$ be a   bounded function on the interval $(0, R)$. Consider the differential equation:
\[
h''(r) + \frac{1}{r}h'(r) - \frac{1}{r^2}h(r) =-g(r),
\]  Then there exists a solution $h(r)$ with $h(0)=h(R)=0$ such that
\[\int_0^R \frac{h(r)}{r} \, dr = \frac{1}{R} \int_0^R s(R - s) g(s)\, ds.\] 
\end{lemma}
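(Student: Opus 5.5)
The plan follows the proof of Lemma \ref{special solution}: multiply the equation by $r$ and integrate over $(0,R)$. The boundary terms that appear are then read off from the explicit Green's function representation of $h$.

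\textbf{Step 1: construct $h$.} The homogeneous equation $r^2h''+rh'-h=0$ is of Cauchy--Euler type with indicial roots $\pm1$. I will use the homogeneous solutions
\[
y_1(r)=r, \qquad y_2(r)=\frac{R^2-r^2}{r},
\]
chosen so that $y_1(0)=0$ and $y_2(R)=0$. Their Wronskian is $W(s)=y_1y_2'-y_1'y_2=-2R^2/s$. Variation of parameters with right-hand side $-g$ gives the candidate
\[
h(r)=\frac{1}{2R^2}\left(\frac{R^2-r^2}{r}\int_0^r s^2 g(s)\,ds + r\int_r^R (R^2-s^2)\,g(s)\,ds\right).
\]
Differentiating twice, the terms containing $g(r)$ itself cancel in $h'$, and $h''+h'/r-h/r^2=-g$ then follows from the Wronskian identity. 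Let $M=\sup|g|$. The first term is $O(r^{-1}\cdot r^3)=O(r^2)$ and the second is $O(r)$, so $h(0)=0$. The second integral vanishes at $r=R$, as does $y_2(R)$, so $h(R)=0$.

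\textbf{Step 2: integrate.} Multiplying the equation by $r$ and using $rh''+h'=(rh')'$ gives
\[
\frac{h}{r}=(rh')'+rg .
\]
Integrating over $(0,R)$ yields
\[
\int_0^R\frac{h(r)}{r}\,dr = Rh'(R)-\lim_{r\to0^+}rh'(r)+\int_0^R s\,g(s)\,ds.
\]

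\textbf{Step 3: boundary terms.} Differentiating the formula for $h$ (the $g(r)$ terms cancel) gives
\[
h'(r)=\frac{1}{2R^2}\left(-\Big(\frac{R^2}{r^2}+1\Big)\int_0^r s^2g\,ds+\int_r^R(R^2-s^2)g\,ds\right).
\]
Using $|g|\le M$, the first part is $O(r)$ and the second is bounded, so $rh'(r)\to0$ as $r\to0^+$. At $r=R$ only the first part survives, giving $h'(R)=-R^{-2}\int_0^R s^2g\,ds$, that is, $Rh'(R)=-R^{-1}\int_0^R s^2 g\,ds$. Substituting into Step 2,
\[
\int_0^R\frac{h}{r}\,dr=\int_0^R s\,g\,ds-\frac1R\int_0^R s^2g\,ds=\frac1R\int_0^R s(R-s)\,g(s)\,ds .
\]

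The only delicate points are the signs in the variation-of-parameters formula and the vanishing of $rh'(r)$ at $0$. Boundedness of $g$ makes the latter straightforward, so the main check is simply that the explicit $h$ satisfies the equation.
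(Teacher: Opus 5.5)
Your proof is correct. The function you construct is exactly the paper's: splitting $\int_r^R=\int_0^R-\int_0^r$ in your formula gives $h(r)=\frac{1}{2r}\int_0^r(s^2-r^2)g\,ds-\frac{r}{2R^2}\int_0^R(s^2-R^2)g\,ds$, which is the solution the paper writes down.

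Where you differ is in deriving the integral identity. The paper asserts that this $h$ solves the boundary value problem (``we can directly check''). It then evaluates $\int_0^R h/r\,dr$ directly: it splits the integral into two pieces and applies Fubini on the triangle $0\le s\le r\le R$, using $\int_s^R(1-s^2/r^2)\,dr=(R-s)^2/R$. You instead put the equation in divergence form $(rh')'=h/r-rg$, so everything reduces to the boundary data $\lim_{r\to0^+}rh'(r)=0$ and $Rh'(R)=-R^{-1}\int_0^R s^2g\,ds$. This is the strategy the paper uses for Lemma \ref{special solution}, not for this lemma.

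Your route has three advantages. It explains where $h$ comes from (variation of parameters with the basis $r$, $(R^2-r^2)/r$). It actually verifies the equation and $h(0)=h(R)=0$. It also shows that the answer depends on $h$ only through $h'(R)$. The one extra point it needs is that $rh'$ is Lipschitz on each $[\epsilon,R]$, so that $\int_0^R(rh')'\,dr=Rh'(R)-\lim_{r\to0^+}rh'(r)$. This is immediate from your explicit formula for $h'$ and $|g|\le M$. The paper's Fubini computation never differentiates $h$, but it leaves the verification of the ODE and boundary conditions to the reader.
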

\begin{proof}
We can directly check the specific solution satisfying the boundary value problem is:
\begin{equation}\label{sol_h}
h(r) = \frac{1}{2r} \int_0^r (s^2 - r^2) g(s) \, ds - \frac{r}{2R^2} \int_0^R (s^2 - R^2) g(s) \, ds.
\end{equation}
We now  compute: 
$I = \int_0^R \frac{h(r)}{r} \, dr$.
Using our solution \eqref{sol_h}, we can split the integral into two parts, $I = I_1 - I_2$:
\begin{align*}
I_1 &= \int_0^R \frac{1}{2r^2} \left( \int_0^r (s^2 - r^2) g(s) \, ds \right) dr, \\
I_2 &= \int_0^R \frac{1}{2R^2} \left( \int_0^R (s^2 - R^2) g(s) \, ds \right) dr.
\end{align*}

Let us evaluate $I_2$ first. The inner integral is independent of $r$, so we simply evaluate the outer integral:
\begin{equation*}
I_2 = \frac{1}{2R^2} \left( \int_0^R (s^2 - R^2) g(s) \, ds \right) \int_0^R 1 \, dr = \frac{1}{2R} \int_0^R (s^2 - R^2) g(s) \, ds.
\end{equation*}

For $I_1$, we apply Fubini's Theorem to reverse the order of integration over the triangular region $0 \le s \le r \le R$:
\begin{equation*}
I_1 = -\frac{1}{2} \int_0^R g(s) \left( \int_s^R \left( 1 - \frac{s^2}{r^2} \right) dr \right) ds.
\end{equation*}
Evaluating the inner integral with respect to $r$:
\begin{equation*}
\int_s^R \left( 1 - s^2 r^{-2} \right) dr = \left[ r + \frac{s^2}{r} \right]_{r=s}^{r=R} = \left( R + \frac{s^2}{R} \right) - (s + s) = \frac{R^2 - 2Rs + s^2}{R} = \frac{(R-s)^2}{R}.
\end{equation*}
So, $I_1$ becomes:
\begin{equation*}
I_1 = -\frac{1}{2R} \int_0^R (R - s)^2 g(s) \, ds.
\end{equation*}

Finally, we recombine $I = I_1 - I_2$:
\begin{equation*}
I = \frac{1}{2R} \int_0^R \Big( -(R-s)^2 - (s^2 - R^2) \Big) g(s) \, ds=\frac{1}{R} \int_0^R s(R - s) g(s) \, ds.
\end{equation*}  
\end{proof}
\begin{theorem}\label{hessian Dini}
Under the condition of Theorem \ref{Splitting functions on neck regions},  for every \(\epsilon, B > 0\) and \(\tau < \tau(n)\), if  \(\delta, \delta' \leq \delta(n,p,\rv,\Lambda,B)\), then we have Dini estimate:
\[\int_{0}^\infty \frac{H(r)}{r}dr\le C\epsilon.
\]\end{theorem}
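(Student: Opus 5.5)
We follow the scheme of Theorem \ref{Dini estimates}, now with the simpler operator $r^2h''+rh'-h$ coming from Lemma \ref{hessian superconvexity}. First, by Lemma \ref{compact support} (whose proof only uses the support of $\phi_\cN$ and Lemma \ref{Green's distance function estimate}), there is $R=R(n,p,\rv,\Lambda,B)$ such that $H\equiv 0$ outside $(0,R)$, so $H(0)=H(R)=0$. Set $g(r)=-e(r)/r^2$ with $e(r)$ the right-hand side of Lemma \ref{hessian superconvexity}, and let $h$ be the special solution of Lemma \ref{Special solution2} with $h(0)=h(R)=0$. Then $H-h$ satisfies $r^2(H-h)''+r(H-h)'-(H-h)\ge 0$ with zero boundary values. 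Arguing as in Theorem \ref{Dini estimates}, at an interior positive maximum the left-hand side would be $\le -(H-h)<0$, so the maximum principle gives $H\le h$. Hence
\[\int_0^R\frac{H}{r}\,dr\le \frac1R\int_0^R s(R-s)\,\frac{e_-(s)}{s^2}\,ds\le \int_0^R \frac{e_-(s)}{s}\,ds,\]
where $e_-=\max(-e,0)$. By the coarea formula, this last integral is bounded by
\[\int_{b\le R} b^{-1}\Big(\sqrt{|\nabla^2u|^2+\epsilon^2}\,|\Delta\phi_\cN|+c|\Rm||\nabla^2u|\phi_\cN+|U_1|+|U_2|\Big).\]

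We estimate the four terms using the pointwise bounds valid on $\supp\phi_\cN\subset\cN_{10^{-4}}$, where $r_h\approx d_\cC\approx b$. These bounds are:
\begin{enumerate}
\item[$\bullet$] $|\Rm|\le \epsilon' b^{-2}$, $|\Ric|\le\epsilon' b^{-1}$ and $|\nabla\Ric|\le Cb^{-2}$, from (A3);
\item[$\bullet$] $|\nabla^2u|\le\epsilon' b^{-1}$, from the remark after Theorem \ref{hessian bound 2};
\item[$\bullet$] $|\nabla^3 u|\le C b^{-1}\sup|\nabla^2u|$, by elliptic estimates in harmonic coordinates on balls of size $\approx b$.
\end{enumerate}
For the cutoff terms, $|\nabla\phi_\cN|$ and $|\Delta\phi_\cN|$ live on the annuli $A_{10^{-4}r_x,10^{-3}r_x}(x)$ with $|\nabla\phi|\le Cr_x^{-1}$ and $|\Delta\phi|\le Cr_x^{-2}$. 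Each annulus then contributes $\lesssim r_x^{-1}(\epsilon' r_x^{-1}+\epsilon)r_x^{-2}r_x^n\lesssim(\epsilon'+\epsilon r_x)r_x^{n-4}$. Summing with (A4) gives $\le C(\epsilon'+\epsilon)\mu(B_2)\le C\epsilon$, once $\epsilon'\le\epsilon$ via small $\delta,\delta'$. The same computation handles $U_1$.

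For the interior terms we use Lemma \ref{hessian bound 1}, i.e. $\int b^{-2}|\nabla^2u|^2\le C$, together with the $L^1$ structure. The curvature term is bounded by $\int b^{-1}|\Rm||\nabla^2 u|\phi\le \epsilon'\int b^{-3}|\nabla^2u|\phi$. The $U_2$ term is $\le C\int b^{-1}(|\nabla\Ric|+|\Ric||\nabla^2u|)\phi$. For the piece $|\Ric||\nabla^2u|$ we get $\le \epsilon'\int b^{-2}|\nabla^2 u|\phi$. For $|\nabla \Ric|$ we use the sharper (A3) bound $|\nabla\Ric|\le Bb^{-1}\sup|\Ric|\le B\epsilon' b^{-2}$, giving $\le C\epsilon'\int_{\supp\phi} b^{-3}\le C\epsilon'$ by the dyadic Vitali computation (\ref{vol}), which shows the volume of $\{d_\cC\approx 2^{-i}\}$ is $\lesssim 2^{-4i}$. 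Finally, $\int b^{-3}|\nabla^2u|\phi\le \epsilon'\int_{\supp\phi}b^{-4}$ diverges only logarithmically, so we instead bound it by Cauchy--Schwarz:
\[\Big(\int b^{-2}|\nabla^2u|^2\Big)^{1/2}\Big(\int_{\supp\phi} b^{-4}\Big)^{1/2}.\]
This is where the obstacle lies.

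\textbf{Main obstacle:} the $\Rm\cdot\nabla^2u$ term requires control of $\int_{\supp\phi} b^{-3}|\nabla^2u|$, which is the very quantity Theorem \ref{L1 hessian} targets. The plan is to absorb it. By coarea, $\int b^{-3}|\nabla^2u|\phi=\int_0^R H(r)/r\,dr$ up to the factor $|\nabla b|^{\pm 2}$, which Lemma \ref{Green's distance function estimate} bounds. The term thus contributes $\le C\epsilon'\int_0^R H/r$, and choosing $\epsilon'$ (via $\delta$) with $C\epsilon'<1/2$ lets us absorb it into the left-hand side; this is legitimate because $H$ has compact support in $(0,R)$, so $\int H/r<\infty$ a priori. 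Altogether this gives $\int_0^\infty H/r\,dr\le C\epsilon$, after relabeling $\epsilon$.
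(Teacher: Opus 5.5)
Your proof is correct. It shares the paper's skeleton: compact support of $H$, comparison with the special solution of Lemma~\ref{Special solution2}, coarea, the pointwise bounds on $\supp\phi_\cN$ from (A3) and the remark after Theorem~\ref{hessian bound 2}, and the dyadic volume count from (A4). Your sign bookkeeping with $g=-e/r^2$ and $e_-$ is also cleaner than the paper's.

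\textbf{The curvature coupling term.} The genuine difference is how you treat $c(n)\int_{b\le R} b^{-1}|\Rm||\nabla^2 u|\phi_\cN$.
\begin{itemize}
\item The paper applies Cauchy--Schwarz, $\bigl(\int|\Rm|^2\phi_\cN\bigr)^{1/2}\bigl(\int b^{-2}|\nabla^2u|^2\phi_\cN\bigr)^{1/2}$. This calls on the neck $L^2$ curvature bound of Theorem~\ref{weak L2 on neck}, and hence the whole Section~4 superconvexity argument, together with Lemma~\ref{hessian bound 1}.
\item You use the pointwise smallness $b^2|\Rm|\le\epsilon'$ from (A3) to dominate the term by $C\epsilon'\int_0^R H/r\,dr$, and then absorb it. The absorption is legitimate. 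First, $\int_0^R H/r<\infty$ a priori, since $H$ vanishes near $r=0$. Second, the constant coming from $C^{-1}\le|\nabla b|\le C$ in Lemma~\ref{Green's distance function estimate} does not get worse as $\delta\to 0$, so $\epsilon'$ can be taken small against it.
\end{itemize}
What your route buys is independence from Theorem~\ref{weak L2 on neck} and from any weighted Hessian $L^2$ bound, and it genuinely produces $C\epsilon$. The Cauchy--Schwarz route gives $C\epsilon$ only if $\int b^{-2}|\nabla^2u|^2\phi_\cN$ is small. Lemma~\ref{hessian bound 1}, as stated in the paper, only gives a bound $C$, so that step silently needs the smallness proved in Jiang--Naber. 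The paper's route is the one that survives in Jiang--Naber's bounded-Ricci setting, where no pointwise bound $|\Rm|\le\epsilon b^{-2}$ is available. Yours fully exploits (A3).

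\textbf{The $U_1$ term.} Here there is a secondary difference. The paper integrates by parts, whereas you bound $|\nabla^3 u|$ pointwise. Your stated estimate $|\nabla^3u|\le Cb^{-1}\sup|\nabla^2u|$ omits the inhomogeneous part of $\Delta\nabla^2u=\Rm*\nabla^2u+\Ric*\nabla^2u+\nabla\Ric*\nabla u$. Including it adds a term $Cb\sup|\nabla\Ric|\le C\epsilon' b^{-1}$, which is harmless, so you still get the $|\nabla^3u|\le C\epsilon' b^{-2}$ you need. Your pointwise treatment also avoids a small issue in the paper: once $|e(r)|$ has been taken level set by level set, one can no longer integrate by parts across levels.

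\textbf{A minor omission.} The derivatives of $\phi_\cN$ are not supported only on the annuli around $\cC$. The outer cutoff $\phi_1$ contributes on $B_{19/10}\setminus B_{18/10}$, but that piece is controlled by the same dyadic bound $\int_{\supp\phi_\cN} b^{-3}\le C$.
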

\begin{proof} It is clear that there exists $R>0$ such that $\phi_\cN \equiv 0 $ on $ \{b \geq R\} $. Therefore, we have
\[
\int_0^\infty \frac{1}{r} H(r) \, dr = \int_0^R \frac{1}{r} H(r) \, dr.
\]
 By Lemma~\ref{Special solution2}, we can choose a special solution \( h(r) \) of the following ODE: \[
h''(r) + \frac{1}{r}h'(r) -\frac{1}{r^2}h(r) =\frac{e(r)}{r^2}, \quad h(0)=h(R)=0,
\]  where $e(r)$ is given by Lemma \ref{hessian superconvexity}. By the same argument of Theorem \ref{Dini estimates}, we have $H(r)\le h(r)$ on $(0,R)$. By Lemma~\ref{Special solution2} we have estimate
\[\int_0^R \frac{h(r)}{r} \, dr = \frac{1}{R} \int_0^R (R - s)\frac{e(s)}{s}\, ds\le \int_0^R \frac{|e(r)|}{r}dr.\]
By coarea we can estimate \[\int_0^ R \frac{|e(r)|}{r}dr\le e_1+e_2+e_3+e_4\] as follows, where 

\[\begin{aligned}
e_1&=\int_0^R r^{-1} \int_{b=r}  \sqrt{|\nabla^2 u|^2 + \epsilon^2} \, |\Delta \phi_\cN| |\nabla b|^{-1}  = \int_{b \leq R} b^{-1}  \sqrt{|\nabla^2 u|^2 + \epsilon^2} \, |\Delta \phi_\cN|,\\
e_2&=c(n)\int_0^R r^{-1} \int_{b=r} |\Rm||\nabla^2 u|\phi_\cN |\nabla b|^{-1}=c(n) \int_{b \leq R} b^{-1}   |\Rm||\nabla^2 u|\phi_\cN ,  \\
e_3&= 2\left|\int_0^R r^{-1} \int_{b=r}   \langle \nabla \phi_\cN, \nabla \sqrt{|\nabla^2 u|^2 + \epsilon^2} \rangle|\nabla b|^{-1}\right|
     = 2\left|\int_{b \leq R} b^{-1} \langle \nabla \phi_\cN, \nabla \sqrt{|\nabla^2 u|^2 + \epsilon^2} \rangle\right|\\
&\quad \le  C \int_{b \le R} \left(b^{-1} |\Delta \phi_\cN|+b^{-2}|\nabla \phi_\cN|\right)  \sqrt{|\nabla^2 u|^2 + \epsilon^2} \, dz,\\
     e_4 &= \left|\int_0^R r^{-1}\int_{b=r} \left(\nabla \Ric*\nabla u +\Ric*\nabla^2 u\right)*\nabla^2 u \frac{\phi_\cN}{\sqrt{|\nabla^2 u|^2 + \epsilon^2}} |\nabla b|^{-1}\right|\\
     &\quad \le C(n) \int_{b \leq R} b^{-1}\left(|\nabla \Ric||\nabla u| +|\Ric||\nabla^2 u|\right)  \phi_\cN.
\end{aligned}\]
We now estimate each term as follows.
To begin with, since $\phi_\cN$ is supported in $\cN_{10^{-4}}=B_2(q)\backslash \bigcup_{x\in\C}\overline{B}_{10^{-4}\cdot r_x}(x)$, by Remark \ref{remark_neck}, we have $r_h(y)\approx d(y,\C)\approx b(y)$. If follows from assumption $(A3)$ that if $\delta<\delta_0$,  we have \( b^{2}|\Rm|+b|\Ric|\leq \epsilon \) on \( \operatorname{supp} \phi_\cN \), and  curvature derivative estimates: 
\begin{equation} |\nabla \Ric|\le \epsilon b^{-2}.
\end{equation}  It also follows from Theorem \ref{hessian bound 1} that we have $|\nabla^2 u|\le \epsilon b^{-1}$ on \( \operatorname{supp} \phi_\cN \). Combing above estimate, we have 
\[\begin{aligned}
e_1&\le \epsilon\int_{b\le R}b^{-2}|\Delta \phi_\cN|\\
e_3&\le C\epsilon \int_{b \le R} \left(b^{-2} |\Delta \phi_\cN|+b^{-3}|\nabla \phi_\cN|\right)  \, dz,\\
     e_4 &\le C(n) \int_{b \leq R} b^{-1}\left(|\nabla \Ric||\nabla u| +|\Ric||\nabla^2 u|\right)  \phi_\cN\le C\epsilon \int_{b \le R}b^{-3}\phi_\cN.
\end{aligned} \]For $e_2$, by Cauchy inequality, Theorem \ref{weak L2 on neck} and Theorem \ref{hessian bound 1}, we have 
\[e_2\le c(n)\left(\int_{b\le R}|\Rm|^2\phi_\cN\right)^{1/2}\left(\int_{b\le R}b^{-2}|\nabla^2 u|\phi_\cN \right)^{1/2}\le C\epsilon.
\]

Collecting estimates we finally arrive at 
\begin{equation} 
\int_0^\infty \frac{H(r)}{r} \, dr \leq C\epsilon \left(  \int_{b \leq R} b^{-3}\phi_\cN+ \int_{b \leq R}b^{-3}|\nabla\phi_\cN| + \int_{b \leq R} b^{-2}|\Delta\phi_\cN| \right).  
\end{equation}
By the end of the proof of Theorem \ref{Dini estimates}, we conclude \[\int_0^\infty \frac{H(r)}{r}\le C\epsilon.\]
\end{proof}
 \begin{proof}[Proof of Theorem \ref{L1 hessian}]
Let us first observe that
\[
\int_{N_{10-3} \cap B_{18/10}(p)} b^{-3}|\nabla^2 u|(z) \, dz < \int_M b^{-3}|\nabla^2 u| \phi_\cN(z) \, dz,
\] By Theorem \ref{hessian Dini}
\[
\begin{aligned}
\int_0^\infty r^{-3} \int_{b=r}  |\nabla^2 u| \phi_\cN(z) \, dz dr 
&< \int_0^\infty r^{-3} \int_{b=r}  \sqrt{|\nabla^2 u|^2 + \epsilon'^2} \phi_\cN(z) \, dz dr \\
&< C(n, B, v, \tau) \epsilon'.
\end{aligned}
\]
By using the coarea formula we can rewrite this as
\[
\int_M b^{-3} |\nabla b| |\nabla^2 u| \phi_\cN(z) \, dz < \epsilon'.
\]
Finally, using the Green's function estimates \(|\nabla b| > C^{-1}\), this gives us the estimate
\[
\int_M b^{-3} |\nabla^2 u| \phi_\cN(z) \, dz < C\epsilon' < \epsilon.
\]\end{proof}

The proof of Theorem \ref{Splitting functions on neck regions} will be a consequence of the so called $\mu$-maximal function estimate (Theorem \ref{maximal function estimate}). To begin with, we need the following telescoping type estimate that allows us to compare the average of a function along
different nearby balls. The proof only relies on Poincar\'e inequality, which were established for space with integral curvature bound by \cite{DWZ}.
\begin{lemma} 
Let \( (M^n, g) \) satisfy  $(A1)$, and let \( f : B_4(q) \to \mathbb{R} \) be an \( $W$^{1,2} \)-function. Then if \( B_{r_1}(x_1), B_{r_2}(x_2), B_{r_3}(x_3) \subseteq B_2(q) \) satisfy \( r_1, r_2 > 10^{-n}r_3 \) and are such that \( B_{2r_1}(x_1), B_{2r_2}(x_2) \subseteq B_{r_3}(x_3) \), we have the estimate

\[
\left| \fint_{B_{r_1}(x_1)} f(z) \, dz - \fint_{B_{r_2}(x_2)} f(z) \, dz \right| < C(n,p,\Lambda) \, r_3 \fint_{B_{r_3}(x_3)} |\nabla f|(z) \, dz.
\]
\end{lemma}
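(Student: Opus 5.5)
The plan is the standard telescoping argument: compare each small-ball average with the average over the large ball $B_{r_3}(x_3)$, and control each difference by a scale-invariant $(1,1)$-Poincar\'e inequality.

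By the triangle inequality it suffices to show, for $i=1,2$,
\[
\left|\fint_{B_{r_i}(x_i)} f - \fint_{B_{r_3}(x_3)} f\right| \le C(n,p,\Lambda)\, r_3\fint_{B_{r_3}(x_3)}|\nabla f|.
\]
Write $\bar f_3=\fint_{B_{r_3}(x_3)} f$. Since $B_{r_i}(x_i)\subseteq B_{r_3}(x_3)$, we get
\[
\left|\fint_{B_{r_i}(x_i)} f-\bar f_3\right|\le \fint_{B_{r_i}(x_i)}|f-\bar f_3| \le \frac{\Vol(B_{r_3}(x_3))}{\Vol(B_{r_i}(x_i))}\fint_{B_{r_3}(x_3)}|f-\bar f_3|.
\]
The argument then rests on two ingredients.

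\textbf{Volume doubling.} The ratio $\Vol(B_{r_3}(x_3))/\Vol(B_{r_i}(x_i))$ must be bounded by a constant $C(n,p,\Lambda)$. Since $B_{2r_i}(x_i)\subseteq B_{r_3}(x_3)$, we have $d(x_i,x_3)<r_3$, so $B_{r_3}(x_3)\subseteq B_{2r_3}(x_i)$. Because $r_i>10^{-n}r_3$, it suffices to compare $\Vol(B_{2r_3}(x_i))$ with $\Vol(B_{10^{-n}r_3}(x_i))$. This follows from the relative volume comparison, Theorem \ref{Volume comparison}, with $R=2r_3$ and $r=10^{-n}r_3$. Its smallness hypothesis $R^2\fint_{B_R}|\Ric|^p\le\delta$ has to be verified on these balls. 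At small scales the $L^p$ bound with $p>n/2$ makes the scale-invariant quantity small; in general one iterates comparison dyadically, as in Petersen--Wei, to obtain a doubling constant depending on $n,p,\Lambda$ for radii up to the fixed scale $2$. This yields the desired bound $C(n,p,\Lambda)$ on the ratio. If the $\Lambda$-dependence is not uniform at unit scale, I would allow the constant to depend also on $\rv$ through (A2), which gives a lower volume bound that combines with the trivial upper bound.

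\textbf{Poincar\'e inequality.} Next I need $\fint_{B_{r_3}(x_3)}|f-\bar f_3|\le C r_3\fint_{B_{r_3}(x_3)}|\nabla f|$, a $(1,1)$-Poincar\'e inequality on the same ball. This is the local Poincar\'e inequality of \cite{DWZ} under integral Ricci bounds, which the paper cites precisely for this purpose. If only a weak version with an enlarged ball on the right is available, I would use the geometric hypothesis $B_{2r_i}(x_i)\subseteq B_{r_3}(x_3)$ instead: apply Poincar\'e on $B_{r_3/2}(x_i)$, whose double is still contained in $B_{r_3}(x_3)$ up to adjusting constants, and chain the resulting averages through finitely many (bounded by $n$) intermediate balls.

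Combining the two ingredients gives each of the two estimates, and the triangle inequality finishes the proof. The main obstacle is not the chaining but making the Poincar\'e and doubling constants genuinely uniform at all scales $r_3\le 2$. This requires the smallness of $r^{2-n/p}\|\Ric\|_{L^p}$, which holds only after restricting to sufficiently small scales or after a preliminary rescaling. Working at unit scale, the dependence on $\Lambda$ (and possibly $\rv$) in the constant absorbs this issue.
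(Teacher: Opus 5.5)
Your argument is correct and follows the same standard route the paper has in mind: compare each small-ball average with the average over $B_{r_3}(x_3)$, control the volume ratio by relative volume comparison, and finish with the $(1,1)$-Poincar\'e inequality of \cite{DWZ}. Your caveat that the doubling constant may also need $\rv$, or a rescaling that makes the normalized $L^p$ Ricci norm small, is apt; the paper's one-line justification leaves this dependence implicit.
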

Given above Lemma and hessian bound \ref{hessian bound 2}, by the same proof of Jiang-Naber \cite{JN}, we have
\begin{theorem}[\cite{JN}, Theorem 5.104]
Under the condition of Theorem \ref{Splitting functions on neck regions}, for every \(x \in \mathcal{C} \cap B_1(q)\) and \(r_x < r < 2/3\), we have that

\[
\fint_{B_r(x)} \bigl| \langle \nabla u_a, \nabla u_b \rangle - \delta_{ab} \bigr|(z) \, dz < \epsilon + C(n, \rv,p,\Lambda) \fint_{W_x} r_h^{1-n}|\nabla^2 u|(z) \, dz,
\]
where $W_x \equiv \{ y \in A_{10^{-2}r_x, 1}(x) : d(y, x) < 2 \cdot d(y, \cC) \}$ is the wedge region.
\end{theorem}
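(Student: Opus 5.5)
The plan is to follow the telescoping argument of Jiang--Naber. Set $f_{ab}=\langle\nabla u_a,\nabla u_b\rangle-\delta_{ab}$, so that $|\nabla f_{ab}|\le C(|\nabla u_a|+|\nabla u_b|)|\nabla^2 u|\le C|\nabla^2 u|$, using the gradient bound in the definition of a splitting map. Fix $x\in\cC\cap B_1(q)$ and $r_x<r<2/3$, and compare the average of $f_{ab}$ on $B_r(x)$ with its average on $B_1(x)$ (or $B_{3/2}(x)$). The latter is $<\epsilon$ because $u$ is a $\delta'$-splitting map, with $\delta'\le\delta(\epsilon)$. Since $|f_{ab}-\fint f_{ab}|$ on $B_r(x)$ is controlled by Poincar\'e and the scale-invariant Hessian bound of Theorem \ref{hessian bound 2}, it suffices to bound $|\fint_{B_r(x)}f_{ab}-\fint_{B_1(x)}f_{ab}|$ by $\epsilon$ plus the wedge integral.

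For the telescoping step, I would not telescope along the dyadic balls $B_{2^{-k}}(x)$ directly, since their doubled balls meet $\cC$. Instead, for each scale $s_k=2^{-k}\in[r,1]$, pick a point $y_k$ with $d(y_k,x)\approx s_k$ and $d(y_k,\cC)\ge c(n)s_k$, so that $B_{cs_k}(y_k)\subset W_x$. Such points exist because, by $(n2)$–$(n3)$, $B_{s_k}(x)$ is GH-close to $\mathbb{R}^{n-4}\times C(S^3/\Gamma)$ with $\cC$ contained in a $\delta s_k$-neighborhood of the $(n-4)$-plane; so a point at distance $s_k$ from $x$ in the cone direction is at distance $\gtrsim s_k$ from $\cC$. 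Then the chain has three kinds of links.

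\textbf{Step 1.} Compare $\fint_{B_r(x)}$ with $\fint_{B_{cr}(y_{k_r})}$, and similarly at the top scale. Apply the telescoping lemma with $r_3\approx 2r$. The error is $Cr\fint_{B_{2r}(x)}|\nabla^2u|\le C(r^2\fint_{B_{2r}(x)}|\nabla^2u|^2)^{1/2}<\epsilon$ by Theorem \ref{hessian bound 2}.

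\textbf{Step 2.} Compare consecutive balls $B_{cs_k}(y_k)$ and $B_{cs_{k+1}}(y_{k+1})$. Choose the $y_k$ along a "ray" so that both balls sit, with their doubles, inside a ball $B_{r_3}(z_k)$ of radius $\approx c s_k$ that is still contained in $W_x$ and stays at distance $\approx s_k$ from $\cC$. Then the telescoping lemma gives an error $\le Cs_k\fint_{B_{r_3}(z_k)}|\nabla^2u|$. On this ball $r_h\approx d(\cdot,\cC)\approx s_k$ by Remark \ref{remark_neck}, and $\Vol(B_{r_3})\ge c(\rv)s_k^n$ by (A2) and volume comparison. Hence the error is
\[
\le C s_k^{1-n}\int_{B_{r_3}(z_k)}|\nabla^2 u|\le C\int_{B_{r_3}(z_k)}r_h^{1-n}|\nabla^2u|.
\]

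\textbf{Step 3.} Sum over $k$. The balls $B_{r_3}(z_k)$ have bounded overlap since they live in disjoint-ish dyadic annuli around $x$. The sum is therefore bounded by $C\int_{W_x}r_h^{1-n}|\nabla^2u|$. This differs from the stated $\fint_{W_x}$ only by the bounded normalization $\Vol(W_x)\le C$, which may be absorbed into the constant.

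The main obstacle is the geometric bookkeeping in Step 2: guaranteeing that a connecting chain of uniformly non-degenerate balls exists inside the wedge at every scale, with each ball's enlargement avoiding $\cC$ by a definite fraction of its radius. It also requires checking that the comparison constants do not depend on $\delta$ and that the telescoping lemma's hypothesis $r_1,r_2>10^{-n}r_3$ holds. I would handle this through the GH approximations $\iota_{x,s}$: fix a unit vector in the $C(S^3/\Gamma)$ factor, take $y_k=\iota_{x,s_k}(0,s_k\theta)$, and use that $\iota_{x,s_k}$ and $\iota_{x,s_{k+1}}$ are compatible up to $\delta s_k$ errors, by comparing both with the model cone. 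Where needed, insert a bounded number of intermediate balls. The remaining steps use only the Poincar\'e-based lemma, (A2), and Theorem \ref{hessian bound 2}.
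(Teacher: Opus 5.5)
Your proposal is correct and matches the paper's route. The paper gives no proof beyond deferring to Jiang--Naber, and your argument is exactly that one: telescope along a chain of balls inside the wedge $W_x$ using the Poincar\'e-based telescoping lemma, use Theorem \ref{hessian bound 2} for the end comparisons, and use $r_h\approx d(\cdot,\cC)$ to turn each link's error into a piece of $\int_{W_x} r_h^{1-n}|\nabla^2 u|$.

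One small correction: $\iota_{x,s_k}$ and $\iota_{x,s_{k+1}}$ agree only up to an isometry of the model cone, not up to $\delta s_k$, so a fixed-direction ray need not be coherent across scales. Your fallback already fixes this: inserting boundedly many intermediate balls works because the regular part of each annulus is connected.
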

 
\begin{proposition}[\cite{JN}, Proposition 5.119]\label{maximal function estimate}
Under the condition of Theorem \ref{Splitting functions on neck regions}, for every \(\epsilon, B > 0\) and \(\tau < \tau(n)\), if \(\delta, \delta' \leq \delta(n,p,\rv,\Lambda,B)\) and if we define the \(\mu\)-supported maximal function \(m_u : \mathcal{C} \to \mathbb{R}^+\) by
\[
m(x) \equiv \sup_{r_x \leq r \leq 2/3} \left( \int_{B_r(x)} \bigl| \langle \nabla u_a, \nabla u_b \rangle - \delta_{ab} \bigr|(z) \, dz + r^2 \int_{B_r(x)} |\nabla^2 u|^2(z) \, dz \right),
\]
then we have the estimate
\[
\int_{B_1(p)} m(x) \, d\mu(x) < \epsilon.
\]
\end{proposition}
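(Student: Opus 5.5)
Following \cite{JN}, we bound $m(x)$ pointwise on $\cC\cap B_1(p)$ by averages of the weighted Hessian over the neck, and then integrate against $\mu$. Throughout we use the normalization $\fint$ in the definition of $m$, since the inequalities are scale invariant. For the Hessian term, Theorem \ref{hessian bound 2} already gives $r^2\fint_{B_r(x)}|\nabla^2 u|^2<\epsilon$ for every $x\in\cC$ and $r_x<r<1$. Its $\mu$-integral is therefore at most $\epsilon\,\mu(B_1)\le B\epsilon$ by (A4). This term is harmless after renaming $\epsilon$.

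For the gradient term we use the telescoping estimate (\cite{JN}, Theorem 5.104) stated just above. For each $x$ and $r_x<r<2/3$ it gives
\[
\fint_{B_r(x)}\bigl|\langle\nabla u_a,\nabla u_b\rangle-\delta_{ab}\bigr| < \epsilon + C\int_{W_x} r_h^{1-n}|\nabla^2 u| .
\]
The right-hand side no longer depends on $r$, so $m(x)\le 2\epsilon + C\int_{W_x} r_h^{1-n}|\nabla^2u|$. Integrating against $\mu$ over $\cC\cap B_1(p)$ and applying Fubini, we need to control
\[
\int_{B_{18/10}} |\nabla^2u|(y)\, r_h(y)^{1-n}\,\mu\bigl(\{x\in\cC\cap B_1: y\in W_x\}\bigr)\,dy .
\]

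The key geometric step is to bound $\mu(\{x: y\in W_x\})$. The condition $y\in W_x$ forces $d(x,y)<2d(y,\cC)$ and $d(y,x)\ge 10^{-2}r_x$. Hence $y$ lies in $\cN_{10^{-3}}$, or in a region where $r_h(y)\approx d(y,\cC)$ (Remark \ref{remark_neck}). The set of such $x$ is also contained in $B_{2d(y,\cC)}(y)\subset B_{3d(y,\cC)}(x_0)$, where $x_0\in\cC$ is a nearest point to $y$. Ahlfors regularity (A4) then gives $\mu\le C\,d(y,\cC)^{n-4}\approx C\,r_h(y)^{n-4}$. Combining this with Lemma \ref{Green's distance function estimate}, the weight becomes $r_h^{1-n}\cdot r_h^{n-4}\approx b^{-3}$. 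The integral is therefore bounded by $C\int_{\cN_{10^{-3}}\cap B_{18/10}} b^{-3}|\nabla^2u|$, which is $<\epsilon$ by Theorem \ref{L1 hessian}.

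The main obstacle is this wedge-region bookkeeping. The issue is near the inner boundary $d(y,x)\approx 10^{-2}r_x$, where $y$ might fall inside $\bar B_{10^{-3}r_{x'}}(x')$ for another center $x'$. There we use the Lipschitz bound $(n4)$ on $r$ together with the disjointness $(n1)$: these give $d(y,\cC)\gtrsim r_x$ comparable to $r_{x'}$, which keeps $y$ in $\cN_{10^{-3}}$ up to harmless constants. If necessary we instead enlarge to $\cN_{10^{-4}}$ and rerun Theorem \ref{L1 hessian} with $\phi_\cN$. Finally, we choose $\delta,\delta'$ so that all the $\epsilon$'s sum to less than the target $\epsilon$.
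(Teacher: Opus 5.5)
Your proposal is correct and follows essentially the argument the paper relies on; the paper gives no separate proof here and defers to Jiang--Naber's Proposition 5.119. In that argument, Theorem~\ref{hessian bound 2} handles the Hessian term, and the telescoping estimate, Fubini, and the Ahlfors bound $\mu(\{x : y\in W_x\})\le C\,d(y,\cC)^{n-4}$ reduce the gradient term to $C\int b^{-3}|\nabla^2 u|$, which Theorem~\ref{L1 hessian} makes small.

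Your inner-boundary worry is settled by $(n4)$ alone, without $(n1)$. If $y\in W_x\cap\overline{B}_{10^{-3}r_{x'}}(x')$, then $r_{x'}>5r_x$ and $d(x,x')\le 3\cdot 10^{-3}r_{x'}$, so $(n4)$ gives $r_{x'}\le(1-3\cdot 10^{-3}\delta)^{-1}r_x$, a contradiction. One bookkeeping point remains: with outer radius $1$, the wedges $W_x$ for $x\in B_1$ reach into $B_2$ rather than $B_{18/10}$. This is a harmless mismatch of constants inherited from the paper's statement of the telescoping estimate.
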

\begin{proof}[Proof of Theorem \ref{Splitting functions on neck regions}] This follows directly from the previous Proposition \ref{maximal function estimate} and the same proof of Jiang-Naber. 
\end{proof}
\subsection{Proof of Ahlfors regularity}

\begin{theorem}\label{strong L2 neck}
Let \((M^n, g)\) satisfy assumptions $(A1),  (A2), (A3)$. Then if $\delta\le\delta(n,p,\rv,\Lambda)$, then  
\begin{enumerate}
\item there exists $B(n,p,\rv,\Lambda)>0$ such that for each \(x \in \mathcal{C}\) and \(r_x < r\) with \(B_{2r}(x) \subseteq B_2\), we have\[B^{-1}r^{n-4} < \mu(B_r(x)) < Br^{n-4},\]
\item we have $L^4$ bound on neck region \[\int_{\cN\cap B_1(p)}|\Ric|^4< C(n,p,\rv,\Lambda).\]
\item we have $L^2$ bound on neck region \[\int_{\cN\cap B_1(p)}|\Rm|^2< C(n,p,\rv,\Lambda).\]
\end{enumerate}
 \end{theorem}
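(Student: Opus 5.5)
The plan is to prove (1) first, since (2) and (3) follow from it. By Theorem \ref{weak L2 on neck} and Remark \ref{weak L4 Ricci on neck}, once (A4) holds with a constant $B(n,p,\rv,\Lambda)$, the $L^2$ bound on $\Rm$ and the $L^4$ bound on $\Ric$ over $\cN_{10^{-5}}\cap B_1$ follow with constants depending on $B$, hence only on $(n,p,\rv,\Lambda)$. On the remaining part $\cN\setminus\cN_{10^{-5}}$, namely the annuli $A_{r_x,10^{-5}r_x}(x)$ for $x\in\cC$, I would use (A3) directly. Each such annulus is GH-close to $\mathbb{R}^{n-4}\times C(S^3/\Gamma)$ away from the singular line, so $|\Rm|\le \epsilon r_x^{-2}$ and $|\Ric|\le \epsilon r_x^{-1}$ there, with volume at most $Cr_x^n$. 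Its contribution is therefore at most $C\sum_x r_x^{n-4}\le C\mu(B_{19/10})$, which is bounded once the upper Ahlfors bound holds at scale one. So everything reduces to (1).

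For (1) I would follow the continuity/induction-on-scales argument of Jiang--Naber (\cite{JN}, Theorem 5.1 / Section 6).
\begin{itemize}
\item[(a)] \emph{Upper bound.} Prove it by downward induction on scale. For each $x\in\cC$ consider the largest scale $r\ge r_x$ at which Ahlfors regularity with a doubled constant $2B$ fails. Rescale $B_r(x)$ to unit size. Below that scale the bound holds with $2B$, so the rescaled neck $\cN\cap B_{2r}(x)$ satisfies (A4) with constant $2B$, and all results of Sections 4 and 5 apply, provided $\delta\le\delta(n,p,\rv,\Lambda,2B)$. The rescaling is harmless because the $L^p$ bound in (A1) only improves under blow-up, since $p>n/2$.
\item[(b)] \emph{Splitting map.} Near $\mathbb{R}^{n-4}\times C(S^3/\Gamma)$, the almost splitting theorem (Theorem \ref{almost splitting}) gives a $\delta'$-splitting map $u$ on the rescaled ball. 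Apply Theorem \ref{Splitting functions on neck regions} with a small $\epsilon$ to obtain $\cC_\epsilon$, on which $u$ is $(1\pm\epsilon)$-bi-Lipschitz and $\mu(\cC\setminus\cC_\epsilon)<\epsilon$.
\item[(c)] \emph{Upper bound from the covering.} By (n1), the balls $B_{\tau r_y}(y)$ are disjoint. Their images under $u$ are essentially disjoint $(n-4)$-balls of radius about $\tau r_y$ inside $B_2(0^{n-4})$, so $\mu(\cC_\epsilon)\le C(n,\tau)$. Adding the exceptional mass $\epsilon$ gives an upper bound $B_0(n,\tau)$ that is independent of $B$.
\item[(d)] \emph{Lower bound.} Use the Reifenberg map of Theorem \ref{Reifenberg map}(5). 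The sets $T_x^{-1}\overline{B}_{r_x}(\Phi(x))$ cover $B_{7/4}(0^{n-4})$, and $|T_x|$ grows at most like $r_x^{-\epsilon}$. This gives $\sum r_x^{n-4}\ge c(n)$, possibly after comparing with $u$ on $\cC_\epsilon$ to remove the $r^{\epsilon}$ loss. This mirrors Jiang--Naber.
\end{itemize}
Fix $B=\max(2B_0,\ldots)$, with the constant independent of the induction. Then (c) shows the bound holds with $B_0<2B$ at the supposedly failing scale, a contradiction. The base case is small scales near $r_x$, where $\mu(B_r(x))\approx r_x^{n-4}$ by (n1) and (n4). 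The top scale is handled by finiteness of $\cC$ on a smooth manifold.

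The main obstacle is the bookkeeping in the induction. The constants in Sections 4 and 5, in particular $\delta$, depend on $B$, while $B$ must be chosen before $\delta$. It must also be checked that the conclusion of Theorem \ref{Splitting functions on neck regions} gives a mass bound independent of $B$. I would handle this as in \cite{JN}: fix $B=2B_0(n,\tau)$ first, then choose $\delta$, and verify that the restriction of a neck region to a subball, after rescaling, is again a $(\delta,\tau)$-neck region.
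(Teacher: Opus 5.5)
Your proposal follows the paper's proof. Once (1) holds, (2) and (3) reduce to Theorem~\ref{weak L2 on neck} and Remark~\ref{weak L4 Ricci on neck}, and (1) is obtained as in Jiang--Naber: the lower bound from the Reifenberg map of Theorem~\ref{Reifenberg map}, and the upper bound by induction on scales from Theorem~\ref{Splitting functions on neck regions}, with $B$ fixed before $\delta$.

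Two small corrections. First, since $\cN_{10^{-5}}\supseteq\cN$, the annuli you treat separately lie outside $\cN$, so that step can be dropped. Second, the induction should run upward from scales comparable to $r_x$: argue at the smallest scale at which the bound fails for \emph{some} center, so that (A4) holds at every center of the rescaled neck, rather than from a ``largest'' failing scale at a single $x$.
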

 \begin{proof} To begin with, note (2) and (3) of Theorem \ref{strong L2 neck} follows from Theorem \ref{weak L2 on neck} once we have proved Ahlfors regularity (1). 
 
  Ahlfors lower bound follows from the same proof in section 6.1 in \cite{JN} and Theorem \ref{Reifenberg map} while Ahlfors upper bound follows from the same proof in section 6.2 in \cite{JN} and Theorem \ref{Splitting functions on neck regions}.
 \end{proof} 
 
 \section{Neck decomposition}
 Our main goal in this section is to prove the following neck decomposition theorem. This is one of the most technical parts in this paper, and the proof here follows almost from the same line as in \cite{JN}. For the sake of completeness, we give the detailed proof of this  decomposition.
 \begin{theorem}[Neck decomposition] \label{neck decomposition}  
Let $(M^n,g)$  satisfy assumptions $(A1),(A2)$ and $(A3)$ above, then for  $\tau\le\tau_0(n)$, $\delta\le\delta(n,\mathrm{v},p,\Lambda)>0$ and $x\in M$, we can write
\[
B_1(x) \subset\bigcup_{a} \left(\N_a\cap B_{r_a}(x_a)\right) \cup \bigcup_{b} B_{r_b}(x_b),
\]
where
\begin{enumerate}
    \item each $\N_a$ is a $(\delta,\tau)$-neck region contained in some ball $B_{2r_a}(x_a)$;
    \item each $B_{r_b}(x_b)$  satisfies $r_h(x_b) > 2r_b$, where $r_h$ is harmonic radius defined in section $\ref{harmonic radius}$; 
    \item the following content bound holds:
        \[
        \sum_a r_a^{n-4} + \sum_b r_b^{n-4} \le C(n,\mathrm{v},p,\Lambda).
        \]
\end{enumerate}\end{theorem}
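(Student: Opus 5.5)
We follow the inductive covering scheme of Jiang--Naber \cite{JN} (itself modeled on \cite{CJN}), with every use of Bishop--Gromov monotonicity replaced by the summability of volume drop, Lemma \ref{summability of volume drop}. After rescaling we may assume $\Lambda$ is as small as we like on all balls below the scale $R_0$ of that lemma. Then Theorems \ref{Volume comparison}, \ref{volume cone}, \ref{almost splitting} and \ref{cone-splitting} hold at every scale $\le 1$ with errors that are small uniformly.

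\textbf{Step 1: classify the balls.} At each point $y$ and scale $s\le 1$ we say $B_s(y)$ is
\begin{itemize}
\item[(a)] \emph{regular} if $r_h(y)>2s$;
\item[(b)] a \emph{neck ball} if it is $(n-4,\delta)$-symmetric but not $(n-3,\delta')$-symmetric;
\item[(c)] otherwise, a ball with an \emph{$\eta$-volume drop}.
\end{itemize}
By (A3) and Theorem \ref{harmonic radius}, an $(n-3,\delta)$-symmetric ball is already $\varepsilon$-Euclidean, hence of type (a). By cone splitting (Theorem \ref{cone-splitting}) and Theorem \ref{volume cone}, a ball that is neither (a) nor (b) must have volume pinching $|\cV_{s}(y)-\cV_{2s}(y)|>\eta(n,\delta)$ at one of finitely many nearby scales. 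Equivalently, if the pinching is small at all points of a ball, then the points of near-maximal volume line up along an almost $(n-4)$-plane and the ball is of type (b). Singular limits $\mathbb{R}^{n-4}\times C(S^3/\Gamma)$ with $\Gamma$ trivial are excluded, since those balls are regular.

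\textbf{Step 2: build the neck regions.} For a ball of type (b), we follow \cite{JN}, Section 7: we cover it by a neck region together with leftover balls. Put $\cC$ to be the set of points where the ball stays $(n-4,\delta)$-symmetric down to a radius $r_x$. Define $r_x$ as the first scale at which one of two things happens: symmetry is lost, or the points of maximal volume on $B_{r}(x)$ stop being $\delta r$-close to an $(n-4)$-plane. Mollify $r_x$ so that it is $\delta$-Lipschitz, giving (n4). Conditions (n1)--(n3) follow from a Vitali selection together with the cone-splitting lemma. The leftover balls $B_{r_x}(x)$ are of type (a) or (c), or are again type (b) at the smaller scale. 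The content estimate $\sum_{x\in\cC} r_x^{n-4}\le C(n)$ for one neck region is exactly the Ahlfors upper bound of Theorem \ref{strong L2 neck}(1). This is available because that theorem only assumes (A1)--(A3).

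\textbf{Step 3: iterate and count.} For a ball of type (c), cover it by balls of radius $\tau s$. Each subsequent ball inherits a definite drop $\eta$ of the pinched volume ratio at one of its points. We then iterate the decomposition: necks produce balls of type (a), (b), (c); balls of type (c) produce children at a smaller scale. A point can sit in a type-(c) ball at only $N(\eta,n,p,\rv,\Lambda)$ scales, by Lemma \ref{summability of volume drop}. Hence along any chain of the tree the type-(c) steps occur at most $N$ times. Between two type-(c) steps the contents multiply by at most $C(n)$: neck regions have bounded packing, and the covers of type-(c) balls have bounded content. The recursion therefore terminates with total content at most $C(n)^{N}$, which gives (3).

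\textbf{Main obstacle.} The main obstacle is Step 3 with only integral curvature control. The volume ratio is not monotone, so the drop has to be counted through the absolute summation of Lemma \ref{summability of volume drop}. This requires care in three places. First, the lemma's scale $R_0$ depends on $\Lambda$; we handle this with an initial covering of $B_1(x)$ by $C(n,\rv,\Lambda)$ balls of radius $R_0$, using the non-collapsing bound (A2). Second, the finiteness must be uniform over points inside a ball, not only at its center. We would first try to obtain this by transferring the drop to nearby centers using Theorem \ref{Volume comparison} together with $\rv$-non-collapsing, as in the proof of Lemma \ref{summability of volume drop}.
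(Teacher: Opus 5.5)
There is a genuine gap in Step 3, which is where the real work of the decomposition lies. First, nothing bounds the number of consecutive neck steps. You allow leftover balls of a neck region to be of type (b) again, and your Step 2 forces this. To keep (n2) you may only \emph{enlarge} $r_x$ when making it $\delta$-Lipschitz, and the enlarged leftover balls are still $(n-4,\delta)$-symmetric. A neck step involves no volume drop, so Lemma \ref{summability of volume drop} does not see it. Yet each neck step can multiply the content by the Ahlfors constant of Theorem \ref{strong L2 neck}, which is of order one, not small. Your assertion that ``between two type-(c) steps the contents multiply by at most $C(n)$'' is therefore unjustified: a chain of $k$ neck steps gives $C(n)^k$ with $k$ unbounded.

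Second, ``a point can sit in a type-(c) ball at only $N$ scales'' does not follow from Lemma \ref{summability of volume drop}. That lemma counts drops at a \emph{fixed} point. The drop certifying a type-(c) ball $B_s(y)$ sits at some point of the ball, not at all of its points. For example, a ball close to $\mathbb{R}^k\times C(Z)$ with $k\le n-5$ has no drop at its center, yet it is neither (a) nor (b). Volume comparison only transfers a drop to points at distance $\ll\eta s$: in a cone the vertex has no drop, while a point at distance $s$ from it does. So your proposed remedy fails, and a fixed point can lie in type-(c) balls at unboundedly many scales. Counting per-point patterns of bad scales, Cheeger--Naber style, gives only bounds with an $\epsilon$-loss, not a finite $(n-4)$-content. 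A smaller point: small pinching at \emph{all} points of a ball makes it almost Euclidean, not a neck, and the relevant pinched points have near-\emph{minimal} volume, not near-maximal.

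The paper, following Section 9 of \cite{JN}, changes the bookkeeping to avoid both problems.
\begin{itemize}
\item Balls are classified by the $(n-4)$-content of the pinched set $\mathcal{E}_\eta(x,r)=\{y\in B_{4r}(x):\cV_{\eta r}(y)<\bar V+\eta\}$, where $\bar V=\inf_{B_4(q)}\cV_{\eta^{-1}}$ is a global lower bound.
\item Theorem \ref{Content cone-splitting} turns large content into $(n-4)$-symmetry, and (A3) upgrades this to $\mathbb{R}^{n-4}\times C(S^3/\Gamma)$.
\item A weak neck region is continued while balls remain c-balls and stops at $\tilde d$-balls. The key estimate in the strong c-ball covering is that the leftover c-balls, i.e.\ the only ones that restart a neck, have total content $\le C(n,\tau)\epsilon$. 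This holds because pinched points of a small ball inside a $\tilde d$-ball are also pinched for that $\tilde d$-ball. The $\epsilon$-smallness turns repeated neck constructions into a convergent geometric series.
\item d-balls are re-covered with a factor $C\epsilon$ per step (Proposition \ref{d-ball}).
\item Only e-balls, where $\inf_{B_{4r_e}(x_e)}\cV_{\eta r_e}\ge\bar V+\eta$ holds \emph{uniformly over the ball}, go to an outer induction on the volume lower bound. This stops after boundedly many rounds because $\cV$ is bounded above.
\end{itemize}
Summability of volume drop is used only to absorb the non-monotonicity of $\cV$, not as the counting device. You need analogues of two ingredients before Step 3 gives a finite bound: an $\epsilon$-small content estimate for the balls that restart a neck, and an inductive quantity that improves at every point of a ball.
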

 
The above neck decomposition developed by Jiang-Naber  is fundamentally a combinatorial, metric-measure theoretic covering argument. Once we have established that the integral Ricci curvature bound (A1) and  uniform non-collapsing
(A2) implies quantitative splitting (Theorem \ref{almost splitting}), and local metric cone structure (Theorem \ref{volume cone}), together with  strong $\vepsilon$-regularity (A3)  which says “generic” singularities must have exactly codimension 4, the entirety of Section 9 in \cite{JN} acts as a formal ``black box” algorithm. It merely categorizes
metric balls and counts them.

Let us recall the volume ratio $\displaystyle \cV_r(x):=\frac{\Vol(B_r(x)}{\Vol(B_r(0^n))}$. Define $\bar{V} \equiv \inf_{y\in B_4(q)}\cV_{\eta^{-1}}$, and volume pinching set 
\[ \mathcal{E}_{\eta}(x,r)=\{y \in B_{4r}(x):\cV_{\eta r}(y)<\bar{V}+\eta\}.
\] Let us now consider the various ball types that will appear in the proof. The conventions below will allow us to keep oriented throughout the proof of the various conditions we will be forced to consider. After defining the sets carefully we will discuss their meaning in words:

\begin{itemize}
\item[(a)] a ball \( B_{r_a}(x_a) \) with subscript \( a \) will be such that there exists a \( (\delta, \tau) \)-neck region \( \cN_a \subseteq B_{2r_a}(x_a) \);

\item[(b)] a ball \( B_{r_b}(x_b) \) with subscript \( b \) will be such that \( r_h(x_b) \geq 2r_b \);

\item[(c)] a ball \( B_{r_c}(x_c) \) with subscript \( c \) will be such that \( \operatorname{Vol}(B_{\delta r_c} \mathcal{E}_\eta(x_c, r_c)) > \epsilon \delta^4 r_c^n \);

\item[(d)] a ball \( B_{r_d}(x_d) \) with subscript \( d \) will be such that \( \operatorname{Vol}(B_{\delta r_d} \mathcal{E}_\eta(x_d, r_d)) \leq \epsilon \delta^4 r_d^n \) and \( \mathcal{E}_\eta(x_d,r_d) \neq \varnothing\);

\item[$(\tilde{\mathrm{d}})$] a ball \( B_{r_{\tilde{d}}}(x_{\tilde{d}}) \) with subscript \( \tilde{d} \) will be such that \( \operatorname{Vol}(B_{\delta r_{\tilde{d}}} \mathcal{E}_\eta(x_{\tilde{d}}, r_{\tilde{d}})) \leq \epsilon \delta^4 r_{\tilde{d}}^n \);

\item[(e)] a ball \( B_{r_e}(x_e) \) with subscript \( e \) will be such that \(\operatorname{Vol}(B_{\delta r_{\tilde{d}}} \mathcal{E}_\eta(x_{\tilde{d}}, r_{\tilde{d}})) \leq \epsilon \delta^4 r_{\tilde{d}}^n \) and  \( \mathcal{E}_\eta(x_e, r_e) = \varnothing\), that is, we have the volume jump \( \inf_{y \in B_{4r_e}(x_e)} V_{\eta r_e}(y) \geq \bar{V} + \eta \);

\item[(f)] a ball \( B_{r_f}(x_f) \) with subscript \( f \) will be a typical ball for which we have no a priori knowledge of additional structure.
\end{itemize}

 Let us remark that of the various ball types above, only the first two take part in the final
result of Theorem \ref{neck decomposition}. A $c$-ball is one for which there are a lot of points that are volume pinched on many scales, from an $n-4$  content point of view.  A $\tilde{d}$-ball is a $d$-ball or $e$-ball. To explain the
e-ball, let us now note that the final proof of the decomposition theorem will
be by an induction process, where we will induct on the lower volume of the
ball. Therefore, an $e$-ball is a ball for which the volume has increased by some
definite amount after dropping some scales, and thus we will be able to apply
our inductive hypothesis to deal with it.

Let us first prove cone splitting principle in  content sense.
 \begin{theorem}[Content cone-splitting]\label{Content cone-splitting}
Let \((M^n,g)\) satisfy satisfy $(A1), (A2)$ with \(0 < \delta, \zeta \leq \delta(n, \rv)\) and \(\epsilon > 0\). Then there exists \(\eta(n, \rv, \delta, \epsilon, \zeta) > 0\) such that if
\begin{enumerate}
\item [$(1)$] $\int_{B_{4}(q)}|\Ric|^{p} < \eta,$ and
\item [$(2)$]  if \(\bar{V} \equiv \inf_{y\in B_4(q)} \cV_{\eta^{-1}}(y)\) and \(\mathcal{E}_\eta(q, 1) \equiv \{y \in B_4(q) : \cV_\eta(y) < \bar{V} + \eta\}\) satisfy
\[
\operatorname{Vol}(B_\delta \mathcal{E}_\eta) > \epsilon \delta^k, 
\]
then there exists \(q' \in B_4(q)\) such that \(B_{\zeta^{-1}}(q')\) is \((n-k, \zeta^2)\)-symmetric.
\end{enumerate}
\end{theorem}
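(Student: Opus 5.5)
We argue by contradiction and compactness, following the scheme of Jiang--Naber (their content cone-splitting lemma), using the precompactness of Theorem \ref{precompactness} and the almost-rigidity results of Section 2 in place of the lower Ricci bound arguments.

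\emph{Setup.} Fix $\delta,\zeta,\epsilon$ and suppose the statement fails. Then there are manifolds $(M_j,g_j,q_j)$ satisfying (A1), (A2) with $\eta_j\to 0$ and $\int_{B_4(q_j)}|\Ric|^p<\eta_j$. Their pinched sets $\mathcal{E}_{\eta_j}$ satisfy $\Vol(B_\delta\mathcal{E}_{\eta_j})>\epsilon\delta^k$. Yet for no $q'\in B_4(q_j)$ is $B_{\zeta^{-1}}(q')$ $(n-k,\zeta^2)$-symmetric.

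By Theorem \ref{precompactness}, a subsequence converges in pointed Gromov--Hausdorff sense to $(X,d,q_\infty)$. By the volume convergence valid under $L^p$ Ricci bounds with noncollapsing (Theorem \ref{volume cone} and the volume comparison Theorem \ref{Volume comparison}), the Hausdorff measure on $X$ is the limit of the Riemannian volumes. Moreover, since the scale-invariant Ricci error $r^2(\fint_{B_r}|\Ric|^p)^{1/p}\to 0$ on all scales $r\le \eta_j^{-1}$, the limit behaves like a space with nonnegative Ricci curvature in the synthetic sense.

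\emph{Step 1: the pinched points are cone tips.} For $y_j\in\mathcal{E}_{\eta_j}$ we have $\cV_{\eta_j}(y_j)<\bar V_j+\eta_j$ and $\cV_{\eta_j^{-1}}(y_j)\ge \bar V_j$. By the almost monotonicity of Theorem \ref{Volume comparison}, the volume ratio is almost constant (up to an error $o(1)$) across all scales $[\eta_j,\eta_j^{-1}]$. Applying Theorem \ref{volume cone} on each dyadic scale, every ball $B_r(y_j)$ with $\eta_j\ll r\ll \eta_j^{-1}$ is $(0,o(1))$-symmetric. Hence any limit point $y$ of such $y_j$ is a point where $X$ is a metric cone at every scale, with vertex $y$.

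Let $\mathcal{E}_\infty$ be the Hausdorff limit of the $\mathcal{E}_{\eta_j}$. By volume convergence, $\Haus^n(B_\delta\mathcal{E}_\infty)\ge\epsilon\delta^k$.

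\emph{Step 2: content implies many independent cone points.} Consider the affine span of $\mathcal{E}_\infty$ within the cone structure. Any two distinct cone vertices of a cone force a line, and iterating the classical cone-splitting principle, $X$ splits isometrically as $\mathbb{R}^m\times C(Z)$, with $\mathcal{E}_\infty\subset\mathbb{R}^m\times\{z_c\}$ for the maximal $m$.

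If $m\le n-k-1$, then $\mathcal{E}_\infty$ lies in an $(n-k-1)$-plane. Its $\delta$-tube inside $B_4$ then has volume at most $C(n)\delta^{k+1}$. This contradicts $\epsilon\delta^k$ once $\delta\le\delta(n,\rv)$ is chosen small relative to $\epsilon$; if the dependence on $\epsilon$ is problematic, one instead uses a quantitative-independence formulation.

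So $m\ge n-k$, and $X$ is globally $(n-k)$-symmetric at every point of $\mathcal{E}_\infty$. Taking $q'_j\to q'\in\mathcal{E}_\infty$, the balls $B_{\zeta^{-1}}(q'_j)$ are $(n-k,\zeta^2)$-symmetric for large $j$. This is the desired contradiction, since $\eta_j^{-1}\gg\zeta^{-1}$.

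\emph{Main obstacle.} The hard part is the quantitative content step: passing from the volume lower bound on $B_\delta\mathcal{E}$ to the existence of $n-k+1$ points that are effectively independent at scale $\delta$. I would do this first at finite $j$, using a covering argument: if all pinched points lay in a $\delta$-neighborhood of an $(n-k-1)$-plane, the volume bound would fail. Then I would invoke the quantitative cone-splitting Theorem \ref{cone-splitting} inductively, adding one dimension at a time, rather than relying solely on the limit-space argument. A further technical point is justifying volume convergence and the cone-splitting principle in limits of $L^p$-Ricci spaces; here I would rely on \cite{PW}, \cite{TZ}, \cite{Q}.
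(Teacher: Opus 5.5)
Your proof follows the paper's proof: contradiction and compactness, pinched points become cone vertices of the limit \(X\), and cone-splitting gives \(X\cong\mathbb{R}^{n-k}\times C(Z)\). That contradicts the assumed absence of \((n-k,\zeta^2)\)-symmetric balls.

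The one real difference is Step~2, and there your version is the one that actually works. The paper asserts that \(\Haus^n(B_\delta\mathcal{E}_\infty)\ge\epsilon\delta^k\) forces \(\dim_{\Haus}\mathcal{E}_\infty\ge n-k\), but a bound at one fixed scale cannot give that. What closes the argument is your observation that \(\mathcal{E}_\infty\) lies in the vertex set \(\mathbb{R}^m\times\{z_c\}\), whose \(\delta\)-tube in \(B_4\) has volume at most \(C(n)\delta^{n-m}\).

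The resulting condition \(\delta\le\epsilon/C(n)\) is genuinely necessary, so drop your hedge about a ``quantitative-independence formulation''. Suppose the \(M_j\) converge to a cone with an isolated vertex, e.g.\ rescaled Eguchi--Hanson metrics with bolt size \(\ll\eta\), \(n=4\), \(k=3\). Then \(\Vol(B_\delta\mathcal{E}_\eta)\gtrsim\delta^n>\epsilon\delta^k\) once \(\epsilon\ll\delta^{n-k}\). Yet no \(B_{\zeta^{-1}}(q')\) with \(q'\in B_4(q)\) is \((n-k,\zeta^2)\)-symmetric for small \(\zeta\). So the statement should be read with \(\delta\le\delta(n,\rv,\epsilon)\); the paper's proof glosses over this dependence rather than establishing its absence.

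Separately, your setup claims the scale-invariant Ricci error tends to zero on all scales up to \(\eta_j^{-1}\). This does not follow from hypothesis (1), which only controls \(B_4(q)\). The paper uses the same fact tacitly when it concludes that \(\cV_r(y)\) is constant for all \(r\), and in its applications the fact comes from rescaling small balls. It should be stated as an additional hypothesis, not derived.
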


\begin{proof}
We argue by contradiction. Suppose the theorem is false. Then there exist fixed parameters $\epsilon > 0, \delta > 0, \zeta > 0$, and a sequence of  manifolds $(M_i, g_i)$ satisfying the volume uniform non-collapsing  and  curvature bound:
\begin{equation}
    \int_{B_4(q_i)} |\Ric_{g_i}|^p \le \eta_i \longrightarrow 0,
\end{equation}
such that for the associated volume pinched sets $\mathcal{E}_{\eta_i} \subset B_4(q_i)$, we have:
\begin{equation} \label{eq:vol_contradiction}
    \Vol(B_\delta(\mathcal{E}_{\eta_i})) > \epsilon \delta^k,
\end{equation}
but there does \textit{not} exist any point $q' \in B_4(q_i)$ such that $B_{\zeta^{-1}}(q')$ is $(n-k, \zeta^2)$-symmetric.

Because $p > n/2$ and the $L^p$ norm of the Ricci curvature is uniformly bounded, we can apply Theorem \ref{precompactness}. Passing to a subsequence, the manifolds $(M_i, g_i,q_i,\Vol_{g_i})$ converge in the measured Gromov-Hausdorff sense to a non-collapsed limit metric measure space $(X, d,q_\infty, \mu)$.   By volume convergence  under $L^p$ bounds (see \cite{TZ}), the measures converge to the Hausdorff measure $\mathcal{H}^n$, and the condition \eqref{eq:vol_contradiction} passes to the limit space, yielding:
\begin{equation} \label{eq:limit_content}
    \mathcal{H}^n(B_\delta(\mathcal{E}_\infty)) \ge \epsilon \delta^k,
\end{equation} where we pass the sets $\mathcal{E}_{\eta_i}$ to a limit subset $\mathcal{E}_\infty \subset B_4(q_\infty) \subset X$.

Now, for any point $y \in \mathcal{E}_\infty$. By the definition of $\mathcal{E}_{\eta_i}$, the  volume ratio satisfies $\mathcal{V}_{\eta_i}(y_i) < \inf \mathcal{V}_{\eta_i^{-1}} + \eta_i$. Since $\eta_i \to 0$, taking the limit implies that the volume ratio $\mathcal{V}_r(y)$ on the space $X$ is  constant for all radii $0 < r < \infty$. 

By Theorem \ref{volume cone}, the constancy of the volume ratio at $y$ forces the limit space $X$ to be a metric cone with vertex $y$. Because this holds for every $y \in \mathcal{E}_\infty$, every point in $\mathcal{E}_\infty$ is a vertex of the cone $X$. 
Thanks to the bound \eqref{eq:limit_content}, the Hausdorff dimension of the vertex set $\mathcal{E}_\infty$ must be at least $n-k$. 

By the classical cone splitting principle, if the vertex set of a metric cone has dimension at least $n-k$, the cone must isometrically split off an $(n-k)$-dimensional Euclidean factor. Therefore:
\begin{equation}
    X \cong \mathbb{R}^{n-k} \times C(Z),
\end{equation}
for some metric space $Z$.

By the definition of pointed Gromov-Hausdorff convergence, this implies that for $i$ sufficiently large, there exists a point $q'_i \in B_4(q_i)$ corresponding to the origin of $\mathbb{R}^{n-k} \times C(Z)$ such that $B_{\zeta^{-1}}(q'_i)$ is strictly $(n-k, \zeta^2)$-symmetric.  This   contradicts our initial assumption that no such point $q_i'$ exists. Thus, the theorem holds.
\end{proof}

 \begin{theorem} 
Let \((M^n, g)\) satisfy $(A1),(A2)$ and $(A3)$ with \(\delta, \epsilon > 0\).  
Then for \(\eta \leq \eta(n, \rv, \delta, \epsilon)\) with \(\bar{V} \equiv \inf_{y\in B_4(q)} V_{\eta-1}(y)\) and \(\mathcal{E}_\eta(q, 1) \equiv \{y \in B_4(q) : V_\eta(y) < \bar{V} + \eta\}\), if we have $\int_{B_{\delta^{-1}}(q)}|\Ric|^{p} < \eta,$ for some $p>n/2$ and $\Vol(B_\delta \mathcal{E}_\eta) > \epsilon \delta^4,$ 
then either  
\begin{enumerate}
\item[$(1)$] we have that \(r_h(q) > \delta^{-1}\), or  
\item[$(2)$]  there exists \(q' \in B_4(q)\) such that \(B_{\delta^{-1}}(q')\) is pointed \(\delta\)-Gromov Hausdorff close to \(B_{\delta^{-1}}(0^{n-4}, y_c) \subseteq \mathbb{R}^{n-4} \times C(S^3/\Gamma)\), where \(\Gamma \subseteq O(4)\) is nontrivial and acts freely away from the origin with \(y_c\) the cone point.
\end{enumerate}
\end{theorem}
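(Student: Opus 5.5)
The plan is to combine the content cone-splitting theorem (Theorem \ref{Content cone-splitting}) with $k=4$ and the codimension-four / strong $\vepsilon$-regularity assumption (A3), arguing by contradiction through a limiting argument. First I fix an auxiliary $\zeta \ll \delta$, to be chosen depending on $n,\rv,\delta$. With $\int_{B_{\delta^{-1}}(q)}|\Ric|^p<\eta$ and $\Vol(B_\delta\mathcal{E}_\eta)>\epsilon\delta^4$, Theorem \ref{Content cone-splitting} gives, for $\eta\le\eta(n,\rv,\delta,\epsilon,\zeta)$, a point $q'\in B_4(q)$ such that $B_{\zeta^{-1}}(q')$ is $(n-4,\zeta^2)$-symmetric. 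That is, it is $\zeta^2\zeta^{-1}$-GH close to a ball in $\mathbb{R}^{n-4}\times C(Z)$ with cone point at $q'$.

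Next I identify the cross section $C(Z)$. Suppose for a sequence $\zeta_i\to0$ (and $\eta_i\to 0$) neither alternative holds. Pass to a limit using Theorem \ref{precompactness} and volume convergence. The limit at $q'_\infty$ is a non-collapsed space $\mathbb{R}^{n-4}\times C(Z)$ where $C(Z)$ is a $4$-dimensional non-collapsed cone, and the ambient space is Ricci-flat in the $L^p$ sense, since $\int|\Ric|^p\to 0$ on ever larger balls. By (A3), any point that is GH-close at some scale to the vertex of $\mathbb{R}^{n-3}\times C(Z^2)$ is in fact almost Euclidean with curvature bounds. So the limit has no singular set of codimension $\le 3$, and away from $\mathbb{R}^{n-4}\times\{o\}$ it is a smooth Ricci-flat manifold (by (A3) and Theorem \ref{harmonic radius}, the harmonic radius is bounded below at small scales, giving $C^{1,\alpha}$/smooth convergence there). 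Hence $C(Z)\setminus\{o\}$ is a flat $4$-dimensional cone, since it is a Ricci-flat cone of dimension $4$ and thus has a $3$-dimensional Einstein, hence constant-curvature, link. So $Z=S^3/\Gamma$ with $\Gamma\subset O(4)$ acting freely.

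I then split on $\Gamma$. If $\Gamma$ is nontrivial, then for $i$ large $B_{\delta^{-1}}(q'_i)$ is $\delta$-GH close to $B_{\delta^{-1}}(0^{n-4},y_c)$, which is alternative (2) and a contradiction. (The radius $\zeta^{-1}\gg\delta^{-1}$ provides room to shrink to $\delta^{-1}$.) If $\Gamma$ is trivial, the limit is $\mathbb{R}^n$ on $B_{\zeta_i^{-1}}$, which contains $q$ since $d(q,q')\le 4$. Then $B_{2\delta^{-1}}(q)$ has almost maximal volume ratio, and Theorem \ref{harmonic radius} (applied after rescaling, using that $\int|\Ric|^p\to0$ makes $r_0$ scale up) gives $r_h(q)>\delta^{-1}$, which is alternative (1) and again a contradiction. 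The constants are then chosen as $\zeta=\zeta(n,\rv,\delta)$ from the contradiction argument, followed by $\eta$ from Theorem \ref{Content cone-splitting}.

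The main obstacle is the middle step: showing rigorously that the $4$-dimensional cone factor is exactly $C(S^3/\Gamma)$. This requires smooth convergence away from $\mathbb{R}^{n-4}\times\{o\}$ under only $L^p$ Ricci bounds. To get it, I would use (A3) to get uniform $|\Rm|$ and $|\nabla \Rm|$ bounds at points of the approximating manifolds that are almost $(n-3)$-symmetric, then pass Ricci-flatness to the limit via $\int|\Ric|^p\to0$. A secondary technical point is that $\Gamma$ should act freely and be a subgroup of $O(4)$, which follows from the smooth flat link.
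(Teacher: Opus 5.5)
Your proposal is correct and follows essentially the same route as the paper: content cone-splitting with $k=4$ to produce $q'$, then a compactness/contradiction argument in which (A3) rules out singularities of codimension $\le 3$ and the vanishing of $\Ric$ forces the $4$-dimensional cone factor to be $C(S^3/\Gamma)$, and finally the dichotomy on $\Gamma$, with $\epsilon$-regularity for the harmonic radius handling the trivial case. One minor inaccuracy: the hypothesis gives $L^p$-smallness of $\Ric$ only on the fixed ball $B_{\delta^{-1}}(q)$, not "on ever larger balls". This is harmless, because Ricci-flatness of the regular part near the vertex already determines the link $Z$ by self-similarity of the cone, and (A3) itself also forces $|\Ric|\to 0$ there.
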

\begin{proof}
By Theorem \ref{Content cone-splitting} (with $k=4$),  if \(\eta\le \eta(n, \rv, \delta, \epsilon)\), the condition $\Vol(B_\delta \mathcal{E}_\eta) > \epsilon \delta^4$ guarantees that there exists $q' \in B_4(q)$ such that $B_{4\delta^{-1}}(q')$ is  $\delta$-Gromov Hausdorff close to $\mathbb{R}^{n-4}\times C(Z)$.
 Let us begin with the following:

\noindent\textbf{Claim:} For $\eta \leq \eta (n,\mathrm{v},\delta)$ sufficiently small, we have that $Z = S^{3}/\Gamma$ is a smooth three-manifold.

Indeed, assume this is not the case. Then by applying Theorem \ref{Content cone-splitting} we have a sequence $(M_{j}^{n},g_{j},q_{j})$ with $\int_{B_{\delta^{-1}}(q_j)}|\Ric|^{p} < \eta_j\to 0$ such that $B_{\delta^{-1}}(q_{j})$ is $\delta_{j}$-GH close to $\mathbb{R}^{n-4}\times C(Z_{j})$ but we do not have that $B_{4\delta^{-1}}(q_{j})$ is $\delta$-GH close to $\mathbb{R}^{n-4}\times C(S^{3}/\Gamma)$. By precompactness theorem under $L^p$ Ricci curvature bound (Theorem \ref{precompactness}), after passing to a subsequence we have that $M_{j}\to X\equiv \mathbb{R}^{n-4}\times C(Z_{\infty})$. Let us first see that $Z_{\infty}$ is a smooth manifold. Indeed, if $Z_{\infty}$ were not smooth, then we see that the singular set of $X$ has codimension at most three. However, by assumption (A3) the singular set of $X$ is at most of codimension four, which tells us that $Z_{\infty}$ is indeed smooth. Additionally, since $\eta_{j}\to 0$, we have that $X$ is Ricci flat on its regular part. It is standard for a smooth manifold $Z_{\infty}$ that $C(Z_{\infty})$ is Ricci flat if and only if $Z_{\infty}$ has constant sectional curvature $1$ which, in particular, implies that $Z_{\infty} = S^{3}/\Gamma$. This means we have that $B_{4\delta^{-1}}(q_j)$ is $\delta$-GH close to $\mathbb{R}^{n-4}\times C(S^{3}/\Gamma)$ for $j$ large enough, which is a contradiction and so proves the claim. $\square$

Finally, to finish the proof we now distinguish between the cases when $\Gamma$ is trivial or nontrivial. If $\Gamma$ is trivial, then $B_{4\delta^{-1}}(q)$ is $\delta$-GH close to $\mathbb{R}^{n}$, which by Theorem \ref{harmonic radius} implies $r_h(q) > 2\delta^{-1}$, and so $r_h(p) > \delta^{-1}$ as in case (1). On the other hand if $\Gamma$ is nontrivial, then this is exactly case (2), which finishes the proof.
\end{proof}
In order to construct neck regions with desired properties, we first construct  weak neck regions. For a detailed introduction to weak neck regions, we refer to \cite{JN}.
\begin{definition}[Weak neck region]\label{weak neck region}
We call \(\tilde{\cN} \subseteq B_2(q)\) a weak \((\delta, \tau)\)-neck region if there exists a closed subset \(\tilde{\cC} = \tilde{\cC}_0 \cup \tilde{\cC}_+ = \tilde{\cC}_0 \cup \{x_i\} \subset B_2(q)\) with \(q \in \tilde{\cC}\) and a radius function \(r : \tilde{\cC} \to \mathbb{R}^+\) with \(r_x > 0\) on \(\tilde{\cC}_+\) and \(r_x = 0\) on \(\tilde{\cC}_0\) such that \(\tilde{\cN} \equiv B_2 \setminus \overline{B}_{r_x}(\mathcal{C}) \) satisfies

\begin{itemize}
\item[\((\tilde{n}1)\)] \(\{B_{\tau^2 r_x}(x)\}\) are pairwise disjoint;
\item[\((\tilde{n}2)\)] for each \(r_x \leq r \leq 1\), there exists a \(\delta r\)-GH map \(\iota_{x,r} : B_{\delta^{-1}r}(0^{n-4}, y_c) \subseteq \mathbb{R}^{n-4} \times C(S^3/\Gamma) \to B_{\delta^{-1}r}(x)\), where \(\Gamma \subseteq O(4)\) is nontrivial;
\item[\((\tilde{n}3)\)] for each \(r_x \leq r\) with \(B_{2r}(x) \subseteq B_2(q)\), we have that \(\mathcal{L}_{x,r} \equiv \iota_{x,r}(B_r(0^{n-4}) \times \{y_c\}) \subseteq B_{\tau(r+r_y)}(\mathcal{C})\) and \(\mathcal{C} \cap B_r(x) \subseteq B_{\delta r}(\mathcal{L}_{x,r})\), where \(B_{\tau(r+r_y)}(\mathcal{C}) = \bigcup_{y \in \mathcal{C}} B_{\tau(r+r_y)}(y)\).
\end{itemize}
\end{definition}
\begin{remark}Since we only consider smooth manifolds, the set $\tilde{\cC}_0=\varnothing$ .  
\end{remark}
\begin{proposition}[Weak $c$-ball covering]\label{weak c-ball}
 Let \((M^n, g)\) satisfy $(A1),(A2),(A3)$ with \(\delta > 0\) and \(0 < \epsilon, \tau < \tau(n)\). Then for \(\eta \leq \eta(n, \rv, \delta, \epsilon, \tau)\), let us assume the following holds:
\begin{enumerate}
\item[$(1)$] $\int_{B_{\delta^{-1}}(q)}|\Ric|^{p} < \eta$;
\item[$(2)$] if \(\bar{V} \equiv \inf_{y\in B_4(q)} \mathcal{V}_{\eta^{-1}}(y)\) and \(\mathcal{E}_\eta \equiv \{x \in B_4(q) : \mathcal{V}_\eta(x) < \bar{V} + \eta\}\), then \(\operatorname{Vol}(B_\delta \mathcal{E}_\eta) > \epsilon   \delta^4\).
\item[$(3)$] \(r_h(q) < 2\).
\end{enumerate}
Then we can write \(B_1(q) \subseteq  \tilde{\cN} \cup \bigcup_{\tilde d} B_{r_{\tilde d}}(x_{\tilde d})\), where
\begin{itemize}
\item[(a)] \(\tilde{\mathcal{N}} = B_{11}(\tilde{q} ) \setminus   \bigcup_{\tilde{d}} B_{r_{\tilde{d}}}(x_{\tilde{d}})  \) is a weak \((\delta, \tau)\)-neck region, where \(\tilde q \in B_4(q)\);
\item[(d)] for each \(\tilde{d}\)-ball \(B_{r_{\tilde{d}}}(x_{\tilde{d}})\), if \(\mathcal{E}_{\tilde{d}}\equiv \{x \in B_{4r_{\tilde{d}}}(x_{\tilde{d}}) : \mathcal{V}_{\eta r_{\tilde{d}}}(x) < \bar{V} + \eta\}\), then we have \(\operatorname{Vol}(B_{\delta r_{\tilde{d}}} \mathcal{E}_{\tilde{d}}) \leq \epsilon  \delta^4 r_{\tilde{d}}^2\).
\end{itemize}
\end{proposition}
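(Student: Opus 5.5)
The plan is to follow the proof of the corresponding weak $c$-ball covering in Section 9 of \cite{JN}, replacing each use of $|\Ric|\le\eta$ by the scale-invariant smallness of the $L^p$ Ricci curvature. The key point is that $s^{2p-n}\int_{B_s(y)}|\Ric|^p\le \int_{B_{\delta^{-1}}(q)}|\Ric|^p<\eta$ for $s\le 1$, because $2p-n>0$. So hypothesis $(1)$ stays in force, after rescaling, at every smaller scale inside $B_{\delta^{-1}}(q)$. Consequently every compactness statement used below (Theorem \ref{precompactness}, Theorem \ref{volume cone}, Theorem \ref{cone-splitting}, Theorem \ref{Content cone-splitting}, and the dichotomy theorem just before Definition \ref{weak neck region}) applies uniformly on all balls $B_s(y)$ with $y\in B_4(q)$ and $s\le 1$.

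\textbf{Step 1 (the top scale).} By $(2)$ and the dichotomy theorem preceding Definition \ref{weak neck region}, either $r_h(q)>\delta^{-1}$, which $(3)$ rules out, or there is $\tilde q\in B_4(q)$ with $B_{\delta^{-1}}(\tilde q)$ $\delta$-GH close to $\mathbb{R}^{n-4}\times C(S^3/\Gamma)$ with $\Gamma$ nontrivial. Every $y\in\mathcal{E}_\eta$ has $\cV_r(y)$ pinched between $\bar V$ and $\bar V+\eta$ over $r\in[\eta,\eta^{-1}]$, so by Theorem \ref{volume cone} it is an almost cone vertex. By Theorem \ref{cone-splitting}, and since a cone point of $\mathbb{R}^{n-4}\times C(S^3/\Gamma)$ lies on $\mathbb{R}^{n-4}\times\{y_c\}$, every such $y$ lies $\delta$-close to the singular line $\mathcal{L}_{\tilde q,1}$. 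Moreover, $\bar V$ equals, up to $\Psi(\eta)$, the volume ratio $|\Gamma|^{-1}$ of the model cone.

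\textbf{Step 2 (radius function and centers).} For $y\in B_{\delta}\mathcal{E}_\eta\cap B_{11}(\tilde q)$, I define
\[
r_y:=\inf\Bigl\{0<s\le 1:\ \Vol\bigl(B_{\delta s}\mathcal{E}_\eta(y,s)\bigr)>\epsilon\delta^4 s^n \text{ holds for every } s'\in[s,1]\Bigr\},
\]
where $\mathcal{E}_\eta(y,s)$ uses the same $\bar V$. Thus $B_{r_y}(y)$ is the first ball in the dyadic descent that fails to be a $c$-ball. I then take a maximal subcollection $\{x_i\}$ with $\{B_{\tau^2 r_{x_i}}(x_i)\}$ pairwise disjoint, which gives $(\tilde n1)$, and set $\tilde\cC=\{x_i\}$. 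The $\tilde d$-balls are the balls $B_{r_{x_i}}(x_i)$, and they satisfy condition (d) by the choice of $r_y$. If $r_{x_i}$ lies just below a threshold I perturb it by a factor $2$; this is harmless.

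\textbf{Step 3 (verifying $(\tilde n2)$ and $(\tilde n3)$).} For $r_x\le s\le 1$ the ball $B_s(x)$ is a $c$-ball. Rescaling and reapplying the dichotomy theorem gives a $\delta s$-GH map from the model onto $B_{\delta^{-1}s}(x)$, because $r_h$ is small at $x$ since $x$ is volume pinched with $\bar V<1-c$. The group $\Gamma$ must be nontrivial at every scale, and in fact the same at every scale, since the volume ratio stays within $\eta$ of $\bar V\approx|\Gamma|^{-1}$. For $(\tilde n3)$, the inclusion $\tilde\cC\cap B_s(x)\subset B_{\delta s}(\mathcal{L}_{x,s})$ follows from the Step 1 argument at scale $s$. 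For the reverse inclusion $\mathcal{L}_{x,s}\subset B_{\tau(s+r_y)}(\tilde\cC)$ I argue by contradiction. Suppose a point of $\mathcal{L}_{x,s}$ is $\tau s$-far from pinched points. Since each pinched point lies within $\delta s$ of the line, the pinched set is then $\tau s$-far from a ball of radius $\tau s/2$ about that point of the line. This contradicts the content lower bound $\Vol(B_{\delta s}\mathcal{E})>\epsilon\delta^4 s^n$ applied at the smaller scale $\tau s$, via Theorem \ref{Content cone-splitting} with $k=4$, which forces $(n-4)$-dimensional content of pinched points along the whole line in every $\tau s$-ball. The points not covered by the union of $\tilde\cN$ and the $\tilde d$-balls then have been handled, because every point of $B_1(q)$ is either in $\tilde\cN$ or in some $\overline B_{r_x}(x)$ by construction.

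\textbf{Main obstacle.} I expect the hardest step to be the coherent choice of $\eta$: the contradiction/compactness arguments must work uniformly over all of the roughly $|\log r_x|$ scales simultaneously. This requires the volume-pinching window $[\eta s,\eta^{-1}s]$ to persist at all scales with the fixed $\bar V$, which holds by Lemma \ref{summability of volume drop} together with the definition of $\mathcal{E}_\eta$. It also requires a uniform sub-scale smallness of $\int|\Ric|^p$, which follows from $2p>n$. I would first fix $\tau$, then $\delta$, then choose $\eta$ small so that each of the finitely many compactness lemmas is applied at precision $\tau^2\delta$.
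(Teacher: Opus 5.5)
\textbf{There is a genuine gap in Steps 2--3.} You draw the centers from $B_\delta\cE_\eta$. A weak neck, however, needs centers that lie on the effective singular line and cover it, at every scale. Your construction gives neither.

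\emph{Covering.} The covering half of $(\tilde n3)$, $\mathcal{L}_{x,r}\subset B_{\tau(r+r_y)}(\tilde\cC)$, already fails at $x=\tilde q$, $r=1$. Hypothesis (2) only says the pinched set has $(n-4)$-content of order $\epsilon$. So it may occupy only an $\epsilon$-fraction of $\mathcal{L}_{\tilde q,1}$: at most points the line can be smoothed out at scale $\eta$, making $\cV_\eta\approx 1$. Then most of the line has no center within distance $\tau$. Your justification runs Theorem \ref{Content cone-splitting} backwards. That theorem turns pinched content into symmetry, but symmetry forces no pinching, so nothing places pinched points in every $\tau s$-ball along the line.

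\emph{Pointedness.} For $(\tilde n2)$, the dichotomy theorem only produces a cone vertex $q'\in B_{4s}(x)$. It does not give a map with $\iota(0^{n-4},y_c)=x$. Take a center $x$ at distance $2\rho$ from a densely pinched stretch of the line. Then $B_s(x)$ is a $c$-ball for all $s\ge\rho$, so your $r_x\le\rho$. Yet $x$ is at distance $2\rho>\delta r$ from the singular line at every scale $r<2\delta^{-1}\rho$, so no pointed $\delta r$-GH map can send the cone point to $x$ at those scales. For the same reason, $\tilde\cC\cap B_s(x)\subset B_{\delta s}(\mathcal{L}_{x,s})$ cannot come from Step 1. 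Your centers are not pinched at scale $s$; they are only $\delta$-close to points pinched at the top scale.

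\textbf{What is missing} is the paper's scale-by-scale construction, which places centers on the approximate singular set itself.
\begin{enumerate}
\item Fix the scale ratio $s=10^{-10n}$ and work with precision $\delta'\ll\delta^2 s$.
\item At each stage, let $\mathcal{L}^i$ be the union of the approximate singular sets of the current $c$-balls. Remove neighborhoods of the existing centers, and Vitali-cover the rest of $\mathcal{L}^i$ by balls of radius $s^{i+1}$ centered on $\mathcal{L}^i$.
\item Classify each new ball as $c$ or $\tilde d$; a $\tilde d$-ball may contain no pinched points at all. Stop at the $\tilde d$-balls and recurse into the $c$-balls.
\end{enumerate}
The line is then covered by construction. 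Pointedness holds because each new center lies on $\mathcal{L}^i$, which by cone-splitting and (A3) is within $\delta's^i\ll\delta^2s^{i+1}$ of the new line. Chaining $\mathcal{L}^k_c\subset B_{\delta's^{k+2}}\mathcal{L}^{k-1}_c$ across scales gives $(\tilde n2)$ at intermediate scales. Finally, the limiting center set $\tilde\cC_0$ is empty because $M$ is smooth.
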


\begin{proof}
Let us consider \(\delta'\leq \delta'(n,\rv ,\tau ,\delta)\ll \delta\) to be fixed later. Denote \(s = 10^{-10n}\). To prove the result we will build on \(B_{2}(q)\) a sequence of weak \((\delta ,\tau)\)-necks \(\tilde{\cN}^{i}\), where each is a refinement of the last, with the eventual goal of arriving at a weak \((\delta ,\tau)\)-neck that cannot be extended any further. Before discussing the inductive procedure for this construction, we need to build the base neck region \(\tilde{\cN}^{1}\). Precisely, by conditions (2) and (3) we can, for \(\eta \leq \eta (n,\delta',\epsilon ,\tau)\), apply Theorem \ref{Content cone-splitting} to get that \(B_{{\delta'}^{- 1}}(q')\) is \(\delta^{\prime}\)-GH close to \(\mathbb{R}^{n-4}\times C(S^{3} / \Gamma)\) for some \(q'\in B_{4}(q)\). In particular, \(B_{12}(q')\) is \((n-4,\delta')\)-symmetric with respect to some \(\mathcal{L}^{1}\equiv \iota^{1}(\mathbb{R}^{n-4}\times \{y_{0}\})\cap B_{12}(q')\) where \(\iota^{1}:B_{{\delta'}^ { -1}}(0^{n-4},y_{0})\subseteq \mathbb{R}^{n-4}\times C(S^{3} / \Gamma)\to B_{{\delta'}^{- 1}}(q')\) is the Gromov‑Hausdorff map. Notice that \(\mathcal{L}^{1}\) acts as an approximate singular set on the ball.

Now consider a Vitali covering
\[
\mathcal{L}^{1}\subseteq \bigcup B_{\tau^{3/2}s}(x_{f}^{1})\subseteq \bigcup B_{s}(x_{f}^{1}),
\]
where \(x_{f}^{1}\in \mathcal{L}^{1}\) and \(\{B_{\tau^{5/3} s}(x_{f}^{1})\}\) is a maximal disjoint collection. We then define
\[
\tilde{\cN}^{1}\equiv B_{12}(q')\setminus \bigcup B_{s}(x_{f}^{1}),
\]
and it is easy to check that for \(\eta \leq \eta (n,\epsilon ,\tau ,\delta^{\prime})\) this is indeed a weak \((\delta ,\tau)\)-neck. Before moving on to the inductive construction, let us further separate the singular balls into a couple of better groups. To accomplish this, recall that for a ball \(B_{r}(x)\), we can define the \(\eta\)-pinched subset \(\mathcal{E}_{\eta}\) by
\[
\mathcal{E}_{\eta}(x,r)\equiv \{y\in B_{4r}(x):\mathcal{V}_{\eta r}(y)< \overline{V} +\eta \}.
\] 
Thus for each ball \(B_{s}(x_{f}^{1})\) in our covering, if we consider the pinched points \(\mathcal{E}_{\eta}(x_{f}^{1},s)\), then we can consider one of two cases:
\begin{enumerate}
\item[$(c)$] $\Vol\bigl(B_{\delta s}\mathcal{E}_{\eta}(x_{f}^{1},s)\bigr) > \epsilon \delta^{4}s^{n},$ 
\item[$(\tilde{d})$] $\Vol\bigl(B_{\delta s}\mathcal{E}_{\eta}(x_{f}^{1},s)\bigr)\leq \epsilon \delta^{4}s^{n}.$
\end{enumerate}
Of course, in case (c) above we have that \(B_{s}(x_{f}^{1})\) is a \(c\)-ball, while in case $(\tilde{d})$ above, we have that \(B_{s}(x_{f}^{1})\) is a \(\tilde{d}\)-ball. Therefore, let us break our balls into these two subsets and write
\[
\tilde{\cN}^{1}\equiv B_{12}(q')\setminus \Bigl(\bigcup_{c}B_{s}(x_{c}^{1})\cup \bigcup_{\tilde{d}}B_{s}(x_{\tilde{d}}^{1})\Bigr).
\]
Now we move on to the inductive procedure for the construction of the weak \((\delta ,\tau)\)-necks \(\tilde{\cN}^{i}\). Thus, assume we have constructed the weak \((\delta ,\tau)\)-neck \(\tilde{\cN}^{i}\) given by
\[
\tilde{\cN}^{i}\equiv B_{12}(q')\setminus \Bigl(\bigcup_{c}B_{s^{i}}(x_{c}^{i})\cup \bigcup_{j\leq i}\bigcup_{\tilde{d}}B_{s^{j}}(x_{\tilde{d}}^{j})\Bigr),
\]
such that the following additional conditions hold:
\begin{itemize}
\item[(i.1)] Each \(c\)-ball \(B_{s^{i}}(x_{c}^{i})\) satisfies \(\Vol\bigl(B_{\delta  s^{i}}\mathcal{E}_{\eta}(x_{c}^{i},s^{i})\bigr) > \epsilon \delta^{4}(s^{i})^{n}\) and there exists an approximate singular set \(\mathcal{L}_{c}^{i} = \iota_{i,c}(\mathbb{R}^{n-4}\times \{y_{i,c}\} \cap B_{2}(0^{n-4},y_{i,c}))\) where the map \(\iota_{i,c}:B_{\delta'^{- 1} s^{i}}(0^{n-4},y_{i,c})\subset \mathbb{R}^{n-4}\times C(S^{3} / \Gamma_{i,c})\to B_{\delta'^{- 1}s^{i}}(\hat{x}_{c}^{i})\) is a \(\delta^{\prime}s^{i}\)-GH map for some \(\hat{x}_{c}^{i}\in B_{\delta^2s^{i}}(x_{c}^{i})\).
\item[(i.2)] Each \(\tilde{d}\)-ball \(B_{s^{j}}(x_{\tilde{d}}^{j})\) satisfies \(\Vol\bigl(B_{\delta s^{j}}\mathcal{E}_{\eta}(x_{\tilde{d}}^{j},s^{j})\bigr)\leq \epsilon \delta^{4}(s^{j})^{n}\).
\item[(i.3)] Denote \(\tilde{\mathcal{C}}_{i}\coloneqq \{x_{c}^{i}\} \bigcup \{x_{\tilde{d}}^{j},j\leq i\}\) and the associated \(r_{x}=s^{i}\) for \(x=x_{c}^{i}\) and \(r_{x}=s^{j}\) for \(x=x_{\tilde{d}}^{j}\). Then the balls \(\{B_{\tau^{5/3} r_{x}}(x),x\in \tilde{\mathcal{C}}_{i}\}\) are pairwise disjoint.
\item[(i.4)] Every \(c\)-ball has radius \(s^{i}\).
\item[(i.5)] \(\tilde{\mathcal{C}}_{i-1}\subset \tilde{\mathcal{C}}_{i}\), where the center of a \(c\)-ball in \(\tilde{\mathcal{C}}_{i-1}\) is a center of a \(\tilde{d}\)-ball or \(c\)-ball in \(\tilde{\mathcal{C}}_{i}\) with radius \(s^{i}\), and the center of a \(\tilde{d}\)-ball in \(\tilde{\mathcal{C}}_{i-1}\) is still a \(\tilde{d}\)-ball in \(\tilde{\mathcal{C}}_{i}\) with the same radius.
\item[(i.6)] Denote \(\mathcal{L}^{i-1}\equiv \bigcup_{c}\bigl(\mathcal{L}_{c}^{i-1}\cap B_{2s^{i-1}}(x_{c}^{i-1})\bigr)\cap B_{12}(q')\), where \(\mathcal{L}_{c}^{i-1}\) is the approximate singular set corresponding to each \(c\)-ball \(B_{s^{i-1}}(x_{c}^{i-1})\) as in (i.1). Then the centers \(\tilde{\mathcal{C}}_{i}\setminus \tilde{\mathcal{C}}_{i-1}\) with radius \(s^{i}\) are chosen such that
\[
\tilde{\mathcal{C}}_{i}\setminus \tilde{\mathcal{C}}_{i-1}\subset \mathcal{L}^{i-1}\setminus \Bigl(\bigcup_{x_{c}^{i-1}}B_{2\tau^{5/3} s^{i}}(x_{c}^{i-1})\bigcup_{j\leq i-1}B_{2\tau^{5/3} s^{j}}(x_{\tilde{d}}^{j})\Bigr)
\]
and
\[
\mathcal{L}^{i-1}\setminus \Bigl(\bigcup_{x_{c}^{i-1}}B_{2\tau^{5/3} s^{i}}(x_{c}^{i-1})\bigcup_{j\leq i-1}B_{2\tau^{5/3} s^{j}}(x_{\tilde{d}}^{j})\Bigr)\subseteq B_{\tau^{3/2} s^{i}}(\tilde{\mathcal{C}}_{i}\setminus \tilde{\mathcal{C}}_{i-1}).
\]
\item[(i.7)] \(r_{h}(x)\leq 2r_{x}\) for any \(x\in \tilde{\mathcal{C}}_{i}\).
\end{itemize}

We therefore wish to build a weak \((\delta ,\tau)\)-neck \(\tilde{\cN}^{i+1}\) that satisfies \((i+1).1-(i+1).7\). In order to build this weak neck region we must therefore break apart the \(c\)-balls in our covering. First we consider the total approximate singular set on scale \(s^{i}\) given by
\[
\mathcal{L}^{i}\equiv \bigcup_{c}\mathcal{L}_{c}^{i}\cap B_{2s^{i}}(x_{c}^{i})\cap B_{12}(q').
\]

Now consider a collection of balls \(\{B_{s^{i+1}}(x_{f}^{i+1})\}\) such that
\[
\begin{aligned}
&\mathcal{L}^{i}\setminus \Bigl(\bigcup_{x_{c}^{i}}B_{2\tau^{5/3} s^{i+1}}(x_{c}^{i})\bigcup_{j\leq i}B_{2\tau^{5/3} s^{j}}(x_{\tilde{d}}^{j})\Bigr)\subseteq \bigcup_{f}B_{\tau^{3/2} s^{i+1}}(x_{f}^{i+1}),\\ 
&x_{f}^{i+1}\in \mathcal{L}^{i}\setminus \Bigl(\bigcup_{x_{c}^{i}}B_{2\tau^{5/3} s^{i+1}}(x_{c}^{i})\bigcup_{j\leq i}B_{2\tau^{5/3} s^{j}}(x_{\tilde{d}}^{j})\Bigr),\\
&\{B_{\tau^{5/3},s^{i+1}}(x_{f}^{i+1})\} \text{ are disjoint}.
\end{aligned}\]
Now each ball \(\{B_{s^{i+1}}(x_{f}^{i+1}),B_{s^{i+1}}(x_{c}^{i})\}\) is either a \(c\)-ball that satisfies
\[
\mathrm{Vol}\bigl(B_{\delta s^{i+1}}\mathcal{E}_{\eta}(x_{f}^{i+1},s^{i+1})\bigr) > \epsilon \delta^{4}(s^{i+1})^{n}
\]
with \(x_{f}^{i+1}\in B_{\delta^{2} s^{i+1}}\mathcal{L}_{f}^{i+1}\) thanks to quantitative cone-splitting (Theorem \ref{cone-splitting}) and strong \(\vepsilon\)-regularity assumption (A3), or is a \(\tilde{d}\)-ball that satisfies the converse inequality
\[
\Vol\bigl(B_{\delta s^{i+1}}\mathcal{E}_{\eta}(x_{f}^{i+1},s^{i+1})\bigr)\leq \epsilon \delta^{4}(s^{i+1})^{n}.
\]

Thus we can separate the balls into these two types by
\[
\{B_{s^{i+1}}(x_{f}^{i+1}),B_{s^{i+1}}(x_{c}^{i})\} = \{B_{s^{i+1}}(x_{c}^{i+1})\}_{c}\cup \{B_{s^{i+1}}(x_{\tilde d}^{i+1})\}_{\tilde d}.
\]

This allows us to define the region \(\tilde{\cN}^{i+1}\) by
\[
\tilde{\cN}^{i+1}\equiv B_{12}(q')\setminus \Bigl(\bigcup_{c}B_{s^{i+1}}(x_{c}^{i+1})\cup \bigcup_{j\leq i+1}\bigcup_{\tilde{d}}B_{s^{j}}(x_{\tilde{d}}^{j})\Bigr).
\]

It is easy to check from the construction that if \(\eta \leq \eta (\delta',n,\tau ,\rv)\), then the region \(\tilde{\cN}^{i+1}\) satisfies \((i+1).1-(i+1).7\), where \((i+1).7\) follows by the inductive fact that the ball \(B_{s^{i}}(x_{c}^{i})\) is close to a nontrivial cone \(\mathbb{R}^{n-4}\times C(S^{3}/\Gamma)\).

Now check that if \(\delta'\leq \delta'(\delta ,\tau ,n,\rv)\) and \(\eta \leq \eta (\delta',\delta ,n,\tau ,\rv)\), then \(\tilde{\cN}^{i+1}\) is indeed a weak \((\delta ,\tau)\)-neck region as defined in Definition \ref{weak neck region} except for the condition \(q'\in \tilde{\mathcal{C}}\). Condition \((\tilde{n}1)\) follows directly from $(i+1).3.$ To see conditions \((\tilde{n}2)\) and \((\tilde{n}3)\), for any \(x\in \tilde{\mathcal{C}}_{i+1}\) and \(1\geq r\geq r_{x}=s^{j}\), assume \(s^{j_{0}}\leq r< s^{j_{0}-1}\) with \(1\leq j_{0}\leq j\). By the inductive scheme, there exist \(\{x_{c}^{k}\in \tilde{\mathcal{C}}_{k},k=j-1,\ldots ,j_{0}-1\}\) such that
\[
x\in \mathcal{L}_{c}^{j-1}\cap B_{s^{j-1}}(x_{c}^{j-1}),\quad x_{c}^{j-1}\in \mathcal{L}_{c}^{j-2}\cap B_{s^{j-2}}(x_{c}^{j-2}),\ldots ,x_{c}^{j_{0}}\in \mathcal{L}_{c}^{j_{0}-1}\cap B_{s^{j_{0}-1}}(x_{c}^{j_{0}-1}).
\]

By the quantitative cone-splitting (Theorem \ref{cone-splitting}) and strong \(\vepsilon\)-regularity assumption (A3), and condition \(k.7\) for \(k=j-1,\ldots ,j_{0}\), if \(\eta \leq \eta (n,\delta^{\prime},\tau ,\rv)\), then \(\mathcal{L}_{c}^{k}\subset B_{\delta^{\prime}s^{k+2}}\mathcal{L}_{c}^{k-1}\). This implies
 \begin{equation}\label{distance to approximate singuar}
d(x,\mathcal{L}_{c}^{j_{0}-1})\leq \sum_{k=j_{0}}^{j-1}\delta^{\prime}s^{k+2}\leq \delta^{\prime}s^{j_{0}+1}.
\end{equation}

If \(\delta^{\prime}\leq \delta^{\prime}(n,\tau ,\delta ,\rv)\) and \(\eta \leq \eta (\delta^{\prime},\delta ,n,\tau ,\rv)\), the above gives \((\tilde{n}2)\). To check \((\tilde{n}3)\), by the inductive scheme \(k.5\) for \(k=i,\ldots ,j_{0}\), we get
\[
\tilde{\mathcal{C}}_{j_{0}}\subset \tilde{\mathcal{C}}_{j_{0}+1}\subset \dots \subset \tilde{\mathcal{C}}_{i+1}.
\]

By (\ref{distance to approximate singuar}),  the quantitative cone-splitting (Theorem \ref{cone-splitting}) and strong \(\vepsilon\)-regularity assumption (A3), and condition \((j_{0}-1).7\) again, if \(\eta \leq \eta (\delta',\delta ,n,\tau ,\rv)\), we have
\[
\mathcal{L}_{x,r}\subset B_{\delta^{\prime}s^{j_{0}}}\mathcal{L}_{c}^{j_{0}-1},
\]
where \(\mathcal{L}_{x,r}\) is defined in condition \((\tilde{n}3)\). On the other hand, by condition \(j_{0}.6\) we have
\[
\mathcal{L}_{c}^{j_{0}-1}\subset B_{\tau^{3/2}s^{j_{0}}}(\tilde{\mathcal{C}}_{j_{0}})\cup_{x_{\tilde{d}}^{j}\in \tilde{\mathcal{C}}_{j_{0}}}B_{\tau^{3/2}s^{j}}(x_{\tilde{d}}^{j}).
\]

By the definition of \(r_{x}\) for \(x\in \tilde{\mathcal{C}}_{i+1}\), this is equivalent to
\[
\mathcal{L}_{c}^{j_{0}-1}\subset B_{\tau^{3/2}s^{j_{0}}}(\tilde{\mathcal{C}}_{j_{0}})\cup_{y\in \tilde{\mathcal{C}}_{j_{0}}\cap \tilde{\mathcal{C}}_{i+1}}B_{\tau^{3/2}r_{y}}(y).
\]
Combining the above, we get
\[
\mathcal{L}_{x,r}\subset B_{\tau^{4/3}s^{j_{0}}}\tilde{\mathcal{C}}_{i+1}\cup_{y\in \tilde{\mathcal{C}}_{i+1}}B_{\tau^{4/3}r_{y}}(y)\subset B_{\tau^{4/3}(r+r_{y})}(\tilde{\mathcal{C}}_{i+1}),
\]
which proves one of the two covering conditions in \((\tilde{n}3)\). For the other covering condition, we argue by contradiction. Assume there exists \(y\in \tilde{\mathcal{C}}_{i+1}\cap B_{r}(x)\setminus B_{\delta r}(\mathcal{L}_{x,r})\). If \(r_{y}\leq \tau^{-100}r\) then by condition \((\tilde{n}2)\) which we have checked, if \(\eta \leq \eta (n,\rv,\tau,\delta,\delta^{\prime})\) the ball \(B_{r}(y)\) must be close to a metric cone. This contradicts \((i+1).7\) the quantitative cone-splitting (Theorem \ref{cone-splitting}) and strong \(\vepsilon\)-regularity assumption (A3). On the other hand, if \(r_{y}\geq \tau^{-100}r\) then since \(y\in B_{r}(x)\), we have \(B_{r_{x}}(x)\subset B_{r}(x)\subset B_{\tau^{3}r_{y}}(y)\), which contradicts the fact that \(B_{\tau^{2}r_{x}}(x)\cap B_{\tau^{2}r_{y}}(y)=\varnothing\). Thus we have shown \(\tilde{\cN}^{i+1}\) is a weak \((\delta,\tau)\)-neck region.

Now to finish the proof of the proposition, consider the closed discrete sets \(\mathcal{C}_{c}^{i}\equiv \bigcup_{c}\{x_{c}^{i}\}\) and note by construction that \(\mathcal{C}_{c}^{i+1}\subseteq B_{s^{i}}(\mathcal{C}_{c}^{i})\). Since $M$ is smooth, it is clear that the Hausdorff limit
\[
\tilde {\mathcal{C}}_{0} = \lim_{i\to\infty}\mathcal{C}_{c}^{i+1}=\varnothing.
\] 
Taking the weak neck regions \(\tilde{\cN}^{i}\) and letting \(i\to\infty\), we arrive at a region
\[
\tilde{\cN}\equiv B_{12}(q')\setminus   \bigcup_{j}\bigcup_{\tilde d}B_{s^{j}}(x_{\tilde d}^{j}) .
\] 
One can check that \(\tilde{\cN}\) is a weak \((\delta,\tau)\)-neck region except in the condition \(q'\in \tilde{\mathcal{C}}\) by using the same argument as above. Now choose a point \(\tilde q\in B_{4}(q')\cap \tilde{\mathcal{C}}\) and we have completed the proof.
\end{proof}

\begin{proposition}[Srong $c$-ball covering]
 Let \((M^n, g)\) satisfy $(A1),(A2),(A3)$   with \(\delta >0\) and \(0< \epsilon,\tau < \tau(n)\). Then for \(\eta \leq \eta(n,\rv,\delta,\epsilon,\tau)\), assume the following hold:
\begin{enumerate}
    \item[$(1)$]   $\int_{B_{\delta^{-1}}(q)}|\Ric|^{p} < \eta$;
    \item[$(2)$] if \(\overline{V}\equiv \inf_{B_4(p)}\mathcal{V}_{\eta^{-1}}(y)\) and \(\mathcal{E}_\eta\equiv \{x\in B_4(q):\mathcal{V}_\eta(x)< \overline{V}+\eta\}\), then \(\Vol(B_\delta \mathcal{E}_\eta) > \epsilon\cdot\delta^4\);
    \item[$(3)$] \(r_h(q)<2\).
\end{enumerate}
Then we can write
\[
B_1(q)\subseteq \cN\cup \bigcup_{b}B_{r_b}(x_b)\cup \bigcup_{c}B_{r_c}(x_c)\cup \bigcup_{d}B_{r_d}(x_d)\cup \bigcup_{e}B_{r_e}(x_e),
\]
where
\begin{itemize}
    \item[(a)] \(\cN = B_{11}(\tilde q )\setminus\Bigl( \displaystyle\bigcup_{b}B_{r_b}(x_b)\cup\displaystyle\bigcup_{c}B_{r_c}(x_c)\cup\displaystyle\bigcup_{d}B_{r_d}(x_d)\cup\displaystyle\bigcup_{e}B_{r_e}(x_e)\Bigr)\) 
          is a \((\delta,\tau)\)-neck region  with \(\tilde q \in B_4(q)\);
    \item[(b)] for each \(b\)-ball \(B_{r_b}(x_b)\), we have \(r_h(x_b)>2r_b\);
    \item[(c)] for each \(c\)-ball \(B_{r_c}(x_c)\), if \(\mathcal{E}_c\equiv \{x\in B_{4r_c}(x_c):\mathcal{V}_{r_c}(x)<\overline{V}+\eta\}\), then 
          \(\Vol(B_{\delta r_c}\mathcal{E}_c) > \epsilon \delta^4 r_c^n\);
    \item[(d)] for each \(d\)-ball \(B_{r_d}(x_d)\), if \(\mathcal{E}_d\equiv \{x\in B_{4r_d}(x_d):\mathcal{V}_{r_d}(x)<\overline{V}+\eta\}\), then 
          \(0<\Vol(B_{\delta r_d}\mathcal{E}_d)\leq \epsilon\delta^4 r_d^n\);
    \item[(e)] for each \(e\)-ball \(B_{r_e}(x_e)\), if \(\mathcal{E}_e\equiv \{x\in B_{4r_e}(x_e):\mathcal{V}_{\eta r_e}(x)<\overline{V}+\eta\}\), then \(\mathcal{E}_e=\varnothing\).
\end{itemize}
Moreover, we have the content bounds
\[
\sum_{x_b\in B_5(q)} r_b^{n-4}+\sum_{x_d\in B_5(q)} r_d^{n-4}+\sum_{x_e\in B_5(q)} r_e^{n-4} < C(n,\tau),
\qquad
\sum_{x_e\in B_5(q)} r_e^{n-4} < C(\eta,\tau)\,\epsilon.
\]
\end{proposition}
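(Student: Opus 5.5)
The plan is to start from the weak $c$-ball covering (Proposition \ref{weak c-ball}) and upgrade the weak neck region it produces into a genuine $(\delta,\tau)$-neck region, following Section 9 of \cite{JN}. Proposition \ref{weak c-ball} gives $B_1(q)\subseteq \tilde\cN\cup\bigcup_{\tilde d}B_{r_{\tilde d}}(x_{\tilde d})$, with $\tilde\cN=B_{11}(\tilde q)\setminus\bigcup_{\tilde d}\overline B_{r_{\tilde d}}(x_{\tilde d})$ a weak $(\delta,\tau)$-neck region. The weak region can fail to be a neck region in two ways:
\begin{enumerate}
\item[$(\mathrm{i})$] the radius function $x\mapsto r_x$ need not satisfy the Lipschitz condition $(n4)$;
\item[$(\mathrm{ii})$] $(\tilde n3)$ only gives $\mathcal L_{x,r}\subset B_{\tau(r+r_y)}(\cC)$, not $\mathcal L_{x,r}\subset B_{\tau r}(\cC)$.
\end{enumerate}

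\textbf{Step 1: regularize the radius function.} I would replace $r_x$ by the $\delta$-Lipschitz envelope
\[
\bar r_x:=\inf_{y\in\tilde\cC}\bigl(r_y+\delta\, d(x,y)\bigr)\le r_x .
\]
I would then re-cover $\tilde\cC$ by a Vitali subcollection of the balls $B_{\bar r_x}(x)$, using the rule $\tau^2\bar r_x$-disjoint. This gives $(n1)$ and $(n4)$. Since $\bar r_x\le r_x$, the GH approximations $(\tilde n2)$ remain valid on all scales $s\ge\bar r_x$. Condition $(n3)$ then follows from $(\tilde n3)$: wherever $\mathcal L_{x,r}$ passes near a large ball $B_{r_y}(y)$, the Lipschitz envelope forces $\bar r$ to be comparable to $r$, so that ball is replaced by smaller balls along $\mathcal L$. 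This is where the factor $\tau$ versus $\tau^2$ is used.

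\textbf{Step 2: classify the new balls.} Each new ball $B_{\bar r_x}(x)$ is examined in turn.
\begin{enumerate}
\item[$(\mathrm{a})$] If $r_h(x)>2\bar r_x$, it is a $b$-ball.
\item[$(\mathrm{b})$] Otherwise we test the pinched set $\mathcal E_\eta(x,\bar r_x)$. It is a $c$-ball if $\Vol(B_{\delta\bar r_x}\mathcal E)>\epsilon\delta^4\bar r_x^n$. It is a $d$-ball if this volume is positive but at most $\epsilon\delta^4\bar r_x^n$. It is an $e$-ball if $\mathcal E=\varnothing$.
\end{enumerate}
The conditions $(a)$–$(e)$ of the statement are then immediate from the definitions.

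\textbf{Step 3: content bound for $b,d,e$-balls.} To bound $\sum(r_b^{n-4}+r_d^{n-4}+r_e^{n-4})$, I would use Ahlfors regularity of the neck region just constructed, i.e. Theorem \ref{strong L2 neck}(1). This gives $\mu_\cN(B_5)\le C(n)$, and hence $\sum r_x^{n-4}\le C(n,\tau)$ over all centers in $B_5(q)$.

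\textbf{Step 4: smallness for $e$-balls (main obstacle).} The bound $\sum r_e^{n-4}<C(\eta,\tau)\epsilon$ is the step I expect to be hardest. The idea is as follows.
\begin{enumerate}
\item[$(\mathrm{a})$] A center $x_e$ lies in $\cC$, so $x_e$ lies within $\delta r_e$ of the approximating lines $\mathcal L$ at larger scales.
\item[$(\mathrm{b})$] The larger ancestor balls in the weak construction were $c$-balls. Hence pinched points are dense near $\mathcal L$ at scale $s^{j-1}$.
\item[$(\mathrm{c})$] But $\mathcal E_\eta(x_e,r_e)=\varnothing$. So in $B_{4r_e}(x_e)$ the volume ratio at scale $\eta r_e$ has jumped by at least $\eta$ above $\bar V$.
\end{enumerate}
To turn this into a content bound, I would integrate against $\mu_\cN$ and use Lemma \ref{summability of volume drop}. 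The $\mu$-measure of points whose volume ratio is pinched at scale $\eta^{-1}$ but drops by $\eta$ at scale $\eta r_e$ is controlled by $\eta^{-1}$ times the total volume pinching. That total pinching is comparable to the content of $\mathcal E$ not captured by the $c$-balls. The $\tilde d$-condition of Proposition \ref{weak c-ball}, $\Vol(B_{\delta r}\mathcal E)\le\epsilon\delta^4 r^n$, then bounds this by $\epsilon$ times the $\mu$-content via a covering of $B_{\delta r}\mathcal E$. The delicate part is making this counting consistent across all scales. There I would follow \cite{JN}, Proposition 9.?? (strong $c$-ball covering) verbatim, with the pointwise Bishop–Gromov monotonicity replaced by the almost-monotonicity of Theorem \ref{Volume comparison}; the $\Lambda$-dependent error is absorbed by assumption $(1)$.
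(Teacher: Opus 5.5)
\textbf{Step 4 proves the wrong estimate.} The second bound in the printed statement has $r_e$ where $r_c$ is meant. The paper's proof establishes $\sum_{x_c\in B_5}r_c^{n-4}\le C(n,\tau)\epsilon$ and calls this the crucial part. Lemma \ref{induction} needs exactly this, so that repeatedly re-covering the $c$-balls gives a convergent geometric series.

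Smallness of the $e$-ball content is false in general. The $e$-balls are those on which the volume ratio has jumped, and they are handled later by induction on $\bar V$, not by smallness. Inside an old $\tilde d$-ball $B_{\tilde r_{\tilde d}}(x_{\tilde d})$, any new ball far from the old-scale pinched set $\mathcal E_{\eta'}(x_{\tilde d},\tilde r_{\tilde d})$ is automatically an $e$-ball, by almost monotonicity of $\cV$ and $\eta\ll\eta'$. So $e$-balls typically carry most of the content.

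Your mechanism, the $\tilde d$-bound $\Vol(B_{\delta r}\mathcal E)\le\epsilon\delta^4r^n$, controls balls that \emph{contain} pinched points, i.e.\ $c$-balls. $e$-balls contain none, and summability of volume drop only gives $\eta$-dependent counts with no factor of $\epsilon$. What is needed is the following chain:
\begin{enumerate}
\item[(i)] every new center lies in $\bigcup_{\tilde d}B_{10\tau\tilde r_{\tilde d}}(x_{\tilde d})$;
\item[(ii)] a pinched point $y_i$ of a new $c$-ball $B_{r_i}(x_i)$ there has $\cV_{\eta r_i}(y_i)<\bar V+\eta$, so by almost monotonicity $y_i\in\mathcal E_{\eta'}(x_{\tilde d},\tilde r_{\tilde d})$ and $x_i\in B_{\delta\tilde r_{\tilde d}}\mathcal E_{\eta'}$;
\item[(iii)] the $\tilde d$-bound allows at most $C\epsilon\delta^{4-n}$ disjoint balls $B_{\delta\tilde r_{\tilde d}/10}$ centered at such $x_i$, and Ahlfors regularity of $\mu_\cN$ turns this into $\mu(C_{\tilde d})\le C(n,\tau)\epsilon\,\tilde r_{\tilde d}^{n-4}$;
\item[(iv)] summing against $\sum_{\tilde d}\tilde r_{\tilde d}^{n-4}\le C(n,\tau)$ finishes.
\end{enumerate}

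\textbf{Step 1 does not produce a $(\delta,\tau)$-neck region.} The construction missing here is also what gives (i). There are two problems.

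First, $(\tilde n2)$ only provides GH approximations at scales $s\ge r_x$. Shrinking to $\bar r_x<r_x$ requires approximations on $[\bar r_x,r_x)$, and these do not follow from $\bar r_x\le r_x$. This is repairable: your envelope satisfies $\bar r_x\ge\delta\tau^2 r_x$ by $(\tilde n1)$. So if you build the weak neck with $\delta'\ll\delta$, scale invariance of the cone turns a $\delta'$-approximation at the old scale into a $\tilde\delta$-approximation at the new one. This is the Claim in the paper's proof.

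Second, a Vitali re-covering of $\tilde\cC$ only removes centers. Once a large ball $B_{r_y}(y)$ is shrunk, the part of $\mathcal L_{x,r}$ formerly inside it is no longer within $\tau r$ of $\cC$, since $(\tilde n3)$ only gave $\tau(r+r_y)$. So $(n3)$ fails. Your phrase ``replaced by smaller balls along $\mathcal L$'' needs new centers that your construction never introduces.

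The paper introduces them as follows:
\begin{itemize}
\item it defines a radius function on all of $B_{11}$, namely $r_y=\delta^2 d(y,\tilde\cC)$ off the cores and $r_y=\delta^2\tau^3\tilde r_{\tilde d}$ on $\overline B_{\tau^3\tilde r_{\tilde d}}(x_{\tilde d})$;
\item it takes the set $\tilde{\mathcal S}$ of points lying on the effective singular set at their own scale;
\item it Vitali-covers $\tilde{\mathcal S}$.
\end{itemize}
All new centers then lie deep inside old $\tilde d$-balls, which is exactly statement (i) that drives the $c$-ball estimate.
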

\begin{proof}
Let us begin the proof by applying Proposition \ref{weak c-ball} with parameters $ \delta^{\prime}$ and $\tau $, where \(\delta^{\prime}\leq \delta^{\prime}(n,\rv ,\delta ,\tau)\), in order to get a covering
\[
B_{1}(q)\subseteq   \tilde{\mathcal{N}}\cup \bigcup_{\tilde{d}\in \tilde{\mathcal{C}}_{+}}B_{\tilde{r}_{\tilde{d}}}(x_{\tilde{d}}),
\]
where \(\tilde{\cN}\equiv B_{11}(\tilde q)\setminus   \bigcup_{\tilde{d}\in \tilde{\mathcal{C}}_{+}}B_{\tilde{r}_{\tilde{d}}}(x_{\tilde{d}})\) is a weak \((\delta^{\prime},\tau)\)-neck region with \(\tilde q\in B_{4}(q)\), and \(B_{\tilde{r}_{\tilde{d}}}(x_{\tilde{d}})\) are all \(\tilde{d}\)-balls with respect to \(\eta^{\prime}\). We will eventually pick our \(\eta \ll \eta^{\prime}\). In the proof below we will use \(\tilde{r}\) to represent the radius of a ball with respect to \(\tilde{\cN}\).

Since \(\tilde{\cN}\) is a weak neck region, we have, in particular, that the collection of balls \(\{B_{\tau^{2}\tilde{r}_{\tilde{d}}}(x_{\tilde{d}})\}\) are all disjoint. Now define the following radius function:
\[
r_y \equiv 
\begin{cases} 
\delta^2 \tau^3 \tilde{r}_{\tilde d} & \text{if } y \in \overline{B}_{\tau^3 \tilde{r}_{\tilde d}}(x_{\tilde d}), \\ 
\delta^2 d(y, \tilde{\cC}) & \text{if } y \notin \bigcup B_{\tau^3 \tilde{r}_d}(x_{\tilde d}). 
\end{cases}
\]
Note that we have the gradient bound \(|\nabla r_y|\leq \delta^{2}\), and \(r_y\geq \delta^{2} d(y,\tilde{\mathcal{C}})\).

In order to build our new covering let us make several observations. To begin with, for each \(y\in B_{2}(q)\), let \(\tilde{y}\in \tilde{\mathcal{C}}\) denote the center point that minimizes \(d(y,\tilde{\mathcal{C}})\). Recalling from the definition of neck region that \(\mathcal{L}_{x,r} = \iota_{x,r}\bigl(B_{r}(0^{n-4})\times\{y_c\}\bigr)\) is the effective singular set on \(B_{r}(x)\), we define the set for \(\tilde{\delta} = \tilde{\delta}(\delta ,\rv ,n,\tau)< \delta\) to be fixed later so that
\[
\tilde{\mathcal{S}}\equiv \{y\in B_{11}(q):y\in B_{\tilde{\delta}^{2} r_y}\mathcal{L}_{\tilde{y},\max\{2d(y,\tilde{\mathcal{C}}),2\tilde{r}_{\tilde{y}}\}}\},
\]
which is roughly to say that \(\tilde{\mathcal{S}}\) are the set of points that belong to the effective singular set on their own scale \(r_y\). Our first claim, which is immediate once you untangle the constants in the definition of \(\tilde{\mathcal{S}}\), is that points of \(\tilde{\mathcal{S}}\) look very close to \(\mathbb{R}^{n-4}\times C(S^{3}/\Gamma)\) at scales bigger than \(r_y\). Precisely,

\begin{claim}
For any given \(\tilde{\delta} >0\), let \(y\in \tilde{\mathcal{S}}\) with \(\delta^{\prime}\leq \delta^{\prime}(n,\rv ,\tau ,\tilde{\delta})\). Then for all \(r_y< r\leq 2\), we have that \(B_{\tilde{\delta}^{-1}r}(y)\) is pointed \(\tilde{\delta} r\)-GH close to \(B_{\tilde{\delta}^{-1}r}(0^{n-4},y_c)\subseteq \mathbb{R}^{n-4}\times C(S^{3}/\Gamma)\).
\end{claim}

Now consider the covering of \(\tilde{\mathcal{S}}\) given by
\[
\tilde{\mathcal{S}}\subseteq \bigcup_{y\in \tilde{\mathcal{S}}}B_{\tau^{3/2}r_{y}}(y),
\]
and let \(\{B_{\tau^{3/2}r_i}(y_i)\}\) be a maximal Vitali subcovering such that \(\{B_{\tau^{2}r_i}(y_i)\}\) are disjoint and \(\tilde{\mathcal{C}}\subset \mathcal{C}\coloneqq   \{y_i\}\). If we define
\[
\cN\equiv B_{11}(\tilde q)\setminus   \bigcup_{i}\overline{B_{r_i}}(y_i),
\]
then it follows from almost the same argument as Proposition \ref{weak c-ball} that \(\cN\) is a \((\delta,\tau)\)-neck region if \(\tilde{\delta}\leq \tilde{\delta}(n,\rv ,\tau ,\delta)\) and \(\delta^{\prime}\leq \delta^{\prime}(n,\rv ,\tau ,\delta)\). Actually, noting the above claim, let us validate the only nontrivial condition (n3). For any \(x\in \mathcal{C}\) and \(1\geq r\geq r_x\) with \(B_{2r}(x)\subset B_{11}(\tilde q)\), we will show \(\mathcal{L}_{x,r}\cap B_r(x)\subset B_{\tau r}(\mathcal{C})\) and \(\mathcal{C}\cap B_r(x)\subset B_{\delta r}(\mathcal{L}_{x,r})\), where \(\mathcal{L}_{x,r}\) is the approximate singular set corresponding to the \((n-4,\tilde{\delta})\)-symmetric ball \(B_r(x)\) from the above claim.

The proof of \(\mathcal{L}_{x,r}\cap B_r(x)\subset B_{\tau r}(\mathcal{C})\) uses a contradiction argument and the fact that \(B_{\tau^{3}r}(y)\cap\tilde{\mathcal{S}}\neq\varnothing\) for any \(y\in\mathcal{L}_{x,r}\cap B_r(x)\) when \(\delta'\) and \(\tilde{\delta}\) are small enough. The other covering \(\mathcal{C}\cap B_r(x)\subset B_{\delta r}(\mathcal{L}_{x,r})\) follows directly from the quantitative cone-splitting (Theorem \ref{cone-splitting}) and strong \(\vepsilon\)-regularity assumption (A3) if \(\tilde{\delta}\leq \tilde{\delta}(n,\rv,\tau,\delta)\) and \(\delta'\leq\delta'(n,\tau,\delta,\rv,\tilde{\delta})\).

By Ahlfors Regularity (Theorem \ref{strong L2 neck}) we now get the estimate
\[
\cH^{n-4}( B_5(\tilde q))+\sum_{x_i\in B_5(\tilde q)\cap\mathcal{C}}r_i^{n-4}\leq C(n,\tau).
\]

Now certainly each ball \(B_{r_i}(x_i)\) is classified into a $(b) \sim (e)$ ball, and therefore we can write
\[
\{B_{r_i}(x_i)\}= \{B_{r_b}(x_b)\}_b\cup \{B_{r_c}(x_c)\}_c\cup \{B_{r_d}(x_d)\}_d\cup \{B_{r_e}(x_e)\}_e.
\]

The crucial aspect of the proof that remains is to show that we have the estimate
\begin{equation}\label{volume estimate 2}
\sum_{x_c\in B_5(\tilde q)\cap\mathcal{C}}r_c^{n-4}\leq C(n,\tau)\epsilon,
\end{equation}
which is to say that the \(c\)-balls have small content. In order to see this, begin by observing that by the definition of \(r_y\) and since \(\tilde{\cN}\) is a weak \((\delta',\tau)\)-neck, we have the covering
\begin{equation}\label{covering}
\tilde{\mathcal{S}}\subseteq  \bigcup_{x_{\tilde d}\in\tilde{\mathcal{C}}}B_{10\tau\tilde{r}_{\tilde d}}(x_{\tilde d}).
\end{equation}
To see $(\ref{covering})$, for any \(y\in\tilde{\mathcal{S}}\) but not in \( \bigcup_{\tilde d} B_{9\tau\tilde{r}_{\tilde d}}(x_{\tilde d})\), assume \(\tilde{y}\in\tilde{\mathcal{C}}\) with \(d(y,\tilde{\mathcal{C}})=d(y,\tilde{y})\). Then \(r_y=\delta^2 d(y,\tilde{\mathcal{C}})\geq 9\delta^2\tau\tilde{r}_{\tilde d}\). By the property of a weak \((\delta',\tau)\)-neck region we get
\[
\mathcal{L}_{\tilde{y},\max\{2d(y,\tilde{\mathcal{C}}),2\tilde{r}_{\tilde{y}}\}}\subset \bigcup_{x_d\in\tilde{\mathcal{C}}}B_{4\tau(d(y,\tilde{\mathcal{C}})+\tilde{r}_{\tilde d})}(x_{\tilde d}).
\]
Thus \(y\in \bigcup_{x_{\tilde d}\in\tilde{\mathcal{C}}}B_{4\tau(d(y,\tilde{\mathcal{C}})+\tilde{r}_{\tilde d})+\delta^2 r_y}(x_{\tilde d})\). A short computation shows that necessarily \(d(y,x_{\tilde d})<10\tau\tilde{r}_{\tilde d}\) for some \(x_{\tilde d}\in\tilde{\mathcal{C}}\), proving  $(\ref{covering})$.

Using Theorem \ref{strong L2 neck} we have that   the packing measure $\mu$ associated to the neck region \(\cN\)  is a doubling measure. In particular, since \(\{B_{\tau^2\tilde{r}_{\tilde d}}(x_{\tilde d})\}\) are disjoint and \(\tilde{\mathcal{C}}\subset\mathcal{C}\) with \(r_{x_{\tilde d}}\leq\delta^2\tau^3\tilde{r}_{\tilde d}\), we obtain
\[
\sum_{x_{\tilde d}\in B_9(\tilde q)}\tilde{r}_{\tilde d}^{\,n-4}\leq C(n,\tau)\mu(B_{10}(\tilde q))\leq C(n,\tau). 
\]

Now for each ball \(B_{\tilde{r}_{\tilde d}}(x_{\tilde d})\), consider the set
\[
C_{\tilde d}\equiv \{x_i\in\mathcal{C}\cap B_{10\tau\tilde{r}_{\tilde d}}(x_{\tilde d}): B_{r_i}(x_i)\text{ is a }c\text{-ball}\}.
\]

\begin{claim}
We have the estimate \(\mu(C_{\tilde d})\leq C(n,\tau)\,\epsilon\,\tilde{r}_{\tilde d}^{\,n-4}\).
\end{claim}

To prove the claim, note that if \(x_i\in C_{\tilde d}\) then \(r_i<\delta^2\tilde{r}_{\tilde d}\) and thus \(B_{4r_i}(x_i)\subseteq B_{\tilde{r}_{\tilde d}}(x_{\tilde d})\). Moreover, there exists \(y_i\in B_{4r_i}(x_i)\) such that \(\mathcal{V}_{\eta r_i}(y_i)<\bar{V}+\eta<\bar{V}+\eta'\). On the other hand, since $\eta r_i\ll\eta'\tilde{r}_{\tilde{d}}\ll 1$, by smallness of volume pinch (Theorem \ref{summability of volume drop}), we conclude \(y_i\in\mathcal{E}_{{\tilde r}_{\tilde d},\eta' }:= \left\{ y \in B_{4\tilde r_{\tilde d}}(x_{\tilde d}) : \mathcal{V}_{\eta' \tilde r_{\tilde d}}(y) < \bar{V} + \eta' \right\}. \)  Therefore \(B_{10^{-1}\delta\tilde{r}_{\tilde d}}(x_i)\subset B_{\delta\tilde{r}_{\tilde d}}\mathcal{E}_{\tilde{r}_{\tilde d},\eta'}\). Choose a maximal collection of balls \(\{B_{\delta\tilde{r}_{\tilde d}}(x_i')\}_{1}^{N'}\) with \(x_i'\in C_{\tilde d}\) such that \(\{B_{10^{-1}\delta\tilde{r}_{\tilde d}}(x_i')\}\) are disjoint. Because \(B_{\tilde{r}_d}(x_{\tilde d})\) is a \(\tilde{d}\)-ball, we have
\[
N'\delta^{n}\tilde{r}_{\tilde d}^{\,n}\leq C(n)\Vol(B_{\delta\tilde{r}_{\tilde d}}\mathcal{E}_{\tilde{r}_{\tilde d},\eta'})\leq C(n)\epsilon\,\delta^{4}\tilde{r}_{\tilde d }^{\,n},
\]
so \(N'\leq C(n)\epsilon\,\delta^{4-n}\). Using the Ahlfors regularity of \(\mu\) (Theorem \ref{strong L2 neck}) we get
\[
\mu(C_{\tilde d})\leq \sum_{i=1}^{N'}\mu(B_{\delta\tilde{r}_{\tilde d}}(x_i'))\leq C(n,\tau)\delta^{n-4}\tilde{r}_{\tilde d}^{\,n-4}N'\leq C(n,\tau)\epsilon\,\tilde{r}_{\tilde d}^{\,n-4},
\]
which proves the claim.

Finally, let
\[
C\equiv \{x_i\in B_5(q):x_i\text{ is a }c\text{-ball}\}.
\]
Then by (\ref{volume estimate 2}) and the claim,
\[
\sum_{x_c\in B_5(\tilde q)}r_c^{n-4}\leq C(n,\tau)\mu(C)\leq C(n)\sum_{\tilde{x}_{\tilde d}\in B_9(\tilde q)}\mu(C_{\tilde d})\leq C(n,\tau)\epsilon\sum_{x_{\tilde d}\in B_9(\tilde q)}\tilde{r}_{\tilde d}^{\,n-4}\leq C(n,\tau)\epsilon,
\]
which finishes the proof of the proposition. \qedhere
\end{proof}

\begin{proposition}[$d$-ball covering]\label{d-ball} 
 Let \((M^n, g)\) satisfy $(A1),(A2),(A3)$ 
with \(\delta, \tau > 0\) and \( \bar{V}= \inf_{y \in B_4(q)} \mathcal{V}_{\eta^{-1}}(y)>0\). Then for  
\(\epsilon \leq \epsilon(n, \rv)\), \(\eta \leq \eta(n, \bar{V}, \delta, \epsilon, \tau)\), let us assume the following holds:  

\begin{enumerate}
\item[$(1)$] $\int_{B_{\delta^{-1}}(q)}|\Ric|^{p} < \eta$;
\item[$(2)$] if \(\mathcal{E}_\eta \equiv \{y \in B_4(q) : \mathcal{V}_\eta(y) < \bar{V} + \eta\}\), then \(0 < \operatorname{Vol}(B_\delta \mathcal{E}_\eta) < \epsilon  \delta^4\).  
\end{enumerate}

Then we can write \(B_1(q) \subset  \tilde{S}\cup\bigcup_b B_{r_b}(x_b) \cup \bigcup_c B_{r_c}(x_c) \cup \bigcup_e B_{r_e}(x_e)\), where  

\begin{itemize} 
\item[(s)] $\cH^{n-4}(\tilde{S})=0;$ 
\item[(b)] for each b-ball \(B_{r_b}(x_b)\), we have \(r_h(x_b) > 2r_b;\)   
\item[(c)] for each c-ball \(B_{r_c}(x_c)\), if \(\mathcal{E}_c \equiv \{y \in B_{4r_c}(x_c) : \mathcal{V}_{\eta r_c}(y) < \bar{V} + \eta\}\), then  
$\Vol(B_{\delta r_c} \mathcal{E}_c) > \epsilon   \delta^4 r_c^n;$
\item[(e)] for each e-ball \(B_{r_e}(x_e)\), if \(\mathcal{E}_e \equiv \{y \in B_{4r_e}(x_e) : \mathcal{V}_{\eta r_e}(y) < \bar{V} + \eta\}\), then  
$\mathcal{E}_e = \varnothing.$
\end{itemize}

Further, we have the content estimates  
\[
\sum_b r_b^{n-4} + \sum_e r_e^{n-4} < C(n, \delta)
\]  
and  
\[
\sum_c r_c^{n-4} \leq C(n, \rv)\epsilon.
\]
\end{proposition}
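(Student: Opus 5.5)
This is the $d$-ball covering of Jiang--Naber (their Proposition on $d$-balls in Section 9). I would prove it by a stopping-time refinement over dyadic-type scales $s^j$, $s=10^{-10n}$, starting from the single ball $B_1(q)$. At each stage every ball $B_{s^j}(x)$ in the current family is classified. If $r_h(x)>2s^j$ it is a $b$-ball. If $\mathcal{E}_\eta(x,s^j)=\varnothing$ it is an $e$-ball. If $\Vol(B_{\delta s^j}\mathcal{E}_\eta(x,s^j))>\epsilon\delta^4 s^{jn}$ it is a $c$-ball. In all three cases the ball is frozen. The remaining balls are $d$-balls, with $0<\Vol(B_{\delta s^j}\mathcal{E})\le\epsilon\delta^4 s^{jn}$, and these are subdivided. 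For a $d$-ball $B_{s^j}(x_d)$ I take a Vitali covering of $B_{s^j}(x_d)\cap B_{\delta s^j}\mathcal{E}_\eta(x_d,s^j)$ by balls of radius $s^{j+1}$. I discard the part of $B_{s^j}(x_d)$ away from the pinched set, which is handled below. The leftover set after infinitely many steps is $\tilde S=\bigcap_j\bigcup_{d\text{ at step }j}B_{s^j}(x_d)$.

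\textbf{Step 1: content decay of $d$-balls.} For a $d$-ball, the number of children of radius $s^{j+1}$ is at most
\[
C(n)\,\Vol(B_{2\delta s^j}\mathcal{E})\,(s^{j+1})^{-n}\le C(n)\epsilon\delta^4 s^{-n},
\]
after adjusting $\delta$ versus $s$; equivalently, I choose the child scale comparable to $\delta$. The $(n-4)$-content of the children is then at most $C(n)\epsilon\delta^{4}(\delta s^{j})^{n}(\delta s^j)^{-4}\cdot$(normalization) $\le C(n)\epsilon\, s^{j(n-4)}$. Here the key point is that $\delta^4$ exactly compensates the codimension-4 scaling. Hence for $\epsilon\le\epsilon(n,\rv)$ the total $(n-4)$-content of the $d$-balls at generation $j$ decays geometrically, like $(C\epsilon)^j$. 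This gives $\cH^{n-4}(\tilde S)=0$, and it gives summability of the frozen children: $\sum_c r_c^{n-4}\le C(n,\rv)\epsilon$, since each $c$-ball is the child of a $d$-ball.

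\textbf{Step 2: points of $B_{s^j}(x_d)$ away from $B_{\delta s^j}\mathcal{E}$.} For $y$ with $d(y,\mathcal{E})\ge\delta s^j$, every point in $B_{\delta s^j}(y)$ has $\mathcal{V}_{\eta s^j}\ge\bar V+\eta$. I would cover this region at scale comparable to $\delta s^j$. Each such ball is either an $e$-ball, if its whole $4r$-ball avoids $\mathcal{E}$, or it has $r_h$ large and is a $b$-ball. The number of these balls is $\le C(n,\delta)$, so their content is $\le C(n,\delta)s^{j(n-4)}$. Summing over the geometrically decaying $d$-content from Step 1 gives $\sum_b r_b^{n-4}+\sum_e r_e^{n-4}<C(n,\delta)$. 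To make the radius bookkeeping in the definition of $\mathcal{E}_e$ consistent at the smaller scale $\eta r_e$, I would use the summability of volume drop (Lemma \ref{summability of volume drop}) together with the smallness $\int_{B_{\delta^{-1}}}|\Ric|^p<\eta$. This guarantees that volume ratios at scales $\eta r$ and $r$ for $r\le 1$ differ by much less than $\eta$ (after shrinking $\eta$), so pinching is essentially scale-monotone.

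\textbf{Main obstacle.} I expect the hardest step to be the consistency of the pinched sets across scales. The condition that defines a $c$- or $d$-ball at scale $s^{j+1}$ refers to $\mathcal{V}_{\eta s^{j+1}}$, while the parent uses $\mathcal{V}_{\eta s^j}$. Under only $L^p$ Ricci bounds, volume ratios are not monotone. I would handle this with Theorem \ref{Volume comparison}, as in the proof of Lemma \ref{summability of volume drop}: the error $\cE(R)\lesssim \eta^{1/(2p)}R^{1-n/(2p)}$ is tiny, which gives $\mathcal{E}_\eta(x,s^{j+1})\subset B_{s^{j+1}}\mathcal{E}_{\eta'}(x_d,s^j)$ up to harmless losses. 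This is the only place where $\eta$ must depend on $\bar V$.
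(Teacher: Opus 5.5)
Your overall architecture is the paper's. You refine each $d$-ball by a Vitali covering whose child radius is $\delta$ times the parent radius. As you concede, a fixed ratio $s\gg\delta$ does not work, because the disjoint cores of the children then need not lie in the $2\delta s^j$-tube of $\mathcal{E}$. You count the children near the pinched set via $\Vol(B_{\delta r}\mathcal{E})\le\epsilon\delta^4 r^n$, get $(C\epsilon)^j$ decay of the $d$-content, bound the $e$-balls by $C(n,\delta)$ per $d$-ball, and take $\tilde S$ as the limit set. Your split into a region near $\mathcal{E}$ and a region away from it is only a reorganization of the paper's intrinsic classification.

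However, Step 2 contains an unfounded step. The alternative ``or it has $r_h$ large and is a $b$-ball'' is justified by nothing in (A1)--(A3) or hypothesis (2). A ball whose $4r$-enlargement meets $\mathcal{E}$ may lie within distance comparable to its radius of a cone point, where $r_h<2r$. The branch is also unnecessary. Either take $r\le\delta s^j/4$, so that $B_{4r}(y)\cap\mathcal{E}=\varnothing$ automatically when $d(y,\mathcal{E})\ge\delta s^j$. Or classify each child by its own pinched set, as the paper does: a non-$e$ child then has a pinched point at its own scale, lies within $4\delta r$ of the parent's $\mathcal{E}$, and is absorbed into the $C\epsilon$ count.

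The more serious problem is your treatment of non-monotonicity. Step 2 needs the implication that a point unpinched at scale $\eta s^j$ is unpinched at scale $\eta r$. You assert that, under $\int_{B_{\delta^{-1}}}|\Ric|^p<\eta$, volume ratios at different scales differ by much less than $\eta$ after shrinking $\eta$. This is not true. The Petersen--Wei relative volume comparison only gives an error of order $\eta^{1/(2p)}R^{1-n/(2p)}$ at scale $R$. At the relevant scale $R\sim\eta$ this is $\eta^{1-(n-1)/(2p)}\gg\eta$. Shrinking $\eta$ cannot help, because the threshold and the error are powers of the same parameter and the error's exponent is below $1$.

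Consequently your inclusion into $\mathcal{E}_{\eta'}$ holds only with $\eta'-\eta\gg\eta$, and hypothesis (2) gives no control of the tube around $\mathcal{E}_{\eta'}$. The loss is not harmless. An away-ball that fails to be an $e$-ball is a $c$- or $d$-ball, and there are $C(n,\delta)$ such balls per parent. That breaks both $\sum_c r_c^{n-4}\le C\epsilon$ and the geometric decay of the $d$-content.

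To close this you would need two things. First, decouple the curvature smallness from the pinching threshold, e.g. $\int|\Ric|^p\ll\eta^{2p}$; this can be arranged upstream by starting the induction at a sufficiently small scale. Second, build some slack into the thresholds, since even a tiny error breaks the exact condition $\mathcal{E}_e=\varnothing$.

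The paper's proof does not resolve this either. Its ``standard covering argument'' tacitly treats pinching as monotone in the scale, exactly as in Jiang--Naber. So on this point your proposal and the paper share the same unaddressed step rather than differing.
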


\begin{proof}
The procedure of the proof will be to iterate a certain covering construction and keep track of the estimates. To illustrate this let us begin by considering a Vitali covering of $B_1(q)$ given by
\[
B_1(q)\subseteq \bigcup_{f}B_{\delta}(x_f^{1}),
\]
with $\{B_{\delta/10}(x_f^{1})\}$ disjoint. Recall that for any ball $B_r(x)$, we can define the $\eta$-pinched subset $\mathcal{E}_{\eta}$ given by
\[
\mathcal{E}_{\eta}(x,r)\equiv \{y\in B_{4r}(x):\cV_{\eta r}(y)< \overline{V} +\eta \}.
\]
In this way we wish to separate the balls $\{B_{\delta}(x_f)\}$ into three types, depending on whether they are $c$-balls, $d$-balls or $e$-balls based on the following conditions:
\[
\mathrm{Vol}\big(B_{\delta^2}\mathcal{E}_{\eta}(x_f^{1},\delta)\big)>\epsilon\delta^{4}\delta^{n};
\]
\[
0< \mathrm{Vol}\big(B_{\delta^2}\mathcal{E}_{\eta}(x_f^{1},\delta)\big)\leq\epsilon\delta^{4}\delta^{n};
\]
\[
\mathcal{E}_{\eta}(x_f^{1},\delta)=\varnothing.
\]
Thus, by breaking up the balls $\{B_{\delta}(x_f)\}$ into these categories we can write our covering as
\[
B_1(q)\subseteq \bigcup_{c=1}^{N_c^{1}}B_{\delta}(x_c^{1})\cup \bigcup_{d=1}^{N_d^{1}}B_{\delta}(x_d^{1})\cup \bigcup_{e=1}^{N_e^{1}}B_{\delta}(x_e^{1}).
\]
Since $B_1(q)$ is itself a $d$-ball, we can use the Vitali condition of the covering and assumption (2) to conclude by a standard covering argument that
\[
\sum_{e=1}^{N_e^{1}}\delta^{n-4}\leq C(n,\delta),\qquad
\sum_{c=1}^{N_c^{1}}\delta^{n-4} + \sum_{d=1}^{N_d^{1}}\delta^{n-4}\leq C(n,\rv)\epsilon.
\]
This does not quite finish the proposition because we still have a collection of $d$-balls in our covering, and therefore we must recover them. Let us remark that the only aspect about $B_1(q)$ used in the above covering was that $B_1(q)$ was a $d$-ball that satisfied condition (2). Thus, for each $d$-ball $B_{\delta}(x_d^{1})$, let us repeat this covering process just introduced for $B_1(q)$. If we do this for every $d$-ball, then we arrive at the covering
\[
\bigcup_{d=1}^{N_d^{1}}B_{\delta}(x_d^{1})\subseteq \bigcup_{c=1}^{N_c^{2}}B_{\delta^{2}}(x_c^{2})\cup \bigcup_{d=1}^{N_d^{2}}B_{\delta^{2}}(x_d^{2})\cup \bigcup_{e=1}^{N_e^{2}}B_{\delta^{2}}(x_e^{2}),
\]
such that we have the estimates
\[
\sum_{d=1}^{N_d^{2}}(\delta^{2})^{n-4}\leq C(n,\rv)\epsilon\sum_{d=1}^{N_d^{1}}\delta^{n-4}\leq \big(C(n,\rv)\epsilon\big)^{2},
\]
\[
\sum_{c=1}^{N_c^{2}}(\delta^{2})^{n-4}\leq C(n,\rv)\epsilon\sum_{d=1}^{N_d^{1}}\delta^{n-4}\leq \big(C(n,\rv)\epsilon\big)^{2},
\]
\[
\sum_{e=1}^{N_e^{2}}(\delta^{2})^{n-4}\leq C(n,\delta)\sum_{d=1}^{N_d^{1}}\delta^{n-4}\leq C(n,\delta)\cdot C(n,\rv)\epsilon.
\]
Combining this with the original covering we obtain a covering of $B_1(q)$ given by
\[
B_1(q)\subseteq \bigcup_{c=1}^{N_c^{1}}B_{\delta}(x_c^{1})\cup \bigcup_{e=1}^{N_e^{1}}B_{\delta}(x_e^{1})\cup \bigcup_{c=1}^{N_c^{2}}B_{\delta^{2}}(x_c^{2})\cup \bigcup_{d=1}^{N_d^{2}}B_{\delta^{2}}(x_d^{2})\cup \bigcup_{e=1}^{N_e^{2}}B_{\delta^{2}}(x_e^{2})
\]
\[
= \bigcup_{d=1}^{N_d^{2}}B_{\delta^{2}}(x_d^{2})\cup \bigcup_{j\leq 2}\bigcup_{c=1}^{N_c^{j}}B_{\delta^{j}}(x_c^{j})\cup \bigcup_{j\leq 2}\bigcup_{e=1}^{N_e^{j}}B_{\delta^{j}}(x_e^{j}),
\]
with
\[
\sum_{d=1}^{N_d^{2}}(\delta^{2})^{n-4}\leq \big(C(n,\rv)\epsilon\big)^{2},\qquad
\sum_{j\leq 2}\sum_{c=1}^{N_c^{j}}(\delta^{j})^{n-4}\leq C(n,\rv)\epsilon + (C(n,\rv)\epsilon)^{2},
\]
\[
\sum_{j\leq 2}\sum_{e=1}^{N_e^{j}}(\delta^{j})^{n-4}\leq C(n,\delta)\big(1 + C(n,\rv)\epsilon\big).
\]
Of course, we may now proceed to cover the $d$-balls $\{B_{\delta^{2}}(x_d^{2})\}$ in the same manner once again. In fact, after repeating this inductive covering $i$ times we see that we arrive at a covering
\[
B_1(q)\subseteq \bigcup_{d=1}^{N_d^{i}}B_{\delta^{i}}(x_d^{i})\cup \bigcup_{j\leq i}\bigcup_{c=1}^{N_c^{j}}B_{\delta^{j}}(x_c^{j})\cup \bigcup_{j\leq i}\bigcup_{e=1}^{N_e^{j}}B_{\delta^{j}}(x_e^{j}),
\]
with the estimates
\[
\sum_{d=1}^{N_d^{i}}(\delta^{i})^{n-4}\leq \big(C(n,\rv)\epsilon\big)^{i},
\]
\[
\sum_{j\leq i}\sum_{c=1}^{N_c^{j}}(\delta^{j})^{n-4}\leq \sum_{1\leq j\leq i}\big(C(n,\rv)\epsilon\big)^{j},
\]
\[
\sum_{j\leq i}\sum_{e=1}^{N_e^{j}}(\delta^{j})^{n-4}\leq C(n,\delta)\sum_{0\leq j\leq i}\big(C(n,\rv)\epsilon\big)^{j}.
\]
To finish the proof consider the closed discrete sets $\tilde{S}^i = \bigcup_{d=1}^{i}\{x_d^i\}$. Note that by construction we have $\tilde{S}^{i+1}\subseteq B_{\delta^{i}}(\tilde{S}^i)$, and we can use the above to conclude the estimate
\[
\mathrm{Vol}(B_{\delta^{i}}(\tilde{S}^i))\leq \big(C(n,\rv)\epsilon\big)^{i} (\delta^{i})^{4}.
\]
Taking the Hausdorff limit we can construct
\[
\tilde{S} = \lim_{i\to\infty}\tilde{S}^{i+1},
\]
and we arrive at the covering
\[
B_1(q)\subseteq \tilde{S}\cup \bigcup_{j}\bigcup_{c}B_{\delta^{j}}(x_c^{j})\cup \bigcup_{j}\bigcup_{c}B_{\delta^{j}}(x_c^{j}).
\]
Using the estimates above and the inclusion $\tilde{S}\subseteq B_{\delta^{i}}(\tilde{S}^i)$, we see that for $\epsilon \leq \epsilon(n,\rv)$ and $0< r\leq 1$, we get
\[
r^{-4}\mathrm{Vol}(B_r(\tilde{S}))\to 0 \quad\text{as } r\to 0,
\]
\[
\sum_j\sum_c(\delta^{j})^{n-4}\leq C(n,\rv)\epsilon,\qquad
\sum_{j\leq i}\sum_{c=1}^{N_c^{j}}(\delta^{j})^{n-4}\leq C(n,\delta),
\]
which finishes the proof of the Proposition.
\end{proof}

Before proceeding to the inductive covering, we must address a fundamental difference between pointwise and integral Ricci curvature bounds. In the case of lower Ricci curvature bound, volume ratio $\mathcal{V}_r(x)$ is monotonic. However, under $L^p$ Ricci bounds, the volume ratio is just almost monotonic, which means it can slightly increase as the scale decreases. However, by Theorem \ref{summability of volume drop}, the summability of volume  pinch at two consecutive scales to ensure the covering algorithm terminates after finite many steps.

\begin{lemma}\label{induction}  
Let \((M^n, g)\) satisfy $(A1),(A2),(A3)$ with \(\bar{V} \equiv \inf_{y \in B_4(q)} \cV_{\eta^{-1}}\). Let us fix \( \delta>0\) and \(0 < \tau \leq \tau(n)\). Then there exists \(\eta(n, \rv, \delta, \tau) > 0\) such that we can write
\[
B_1(q) \subseteq \tilde{S} \cup \bigcup_a (  \cN_a  \cap B_{r_a}(x_a)) \cup \bigcup_b B_{r_b}(x_b) \cup \bigcup_e B_{r_e}(x_e),
\]
where
\begin{itemize}
\item[(a)] \(\cN_a \subseteq B_{2r_a}(x_a)\) is a \((\delta, \tau)\)-neck region;
\item[(b)] for each \(b\)-ball \(B_{r_b}(x_b)\), we have \(r_h(x_b) > 2r_b\);
\item[(c)] for each \(e\)-ball \(B_{r_e}(x_e)\), if \(\mathcal{E}_e \equiv \{y \in B_{4r_e}(x_e) : V_{\eta r_e} < \bar{V} + \eta\}\), then \(\mathcal{E}_e = \varnothing\); 
\item[(s)]  \(\cH^{n-4}(\tilde{S})=0.\)
\end{itemize}
Further, we have the content bound \(\sum_a r_a^{n-4} + \sum_b r_b^{n-4} + \sum_e r_e^{n-4} < C(n, \tau, \delta)\) and radius bound \(r_a, r_b, r_e \leq \eta^2\).
\end{lemma}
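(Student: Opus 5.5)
This follows Jiang--Naber's inductive covering. We alternate between the strong $c$-ball covering and the $d$-ball covering (Proposition \ref{d-ball}) until every remaining ball is of type $a$, $b$ or $e$, or lies in a set of zero $(n-4)$-content. The $e$-balls are not refined further; they are the output balls of type (c) in the statement.

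Before anything, rescale so that all balls have radius at most $\eta^2$. Cover $B_1(q)$ by a Vitali collection $\{B_{\eta^2}(y_j)\}$; its cardinality is bounded by $C(n,\rv,\eta)$ by (A2) and volume comparison. Next use (A1) together with the estimate $\delta(R)\le C\Lambda^{1/p}R^{2-n/p}$ from the proof of Lemma \ref{summability of volume drop}. After rescaling each such ball to unit size, this gives $\int_{B_{\delta^{-1}}}|\Ric|^p<\eta$, so the small-curvature hypothesis (1) of the covering propositions holds at every scale below $\eta^2$ (for $\eta$ small relative to $\Lambda$; I would absorb this into the choice of $R_0$).

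Now run the dichotomy on each ball $B_r(x)$. If $r_h(x)>2r$, it is a $b$-ball. Otherwise:
\begin{itemize}
\item if $\mathcal{E}_\eta(x,r)=\varnothing$, it is an $e$-ball;
\item if $\Vol(B_{\delta r}\mathcal{E}_\eta)>\epsilon\delta^4r^n$, it is a $c$-ball, and the strong $c$-ball covering writes it as a $(\delta,\tau)$-neck region plus $b,c,d,e$-balls, where the $c$-balls have content $\le C(n,\tau)\epsilon r^{n-4}$ and the $b,d,e$-balls have content $\le C(n,\tau)r^{n-4}$;
\item if $0<\Vol(B_{\delta r}\mathcal{E}_\eta)\le\epsilon\delta^4 r^n$, it is a $d$-ball, and Proposition \ref{d-ball} writes it as a set $\tilde S$ with $\cH^{n-4}(\tilde S)=0$ plus $b,c,e$-balls, where the $c$-content is $\le C(n,\rv)\epsilon r^{n-4}$ and the $b,e$-content is $\le C(n,\delta)r^{n-4}$.
\end{itemize}

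Alternate the two steps: apply the $c$-covering to every $c$-ball, then the $d$-covering to each $d$-ball produced, and so on. Each full cycle multiplies the total $c$-content by at most $C(n,\tau,\delta,\rv)\epsilon$. Fix $\epsilon=\epsilon(n,\tau,\delta,\rv)$ so that this factor is at most $1/2$, and then choose $\eta\le\eta(n,\rv,\delta,\epsilon,\tau)$ according to both propositions. The $c$-content then decays geometrically. Summing the geometric series bounds the neck ($a$), $b$ and $e$ contents by $C(n,\tau,\delta)$. The residual set of points lying in $c$-balls at every stage has vanishing $(n-4)$-content, since its $r$-neighbourhoods have content $\lesssim 2^{-i}$. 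This residual set is added to $\tilde S$, and the countable union of the $\tilde S$ from the $d$-steps still has $\cH^{n-4}=0$.

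The main obstacle is the bookkeeping of the reference value $\bar V$, because the volume ratio is only almost monotone under (A1). Both propositions are stated relative to $\bar V=\inf_{B_4(q)}\cV_{\eta^{-1}}$, taken at the top scale. When we recurse inside a smaller ball $B_r(x)$, the pinched sets must still refer to the same $\bar V$, and the quantity $\inf_{B_{4r}(x)}\cV_{\eta^{-1}r}$ must not drop below $\bar V$ by more than about $\eta/10$. Otherwise balls could change type incorrectly. I would control this with Lemma \ref{summability of volume drop} and the estimate
\[
\cV_r\ge \cV_R-\cE(R),\qquad \cE(R)\lesssim R^{1-n/(2p)},
\]
whose sum over dyadic scales below $\eta^2$ is at most $\eta/10$ once $\eta$ is small. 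This is exactly why the radii are forced below $\eta^2$.

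The same estimate justifies the inclusion used in the strong $c$-covering claim, namely that points pinched at scale $\eta r_i$ are also pinched at scale $\eta'\tilde r$. With this uniform comparison the recursion is consistent, and the lemma follows.
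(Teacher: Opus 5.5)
Your proposal is correct and follows essentially the same route as the paper. Both arguments Vitali-cover at scale $\eta^2$, rescale to get small $L^p$ Ricci curvature, and classify the top ball. They then alternate the strong $c$-ball covering with the $d$-ball covering, so that the $c$-content decays geometrically, and absorb the limiting set of $c$-ball centers, together with the $d$-step sets, into $\tilde S$.

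Your explicit tracking of $\bar V$ via Lemma \ref{summability of volume drop} spells out what the paper uses only implicitly. One small slip: $\sum_{R\le\eta^2}\cE(R)\sim\Lambda^{1/(2p)}\eta^{2-n/p}$ is $\le\eta/10$ only when $p>n$. So the starting scale must be taken smaller, depending on $\Lambda$ and $p$ as your ``absorb into $R_0$'' remark suggests, rather than exactly $\eta^2$.
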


\begin{proof}
Let us pick $\eta$ as in Propositions \ref{weak c-ball} and \ref{d-ball}. Note first that we can cover $B_{1}(q)$ by the Vitali covering
\[
B_{1}(q)\subseteq \bigcup_{f}B_{\eta^{2}}(x_{f}),
\]
where $\{B_{\eta^{2} / 10}(x_{f})\}$ are disjoint. Then by a standard covering argument there are at most $C(n,\tau ,\delta)$ balls in this set. Therefore if we focus our covering on each of these balls individually, we may then take the union and not effect the content bound by more than another factor.

Thus, by rescaling one of these balls to scale one and focusing on it, we can assume without loss of generality that $\int_{B_{{\eta}^{-1}}(q)}|\mathrm{Ric}|^p\leq \eta$. Now let us recall that for every ball $B_{r}(x)$, we can define the $\eta$-pinched points $\mathcal{E}_{\eta}(x,r)$ by
\[
\mathcal{E}_{\eta}(x,r)\equiv \{y\in B_{4r}(x):\cV_{\eta r}(y)< \overline{V} +\eta \}.
\]
In our decomposition we want to first distinguish between whether $B_{1}(q)$ is a $b$-ball, $c$-ball, $d$-ball, or $e$-ball. In the cases where it is either a $b$-ball or $e$-ball we are of course done; therefore we can assume $B_{1}(q)$ is either a $c$-ball or $d$-ball. The handling of the two cases is almost verbatim; therefore we will assume $B_{1}(q)$ is a $c$-ball. In this case, we can apply Proposition \ref{weak c-ball} in order to build the covering
\[
B_{1}\subseteq   \mathcal{N}\cup \bigcup_{b}B_{r_{b}}(x_{b})\cup \bigcup_{c}B_{r_{c}}(x_{c})\cup \bigcup_{d}B_{r_{d}}(x_{d})\cup \bigcup_{e}B_{r_{e}}(x_{e}),
\]
where $\mathcal{N}\subseteq B_{11}(\tilde q)$ is a $(\delta ,\tau)$-neck for some $\tilde q\in B_{4}(q)$ and we have the content estimates
\[
\sum_{b}r_{b}^{n-4}+\sum_{d}r_{d}^{n-4}+\sum_{e}r_{e}^{n-4}\leq C(n,\tau),\qquad
\sum_{c}r_{c}^{n-4}\leq C(n,\tau)\epsilon.
\]
What we have gained from the above is that the remaining $c$-balls have small $n-4$-content. We will need to remove both the $c$-balls and $d$-balls in this decomposition before the proof is complete. Let us first deal with the $d$-balls. Indeed, if we apply Proposition \ref{d-ball} to each $d$-ball $B_{r_d}(x_d)$, then we obtain the covering
\[
B_{1}(p)\subseteq \tilde{S}\cup   \mathcal{N}\cup \bigcup_{b}B_{r_{b}}(x_{b})\cup \bigcup_{c}B_{r_{c}}(x_{c})\cup \bigcup_{e}B_{r_{e}}(x_{e}),
\]
where $\tilde{S} = \bigcup_{d}\tilde{S}_{d}$ is a countable union of $n-4$-measure zero sets, and thus itself has $n-4$ measure zero. We also now have the estimates
\[
\sum_{c}r_{c}^{n - 4}\leq C(n,\tau)\cdot \epsilon,\qquad
\sum_{b}r_{b}^{n - 4} + \sum_{e}r_{e}^{n - 4}\leq C(n,\tau).
\]
Let us remark that our only original assumption to construct this covering was that $B_{1}(q)$ was a $c$-ball. Therefore, we can repeat this construction on each $c$-ball $B_{r_c}(x_c)$ in order to build a covering
\[
B_{1}(q)\subseteq \tilde{S}\cup \bigcup_{a}\big( \mathcal{N}_{a}\cap B_{r_{a}}\big)\cup \bigcup_{b}B_{r_{b}}(x_{b})\cup \bigcup_{c}B_{r_{c}}(x_{c})\cup \bigcup_{e}B_{r_{e}}(x_{e}),
\]
with the estimates
\[
\sum_{a}r_{a}^{n - 4}\leq 1 + C(n,\tau)\epsilon,\qquad
\sum_{c}r_{c}^{n - 4}\leq \big(C(n,\tau)\cdot \epsilon \big)^{2},
\]
\[
\sum_{b}r_{b}^{n - 4} + \sum_{e}r_{e}^{n - 4}\leq C(n,\tau)\big(1 + C(n,\tau)\epsilon \big).
\]
If we continue to recover the $c$-balls, then after $i$ iterations we have the covering
\[
B_{1}(q)\subseteq \tilde{S}\cup \bigcup_{a}\big( \mathcal{N}_{a}\cap B_{r_{a}}\big)\cup \bigcup_{b}B_{r_{b}}(x_{b})\cup \bigcup_{c}B_{r_{c}}(x_{c})\cup \bigcup_{e}B_{r_{e}}(x_{e}),
\]
with the estimates
\[
\sum_{a}r_{a}^{n - 4}\leq \sum_{j = 0}^{i}\big(C(n,\tau)\epsilon\big)^{j},\qquad
\sum_{c}r_{c}^{n - 4}\leq \big(C(n,\tau)\cdot \epsilon\big)^{i},
\]
\[
\sum_{b}r_{b}^{n - 4} + \sum_{e}r_{e}^{n - 4}\leq C(n,\tau)\sum_{j = 0}^{i}\big(C(n,\tau)\epsilon\big)^{j}.
\]
Now we may consider the discrete sets $\tilde{S}_{c}^{i}\bigcup_{c}\{x_{c}^{i}\}$ and their Hausdorff limit $\tilde{S}_{c} = \lim \tilde{S}_{c}^{i}$. Arguing as in Proposition \ref{d-ball} we see for $\epsilon \leq \epsilon (n,\tau)$ that $\cH^{n - 4}(\tilde{S}_{c}) = 0$. Combining this with our previous $\tilde{S}$ set we then arrive at the covering
\[
B_{1}(q)\subseteq \tilde{S}\cup \bigcup_{a}(  \mathcal{N}_{a}\cap B_{r_{a}})\cup \bigcup_{b}B_{r_{b}}(x_{b})\cup \bigcup_{e}B_{r_{e}}(x_{e}),
\]
which for $\epsilon \leq \epsilon(n,\tau)$ satisfies
\[
\cH^{n - 4}(\tilde{S}) = 0,\qquad
\sum_{a}r_{a}^{n - 4}\leq 2,\qquad
\sum_{b}r_{b}^{n - 4} + \sum_{e}r_{e}^{n - 4}\leq C(n,\tau),
\]
which completes the proof of the lemma.
\end{proof}

\begin{proposition}[Inductive covering]
Let \((M^n, g)\) satisfy $(A1),(A2),(A3)$. Let $ V =\inf_{B_4(q)}\cV_{1}(y)\ge \rv$ and fix $0< \delta$ and $0< \tau \leq \tau (n)$. Then there exists $v_{0}(n,\rv ,\delta ,\tau) > 0$ such that we can write
\[
B_{1}(q)\subseteq \tilde{S}\cup \bigcup_{a}\left(  \mathcal{N}_{a}\cap B_{r_{a}}\right)\cup \bigcup_{b}B_{r_{b}}(x_{b})\cup \bigcup_{v}B_{r_{v}}(x_{v}),
\]
where
\begin{itemize}
	\item[(a)] $\mathcal{N}_{a}\subseteq B_{2r_{a}}(x_{a})$ is a $(\delta ,\tau)$-neck region;
	\item[(b)] for each $b$-ball $B_{r_{b}}(x_{b})$, we have $r_{h}(x_{b}) > 2r_{b}$;
	\item[(v)] for each $v$-ball $B_{r_{v}}(x_{v})$, if $V_{v}\equiv \inf_{y\in B_{4r_{v}}(x_{v})}\mathcal{V}_{r_{v}}$, then $V_{v}\geq V + v_{0}$;
	\item[(s)]$\cH^{n - 4}(\tilde{S}) = 0$.
\end{itemize}
Further, we have the content bound $\sum_{a}r_{a}^{n - 4} + \sum_{b}r_{b}^{n - 4} + \sum_{v}r_{v}^{n - 4}< C(n,\tau ,\delta)$.
\end{proposition}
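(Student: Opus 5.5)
The plan is to follow the inductive covering of Jiang--Naber and obtain the statement from Lemma \ref{induction} by iterating over $e$-balls. Choose $\eta=\eta(n,\rv,\delta,\tau)$ as in Lemma \ref{induction} and set $v_0$ comparable to $\eta/2$. Since the $e$-balls there are only guaranteed to satisfy $\inf_{B_{4r_e}(x_e)}\cV_{\eta r_e}\ge \bar V+\eta$, we re-cover each $e$-ball $B_{r_e}(x_e)$ by a Vitali family of balls $B_{\eta r_e}(y)$, at most $C(n,\eta)$ of them. Each such ball is then a $v$-ball at scale $r_v=\eta r_e$, with the content multiplied by at most $C(n,\eta)\eta^{n-4}$. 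This uses that $\bar V=\inf_{B_4(q)}\cV_{\eta^{-1}}$ is at least $V-\Psi$ when the original ball is rescaled so that $\int|\Ric|^p$ is small.

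The $L^p$ setting requires care at exactly this point. Under a lower Ricci bound, $\cV_{\eta^{-1}}\le \cV_1$ would give $\bar V\ge V$ only up to monotonicity. Here we instead use the relative volume comparison, Theorem \ref{Volume comparison}, together with the smallness $\int_{B_{\eta^{-1}}}|\Ric|^p<\eta$ obtained after rescaling the $\eta^2$-balls in the proof of Lemma \ref{induction}. This gives $\bar V\ge V-C\eta^{1/2}$, and hence
\[
\inf_{B_{4r_v}(x_v)}\cV_{r_v}\ \ge\ \bar V+\eta-C\eta^{1/2}\cdot(\text{small}).
\]
I expect this is the main obstacle: arranging the constants so that the gain $\eta$ dominates the almost-monotonicity error. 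The plan is to make the Ricci smallness parameter much smaller than $\eta$, which is possible because the rescaling factor $\eta^2$ can be reduced further without changing $\eta$, via the decay $\delta(R)\lesssim \Lambda^{1/p}R^{2-n/p}$ from the proof of Lemma \ref{summability of volume drop}. We must also check that $\cV_{r}$ for radii between $\eta r_e$ and $r_e$ is compared correctly, again via Theorem \ref{Volume comparison} with small error.

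Finally, collect everything. The $a$-, $b$- and $\tilde S$-pieces from Lemma \ref{induction} are kept unchanged, each $e$-ball is replaced by $C(n,\eta)$ $v$-balls, and the total content is bounded by $C(n,\tau,\delta)$ because $\eta$ depends only on $(n,\rv,\delta,\tau)$. The initial Vitali cover of $B_1(q)$ by $\eta^2$-balls contributes only a further factor $C(n,\tau,\delta)$. The union of the countably many $\tilde S$'s still satisfies $\cH^{n-4}(\tilde S)=0$.
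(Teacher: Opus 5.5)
Your skeleton is the paper's: cover $B_1(q)$ by balls $B_\rho(x_f)$ with $\rho=\eta^2$, apply Lemma~\ref{induction} to each, keep the $a$-, $b$- and $\tilde S$-pieces, re-cover every $e$-ball by balls of radius $r_v=\eta r_e$, and take $v_0=\eta/2$.

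You do not need to compare $\cV_r$ for $\eta r_e\le r\le r_e$. Since $B_{4r_v}(x_v)\subset B_{4r_e}(x_e)$ and $\cV_{r_v}=\cV_{\eta r_e}$, the $e$-ball condition gives $\inf_{B_{4r_v}(x_v)}\cV_{r_v}\ge \bar V_f+\eta$ directly. Here $\bar V_f=\inf_{B_{4\rho}(x_f)}\cV_{\eta^{-1}\rho}$ is the pinching level of the rescaled ball. Everything therefore reduces to the single inequality $\bar V_f\ge V-\eta/2$, i.e. $\cV_{\eta^{-1}\rho}(y)\ge \cV_1(y)-\eta/2$ for $y\in B_{4\rho}(x_f)$.

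\textbf{The gap.} As you note, $\bar V\ge V-C\eta^{1/2}$ is useless against a gain of $\eta$. Your remedy, shrinking $\rho$ so that the rescaled balls satisfy $\int_{B_{\eta^{-1}}}|\Ric|^p\ll\eta$, works at the wrong scale. By Theorem~\ref{Volume comparison}, the loss in passing from $\cV_R$ to $\cV_r$ for $r\le R$ is controlled by the normalized Ricci integral at the larger scale $R$. Chaining the bound $\cV_R-\cV_r\le\cE(R)$ from the proof of Lemma~\ref{summability of volume drop} gives the same conclusion. So the loss from scale $1$ down to scale $\eta^{-1}\rho$ is governed by the Ricci integral at $R=1$, which is fixed by the data and does not improve as $\rho$ decreases. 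Smallness on the rescaled balls only controls almost-monotonicity below scale $\eta^{-1}\rho$, which is what Lemma~\ref{induction} uses internally, not what is needed here.

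\textbf{What closes the step.} You need smallness at the proposition's own unit scale: $\int_{B_4(q)}|\Ric|^p\le\eta'$ with $\eta'=\eta'(n,p,\rv,\delta,\tau)$ small compared with a suitable power of $\eta$. Then volume comparison gives $\cV_{\eta^{-1}\rho}(y)\ge\cV_1(y)-\eta/4$, and the chain closes with $v_0=\eta/2$.
\begin{itemize}
\item This is not implied by (A1)--(A3) at unit scale. On hyperbolic space, for instance, $\cV_1(y)-\cV_\eta(y)$ is bounded below by a positive dimensional constant.
\item It cannot be manufactured inside the proof, so it must be a hypothesis, arranged where the proposition is used. In the proof of Theorem~\ref{neck decomposition}, first cover by balls of radius much smaller than $R_0\Lambda^{-1/(2p)}$. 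This costs a $\Lambda$-dependent number of balls, consistent with the constant $C(n,\rv,p,\Lambda)$ there.
\item The bound $r^2(\fint_{B_r}|\Ric|^p)^{1/p}\le C\Lambda^{1/p}r^{2-n/p}$ improves as $r\downarrow0$, so every later $v$-ball inherits the smallness.
\end{itemize}

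The paper's proof passes through the same inequality (``smallness of volume pinching at small scales'' and the factor $1-\Psi(\eta|n)$) and tacitly relies on the same smallness. Its other step, $\inf_{B_{1+4\eta^2}(q)}\cV_1\ge V-\eta/4$, is trivial because $B_{1+4\eta^2}(q)\subset B_4(q)$. So you located the right delicate point, but the fix has to be a top-scale smallness assumption, not a smaller covering radius.
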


\begin{proof}
This proposition is very similar to the previous lemma; we essentially need only one additional covering in order to deal with the fact that $V$ here is coming from the volume pinching at scale one, instead of scale $\eta^{-1}$. Let $\eta$ be chosen as in Lemma \ref{d-ball}, and let us cover $B_{1}(q)$ by the Vitali covering
\[
B_{1}(q)\subseteq \bigcup_{f}B_{\eta^{2}}(x_{f}),
\]
where $\{B_{\eta^{2} / 10}(x_{f})\}$ are disjoint. By a standard covering argument we have that there are at most $C(n,\tau ,\delta)$ balls in this collection. For each ball $B_{\eta^{2}}(x_{f})$ let us apply Lemma \ref{induction}  to find the covering given by
\[
B_{1}(q)\subseteq \bigcup_{f}B_{\eta^{2}}(x_{f})\subseteq \tilde{S}\cup \bigcup_{a}\big( \mathcal{N}_{a} \cap B_{r_{a}}\big)\cup \bigcup_{b}B_{r_{b}}(x_{b})\cup \bigcup_{e}B_{r_{e}}(x_{e}),
\]
such that we have the content bounds $\sum_{a}r_{a}^{n - 4} + \sum_{b}r_{b}^{n - 4} + \sum_{e}r_{e}^{n - 4}< C(n,\tau ,\delta)$ and such that $r_{e}\leq \eta^{4}$. We will see that these $e$-balls will be the $v$-balls of the proposition with $v_{0}$ defined appropriately. To see this, let us be careful and note that the $e$-balls in this covering are with respect to one of the balls $B_{\eta^{2}}(x_{f})$. Therefore for each $e$-ball, by smallness of volume pinching at small scales (Theorem \ref{summability of volume drop}), we have the estimate
\[
\inf_{B_{4r_{e}}(x_{e})}\mathcal{V}_{\eta r_{e}}(y)\geq \inf_{B_{4\eta^{2}}(x_{f})}\mathcal{V}_{\eta}(y) + \eta \geq \inf_{B_{(1 + 4\eta^{2})}(q)}\mathcal{V}_{1}(y) + \frac{3}{4}\eta .
\]
In order to finish the proof, let us notice by volume comparison theorem \ref{Volume comparison}, 
\[
\inf_{B_{(1 + 4\eta^{2})}(q)}\mathcal{V}_{1}(y)\geq  (1-\Psi(\eta|n))\inf_{B_{1}(q)}\mathcal{V}_{1}(y) >  V - \frac{1}{4}\eta ,
\]
 for some $\eta < \eta (n)$, where $\Psi(\eta|n)\to 0$ as $\eta\to 0$. Combining this with the previous inequality we arrive at the estimate
\[
\inf_{B_{4r_{e}}(x_{e})}\mathcal{V}_{\eta r_{e}}(y)\geq \inf_{B_{(1 + 4\eta^{2})}(p)}\mathcal{V}_{1}(y) + \frac{3}{4}\eta \geq V + \frac{1}{2}\eta \equiv V + v_{0}.
\]
Choosing a Vitali covering $\{B_{r_{v}}(x_{v}),x_{v}\in B_{r_{e}}(x_{e})\}$ with $r_{v} = \eta r_{e}$ of $B_{r_{e}}(x_{e})$ we thus finish the proof of the proposition.
\end{proof}
 
\begin{proof}[Proof of Theorem \ref{neck decomposition}]
Let $M$ satisfy the assumptions of the theorem, and let $V\equiv \inf_{B_{4}(q)}\mathcal{V}_{1}(y) > 0$. Let us then fix $v_{0}(n,V,\delta ,\tau)$ as in Proposition \ref{induction}. Then if we apply Proposition \ref{induction}, we arrive at the covering
\[
B_{1}\subseteq \tilde{S}^{1}\cup \bigcup_{a}\big( \mathcal{N}_{a}^{1} \cap B_{r_{a}^{1}}\big)\cup \bigcup_{b}B_{r_{b}^{1}}(x_{b}^{1})\cup \bigcup_{v}B_{r_{v}^{1}}(x_{v}^{1}),
\]
where
\begin{itemize}
	\item $\mathcal{N}_{a}^{1}\subseteq B_{2r_{a}^{1}}(x_{a}^{1})$ is a $(\delta,\tau)$-neck;
	\item $r_{h}(x_{b}^{1}) > 2r_{b}^{1}$;
	\item for each $v$-ball $B_{r_{v}^{1}}(x_{v}^{1})$, if $V_{v}^{1}\equiv \inf_{y\in B_{4r_{v}^{1}}(x_{v}^{1})}\mathcal{V}_{r_{v}^{1}}(y)$, then $V_{v}^{1}\geq V + v_{0}$;
	\item $\cH^{n-4}(\tilde{S}^{1}) = 0$;
	\item we have the content bound $\sum_{a}(r_{a}^{1})^{n-4} + \sum_{b}(r_{b}^{1})^{n-4} + \sum_{v}(r_{v}^{1})^{n-4} < C(n,\tau,\delta)$.
\end{itemize}

Now we may simply apply Proposition \ref{induction} to each $v$-ball $B_{r_{v}^{1}}(x_{v}^{1})$. Since the volume lower bound on these balls has increased by $v_{0}$, after at most $K = K(n,V,v_{0})$ such iterations this volume bound must exceed the maximum possible volume lower bound, and therefore the process must stop (see also Theorem \ref{summability of volume drop}). More precisely, let us assume we have applied Proposition \ref{induction} $i$ times to obtain the covering
\[
B_{1}\subseteq \tilde{S}^{i}\cup \bigcup_{a}\big(   \mathcal{N}_{a}^{i} \cap B_{r_{a}^{i}}\big)\cup \bigcup_{b}B_{r_{b}^{i}}(x_{b}^{i})\cup \bigcup_{v}B_{r_{v}^{i}}(x_{v}^{i}),
\]
with the property that for every $v$-ball $B_{r_{v}^{i}}(x_{v}^{i})$ we have
\[
\inf_{B_{4r_{v}^{i}}(x_{v}^{i})}\mathcal{V}_{r_{v}^{i}}(y) \geq V + i v_{0}.
\]
Since the volume is uniformly bounded above by Theorem \ref{Volume comparison}, there exists $I = I(n,V,v_{0})$ such that $I v_{0} > \max_{y}\mathcal{V}_{1}(y)$, and therefore we must have that there are no $v$-balls left after $I$ iterations and $\tilde{S}^{I}=\varnothing$. Thus we can write
\[
B_{1}\subseteq  \bigcup_{a}\big(  \mathcal{N}_{a}^{I} \cap B_{r_{a}^{I}}\big)\cup \bigcup_{b}B_{r_{b}^{I}}(x_{b}^{I}).
\]

Let us now define   $\mathcal{N}_{a} \equiv \mathcal{N}_{a}^{I}$, $r_{a}\equiv r_{a}^{I}$, and $B_{r_{b}}(x_{b})\equiv B_{r_{b}^{I}}(x_{b}^{I})$,
this completes the proof of Theorem \ref{neck decomposition}.
\end{proof} 
 
\section{Proof of $L^2$-estimates} 
\begin{proof}[Proof of Theorem~\ref{thm:main}]
Fix $\varepsilon>0$ and $\tau=\tau_0(n)$.  
Choose $\delta$ according to Theorem~\ref{neck decomposition} and also small enough so that Theorem~\ref{strong L2 neck} applies.  
Apply the neck decomposition to  ball $B_1(x)$,   
we have
\[
\int_{B_1(x)} |\Rm|^2 \le \sum_a \int_{\N_a\cap B_{r_a}(x_a)} |\Rm|^2 + \sum_b \int_{B_{r_b}(x_b)} |\Rm|^2 .
\]
 By Theorem~\ref{strong L2 neck}, we have scale invariant $L^2$ estimate on each $\N_a\cap B_{r_a}(x_a)$:
\[
\int_{\N_a\cap B_{r_a}(x_a)} |\Rm|^2 \le C \,r_a^{n-4}.
\]
 And on each $B_{r_b}(x_b)$ we have $r_h(x_b)>2r_b$, since $p>\frac{n}{2}\ge 2$, by $W^{2,p}$ estimate,
\[
\int_{B_{r_b}(x_b)} |\Rm|^2 \le C\, r_b^{n-4}.
\]
 Using the content bound from Theorem~\ref{neck decomposition},
\[
\int_{B_1(x)} |\Rm|^2 \le C\, \left(\sum_a r_a^{n-4}+ \sum_b r_b^{n-4}\right) \le C(n,\mathrm{v},p,\Lambda).
\]
This completes the proof.
\end{proof}
\appendix
\section{$L^4$ Ricci curvature estimate on neck regions} In  this section we prove $L^4$ estimate for Ricci curvature.
\begin{theorem}\label{L4 ricci}
Let $(M^n,g)$  satisfy assumptions $(A1), (A2), (A3)$ and $(A4)$.   If $\delta\le\delta_0(n,p,\rv,\Lambda,B)$, then there exists $C(n,p,\rv,\Lambda,B)>0$ such that
\[
r^{4-n} \int_{\mathcal{N}_{10^{-5}} \cap B_r(x)} |\Ric|^4(z) \, dz \leq C.
\]
\end{theorem}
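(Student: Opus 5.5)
The plan is to repeat the superconvexity argument of Lemma \ref{superconvexity} and Theorem \ref{Dini estimates}, with $|\Rm|$ replaced by $|\Ric|^2$. After rescaling to $r=1$, set
\[
f=\phi_\cN\,(|\Ric|^2+\sigma)^{1-\eta},
\]
and $H(r)=r^{-1-2\eta}\int_{b=r} f\,|\nabla b|$. Since $|\Ric|^2$ scales like $|\Rm|$, the homogeneity matches exactly. Lemma \ref{evolution} therefore gives the same Cauchy--Euler operator $r^2H''+(4\eta-1)rH'-4\eta(1-\eta)H=r^{1-2\eta}\int_{b=r}\Delta f|\nabla b|^{-1}$, and Lemma \ref{compact support} applies verbatim. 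The target is again the Dini bound $\int_0^\infty H(r)/r\,dr\le C$. By the coarea formula and $C^{-1}\le|\nabla b|\le C$ (Lemma \ref{Green's distance function estimate}), this yields $\int\phi_\cN b^{-2-2\eta}(|\Ric|^2+\sigma)^{1-\eta}\le C$. On $\cN_{10^{-5}}$ we have $|\Ric|\le \vepsilon b^{-1}$, so $|\Ric|^4\le C b^{-2-2\eta}(|\Ric|^2)^{1-\eta}$ there, and the theorem follows.

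The key step is a lower bound for $\Delta(|\Ric|^2+\sigma)^{1-\eta}$. I would use $\Delta|\Ric|^2=2|\nabla\Ric|^2+2\langle\Ric,\Delta\Ric\rangle$, and no Kato inequality is needed. The bad Hessian term $-\eta(1-\eta)(|\Ric|^2+\sigma)^{-1-\eta}|\nabla|\Ric|^2|^2$ is bounded using $|\nabla|\Ric|^2|^2\le 4|\Ric|^2|\nabla\Ric|^2\le 4(|\Ric|^2+\sigma)|\nabla\Ric|^2$. The resulting term $-4\eta(1-\eta)(|\Ric|^2+\sigma)^{-\eta}|\nabla\Ric|^2$ is absorbed by the positive $2(1-\eta)(|\Ric|^2+\sigma)^{-\eta}|\nabla\Ric|^2$ once $\eta<1/2$. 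The remaining term $(|\Ric|^2+\sigma)^{-\eta}|\Ric||\Delta\Ric|$ is put into the error $e(r)$. I will not try to commute $\Delta\Ric$ into curvature terms. Instead I use the (A3) derivative bound $|\nabla^2\Ric|\le C b^{-2}|\Ric|$, valid since $r_h\approx b$ on the support (Remark \ref{remark_neck}). This gives an error density bounded by $b^{-2}(|\Ric|^2+\sigma)^{1-\eta}$.

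The main obstacle is this last term. It has the same order $b^{-2}\cdot(\cdot)$ as the zeroth-order term, so it cannot simply be integrated away. The approach I would try first is to absorb it into the ODE coefficient exactly as the $(|\Rm|+\sigma)^{2-\eta}$ term was absorbed in Lemma \ref{superconvexity}. That requires the constant to be small: $|\nabla^2\Ric|\le \vepsilon' b^{-2}|\Ric|$ with $\vepsilon'$ small relative to $\eta(1-\eta)$. This smallness should follow from (A3) by applying the derivative bounds on balls of radius $\approx b$. On such balls $\sup|\Ric|\le\vepsilon b^{-1}$, so $|\nabla^2\Ric|\le B\vepsilon b^{-3}$. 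To compare this with $|\Ric|$ at the same point, I would absorb the part where $|\Ric|^2\gtrsim \vepsilon b^{-2}$ into the coefficient. The complementary part, where $|\Ric|^2\lesssim\vepsilon b^{-2}$, I would bound crudely by $C b^{-3-2\eta}$ times a small constant and estimate through the integral $\int b^{-3-2\eta}\phi_\cN\le C$ from the end of the proof of Theorem \ref{Dini estimates}. Here the extra power of $b$ comes from $|\Ric|\le\vepsilon b^{-1}$ rather than $b^{-2}$, and this is where $|\Ric|\lesssim b^{-1}$ beats the $|\Rm|\lesssim b^{-2}$ scaling.

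The cutoff errors $e_1,e_2$ are handled as in (\ref{integral}). Using $|\Ric|^2\le b^{-2}$ and $|\nabla|\Ric|^2|\le Cb^{-3}$, they reduce to $\int b^{-2}|\Delta\phi_\cN|$ and $\int b^{-3}|\nabla\phi_\cN|$, which are bounded by Lemma \ref{Good cutoff fuctions} together with the Ahlfors bound (A4). The comparison with the special solution of Lemma \ref{special solution}, and the maximum principle argument, then go through unchanged.
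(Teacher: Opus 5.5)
There is a genuine gap, and it sits exactly at the step you flagged as the main obstacle. Your computation $\Delta(|\Ric|^2+\sigma)^{1-\eta}\ge -2(1-\eta)(|\Ric|^2+\sigma)^{-\eta}|\Ric||\Delta\Ric|$ for $\eta<1/2$ is correct. But (A3) only bounds $|\Delta\Ric|$ by $CBb^{-2}\sup_{B_b}|\Ric|\le CB\vepsilon\, b^{-3}$, not by $|\Ric|$ at the same point.

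On $\{|\Ric|\ge\vepsilon^{1/2}b^{-1}\}$ your absorption into the ODE coefficient works. On the complementary set it fails. There the density entering $\int_0^R e(r)r^{-1}\,dr$ is
\[
b^{-2\eta}(|\Ric|^2+\sigma)^{-\eta}|\Ric||\Delta\Ric|\le C\vepsilon\, b^{-3-2\eta}|\Ric|^{1-2\eta},
\]
and $|\Ric|$ may be as large as $\vepsilon^{1/2}b^{-1}$ on that set. So this is only $\lesssim\vepsilon^{3/2-\eta}b^{-4}$, not $\lesssim b^{-3-2\eta}$. The integral $\int_{\supp\phi_\cN}b^{-4}$ is not uniformly bounded; this is the divergence recorded in the Remark after Definition \ref{def_Green}.

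Nor can this term be absorbed. Young's inequality $\vepsilon b^{-3}|\Ric|\le\theta b^{-2}|\Ric|^2+C_\theta\vepsilon^2b^{-4}$ reproduces $b^{-4}$. Moving the threshold does not help either. Absorption needs the threshold to be at least of order $b^{-1}$, while integrability of the remainder needs it to be $O(b^{-s})$ with $s<1$.

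The heuristic that $|\Ric|\lesssim b^{-1}$ beats $|\Rm|\lesssim b^{-2}$ is what makes $e_3$ harmless in Theorem \ref{Dini estimates}. There the error is $(|\Rm|+\sigma)^{-\eta}|\nabla^2\Ric|\lesssim\sigma^{-\eta}b^{-3}$. For a functional built from $|\Ric|^2$, the error $|\Ric||\nabla^2\Ric|$ has exactly the scaling of $b^{-2}|\Ric|^2$, so there is no gain. What is missing is a same-point bound $|\langle\Ric,\Delta\Ric\rangle|\le\theta b^{-2}|\Ric|^2+(\text{integrable})$ with $\theta\ll\eta(1-\eta)$, and (A3) does not provide it.

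The paper's route differs at exactly this point. It takes $f=\phi_\cN(|\Ric|+\sigma)^{2-2\eta}$ and uses $\frac12\Delta|\Ric|^2=|\nabla\Ric|^2+\langle\nabla^2\R,\Ric\rangle+\Rm*\Ric*\Ric$. It absorbs $\Rm*\Ric*\Ric$ into the ODE coefficient via $b^2|\Rm|\ll\eta(1-\eta)$, as in Lemma \ref{superconvexity}, and puts only $(|\Ric|+\sigma)^{1-2\eta}|\nabla^2\R|$ into $e(r)$. So it needs no case split: it commutes $\Delta\Ric$, which you deliberately avoided.

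Be aware, though, that this is not a valid formula for general metrics. Linearize at the flat metric with a transverse-traceless perturbation $h$: then $\R'=0$ and $\Ric'=-\frac12\Delta h$, so $(\Delta\Ric)'=-\frac12\Delta^2h$ need not vanish. Moreover, the paper then bounds $|\nabla^2\R|\le Cb^{-2}|\Ric|$. That gives $e_3\lesssim\int\phi_\cN b^{-2-2\eta}(|\Ric|+\sigma)^{1-2\eta}|\Ric|$, which is again the critical quantity with a non-small constant. The displayed bound $C\sigma^{-2\eta}\int b^{-2-2\eta}\phi_\cN$ drops the factor $(|\Ric|+\sigma)|\Ric|$. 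So the obstacle you isolated is real, and the Appendix argument as written does not resolve it either.
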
  

 In fact, in this case we choose $
f = \phi_\cN\,\left(|\Ric|+\sigma\right)^{2-2\eta}$,
 $F(r)=r^{-3}\int_{b=r} f\,|\nabla b|,$ and 
$H(r)=r^{2(1-\eta)}F(r)$ in Lemma \ref{superconvexity}. 

\begin{lemma}\label{ricci superconvexity}
Let \((M^n, g)\) satisfy assumptions  $(A1),(A2),(A3)$ and $(A4)$.  
If $\delta\le\delta_0(n,p, \rv,\Lambda,B),$ then there exists $\sigma=\sigma(n)>0$ and $C=C(n,p, \rv,\Lambda,B)>0$ such that for $H(r)=r^{-1-2\eta}\int_{b=r} \phi_\cN\,\left(|\Ric| +\sigma\right)^{2-2\eta}|\nabla b|$, we have the following evolution equation:
\[
r^2 H''+(4\eta-1)rH'-3\eta(1-\eta)H
= r^{1-2\eta}\int_{b=r}\Delta f\,|\nabla b|^{-1}\ge -e(r),
\]
where \begin{equation}\label{ricci error}
\begin{aligned}e(r) =r^{1-2\eta}\int_{b=r} \left((|\mathrm{Ric}| + \sigma)^{2-2\eta}|\Delta\phi_\cN| +2|\nabla\phi_\cN||\nabla(|\mathrm{Ric}| + \sigma)^{2-2\eta}|+(2+2\eta)(|\Ric|+\sigma) ^{1-2\eta}|\nabla^2 \R|\right)|\nabla b|^{-1}
\end{aligned}
\end{equation}
\end{lemma}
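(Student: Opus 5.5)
The plan is to copy the proof of Lemma \ref{superconvexity}, replacing $|\Rm|^{1-\eta}$ by $G:=(|\Ric|+\sigma)^{2-2\eta}$. The differential inequality for $|\Ric|$ now comes from a Bochner-type identity for the Ricci tensor instead of Proposition \ref{laplacian of Rm}.

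First I apply Lemma \ref{evolution}(2) to $F(r)=r^{-3}\int_{b=r}f|\nabla b|$ with $f=\phi_\cN G$. Since $H=r^{2-2\eta}F$, the same change of variables as before gives
\[
r^2H''+(4\eta-1)rH'-4\eta(1-\eta)H=r^{1-2\eta}\int_{b=r}\Delta f\,|\nabla b|^{-1}.
\]
Next I expand $\Delta f=G\Delta\phi_\cN+2\langle\nabla\phi_\cN,\nabla G\rangle+\phi_\cN\Delta G$. The first two terms are bounded below by minus their absolute values, which produces the first two terms of $e(r)$.

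For $\Delta G$ I use the contracted second Bianchi identity, which gives the pointwise formula
\[
\Delta\Ric_{ij}=\tfrac12\nabla_i\nabla_j\R+\Rm*\Ric+\Ric*\Ric.
\]
From it,
\[
\tfrac12\Delta|\Ric|^2=|\nabla\Ric|^2+\tfrac12\langle\Ric,\nabla^2\R\rangle+\Rm*\Ric*\Ric .
\]
This yields
\[
\Delta|\Ric|\ge\frac{|\nabla\Ric|^2-|\nabla|\Ric||^2}{|\Ric|}-\tfrac12|\nabla^2\R|-C(n)|\Rm||\Ric|,
\]
where I use $|\Ric|\le C(n)|\Rm|$. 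Then
\[
\Delta G=(2-2\eta)(|\Ric|+\sigma)^{1-2\eta}\Delta|\Ric|+(2-2\eta)(1-2\eta)(|\Ric|+\sigma)^{-2\eta}|\nabla|\Ric||^2 .
\]
Since $2-2\eta>1$, the coefficient of $|\nabla|\Ric||^2$ is nonnegative, so the ordinary Kato inequality $|\nabla|\Ric||\le|\nabla\Ric|$ suffices and no improved Kato inequality is needed. This is where the Ricci case is easier than Lemma \ref{superconvexity}. The remaining terms are $-(2-2\eta)(|\Ric|+\sigma)^{1-2\eta}\tfrac12|\nabla^2\R|$, which is bounded below by the last term of $e(r)$ since $2-2\eta\le 2+2\eta$, and $-C(n)(|\Ric|+\sigma)^{2-2\eta}|\Rm|$.

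The main step is absorbing $C(n)\phi_\cN(|\Ric|+\sigma)^{2-2\eta}|\Rm|$ into the gap $\eta(1-\eta)H$. The left side has $-4\eta(1-\eta)H$, and I want to end with only $-3\eta(1-\eta)H$. As in Lemma \ref{superconvexity}, on $\supp\phi_\cN$ the points are $\Psi(\delta)$-close to $(n-3)$-symmetric cones at scale $\approx b$, by Remark \ref{remark_neck} and Lemma \ref{Green's distance function estimate}. So (A3) gives
\[
b^2|\Rm|\le C(n)^{-1}\eta(1-\eta)\quad\text{for }\delta\le\delta_0 .
\]
On $\{b=r\}$ this means
\[
C(n)\,r^{1-2\eta}\phi_\cN(|\Ric|+\sigma)^{2-2\eta}|\Rm|\,|\nabla b|^{-1}\le \eta(1-\eta)\,r^{-1-2\eta}\phi_\cN(|\Ric|+\sigma)^{2-2\eta}\,C^2|\nabla b| .
\]
The factor $|\nabla b|^{-1}$ is converted to $|\nabla b|$ using $C^{-1}\le|\nabla b|\le C$ from Lemma \ref{Green's distance function estimate}. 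I absorb the resulting constant by shrinking $\delta_0$ further, so the curvature smallness beats $C^2$.

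The delicate point is that the $\sigma$-part must not spoil this. Here the term multiplies $|\Rm|$ rather than being a power of $(|\Ric|+\sigma)$, so the smallness of $b^2|\Rm|$ alone controls it, and $\sigma$ only needs to be fixed positive, for instance as in Lemma \ref{superconvexity} using the $R$ of Lemma \ref{compact support}. Integrating over $\{b=r\}$ against $r^{1-2\eta}|\nabla b|^{-1}$ then gives the stated inequality, with $e(r)$ as in (\ref{ricci error}).
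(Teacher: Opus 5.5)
\textbf{There is a genuine gap.} Your proposal follows the paper's proof essentially step by step: the same $G=(|\Ric|+\sigma)^{2-2\eta}$, Lemma \ref{evolution} producing the $-4\eta(1-\eta)H$ identity, a Bochner formula for $|\Ric|$ whose only second-order term is $\nabla^2\R$, ordinary Kato, and absorption of $C(n)\phi_\cN G|\Rm|$ into $\eta(1-\eta)H$ via (A3). Your bookkeeping of the constant coming from $|\nabla b|^{\pm1}$ is actually more careful than the paper's.

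The gap, which the paper shares, is the identity $\Delta\Ric_{ij}=\tfrac12\nabla_i\nabla_j\R+\Rm*\Ric+\Ric*\Ric$. It is false for a general metric.
\begin{itemize}
\item The contracted Bianchi identity only controls the divergence of $\Ric$. It gives $\nabla^k\nabla_iR_{kj}=\tfrac12\nabla_i\nabla_j\R+\Rm*\Ric$.
\item The remainder $\Delta R_{ij}-\nabla^k\nabla_iR_{kj}=\nabla^k(\nabla_kR_{ij}-\nabla_iR_{kj})$ is a double divergence of $\Rm$. It involves second derivatives of curvature that cannot be rewritten in terms of $\nabla^2\R$ and quadratic terms.
\end{itemize}

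Here is a concrete counterexample. On $\Sigma^2\times\mathbb{R}^{n-2}$ one has $\Ric=Kg_\Sigma$, so $\Delta\Ric=(\Delta K)g_\Sigma$ while $\tfrac12\nabla^2\R=\nabla^2K$. Take $g_\Sigma=e^{2u}(dx^2+dy^2)$ with $u=(y^4-x^4)/12$. Then $K=e^{-2u}(x^2-y^2)$. At the origin all quadratic curvature terms vanish and $\Delta K=0$, but $\nabla^2K=\mathrm{diag}(2,-2)\neq0$.

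So your lower bound for $\Delta|\Ric|$ in terms of $|\nabla^2\R|$ and $|\Rm||\Ric|$ is unjustified. The lemma with only $|\nabla^2\R|$ in $e(r)$ is not established by this argument.

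\textbf{How to repair it.} The fix is simpler than the original argument.
\begin{itemize}
\item Use the exact identity $\tfrac12\Delta|\Ric|^2=|\nabla\Ric|^2+\langle\Ric,\Delta\Ric\rangle$ together with $|\Delta\Ric|\le\sqrt n\,|\nabla^2\Ric|$.
\item With Kato this gives $\Delta|\Ric|\ge-\sqrt n\,|\nabla^2\Ric|$, distributionally across $\{\Ric=0\}$.
\item Hence $\Delta G\ge-(2-2\eta)\sqrt n\,(|\Ric|+\sigma)^{1-2\eta}|\nabla^2\Ric|$ for $\eta\le 1/2$.
\end{itemize}
No $\Rm$-term appears, so neither (A3) nor the absorption step is needed. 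You get the inequality with $-4\eta(1-\eta)H$, and the version with $-3\eta(1-\eta)H$ follows because $H\ge0$.

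The price is that $e(r)$ now contains $(2-2\eta)\sqrt n\,\phi_\cN(|\Ric|+\sigma)^{1-2\eta}|\nabla^2\Ric|$ in place of $(2+2\eta)(|\Ric|+\sigma)^{1-2\eta}|\nabla^2\R|$. That is a different statement from the one you set out to prove. It is harmless for the application, however: in Theorem \ref{ricci Dini estimates} the paper immediately bounds $|\nabla^2\R|\le\sqrt n|\nabla^2\Ric|$, so only the $|\nabla^2\Ric|$ version is ever used.
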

\begin{proof}
By applying Lemma \ref{evolution} to $F(r)$, the fundamental differential equation for $H(r)$ is given by:
\begin{equation} \label{eq: H_basic}
    r^2 H''(r) + (4\eta - 1)r H'(r) - 4\eta(1-\eta)H(r) = r^{1-2\eta} \int_{b=r} \Delta f |\nabla b|^{-1}.
\end{equation}Let $G = (|\mathrm{Ric}| + \sigma)^{2-2\eta}$. The Laplacian of $f$ expands to:
\begin{equation}
    \Delta f = G\Delta\phi_{\mathcal{N}} + 2\langle\nabla\phi_{\mathcal{N}}, \nabla G\rangle + \phi_{\mathcal{N}} \Delta G.
\end{equation}
To compute $\Delta G$, we first find the Laplacian of $|\mathrm{Ric}|$. By the Bochner-Weitzenb\"ock formula for the Ricci tensor, we have:
\begin{equation}
    \frac{1}{2} \Delta |\mathrm{Ric}|^2 = |\nabla \mathrm{Ric}|^2 + \langle \nabla^2 \R, \mathrm{Ric} \rangle + \mathrm{Rm} * \mathrm{Ric} * \mathrm{Ric}.
\end{equation}
Using the identity $\Delta |\mathrm{Ric}| = \frac{\Delta |\mathrm{Ric}|^2}{2|\mathrm{Ric}|} - \frac{|\nabla |\mathrm{Ric}||^2}{|\mathrm{Ric}|}$, we can estimate:
\begin{equation}
    \Delta |\mathrm{Ric}| \ge \frac{|\nabla \mathrm{Ric}|^2 - |\nabla |\mathrm{Ric}||^2}{|\mathrm{Ric}|} - |\nabla^2 \R| - C(n)|\mathrm{Rm}||\mathrm{Ric}|\ge - |\nabla^2 \R| - C(n)|\mathrm{Rm}||\mathrm{Ric}|.
\end{equation}
Applying this to $G = (|\mathrm{Ric}| + \sigma)^{2-2\eta}$, a direct calculation yields:
\begin{align}
    \Delta G &= (2-2\eta)(|\mathrm{Ric}|+\sigma)^{1-2\eta}\Delta|\mathrm{Ric}| +(2-2\eta)(1-2\eta)(|\mathrm{Ric}|+\sigma)^{-2\eta}|\nabla|\mathrm{Ric}||^2 \nonumber \\
    &\ge -(2-2\eta)(|\mathrm{Ric}|+\sigma)^{1-2\eta}|\nabla^2 \R|-C(n)(|\mathrm{Ric}|+\sigma)^{2-2\eta}|\Rm| 
\end{align}
Plugging this back into $\Delta f$ gives:
\begin{equation} \label{eq: delta_f_bound}
    \Delta f \ge G\Delta\phi_{\mathcal{N}} + 2\langle\nabla\phi_{\mathcal{N}}, \nabla G\rangle - C(n)\phi_{\mathcal{N}} G |\mathrm{Rm}| -  (2-2\eta)(|\mathrm{Ric}|+\sigma)^{1-2\eta}|\nabla^2 \R|.
\end{equation}
Finally, we conclude
\begin{align}
r^{1-2\eta}\int_{b=r}\Delta f\,|\nabla b|^{-1} 
&\ge -r^{1-2\eta}\int_{b=r} \left(G|\Delta\phi_\cN| +2|\nabla\phi_\cN||\nabla G|+(2+2\eta)(|\mathrm{Ric}|+\sigma)^{1-2\eta}|\nabla^2 \R|\right) |\nabla b|^{-1}\\
&\quad- C(n)r^{1-2\eta}\int_{b=r}\phi_\cN G |\Rm||\nabla b|,
\end{align} where we used $C^{-1}\le|\nabla b|\le C$ in the last line by Lemma \ref{Green's distance function estimate}

Since $\phi_\cN$ is supported in $(\delta,\tau)$-neck region, which is $\Psi(\delta)$ GH-close to $(n-3)$-symmetric cone, where $\Psi(\delta)\to 0$ as $\delta\to 0$. By Strong epsilon regularity assumption (A3), if $\delta<\delta_0$ is sufficently small relative to $\eta$, we have curvature estimate 
 \[   r^2 \phi_\cN|\Rm| \le C(n)^{-1}\eta(1-\eta) \phi_\cN \text{ on $\{b=r\}$}.
 \] Then we conclude \[C(n)r^{1-2\eta}\phi_\cN G |\Rm|\le \eta(1-\eta) r^{-1-2\eta}  \phi_\cN G \text{ on $\{b=r\}$}.
 \] 
Thus,
\begin{equation} 
r^2 H''+(4\eta-1)rH'-3\eta(1-\eta)H
= r^{1-2\eta}\int_{b=r}\Delta f\,|\nabla b|^{-1}\ge -e(r),
\end{equation} where \begin{equation} 
e(r) =r^{1-2\eta}\int_{b=r} \left((|\mathrm{Ric}| + \sigma)^{2-2\eta}|\Delta\phi_\cN| +2|\nabla\phi_\cN||\nabla(|\mathrm{Ric}| + \sigma)^{2-2\eta}|+(2+2\eta)(|\Ric|+\sigma) ^{1-2\eta}|\nabla^2 \R|\right)|\nabla b|^{-1}.\end{equation} 
\end{proof} 
\begin{theorem}\label{ricci Dini estimates}
 Let \((M^n, g)\) satisfy assumptions  $(A1),(A2),(A3)$ and $(A4)$.  If $\delta<\delta_0(n,p, \rv,\Lambda,B)$, there exists $C(n,p, \rv,\Lambda,B)>0$ such that for \[H(r)=r^{-1-2\eta}\int_{b=r} \phi_\cN\,\left(|\Rm|+\sigma\right)^{2-2\eta}|\nabla b|,\] we have the Dini estimate:
\[
\int_0^\infty \frac{H(r)}{r} \, dr \le C.
\]
\end{theorem}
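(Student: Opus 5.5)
The plan is to copy the proof of Theorem~\ref{Dini estimates} line by line, with Lemma~\ref{ricci superconvexity} in place of Lemma~\ref{superconvexity}. I read the integrand of $H$ as $\phi_\cN(|\Ric|+\sigma)^{2-2\eta}$, which is the function $f$ of Lemma~\ref{ricci superconvexity}; the $|\Rm|$ in the display looks like a typo. By Lemma~\ref{compact support}, whose proof only uses the support of $\phi_\cN$, $H$ is supported in $(0,R)$. Lemma~\ref{special solution} with $g=e(r)/r^2$ gives $h$ with $h(0)=h(R)=0$. The maximum principle argument from the proof of Theorem~\ref{Dini estimates} then gives $H\le h$. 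Lemma~\ref{special solution} also gives
\[
\int_0^R\frac{H}{r}\,dr\le\frac{1}{3\eta(1-\eta)}\int_0^R\frac{e(r)}{r}\,dr .
\]
By the coarea formula and $C^{-1}\le|\nabla b|\le C$ (Lemma~\ref{Green's distance function estimate}), the right-hand side splits into three bulk integrals $e_1+e_2+e_3$ over $\{b\le R\}$, each carrying the weight $b^{-2\eta}$.

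The first two terms are handled as in Theorem~\ref{Dini estimates}. On $\supp\phi_\cN$ we have $b\approx d(\cdot,\cC)\approx r_h$ (Remark~\ref{remark_neck}), so (A3) gives
\[
|\Ric|\le\varepsilon b^{-1},\qquad |\nabla\Ric|\le Bb^{-1}\sup|\Ric|\le Cb^{-2},\qquad |\nabla^2\Ric|\le Cb^{-2}\sup|\Ric|,
\]
where each $\sup$ is over a ball of radius comparable to $b$. For $e_1$, using $\sigma\le Cb^{-1}$ on $\{b\le R\}$, we get $b^{-2\eta}(|\Ric|+\sigma)^{2-2\eta}|\Delta\phi_\cN|\le Cb^{-2}|\Delta\phi_\cN|$. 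For $e_2$, $b^{-2\eta}|\nabla\phi_\cN|\,(|\Ric|+\sigma)^{1-2\eta}|\nabla\Ric|\le Cb^{-3}|\nabla\phi_\cN|$. Both integrals were already shown to be bounded in the proof of Theorem~\ref{Dini estimates}, using Lemma~\ref{Good cutoff fuctions}, (A4), and the dyadic volume count $\Vol(\supp\phi_\cN\cap\{d_\cC\sim 2^{-i}\})\le C2^{-4i}$.

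The main obstacle is $e_3$, the term $b^{-2\eta}(|\Ric|+\sigma)^{1-2\eta}|\nabla^2\R|$. The crude bound $|\nabla^2\R|\le Cb^{-3}$ makes it $b^{-4}\phi_\cN$, which is exactly the borderline non-integrable power. So $e_3$ cannot be treated as a pure error term and has to be absorbed. I would use the finer bound $|\nabla^2\R|\le|\nabla^2\Ric|\le Bb^{-2}\sup_{B_{cb}}|\Ric|$ together with the smallness $b\,|\Ric|\le\varepsilon$. This gives
\[
e_3\lesssim\varepsilon'\int b^{-2-2\eta}\,\phi_\cN\,\bigl(\sup_{B_{cb}}|\Ric|+\sigma\bigr)^{2-2\eta},
\]
which has the same form as the integrand defining $\int H/r$, except for the local supremum. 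To compare the supremum with pointwise values, I would first try covering the support by balls $B_{cb}(y)$. On each such ball the neck is almost a cone, so $\sup_{B_{cb}}|\Ric|$ is controlled, up to $C(B)$, by the average of $|\Ric|$ over a slightly larger ball; this uses the gradient bound $|\nabla\Ric|\le Bb^{-1}\sup|\Ric|$ from (A3) as a doubling/Harnack-type estimate. That converts $e_3$ into at most $C\varepsilon'\int_{R'}^{}\,H(s)\,ds/s$ over comparable scales $s\in[r/2,2r]$, plus a term involving $\sigma$ that is bounded by $\int b^{-3-2\eta}\phi_\cN\le C$.

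Choosing $\delta$ small makes $\varepsilon'$ small, so $C\varepsilon'(3\eta(1-\eta))^{-1}\int H/r$ can be absorbed into the left-hand side. This absorption is legitimate because $\int_0^R H/r<\infty$ a priori: $H$ is compactly supported in $(0,R)$ and smooth there. The result is $\int_0^\infty H/r\,dr\le C$. If the sup-to-average comparison fails, the fallback is to absorb $e_3$ directly into the $-3\eta(1-\eta)H$ term of the ODE inequality of Lemma~\ref{ricci superconvexity}. That term would need to be replaced by $-2\eta(1-\eta)H$, and Lemma~\ref{special solution} rerun with the correspondingly modified indicial roots.
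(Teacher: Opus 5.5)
Your reduction matches the paper's step for step: compact support of $H$, the special solution of Lemma~\ref{special solution}, the maximum principle, the coarea split into $e_1+e_2+e_3$, and the bounds $e_1\le C\int b^{-2}|\Delta\phi_\cN|$ and $e_2\le C\int b^{-3}|\nabla\phi_\cN|$. Reading $|\Rm|$ as $|\Ric|$ in the statement also matches how the paper's proof uses $H$.

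Your diagnosis of $e_3$ is correct, and sharper than the paper's. The paper writes $e_3\le C\sigma^{-2\eta}\int_{b\le R}b^{-2-2\eta}\phi_\cN$ by replacing $(|\Ric|+\sigma)^{1-2\eta}|\nabla^2\R|$ with $Cb^{-2}(|\Ric|+\sigma)^{-2\eta}$. But $|\nabla^2\R|\le Cb^{-2}|\Ric|$ only gives $Cb^{-2}|\Ric|(|\Ric|+\sigma)^{1-2\eta}$. The dropped factor $|\Ric|(|\Ric|+\sigma)$ is not bounded by (A3), which only gives $|\Ric|<\varepsilon b^{-1}$. So $e_3$ really has the same homogeneity as $\int H/r$.

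\textbf{The gap: the small factor $\varepsilon'$ in your absorption step does not exist.} The best pointwise bound (A3) gives at scale comparable to $b$ is
\[
b^{-2\eta}(|\Ric|+\sigma)^{1-2\eta}|\nabla^2\R|\le C(n)\,B\,b^{-2-2\eta}(|\Ric|+\sigma)^{1-2\eta}\sup_{B_{cb}}|\Ric|,
\]
and its coefficient $C(n)B$ is fixed, not small.
\begin{itemize}
\item The smallness in (A3) is $b\sup|\Ric|<\varepsilon$, and you cannot insert it without changing homogeneity. Trading one factor $\sup|\Ric|$ for $\varepsilon b^{-1}$ gives $\varepsilon\, b^{-3-2\eta}(|\Ric|+\sigma)^{1-2\eta}$.
\item That term is at most $\theta\, b^{-2-2\eta}(|\Ric|+\sigma)^{2-2\eta}$ only where $|\Ric|+\sigma\ge\theta^{-1}\varepsilon b^{-1}$. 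Since (A3) gives $|\Ric|<\varepsilon b^{-1}$, this region is essentially excluded near $\cC$.
\item Bounded crudely instead, the term is $\varepsilon b^{-4}$. Its integral against $\phi_\cN$ picks up an order-one contribution from every dyadic scale, so it has no uniform bound.
\item Absorption needs $e_3\le\theta\cdot 3\eta(1-\eta)\int_0^R H/r\,dr+C$ with $\theta<1$. In effect, $b^2|\nabla^2\R|$ must be a small multiple of $\eta(|\Ric|+\sigma)$, and $\eta\le\eta(n)$ is itself small.
\item Closeness to $\mathbb{R}^{n-4}\times C(S^3/\Gamma)$ controls the size of $b|\Ric|$ and $b^2|\Rm|$. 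It does not control the relative oscillation $b^2|\nabla^2\Ric|/\sup_{B_b}|\Ric|$, which (A3) only bounds by $B$.
\end{itemize}

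Your fallback fails for the same reason. Absorbing $e_3$ into the $-3\eta(1-\eta)H$ term on $\{b=r\}$ requires $r^2|\nabla^2\R|\le c\,\eta(1-\eta)(|\Ric|+\sigma)$ pointwise, which is the same missing smallness. Contrast the $|\Rm|$-term in Lemma~\ref{ricci superconvexity}: it can be absorbed because (A3) really does make $r^2|\Rm|$ small.

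The sup-to-average comparison is a further unsupported step. The gradient bound in (A3) controls $\Ric$ on $B_{r/2}$ by its supremum on $B_r$. Turning that into a Harnack-type estimate would need a doubling property of $r\mapsto\sup_{B_r}|\Ric|$, which is not given.

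Closing $e_3$ needs an additional input that makes $b^2|\nabla^2\R|$ small relative to $|\Ric|+\sigma$. As written, neither your argument nor the paper's controls this term.
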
\begin{proof}
By the proof of Theorem \ref{Dini estimates},
\begin{align*}
 \int_0^\infty \frac{H(r)}{r} dr& \le  \frac{1}{3\eta(1-\eta)}\int_0^R\frac{e(r)}{r}dr,
\end{align*}
it suffices to estimate the right hand side.
From  expression $(\ref{ricci error})$ and coarea formula,  
we can write it as \[\int_0^R \frac{e(r)}{r}dr= e_1+e_2+e_3,\] where 
\[\begin{aligned}
e_1&=\int_0^R r^{-2\eta} \int_{b=r} \left(|\Ric| +\sigma\right)^{2-2\eta}|\Delta \phi_\cN| |\nabla b|^{-1}
     = \int_{b \leq R} b^{-2\eta} \left(|\Ric| +\sigma\right)^{2-2\eta}|\Delta \phi_\cN|,\\
e_2&=2\int_0^R r^{-2\eta} \int_{b=r} |\nabla\phi_\cN||\nabla\left(|\Ric| +\sigma\right)^{2-2\eta}||\nabla b|^{-1}
     = 2\int_{b \leq R} b^{-2\eta} |\nabla\phi_\cN||\nabla\left(|\Ric| +\sigma\right)^{2-2\eta}|,\\
     e_3 &= (2+2\eta)\int_0^R r^{-2\eta}\int_{b=r}\phi_\cN  (|\Ric|+\sigma) ^{1-2\eta}|\nabla^2 \R|  |\nabla b|^{-1} 
      = (2+2\eta)\int_{b \leq R} b^{-2\eta}\phi_\cN  (|\Ric|+\sigma) ^{1-2\eta}|\nabla^2 \R| . 
\end{aligned}\]

We now estimate each term as follows.
To begin with, since $\phi_\cN$ is supported in $\cN_{10^{-4}}=B_2(q)\backslash \bigcup_{x\in\C}\overline{B}_{10^{-4}\cdot r_x}(x)$, by Remark \ref{remark_neck} and Lemma \ref{Green's distance function estimate}, we have $r_h(y)\approx d(y,\C)\approx b(y)$. If follows from assumption  (A3)  that if $\delta<\delta_0$,  we have curvature derivative estimates: 
 \[\begin{aligned}|\nabla \left(|\Ric| +\sigma\right)^{2-2\eta}| \le(2-2\eta)(|\Ric|&+\sigma)^{1-2\eta}|\nabla \Ric| \le 2b^{-1}(|\Ric|+\sigma)^{2-2\eta},\\ 
 |\nabla^2 \R|\le \sqrt{n} |\nabla^2 \Ric|&\le C(n)b^{-2}|\Ric|  .\end{aligned}\]
Now we specify $\sigma\le  CR^{-1}$, we have \begin{equation}\label{ricci} |\Ric| +\sigma  \le Cb^{-1}  \text{ on $\{b\le R\}$ }. \end{equation}Combing above estimates, we have
 \[\begin{aligned}
 e_1&\leq \int_{b \leq R} b^{-2\eta} (Cb^{-1})^{2-2\eta}|\Delta \phi_\cN|\le C\int_{b \leq R} b^{-2} |\Delta \phi_\cN| ,\\
e_2&\leq C\int_{b \leq R} b^{-2\eta} |\nabla\phi_\cN|\left(b^{-1}(|\Ric|+\sigma)^{2-2\eta}\right)\leq  C\int_{b \leq R} b^{-2\eta} |\nabla\phi_N|\left(b^{-1}b^{2\eta-2}\right)=C\int_{b \leq R} b^{-3} |\nabla\phi_\cN| ,\\ 
e_3&\le C\int_{b \leq R} b^{-2\eta}\phi_\cN b^{-2}(|\Ric|+\sigma)^{-2\eta}  \le C\sigma^{-2\eta}\int_{\{b\le R\}}  b^{-2-2\eta} \phi_\cN.
\end{aligned}\]
Combining all these we arrive at
\begin{equation}\label{ricci integral}
\int_0^\infty \frac{H(r)}{r} \, dr \leq C \left(  \int_{b \leq R} b^{-2-2\eta}\phi_\cN+ \int_{b \leq R}b^{-3}|\nabla\phi_\cN| + \int_{b \leq R} b^{-2}|\Delta\phi_\cN| \right).  
\end{equation}Following the same argument in Theorem \ref{Dini estimates}, we finish the proof.
\end{proof}
\begin{proof}[Proof of Theorem \ref{L4 ricci}]
By Theorem \ref{ricci Dini estimates} and coarea formula,
\[   \int_{0}^\infty r^{-2-2\eta}\int_{b=r}\phi_\cN \left(|\Ric|+\sigma\right)^{2-2\eta}|\nabla b|=\int_0^\infty\frac{H(r)}{r}dr\le C.\]
Then  
\[\int_{\cN_{10^{-5}}\cap B_1(x)}|\Ric|^4 \le  C\int_M  b^{-2-2\eta}\phi_\cN \left(|\Ric|+\sigma\right)^{2-2\eta}.
\]    
By coarea formula, Lemma (\ref{Green's distance function estimate}) and $(\ref{proof_dini})$ we have 
\begin{equation}
\begin{aligned}
 \int_M b^{-2-2\eta}\phi_\cN \left(|\Ric|+\sigma\right)^{2-2\eta}&=
     \int_0^\infty r^{-2-2\eta}\int_{b=r}\phi_\cN  \left(|\Ric|+\sigma\right)^{2-2\eta} |\nabla b|^{-1}\\
     &\le C\int_{0}^\infty r^{-2-2\eta}\int_{b=r}\phi_\cN \left(|\Ric|+\sigma\right)^{2-2\eta}|\nabla b|\\ & \le C.
\end{aligned}
\end{equation}
This proves \[\int_{\cN_{10^{-5}}\cap B_1(x)}|\Ric|^4\le C.
\] 
\end{proof}
 
 \bibliographystyle{plain}

\end{document}